\renewcommand{\leq}{\leqslant}
\renewcommand{\le}{\leqslant}
\renewcommand{\geq}{\geqslant}
\renewcommand{\ge}{\geqslant}
\definecolor{citation}{rgb}{0.2,0.5,0.2}
\definecolor{formula}{rgb}{0.1,0.2,0.5}
\definecolor{url}{rgb}{0,0.2,0.7}
\newtheoremstyle%
   {olivier}
   {1.5ex}
   {1.5ex}
   {\sl}
   {11.5pt}
   {\bf\sc}   
   {.~---}
   {1ex}
   {}
\newtheoremstyle%
   {oliv0}
   {1.5ex}
   {1.5ex}
   {\sl}
   {11.5pt}
   {\bf\sc}   
   {~---}
   {1ex}
   {}
\newtheoremstyle%
   {clovis}
   {1.5ex}
   {1.5ex}
   {\normalfont}
   {11.5pt}
   {\sl}               
   {.~---}
   {1ex}
   {}
\newtheorem{theo}{Theorem}[section]
\newtheorem{cor}[theo]{Corollary}
\newtheorem{lemma}[theo]{Lemma}
\newtheorem{prop}[theo]{Proposition}
\newtheorem{conj}[theo]{Conjecture}
\newtheorem{claim}[theo]{Claim}
\theoremstyle{definition}
\newtheorem{de}[theo]{Definition}
\theoremstyle{fact}
\theoremstyle{remark}
\newtheorem{remark}[theo]{Remark}
\newtheorem{example}[theo]{Example}
\numberwithin{equation}{section}
\def\R {\mathbb{R}}
\def\N {\mathbb{N}}
\def\S {\mathbb{S}}
\def\Z {\mathbb{Z}}
\def\j {\mathcal{J}}
\def\p {\mathcal{P}}
\def\eps{\varepsilon}
\def\O{\Omega}
\newcommand{\opp}[1]{\p[{#1}]}
\newlength{\defbaselineskip}
\def\restriction#1#2{\mathchoice
              {\setbox1\hbox{${\displaystyle #1}_{\scriptstyle #2}$}
              \restrictionaux{#1}{#2}}
              {\setbox1\hbox{${\textstyle #1}_{\scriptstyle #2}$}
              \restrictionaux{#1}{#2}}
              {\setbox1\hbox{${\scriptstyle #1}_{\scriptscriptstyle #2}$}
              \restrictionaux{#1}{#2}}
              {\setbox1\hbox{${\scriptscriptstyle #1}_{\scriptscriptstyle #2}$}
              \restrictionaux{#1}{#2}}}
\def\restrictionaux#1#2{{#1\,\smash{\vrule height .8\ht1 depth .85\dp1}}_{\,#2}}
\begin{document}

\title[Propagation phenomena with nonlocal diffusion]{Propagation phenomena with nonlocal diffusion in presence of an obstacle}

\author[Julien Brasseur]{Julien Brasseur}
\address{\'Ecole des Hautes \'Etudes en Sciences Sociales, PSL Research University, CNRS, Centre d'Analyse et de Math\'ematique Sociales, Paris, France}
\email{julien.brasseur@ehess.fr, julienbrasseur@wanadoo.fr}
\author[J\'er\^{o}me Coville]{J\'er\^ome Coville}
\address{BioSP, INRA, 84914, Avignon, France}
\email{jerome.coville@inra.fr}

\begin{abstract}
We consider a nonlocal semi-linear parabolic equation on a connected exterior domain of the form $\mathbb{R}^N\setminus K$, where
$K\subset\mathbb{R}^N$ is a compact ``obstacle". The model we study is motivated by applications in biology and takes into
account long range dispersal events that may be anisotropic depending on how a given population
perceives the environment. 
To formulate this in a meaningful manner, we introduce a new theoretical framework which is of both mathematical and biological interest.
The main goal of this paper is to construct an entire solution that behaves like a planar travelling wave as $t\to-\infty$
and to study how this solution propagates depending on the shape of the obstacle. We show that whether the solution
recovers the shape of a planar front in the large time limit is equivalent to whether a certain Liouville type
property is satisfied. We study the validity of this Liouville type property and we extend some
previous results of Hamel, Valdinoci and the authors. Lastly, we show that the entire solution is a generalised transition front.
\end{abstract}

\subjclass[2010]{35J60}


\maketitle

\tableofcontents

\section{Introduction}

Since the seminal works of Fisher \cite{Fisher1937}, Kolmogorov, Petrovskii and Piskunov \cite{Kolmogorov1937} on the propagation of advantageous genes in an homogeneous population, 
reaction-diffusion models have been extensively used to study the complex dynamics arising in nature \cite{Berestycki2017,Cantrell2004,Kanel1961,Kawasaki1997,Murray1993,Okubo2002}.
One of the main success of this type of modelling is the notion of ``travelling waves" that has emerged from it, which has provided a rich and flexible theoretical framework to analyse the underlying dynamics of the problem considered.\par

In the past two decades, reaction-diffusion models involving more realistic descriptions of spatial interactions as well as of the environment have been considered to analyse a wide range of problems from ecology \cite{Kawasaki1997,Kinezaki2003, Okubo2002,Shigesada1986}, combustion theory \cite{Kagan2000,Kagan1998,Sivashinsky2002} or phase transition in heterogeneous medium \cite{Coville2010a,Dirr2006}. This has considerably increased our understanding of the impact of the time and spatial heterogeneities of the environment on propagation phenomena. In turn, this profusion of work has led to the introduction of new notions of travelling waves generalising the traditional notion of planar wave and reflecting the essential properties of the environment \cite{Berestycki2002,Berestycki2012a,Berestycki2009d,Berestycki2005b,Berestycki1992,Bonnet1999,Kinezaki2003,Matano2003,Nadin2009,Nolen2009,Shen2004,Shigesada1986,Xin2000}.
In particular, notions such as pulsating fronts, random fronts or conical (or curved) fronts have been introduced to analyse propagation phenomena occurring in time and/or space periodic environments \cite{Berestycki2002,Berestycki2005b,Matano2006,Nadin2009,Xin2000}, or random ergodic environments \cite{Matano2003,Nolen2009,Shen2004}, or to study propagation phenomena with some geometrical constraints  \cite{Berestycki2016b,Bonnet1999,Ninomiya2005}. It turns out that almost all of these new notions fall into the definition of generalised transition wave recently introduced by Berestycki and Hamel in \cite{Berestycki2007b}, see also \cite{Berestycki2012a,Hamel2016}.\par
It is worth mentioning that the complexity of propagation phenomena may come from either heterogeneous interactions (heterogeneous diffusion and reaction) or the geometry of the domain where the equation is defined (cylinder with rough boundary or domain with a complex structure). In the latter case, new phenomena are observed such as the pinning  of fronts.
We point the interested reader to \cite{Berestycki2017,Nadin2018,Xin2000} and references therein for a more thorough description of the state of the art on propagation phenomena in the context of reaction-diffusion equations.\par

Propagation phenomena can also be observed using other types of models, in particular nonlocal models that take into account long range dispersal phenomena (which are commonly observed in ecology, see \cite{Bartumeus2007,Bartumeus2009,Cain,Chapman}). For example, planar fronts \cite{Alfaro2017,Bates1997,Carr2004,Chen1997,Coville2007d,Coville2008a,Coville2007a}, pulsating fronts \cite{Coville2013,Fang2015,Rawal2015,Shen2012a} and generalised transition waves \cite{Berestycki2017a,Lim2016,Shen2017,Shen2017a,Shen2019} have been constructed for integro-differential models of the form
\begin{equation}\label{nonloc}
\partial_t u(t,x)=\mathcal{J}\ast u(t,x) -u(t,x)+f(t,x,u(t,x)) \,{\mbox{ for }} (t,x)\in \R\times\R^N,
\end{equation}
where  $f$ is a classical bistable or monostable nonlinearity, $\j$ is a nonnegative probability density function and $\ast$ is the standard convolution operator given by
$$ \mathcal{J}\ast u(x):=\int_{\R^N}\mathcal{J}(x-y)\hspace{0.05em}u(y)\hspace{0.1em}\mathrm{d}y. $$
However, to the best of our knowledge, there are no results dealing with the impact of the geometry on the large time dynamics of such nonlocal semi-linear equation, and only linear versions of \eqref{nonloc} seem to have been considered, see \cite{cortazar2012,cortazar2016}.\par

In the spirit of the pioneer work of Berestycki, Hamel and Matano \cite{Berestycki2009d}, we analyse here the effect of the geometry of the domain on the propagation phenomena for an adapted version of \eqref{nonloc} on exterior domains.
Precisely, given a compact set $K\subset\R^N$ with nonempty interior such that the exterior domain $\Omega:=\R^N\setminus K$ is connected, we are interested
in the properties and large time behaviour of the entire solutions $u$ to the following nonlocal semi-linear parabolic problem
\begin{align}
\partial_tu=Lu+f(u) \,{\mbox{ a.e. in }} \R\times\overline{\Omega}, \tag{P}\label{P}
\end{align}
where $L$ is the nonlocal diffusion operator given by
\begin{align*}
Lu(x):=\int_{\R^N\setminus K}J(\delta(x,y))\hspace{0.05em}(u(y)-u(x))\hspace{0.1em}\mathrm{d}y.
\end{align*}
Here, $J$ is a nonnegative kernel, $f$ is a ``bistable" nonlinearity and $\delta:\overline{\Omega}\times\overline{\Omega}\to[0,\infty)$ is a distance on $\overline{\Omega}$ that behaves locally like the Euclidean distance (precise assumptions on $J$, $f$ and $\delta$ will be given later on, see Subsection~\ref{SE:ASSUMP}).\par

The problem \eqref{P} can be seen as a nonlocal version of the reaction-diffusion problem studied by Berestycki, Hamel and Matano in \cite{Berestycki2009d}, namely
\begin{align}
\label{eqlocale}
\left\{\begin{array}{rl}
\partial_tu=\Delta u+f(u) & \text{in }\R\times\Omega,\vspace{3pt}\\
\nabla u\cdot\nu=0 & \text{on }\R\times\partial\Omega.
\end{array}
\right.
\end{align}
There, they show that for any unit vector $e\in \mathbb{S}^{N-1}$ (where $\mathbb{S}^{N-1}$ denotes the unit sphere of $\R^N$), there exists a generalised transition wave in the direction $e$ solution to \eqref{eqlocale}, i.e. for any  $e\in \mathbb{S}^{N-1}$, there exists an entire solution, $u(t,x)$, to \eqref{eqlocale} defined for all $t\in \R$ and all $x\in \overline{\Omega}$ that satisfies $0<u(t,x)<1$ for all $(t,x)\in\R\times\overline{\Omega}$ and such that
$$ |u(t,x)-\phi(x\cdot e+ct)|\underset{t\to-\infty}{\longrightarrow}0 \, {\mbox{ uniformly in }}x\in\overline{\Omega}, $$
where $(\phi,c)$ is a planar travelling wave of speed $c>0$. That is, $(\phi,c)$ is the unique (up to shift) increasing solution to
\begin{align*}
\left\{
\begin{array}{l}
c\,\phi'=\phi''+f(\phi)\text{ in }\R,\vspace{3pt}\\
\displaystyle\lim_{z\to +\infty}\phi(z)=1,\, \lim_{z\to -\infty}\phi(z)=0.
\end{array}
\right.
\end{align*}
Moreover, they prove that there exists a classical solution, $u_\infty$, to
\begin{align}
\left\{
\begin{array}{rl}
\Delta u_\infty+f(u_\infty)=0 & \text{in }\overline{\Omega},\vspace{3pt}\\
\nabla u_\infty\cdot\nu=0 & \text{on }\partial \Omega, \vspace{3pt}\\
0< u_\infty\leq 1 & \text{in }\overline{\Omega}, \vspace{3pt}\\
u_\infty(x)\to1 & \text{as }|x|\to+\infty,
\end{array} \label{EQ:BHM0}
\right.
\end{align}
which they show corresponds to the large time limit of $u(t,x)$ in the sense that
$$ |u(t,x)-u_\infty(x)\hspace{0.1em}\phi(x\cdot e+ct)|\underset{t\to\infty}{\longrightarrow}0 \, {\mbox{ uniformly in }}x\in\overline{\Omega}. $$
In addition, they were able to classify the solutions $u_\infty$ to \eqref{EQ:BHM0} with respect to the geometry of $K$.
Precisely, they proved that if the obstacle $K$ is either starshaped or directionally convex (see \cite[Definition 1.2]{Berestycki2009d}), then the solutions $u_\infty$ to \eqref{EQ:BHM0} are actually identically equal to $1$ in the whole set $\overline{\Omega}$.
This remarkable rigidity property was further extended to more complex obstacles by Bouhours in \cite{Bouhours2015} who showed a sort of ``stability" of this result with respect to small regular perturbations of the obstacle.
Yet, this phenomenon does not hold in general. Indeed, Berestycki \emph{et al.} \cite{Berestycki2009d} proved that when the domain is no longer starshaped nor directionally convex but merely simply connected (see \cite{Berestycki2009d}), then \eqref{EQ:BHM0} admits nontrivial solutions with $0<u_\infty<1$ in $\overline{\Omega}$, thus implying that the disturbance caused by the obstacle may remain forever depending on the geometry of $K$.\par

Our main objective in this article is to construct such an entire solution for the problem \eqref{P} and to study its main properties with respect to the geometry of the domain.

 \subsection{Biological motivation}

Before stating our main results, let us first discuss the relevance of this type of model.  To this end, let us go back to the very description of population dispersal.
For it, let us denote by $u(t,x)$ the density of the population at time $t$ and location $x$.
Moreover, let us discretize uniformly the domain $\Omega$ into small cubes of volume $|\Delta x_i|$ centered at points $x_i\in\Omega$, and the time into discrete time steps $\Delta t$. Then, following Huston \emph{et al.} \cite{Hutson2003}, we can describe the evolution of the population in terms of the exchange of individuals between sites. Namely, for a site $x_i$, the total number of individuals $N(t,x_i)$ will change during the time step $\Delta t$ according to
\begin{align*}
\frac{N(t+\Delta t,x_i)-N(t,x_i)}{\Delta t}= \frac{N_{i \leftarrow} -N_{i\to }}{\Delta t},
\end{align*}
where $N_{i \leftarrow}$ and $N_{i\to }$ denote the total number of individuals reaching and leaving the site $x_i$, respectively.
Since $N(t,x_i)= u(t,x_i)|\Delta x_i|$, this can be rewritten
$$ \frac{u(t+\Delta t,x_i)-u(t,x_i)}{\Delta t}|\Delta x_i|=\gamma\hspace{0.1em}|\Delta x_i|\sum_{j=-\infty}^{+\infty}\big(\mathcal{J}(x_i,x_j)\hspace{0.05em}u(t,x_j)- \mathcal{J}(x_j,x_i)\hspace{0.05em}u(t,x_i)\big)|\Delta x_j|, $$
where $\mathcal{J}(x_i,x_j)$ denotes the rate of transfer of individuals from the site $x_i$ to the site $x_j$ and $\gamma$ denotes a dispersal rate (or diffusion coefficient).\par

In ecology, understanding the structure of the rate of transfer $\mathcal{J}(x_i,x_j)$ is of prime interest as it is known to condition some important feature of the dispersal of the individuals \cite{Clobert2012,Langlois2001,Nathan2012,Robledo-Arnuncio2014}. For example, this rate can reflect some constraints of the environment on the capacity of movement of the individuals \cite{Clobert2012,Cortazar2007,Etherington2016,Graves2014,Schurr2005} and/or incorporate important features that are biologically/ecologically relevant such as  a dispersal budget \cite{Berestycki2016a,Hutson2003} or a more intrinsic description of the landscape such as its connectivity, fragmentation, anisotropy or other particular geometrical structure   \cite{Adriaensen2003,Clobert2012,Etherington2016,Fagan2002,Ricketts2001,Taylor1993,Tischendorf2000}.\par

Here, we are particularly interested in the impact that the geometry of $\Omega$ can have on this rate. A natural assumption is to consider that $\mathcal{J}(x_i,x_j)$ depends on the ``effective distance'' between $x_i$ and $x_j$.
The perception of the environment being a characteristic trait of a given species (as observed in \cite{Frantz2012}), this notion of ``effective distance'' will then change depending on the species considered.\par

Let us consider, for instance, an habitat consisting of a uniform field with, in the middle of it, a circular pond, e.g. $\O:=\R^2\setminus \overline{B_1}$ where $B_1$ denotes the unit disk of $\R^2$. One can then imagine that, for some species having a high dispersal capacity (as, for example, bees \cite{Pokorny2015}), the pond will not be considered as an obstacle in the sense that it does not affect their displacement (since the individuals can easily ``jump'' from one side of the pond to another). On the contrary, for other species, such as many land animals, this pond will \emph{actually} be seen as a physical dispersal barrier. Whence, to go from one side of the pond to another they will have to circumvent it. So, in this case, the metric considered to evaluate this ``effective distance'' has to reflect such type of behaviour (see e.g. \cite{Schurr2005} for an illustrative example).\par

\begin{figure}
\centering
\includegraphics[scale=0.25]{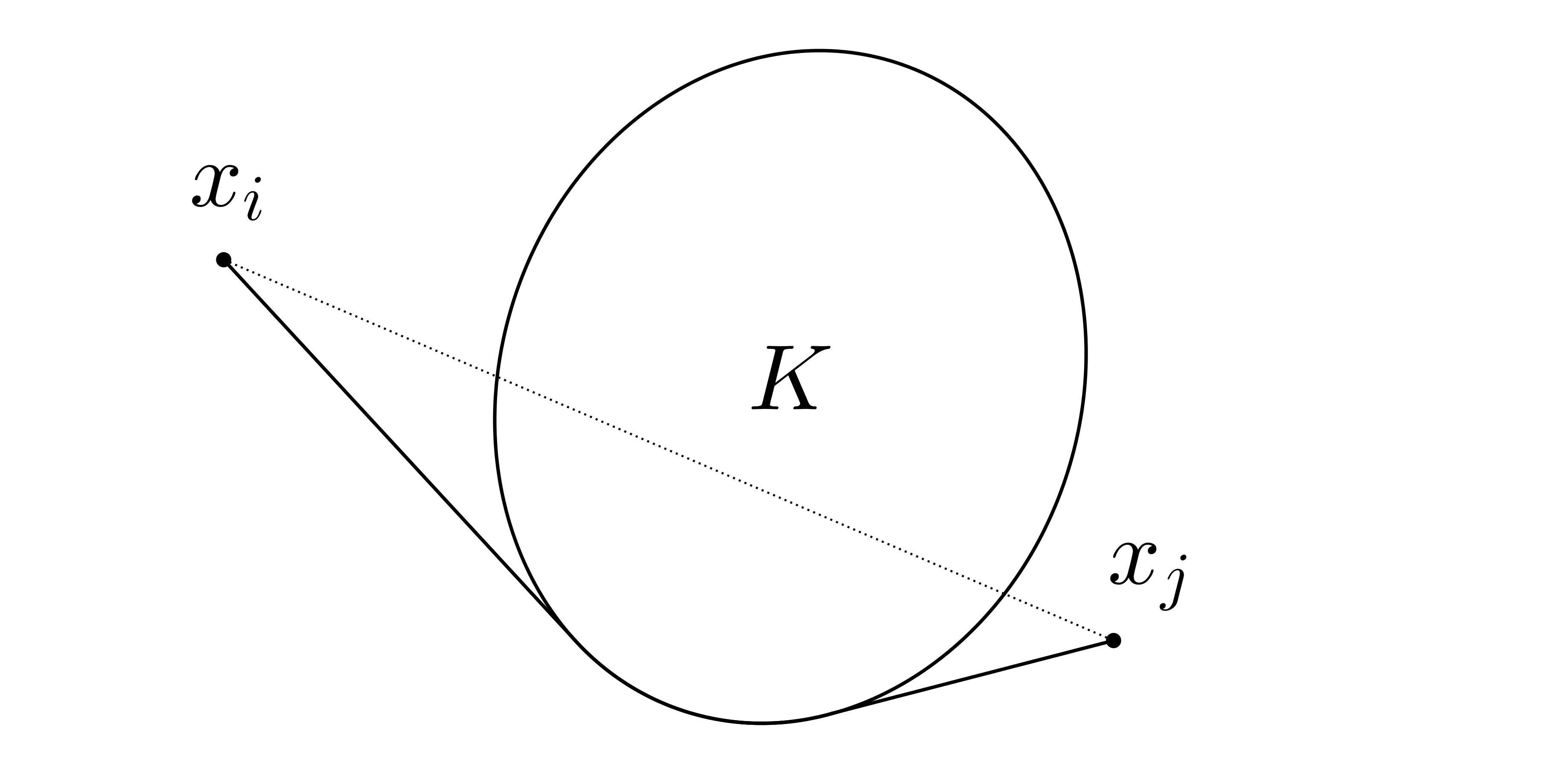}
\caption{The geodesic distance (continuous line)}  and the Euclidean distance (dashed line) between $x_i,x_j\in\R^N\setminus K$. \label{fig0}
\end{figure}

A way to understand the impact of the landscape on the movement of the individuals is to use a ``least cost path" modelling \cite{Adriaensen2003,Etherington2016,Sutherland2015}. The metric related to this geographic concept can then serve as a prototype for the metric used to evaluate $\j(x_i,x_j)$. The idea behind the ``least cost path" concept is to assign to each path taken to join one site to another some costs related to a predetermined constraint and try to minimize the costs. This notion can then be related to the notion of geodesic path on a smooth surface where the costs then reflect some geometrical aspect of the landscape.
Following this idea, it is then natural to consider the ``effective distance'' as some geodesic distance $\delta$ reflecting how the geometry of the landscape is perceived by the species considered and to take $\j(x_i,x_j)=J(\delta(x_i,x_j))$, where $J$ is a function encoding the probability to make a jump of length $\delta(x_i,x_j)$. In the above example, the appropriate distance will then be either the standard Euclidean distance (i.e. $\delta(x_i,x_j)=|x_i-x_j|$) or the geodesic distance defined in the perforated domain $\Omega$.\par

Since diffusion is classically accompanied by demographic variations (which we may suppose to be described by a nonlinear function $f$ of the density of population), by letting $|\Delta x_j|\to0^+$ and $\Delta t\to0^+$, we then formally get
$$\partial_t u(t,x)=\gamma\left(\int_{\Omega}J(\delta(x,y))\hspace{0.05em}u(t,y)\hspace{0.1em}\mathrm{d}y - u(t,x)\int_{\Omega}J(\delta(y,x))\hspace{0.1em}\mathrm{d}y\right)+f(u(t,x)), $$
which thereby yields equation \eqref{P}, up to an immaterial constant $\gamma$.

It is worth mentioning that, although the description of the rate of transfer using a geodesic distance is well-known in the ecology community \cite{Etherington2016,Sutherland2015}, to our knowledge, this the first time that such concept has been formalised mathematically in the framework of nonlocal reaction-dispersal equations to describe the evolution of a population living in a domain and having a long distance dispersal strategy.

The mathematical framework we propose goes far beyond the situation we analyse here. Indeed, the model  \eqref{P} is quite natural and  well-posed as soon as  a geodesic-type distance, which we will refer to as ``quasi-Euclidean" (see Definition~\ref{quasieuclid}), can be defined on the domain $\O$ considered, allowing thus to handle domains with very complex geometrical structure (such graph trees, which are particularly pertinent in conservation biology for the help they can provide in the understanding of the impact of blue and green belts in urban landscapes \cite{deOliveira2014,Xiu2016}). As we will see, our setting also allows to model an extremely wide class of biologically relevant ``effective distances" (see Remark~\ref{RE:QUASI}).

\subsection{Notations and definitions}\label{SE:NOTATIONS}

Before we set our main assumptions, we need to introduce some necessary definitions.\par

We begin with the metric framework on which we will work.
\begin{de}
Let $x,y\in\R^N$. We call a \emph{path connecting} $x$ \emph{to} $y$ any continuous piecewise $C^1$ function $\gamma:[0,1]\to\R^N$ with $\gamma(0)=x$ and $\gamma(1)=y$ and we denote by $\mathrm{length}(\gamma)$ its length. The set of all such paths is denoted by $H(x,y)$.
\end{de}
\begin{de}\label{quasieuclid}
Let $E\subset\R^N$. A \emph{quasi-Euclidean distance} on $E$ is a distance $\delta$ on $E$ such that $\delta(x,y)=|x-y|$ if $[x,y]\subset E$ and $\delta(x,y)\geq|x-y|$ for all $x,y\in E$. We denote by $\mathcal{Q}(E)$ the set of all quasi-Euclidean distances on $E$.
\end{de}
\begin{example}\label{EX:QUASI}
The geodesic distance $d_{E}$ on a set $E$, defined by
\begin{align}
d_{E}(x,y):=
\left\{
\begin{array}{cl}
\displaystyle\inf_{\gamma\in H(x,y) \atop \gamma\subset \overline{E}}\,\mathrm{length}(\gamma) & \text{ if } x,y \text{ belong to the same connected component}, \\
+\infty & \text{ otherwise,}
\end{array} \nonumber
\right.
\end{align}
is a nontrivial quasi-Euclidean distance. If $d_F$ is the geodesic distance on a set $F\supset E$, then its restriction $\restriction{d_F}{E}$ to $E\times E$ is another nontrivial example of quasi-Euclidean distance on $E$. Moreover, since $\mathcal{Q}(E)$ is a convex set, one may obtain other examples of such distances by convex combination of the previous examples and/or the Euclidean distance.
\end{example}
\begin{remark}
If $E$ is convex, then the Euclidean distance is the only quasi-Euclidean distance.
\end{remark}
\begin{remark}\label{RE:QUASI}
Roughly speaking, a quasi-Euclidean distance can be interpreted as the length of a path connecting two points and which behaves locally like the Euclidean distance. In fact, the condition $\delta(x,y)\geq|x-y|$ can be equivalently rephrased by saying that, for any two points $x,y\in E$, there exists a path $\gamma\in H(x,y)$ (which is \emph{not} compelled to stay in $E$) connecting $x$ to $y$ and such that $\delta(x,y)=\mathrm{length}(\gamma)$. Biologically speaking, it provides a natural and flexible tool to model the ``effective distance" between two locations. It can account for a wide range of situations, for example it can model a population whose individuals can jump through some obstacles (say small ones) and not through others (say large ones), or through portions of an obstacle, as well as all the intermediary situations.
\end{remark}
\begin{de}\label{DE:CovProp}
Let $E\subset\R^N$ be a connected set and let $\delta\in\mathcal{Q}(\overline{E})$. Let $J:[0,\infty)\to[0,\infty)$ be a measurable function with $|\mathrm{supp}(J)|>0$. For any $x\in\overline{E}$, we define $\Pi_0(J,x):=\{x\}$ and
$$ \Pi_{j+1}(J,x):=\bigcup_{z\in\Pi_j(J,x)}\!\!\mathrm{supp}\left(J(\delta(\cdot,z))\right) \,{\mbox{ for any }}j\geq0. $$
We say that the metric space $(E,\delta)$ has the $J$-\emph{covering property} if
$$ \overline{E}=\bigcup_{j\geq0}\,\Pi_j(J,x) \,{\mbox{ for every }}x\in\overline{E}. $$
\end{de}
\begin{remark}\label{RE:COV:PROP}
If $E$ is a connected set and if $\delta$ is the Euclidean distance, then the above property is automatically satisfied (see Proposition~\ref{CovEucl} in the Appendix). Moreover, if $E=\R^N\setminus K$ for some compact convex set $K\subset\R^N$ with $C^2$ boundary and if $\mathrm{supp}(J)$ contains a nonempty open set (e.g. if $J$ is continuous), then $(E,\delta)$ has the $J$-covering property for any $\delta\in\mathcal{Q}(\overline{E})$ (see Proposition~\ref{COV:CVX} in the Appendix).
\end{remark}
Let us also list in this subsection a few notations and definitions used in the paper:
\vskip 0.2cm
\begin{tabular}{rl}
$|E|$ & is the Lebesgue measure of the measurable set $E$; \\
$\mathds{1}_E$ & is the characteristic function of the set $E$; \\
$\mathbb{S}^{N-1}$ & is the unit sphere of $\R^N$; \\
$B_R$ & is the open Euclidean ball of radius $R>0$ centered at the origin; \\
$B_R(x)$ & is the open Euclidean ball of radius $R>0$ centered at $x\in\R^N$; \\
$\mathcal{A}(R_1,R_2)$ & is the open annulus $B_{R_2}\setminus\overline{B_{R_1}}$; \\
$\mathcal{A}(x,R_1,R_2)$ & is the open annulus $x+\mathcal{A}(R_1,R_2)$; \\
$g\ast h$ & is the convolution of $g$ and $h$; \\
$\Delta_h^2$ & is the operator given by $\Delta_h^2f(x)=f(x+h)-2f(x)+f(x-h)$; \\
$\lfloor x\rfloor$ & is the integral part of $x\in\R$, i.e. $\lfloor x\rfloor=\sup\{k\in\Z;k\leq x\}$.
\end{tabular} \\

Given $E\subset\R^N$ and $p\in[1,\infty]$, we denote by $L^p(E)$ the Lebesgue space of (equivalence classes of) measurable functions $g$ for which the $p$-th power of the absolute value is Lebesgue integrable when $p<\infty$ (resp. essentially bounded when $p=\infty$). When the context is clear, we will write $\|g\|_p$ instead of $\|g\|_{L^p(E)}$. The set $L^\infty(E)\cap C(E)$ of bounded continuous functions on $E$ will be denoted by $C_b(E)$. Given $\alpha\in(0,1)$ and $p\in[1,\infty]$, $B_{p,\infty}^\alpha(\R^N)$ stands for the Besov-Nikol'skii space consisting in all measurable functions $g\in L^p(\R^N)$ such that
$$ [g]_{B_{p,\infty}^\alpha(\R^N)}:=\sup_{h\ne0}\frac{\|g(\cdot+h)-g\|_{L^p(\R^N)}}{|h|^\alpha}<\infty. $$
Note that, when $p=\infty$, the space $B_{\infty,\infty}^\alpha(\R^N)$ coincides with the classical H\"older space~$C^{0,\alpha}(\R^N)$.
For a set $E\subset\R^N$ and $g:E\to\R$, we set
$$[g]_{C^{0,\alpha}(E)}:=\sup_{x,y\in E,\,x\neq y}\frac{|g(x)-g(y)|}{|x-y|^{\alpha}}.$$
Moreover, given $(k,\alpha)\in\N\times(0,1)$, $(E,F)\subset \R\times\R^N$ and a function $g:E\times F\to\R$, we say that $g\in C^k(E,C^{0,\alpha}(F))$ if, for all $(t,x)\in E\times F$, it holds that
$$ g(\cdot,x)\in C^k(E)\ \text{ and }\ g(t,\cdot)\in C^{0,\alpha}(F). $$

For our purposes, we need to introduce a new function space, closely related to $B_{p,\infty}^\alpha(\R^N)$.
\begin{de}
Let $E\subset\R^N$ be a measurable set and let $\delta$ be a distance on $E$. Let $\alpha\in(0,1)$ and $p\in[1,\infty)$. We call $\mathbb{B}_{p,\infty}^\alpha(E;\delta)$ the space of functions $g:\R_+\to\R$ such that $g_{\text{rad}}\in L^p(\R^N)$ where $g_{\text{rad}}(x):=g(|x|)$ and such that
$$ [g]_{\mathbb{B}_{p,\infty}^\alpha(E;\delta)}:=\sup_{x_1,x_2\in E,\,x_1\neq x_2}\frac{\left\|g(\delta(x_1,\cdot))-g(\delta(x_2,\cdot))\right\|_{L^p(E)}}{|x_1-x_2|^\alpha}<\infty. $$
\end{de}
\begin{remark}\label{RE:BESOV}
If $E=\Omega=\R^N\setminus K$ for some compact set $K\subset\R^N$, if $\delta\in\mathcal{Q}(\overline{\Omega})$ and if $g\in\mathbb{B}_{p,\infty}^\alpha(\Omega;\delta)$ has compact support, then $g_{\mathrm{rad}}\in B_{p,\infty}^\alpha(\R^N)$. Moreover, if $\delta$ is the Euclidean distance, then it also holds that
$$ RB_{p,\infty}^\alpha(\R^N):=\big\{g~\text{s.t.}~g_{\text{rad}}\in B_{p,\infty}^\alpha(\R^N)\big\}\subset \mathbb{B}_{p,\infty}^\alpha(\Omega;\delta). $$
However, in general, $\mathbb{B}_{p,\infty}^\alpha(\Omega;\delta)$ and $RB_{p,\infty}^\alpha(\R^N)$ are distinct function spaces.
\end{remark}

\subsection{Assumptions}\label{SE:ASSUMP} Let us now specify the assumptions made all along this paper. Throughout the paper we will always assume that
\begin{align}
\begin{array}{l}
K\subset\R^N \text{ is a compact set, that }\Omega:=\R^N\setminus K \text{ is connected and that }\delta\in\mathcal{Q}(\overline{\Omega}).
\end{array}
\label{C1}
\end{align}
As already mentioned above, we will suppose that $f$ is of ``bistable" type. More precisely, we will assume that $f:[0,1]\to\R$ is such that
\begin{align}
\left\{
\begin{array}{l}
\exists\,\theta\in(0,1),\  f(0)=f(\theta)=f(1)=0,\  f<0\text{ in }(0,\theta),\  f>0\text{ in }(\theta,1), \vspace{3pt}\\
f\in C^{1,1}([0,1]),\  f'(0)<0, \ f'(\theta)>0\  \text{and }f'(1)<0.
\end{array}
\right. \label{C2}
\end{align}
Also, we suppose that $J:[0,\infty)\to[0,\infty)$ is a \emph{compactly supported} measurable function with $|\mathrm{supp}(J)|>0$ such that
\begin{align}
\left\{
\begin{array}{l}
(\Omega,\delta)\text{ has the }J\text{-covering property}, \vspace{1pt}\\
\displaystyle\int_{\R^N}J_{\text{rad}}(z)\hspace{0.1em}\mathrm{d}z=1 \text{ where } J_{\text{rad}}(z):=J(|z|), \vspace{1pt}\\
\forall\, x_1\in\overline{\Omega},\ \displaystyle\lim_{x_2\to x_1}\|J(\delta(x_1,\cdot))-J(\delta(x_2,\cdot))\|_{L^1(\Omega)}=0, \vspace{1pt}\\
\mathcal{J}^\delta\in L^\infty(\Omega) \text{ where } \mathcal{J}^\delta(x):=\displaystyle\int_{\Omega}J(\delta(x,z))\hspace{0.1em}\mathrm{d}z.
\end{array}
\right. \label{C3}
\end{align}
Biologically speaking, the first assumption in \eqref{C3} means that if $\delta$ reflects how the individuals of a given species move in the environment given by $\Omega$ and if $J(\delta(x,y))$ represents their dispersal rate,
then the individuals can reach any point of $\Omega$ in a finite number of ``jumps" regardless of their initial position.
Mathematically speaking, it ensures that the strong maximum principle holds (as will be made clear throughout the paper).
As for the last two assumptions, they are essentially meant to ensure that $\mathcal{J}^\delta\in C_b(\overline{\Omega})$. They are satisfied if, for instance, either $\delta$ is the Euclidean distance or $J$ is non-increasing and $J\in \mathbb{B}_{1,\infty}^\alpha(\Omega;\delta)$.\par

Lastly, we require the datum $(J,f)$ to be such that there exist an increasing function $\phi\in C(\R)$ and a speed $c>0$ satisfying
\begin{align}\left\{
\begin{array}{c}
c\,\phi'=J_1\ast \phi-\phi+f(\phi) \text{ in }\R, \vspace{3pt}\\
\displaystyle\lim_{z\to +\infty}\phi(z)=1,\, \lim_{z\to -\infty}\phi(z)=0.
\end{array}
\right. \label{C4}
\end{align}
where $J_1$ is the nonnegative even kernel given by:
\begin{align}
J_1(x):=\int_{\R^{N-1}} J_{\text{rad}}(x,y')\hspace{0.1em}\mathrm{d} y'. \label{J1}
\end{align}
\begin{remark}\label{rem:boot}
Notice that \eqref{C4} implies that $0<\phi<1$ and that $\phi\in C^{0,1}(\R)$. Actually, the fact that $f\in C^{1,1}([0,1])$ (as imposed by assumption \eqref{C2}) guarantees that $\phi\in C^{2}(\R)$ (as can be seen by a classical bootstrap argument).
\end{remark}
\begin{remark}
Although this is well-known (see e.g. \cite{Bates1997,Chen1997,Coville2007d,Yagisita2009}), it is worth mentioning that \eqref{C4} is not an empty assumption. For example, it is satisfied if, in addition to \eqref{C2} and \eqref{C3}, the following assumptions are made:
\begin{align}
J_{\text{rad}}\in W^{1,1}(\R^N), \,\, \, \max_{[0,1]}\hspace{0.1em}f'<1\  \text{ and }   \int_0^1f(s)\hspace{0.1em}\mathrm{d}s>0. \label{C5}
\end{align}
See also \cite[Section 2.4]{Brasseur2019} for additional comments on the matter.
\end{remark}


\section{Main results}

The results of Berestycki, Hamel and Matano for the classical problem \eqref{eqlocale} say that there exists an entire solution $u(t,x)$ that behaves like a planar wave as $t\to-\infty$ and as a planar wave multiplied by $u_\infty(x)$ as $t\to+\infty$, where $u_\infty$ solves \eqref{EQ:BHM0}. Moreover, they were able to classify the solutions to \eqref{EQ:BHM0} with respect to the geometry of $K$, providing us with a good insight on how  the latter influences the large time dynamics.\par

Our goal here is to obtain corresponding results for the nonlocal problem \eqref{P}.
In the first place, we will prove that there exists an entire solution to \eqref{P} with analogous properties as in the classical case.
Then, we will study more precisely how the geometry of $K$ affects its large time behaviour and we will prove
that this question is equivalent to investigating under which circumstances a certain Liouville type property holds.

\subsection{General existence results}
Our first main result deals with the existence and uniqueness of an entire (i.e. time-global) solution to problem \eqref{P}. 
\begin{theo}[Existence of an entire solution]\label{TH:EXIST:ENTIRE}
Assume \eqref{C1},~\eqref{C2},~\eqref{C3},~\eqref{C4}~and let $(\phi,c)$ be as in \eqref{C4}. Suppose that $J\in \mathbb{B}_{1,\infty}^\alpha(\Omega;\delta)$ for some $\alpha\in(0,1)$ and that
\begin{align}
\max_{[0,1]}f'<\inf_{\Omega}\mathcal{J}^\delta, 
\label{fJdelta-pos}
\end{align}
Then, there exists an entire solution $u\in C^2(\R,C^{0,\alpha}(\overline{\Omega}))$ to \eqref{P} satisfying $0<u<1$ and $\partial_tu>0$ in $\R\times\overline{\Omega}$. Moreover,
\begin{align}
|u(t,x)-\phi(x_1+ct)|\underset{t\to-\infty}{\longrightarrow}0 \, {\mbox{ uniformly in }}x\in\overline{\Omega}. \label{unique}
\end{align}
Furthermore, \eqref{unique} determines a unique bounded entire solution to \eqref{P}.
If, in addition, \eqref{C5} holds, then there exists a continuous solution, $u_\infty:\overline{\Omega}\to(0,1]$, to
\begin{align}
\left\{
\begin{array}{r l}
Lu_\infty+f(u_\infty)=0 & \text{in }\overline{\Omega}, \vspace{3pt}\\
u_\infty(x)\to1 & \text{as }|x|\to\infty,
\end{array} \tag{$P_\infty$}\label{Pinf}
\right.
\end{align}
such that
\begin{align}
|u(t,x)-u_\infty(x)\hspace{0.1em}\phi(x_1+ct)|\underset{t\to+\infty}{\longrightarrow} 0 \, {\mbox{ locally uniformly in }}x\in\overline{\Omega}. \label{asympt:large:time}
\end{align}
\end{theo}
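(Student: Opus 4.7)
The plan is to mimic the Berestycki--Hamel--Matano construction, adapting it to the nonlocal exterior setting. For each $n\in\N$, I would solve the Cauchy problem for \eqref{P} on $[-n,\infty)\times\overline{\Omega}$ with initial datum $u_n(-n,x)=\phi(x_1-cn)$. Well-posedness in $C([-n,T];L^\infty(\overline{\Omega}))$ follows from a Banach fixed-point argument based on $\mathcal{J}^\delta\in L^\infty(\Omega)$ and the Lipschitz continuity of $f$; the regularity $u_n\in C^2([-n,T];C^{0,\alpha}(\overline{\Omega}))$ then follows from the Besov assumption $J\in\mathbb{B}_{1,\infty}^\alpha(\Omega;\delta)$, since $L$ inherits $C^{0,\alpha}$ regularity from this hypothesis. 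The target is to squeeze every $u_n$ between two pairs of barriers uniform in $n$, extract a locally uniform limit $u:=\lim_{k\to\infty}u_{n_k}$ and derive the required properties from those of the barriers.

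The main technical step, and the one I expect to be the main obstacle, is the construction of sub- and super-solutions of Fife--McLeod type
$$v_n^{\pm}(t,x):=\phi\bigl(x_1+ct\pm\zeta(t+n)\bigr)\pm Q\,e^{-\lambda(t+n)},$$
where $\zeta$ is bounded and $\lambda\in(0,\inf_{\Omega}\mathcal{J}^\delta-\max_{[0,1]}f')$, this interval being nonempty precisely because of \eqref{fJdelta-pos}. Substituting $v_n^\pm$ into \eqref{P} and using that $\phi$ solves \eqref{C4} on $\R$, the residual splits into a geometric error
$$\mathcal{E}(x,t):=\int_{\R^N}\Bigl[J(\delta(x,y))\mathds{1}_{\Omega}(y)-J_{\mathrm{rad}}(x-y)\Bigr]\phi(y_1+ct)\,\mathrm{d}y+\bigl(1-\mathcal{J}^\delta(x)\bigr)\phi(x_1+ct),$$
which is concentrated near $K$ thanks to the compact support of $J$ and the fact that $\delta(x,y)=|x-y|$ whenever the segment joining $x$ and $y$ lies in $\overline{\Omega}$; and a linear dissipation term of order $Q\bigl(\inf_{\Omega}\mathcal{J}^\delta-\max_{[0,1]}f'-\lambda\bigr)e^{-\lambda(t+n)}$. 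The delicate point is to make the latter uniformly dominate the former, for a suitable choice of $\zeta$ and $Q$. This requires the monotonicity of $\phi$ together with the Besov-type regularity of $J$ to control $\mathcal{E}$, and is where the construction genuinely differs from the classical case.

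Given these barriers and the comparison principle for \eqref{P} (which follows from the $J$-covering property of \eqref{C3} and \eqref{fJdelta-pos}), $v_n^-\leq u_n\leq v_n^+$ on $[-n,\infty)\times\overline{\Omega}$. Uniform estimates allow extraction of a subsequence $u_{n_k}\to u$ locally uniformly; $u$ solves \eqref{P} by dominated convergence in the integral operator, and the bounds pass to the limit and yield \eqref{unique}. For monotonicity, I would compare $u(t+h,\cdot)$ with $u(t,\cdot)$ for $h>0$: both solve \eqref{P}, and as $t\to-\infty$ the difference is asymptotic to $\phi(\cdot+ch)-\phi(\cdot)>0$, so the comparison principle yields $u(t+h,\cdot)\geq u(t,\cdot)$; the strong maximum principle (again via the $J$-covering property) upgrades this to strict inequality, and passing to the limit $h\to 0^+$ gives $\partial_tu>0$. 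Uniqueness of the entire solution satisfying \eqref{unique} is then a standard sliding-and-comparison argument: any other bounded entire solution $v$ with the same $t\to-\infty$ asymptotics satisfies $v\leq u+\varepsilon$ for $t$ sufficiently negative, hence (by comparison) for all times, and the symmetric bound together with $\varepsilon\downarrow 0$ gives $v=u$.

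Finally, under \eqref{C5}, monotonicity and $u\leq 1$ yield the pointwise limit $u_\infty(x):=\lim_{t\to\infty}u(t,x)\in(0,1]$; positivity follows from $u_\infty\geq u(0,\cdot)>0$, while the uniform $C^{0,\alpha}$ estimates make the limit locally uniform and $u_\infty$ continuous. Passing to the limit in \eqref{P} by dominated convergence gives $Lu_\infty+f(u_\infty)=0$ in $\overline{\Omega}$. To establish $u_\infty(x)\to 1$ as $|x|\to\infty$, one uses that the sub-solution bound is asymptotically sharp for $x$ far from $K$ (where $\mathcal{E}$ is negligible), giving $u_\infty(x)\geq\phi(x_1+s)$ for every $s\in\R$ once $x$ lies outside a sufficiently large ball, hence $u_\infty(x)\geq 1$ there; the bound $u_\infty\leq 1$ closes the argument. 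The asymptotic \eqref{asympt:large:time} then follows from the decomposition
$$u(t,x)-u_\infty(x)\phi(x_1+ct)=\bigl(u(t,x)-u_\infty(x)\bigr)+u_\infty(x)\bigl(1-\phi(x_1+ct)\bigr),$$
by observing that the first bracket tends to $0$ locally uniformly by monotone convergence combined with local compactness, while $\phi(x_1+ct)\to 1$ for any fixed $x$ as $t\to\infty$.
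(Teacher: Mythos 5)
Your overall scaffolding (approximating Cauchy problems on $[-n,\infty)$, barriers, comparison, passage to the limit, uniqueness by sliding, and $u_\infty$ as the monotone $t\to\infty$ limit) is the right skeleton, and it mirrors the paper's architecture. The genuine gap is in the barrier construction, and it is a gap you cannot repair with the ansatz you propose.

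The barriers you write, $v_n^\pm(t,x)=\phi(x_1+ct\pm\zeta(t+n))\pm Qe^{-\lambda(t+n)}$ with $\zeta$ bounded and parametrized by $t+n$, fail on two counts. First, they cannot deliver \eqref{unique}. For a fixed time $t$, as $n\to\infty$ the shift $\zeta(t+n)\to\zeta(\infty)$ and the amplitude $Qe^{-\lambda(t+n)}\to 0$, so the sandwich passes to $\phi(x_1+ct-\zeta(\infty))\le u(t,x)\le\phi(x_1+ct+\zeta(\infty))$. The gap between these bounds is of order $\zeta(\infty)$, \emph{uniformly in $t$}, and does not shrink as $t\to-\infty$. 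You would conclude $|u(t,x)-\phi(x_1+ct)|\le 2\|\phi'\|_\infty\zeta(\infty)$ for all $t$, which is a constant and not the decay required by \eqref{unique}. The paper sidesteps this by using barriers $w^\pm$ whose shift $\xi(t)$ is a function of $t$ alone (not of $t+n$) and satisfies $\xi(t)\to 0$ as $t\to-\infty$; in particular the $n$-dependence of the barriers is removed, and the barriers themselves collapse onto $\phi(x_1+ct)$ as $t\to-\infty$. Second, the claim that $v_n^\pm$ serve on all of $[-n,\infty)$ fails because the geometric error $\mathcal E(x,t)$ near $K$ does not decay in the relevant regime while your dissipation term does. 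Take $x$ near $K$ (say $x_1\approx -R_K$) and $t\approx R_K/c$ so that $x_1+ct$ is in the transition zone of $\phi$; this is a \emph{fixed, $n$-independent} time, and there $\mathcal E(x,t)$ can be of order one. But your dissipation $Qe^{-\lambda(t+n)}$ at such fixed $t$ tends to $0$ as $n\to\infty$, and no choice of bounded $\zeta$ or constant $Q$ can compensate uniformly in $n$. Your proposed escape hatch — using the monotonicity of $\phi$ and Besov regularity of $J$ to control $\mathcal E$ — cannot recover an $n$-uniform dominance here because $\mathcal E$ is not small, just localized. The paper does not try to make a single perturbed-front barrier work through the encounter with $K$: instead it uses the two-front ansatz $w^+=\phi(x_1+M^+(t))+\phi(-x_1+M^+(t))$ (for $x_1\ge 0$), which is \emph{by construction} large on the side where $K$ lives, and its validity is proved only on a fixed interval $(-\infty,T_1]$ (using the convexity of $\phi$ near $-\infty$, Lemma~\ref{LE:CONVEXITE:PHI}); for $t>T_1$ the paper never returns to the barriers but instead orders the $u_n$ in $n$ (because $u_n(-n,\cdot)=w^-(-n,\cdot)$ are ordered, hence $u_{n-1}(-n+1,\cdot)\le u_n(-n+1,\cdot)$ and the comparison principle propagates $0\le u_{n-1}\le u_n\le 1$ forward). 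A subsequence argument does not give this monotone limit for free, nor does it trivially transfer the barrier bounds.

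A secondary point: your argument that $u_\infty(x)\to 1$ as $|x|\to\infty$ would, as written, prove the stronger (and generally false) claim that $u_\infty\equiv 1$ outside a fixed ball. The conclusion ``the sub-solution bound is asymptotically sharp for $x$ far from $K$, giving $u_\infty(x)\ge\phi(x_1+s)$ for every $s$'' cannot hold uniformly in $s$ on a fixed complement of a ball; it only holds after letting $|x|\to\infty$. The paper establishes the limit $u_\infty(x)\to 1$ through a sweeping-type argument (Lemma~\ref{LE:sup-lim}, adapted from \cite{Brasseur2019}), starting from $\sup_{\overline\Omega}u_\infty=1$ (which it derives from $\lim_{x_1\to\infty}u(t,x)=1$, Proposition~\ref{PROP:LIMITE:X1}) rather than from the subsolution directly.
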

\begin{figure}[!ht]
\centering
\subfloat[$t=50$]{\includegraphics[width=4cm,height=2.7cm]{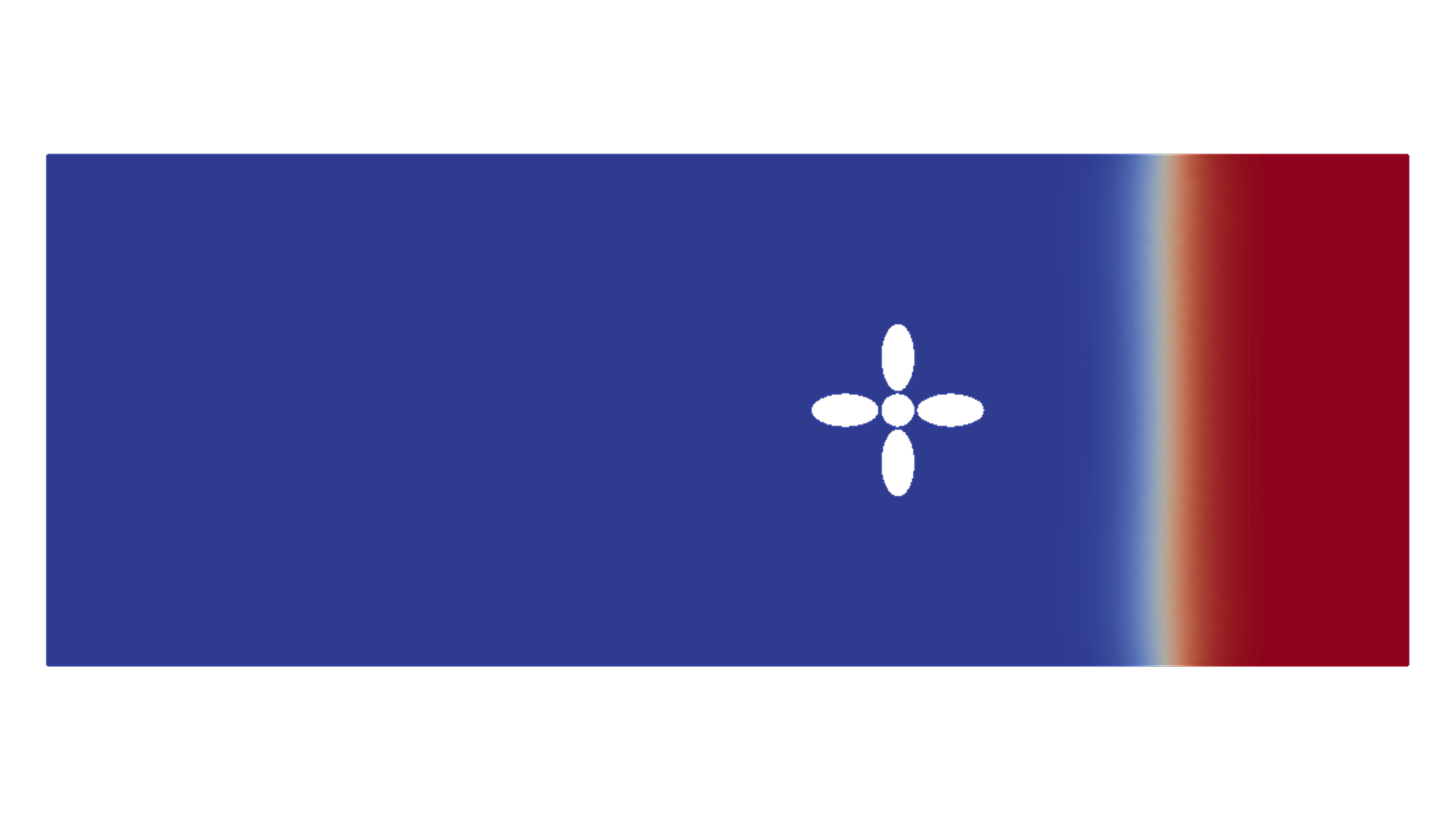}}
\subfloat[$t=100$]{\includegraphics[width=4cm,height=2.7cm]{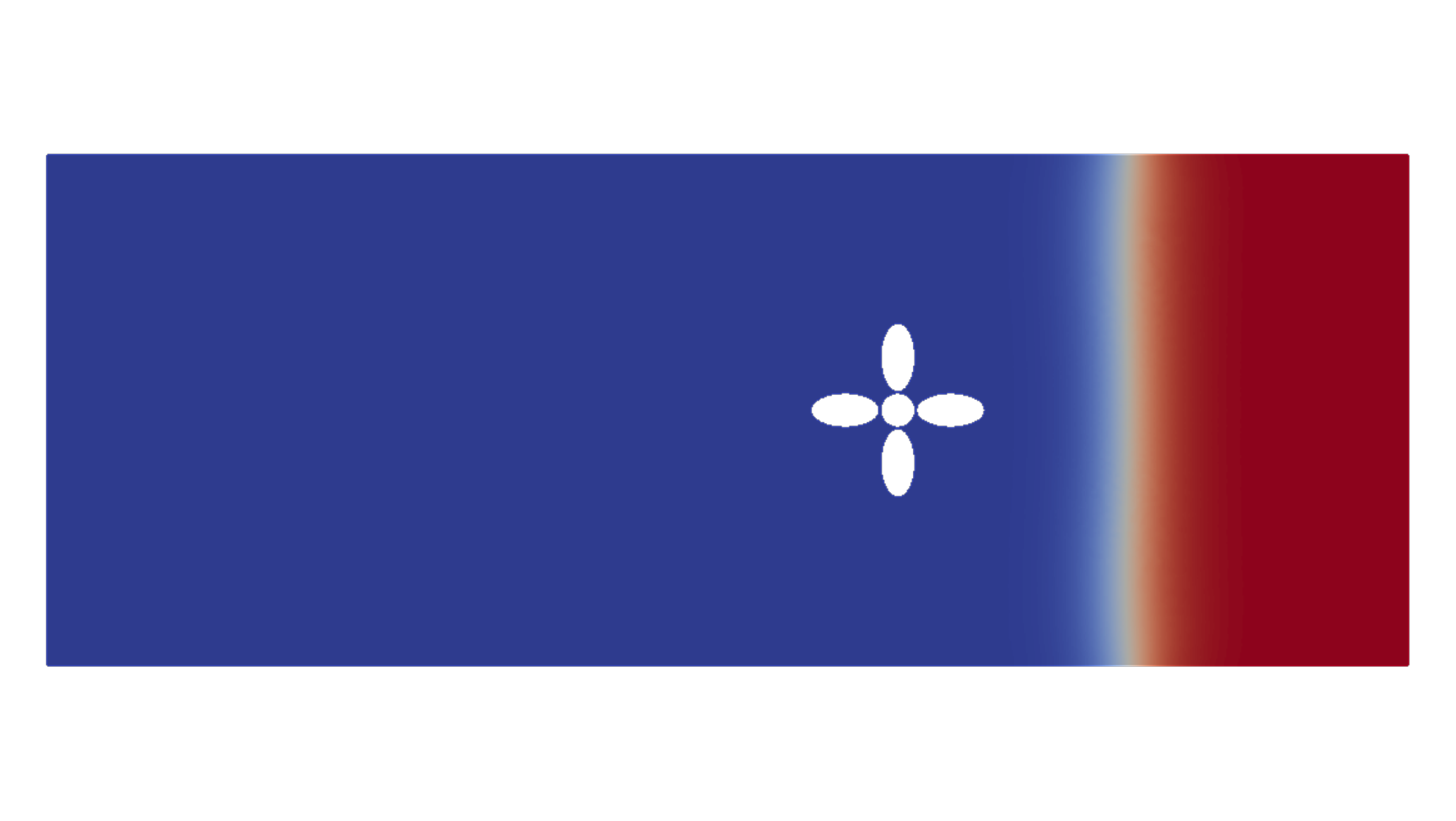}}
\subfloat[$t=200$]{\includegraphics[width=4cm,height=2.7cm]{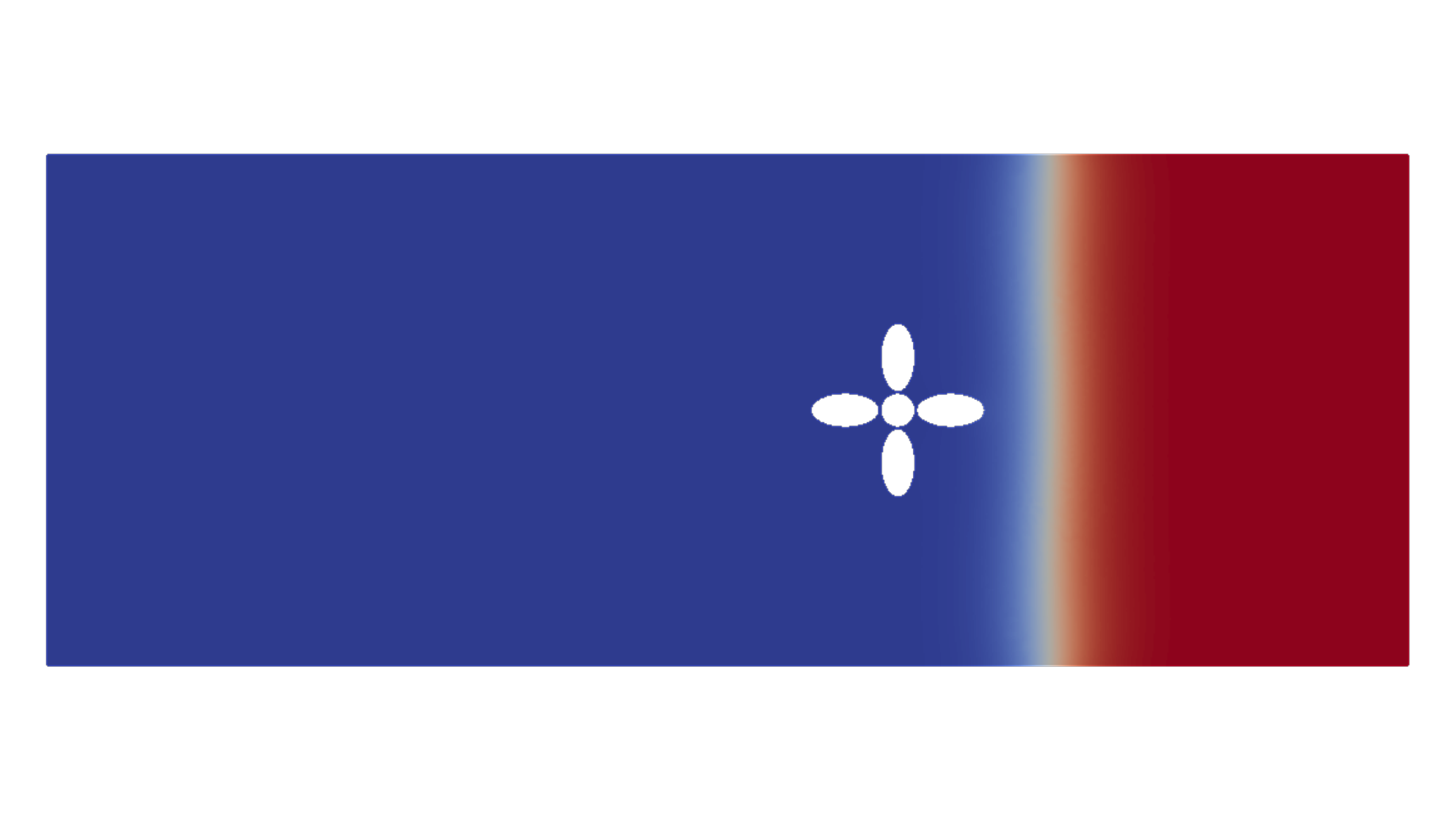}}
\subfloat[$t=300$]{\includegraphics[width=4cm,height=2.7cm]{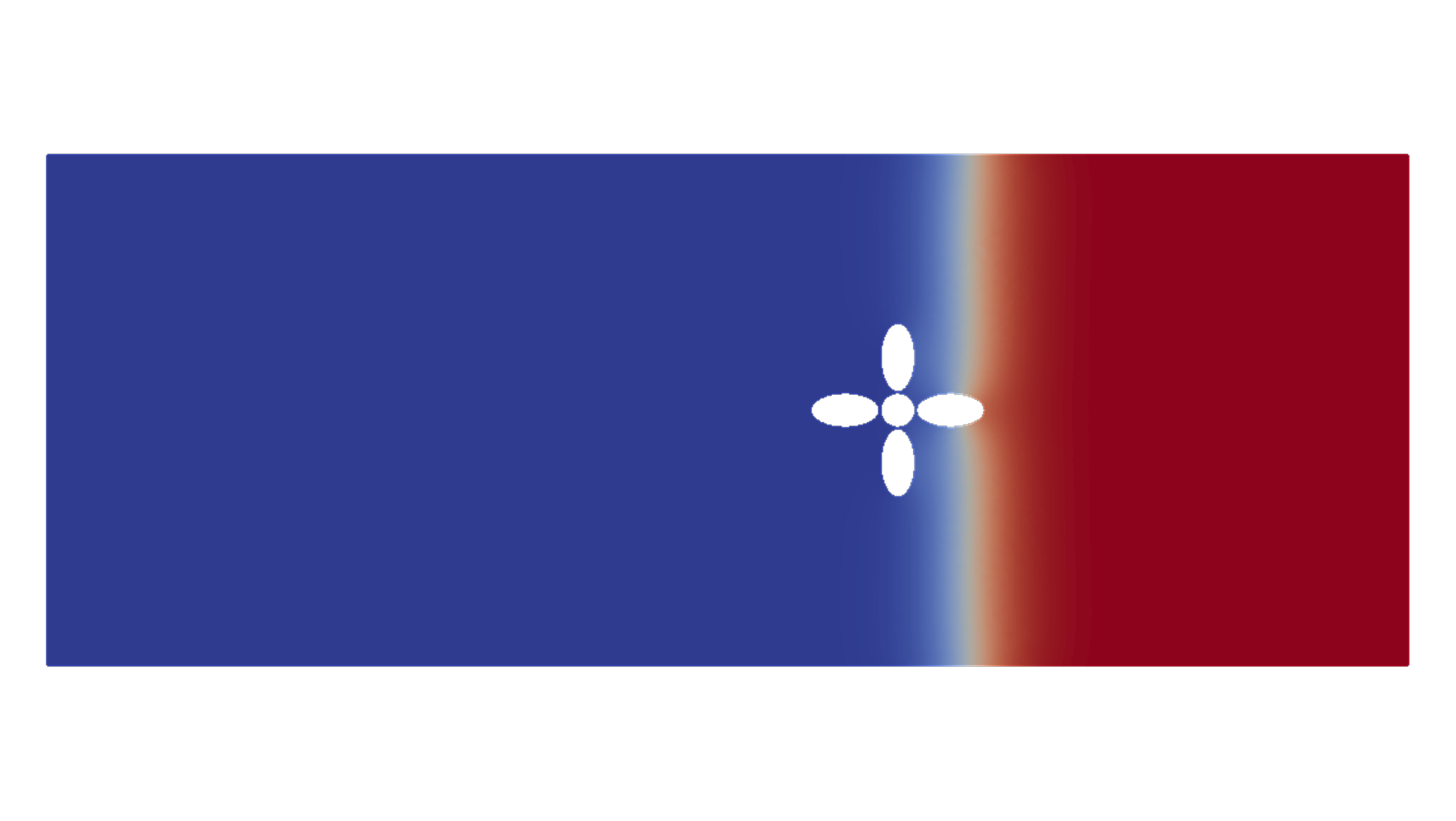}}

\subfloat[$t=400$]{\includegraphics[width=4cm,height=2.7cm]{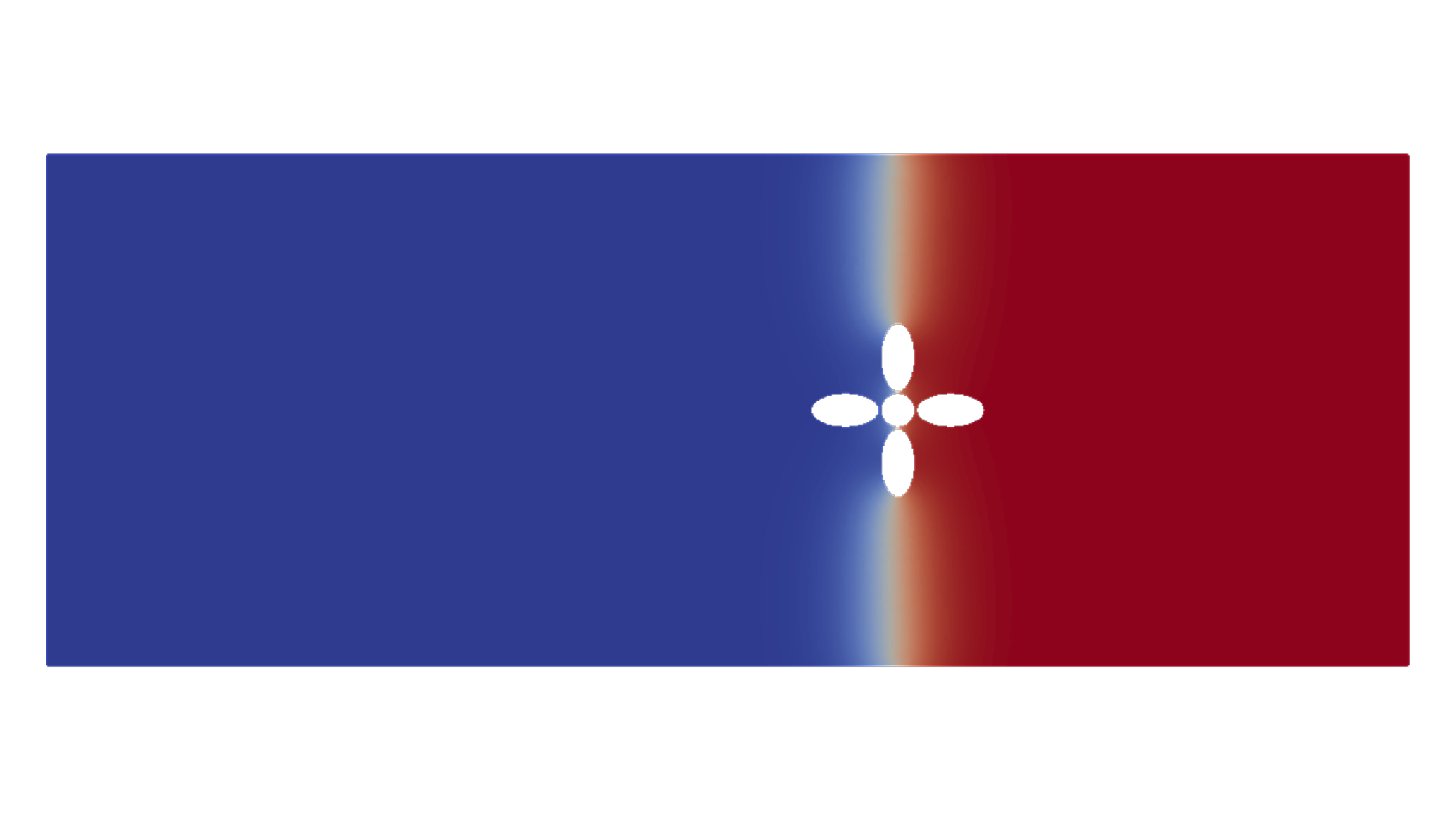}}
\subfloat[$t=450$]{\includegraphics[width=4cm,height=2.7cm]{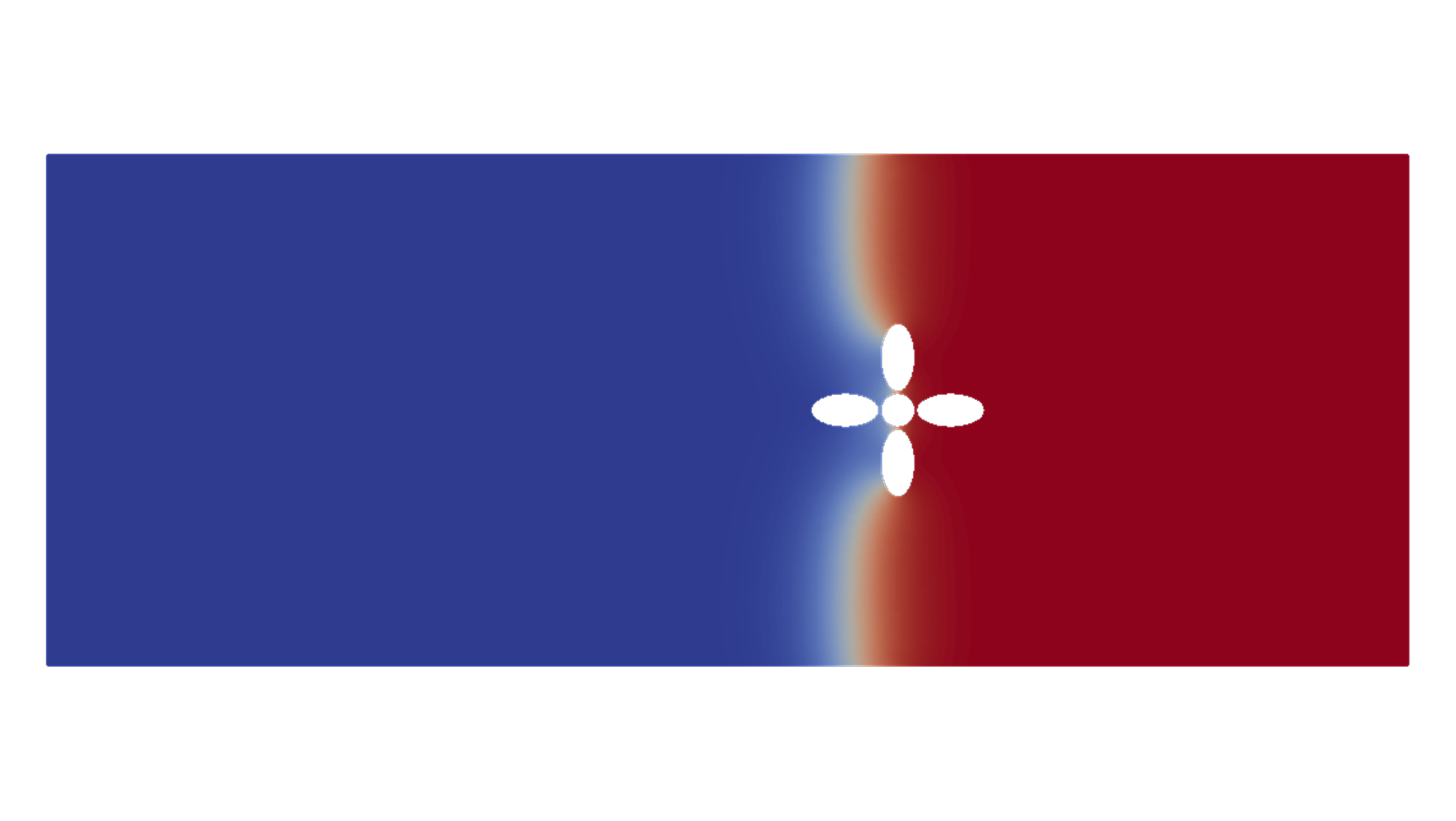}}
\subfloat[$t=500$]{\includegraphics[width=4cm,height=2.7cm]{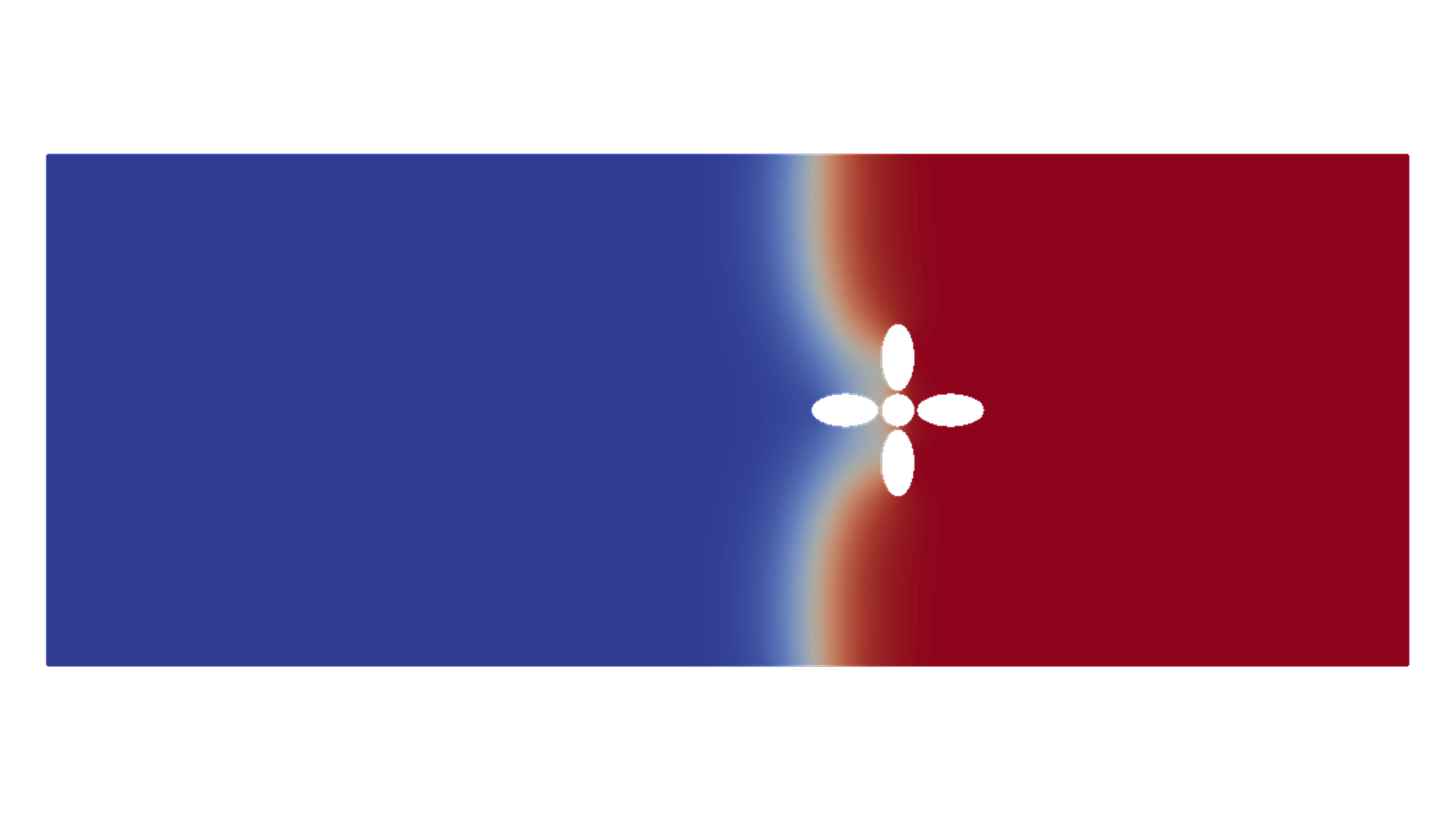}}
\subfloat[$t=550$]{\includegraphics[width=4cm,height=2.7cm]{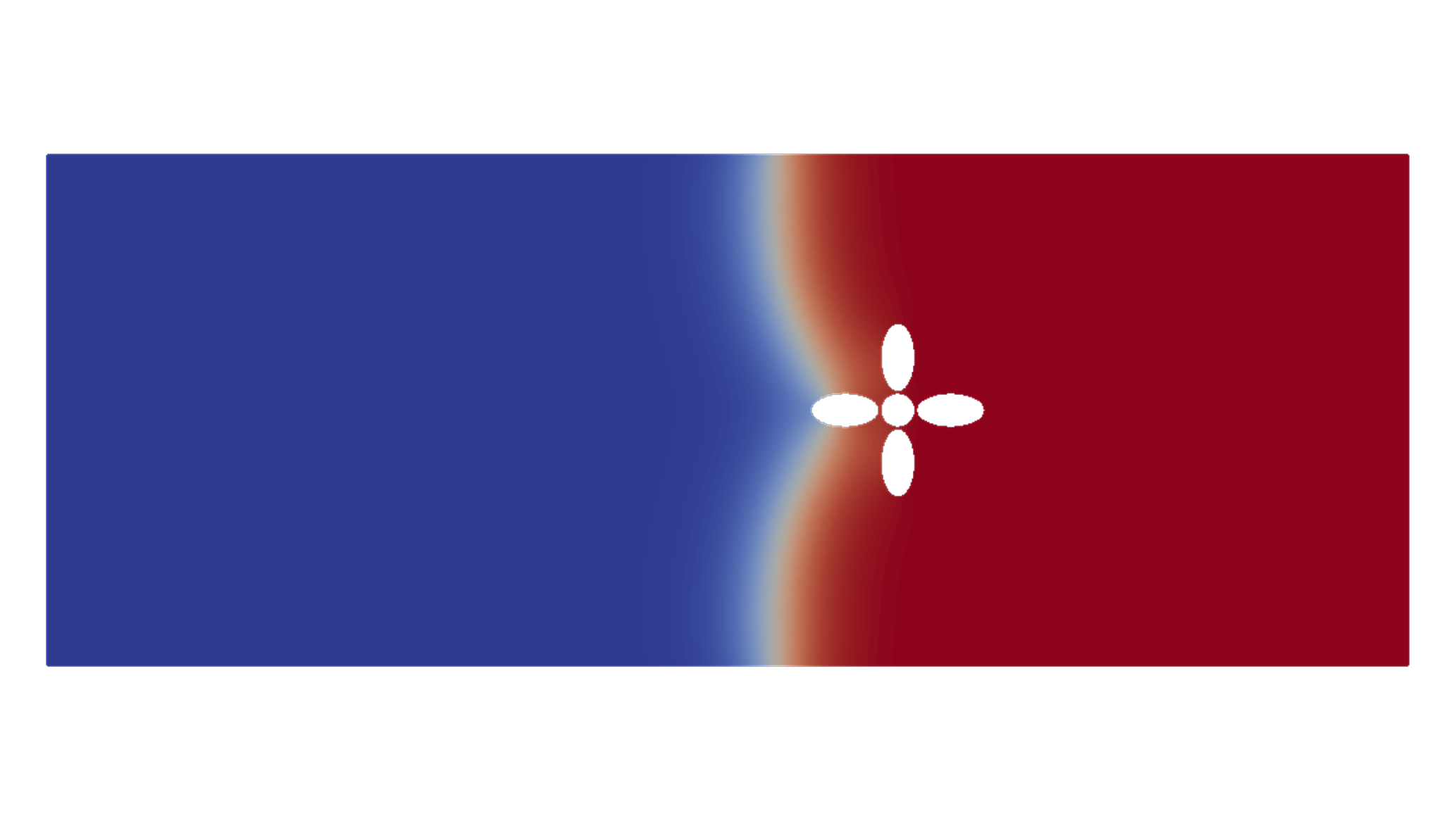}}

\subfloat[$t=600$]{\includegraphics[width=4cm,height=2.7cm]{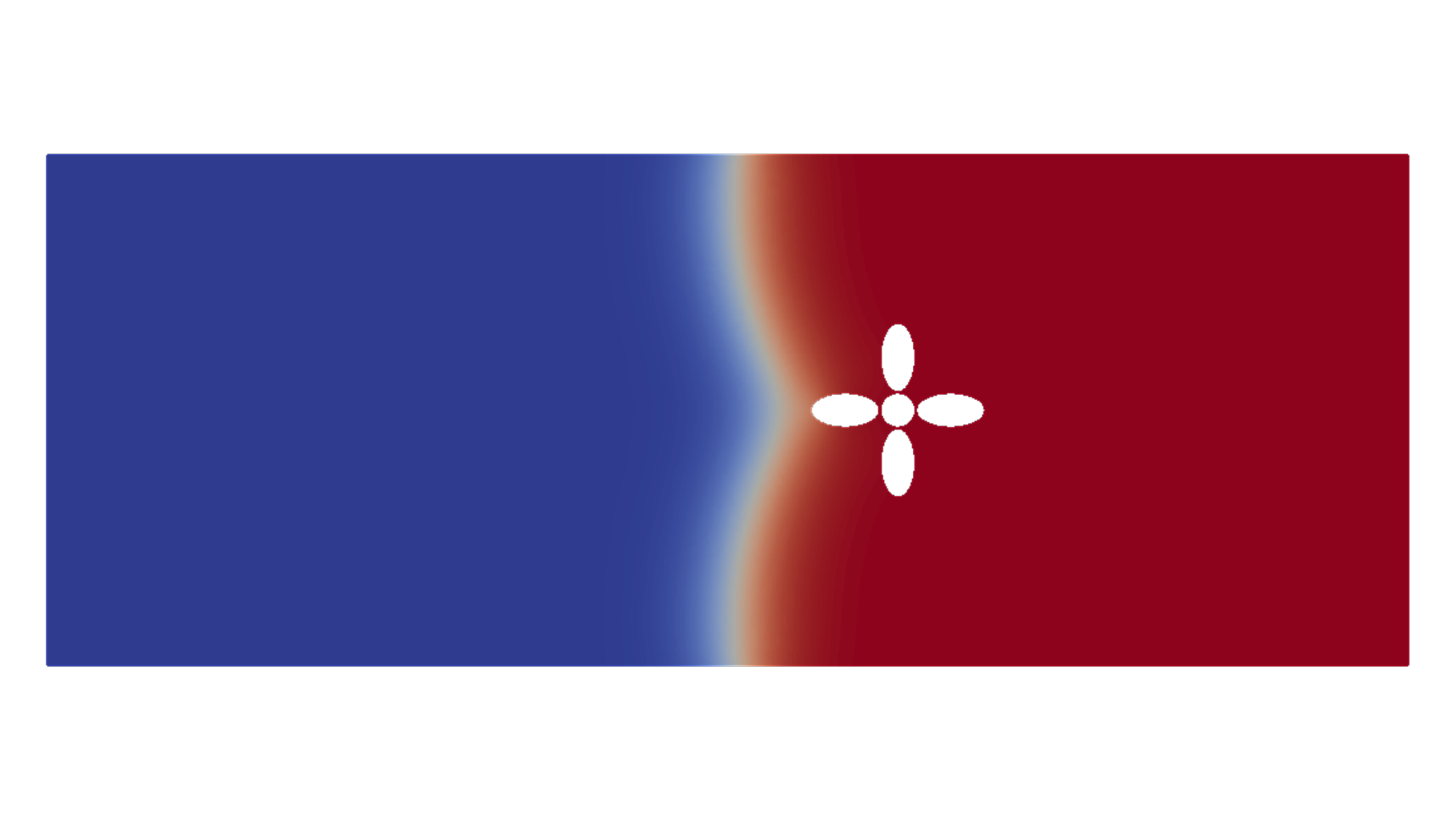}}
\subfloat[$t=650$]{\includegraphics[width=4cm,height=2.7cm]{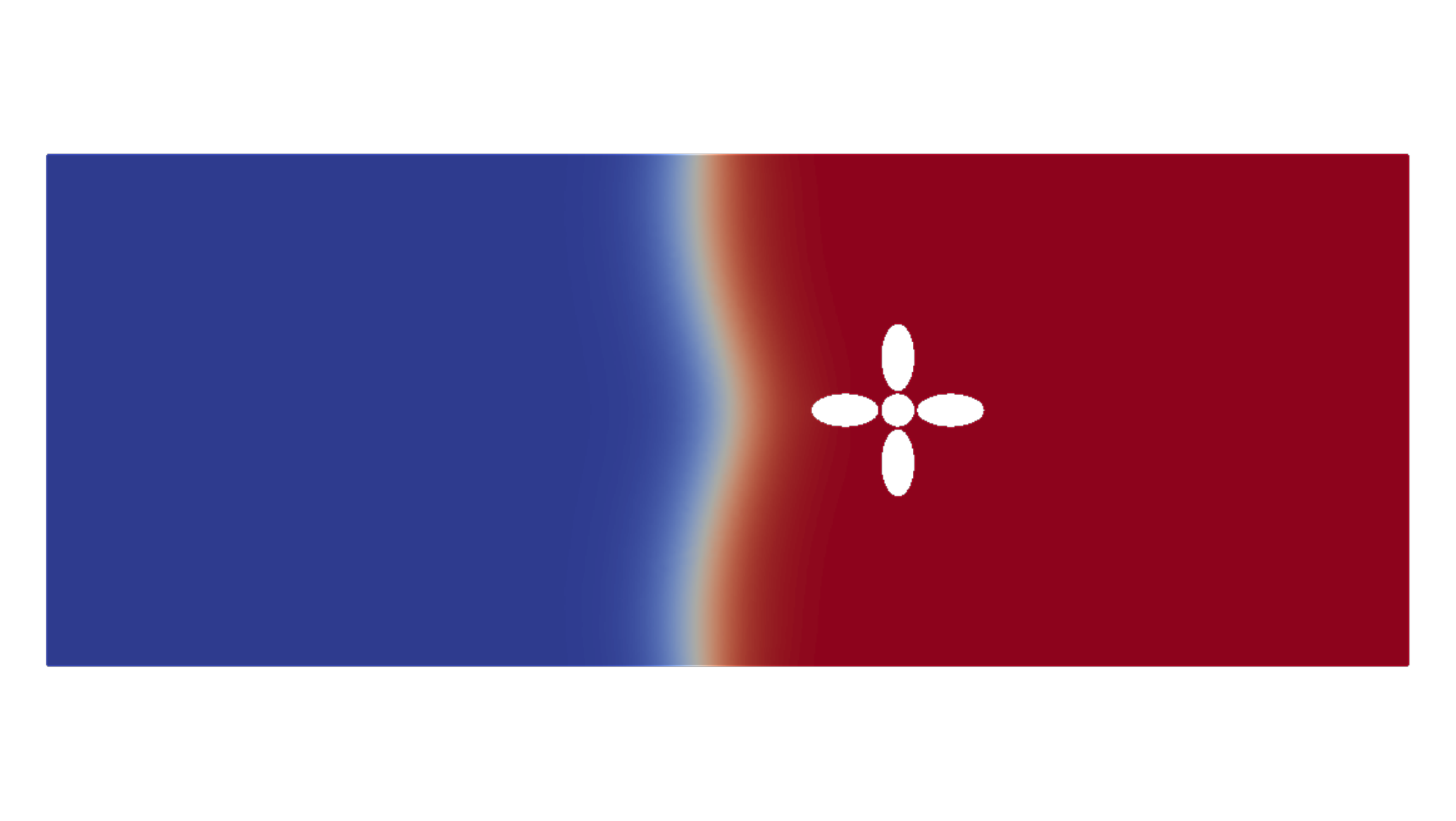}}
\subfloat[$t=700$]{\includegraphics[width=4cm,height=2.7cm]{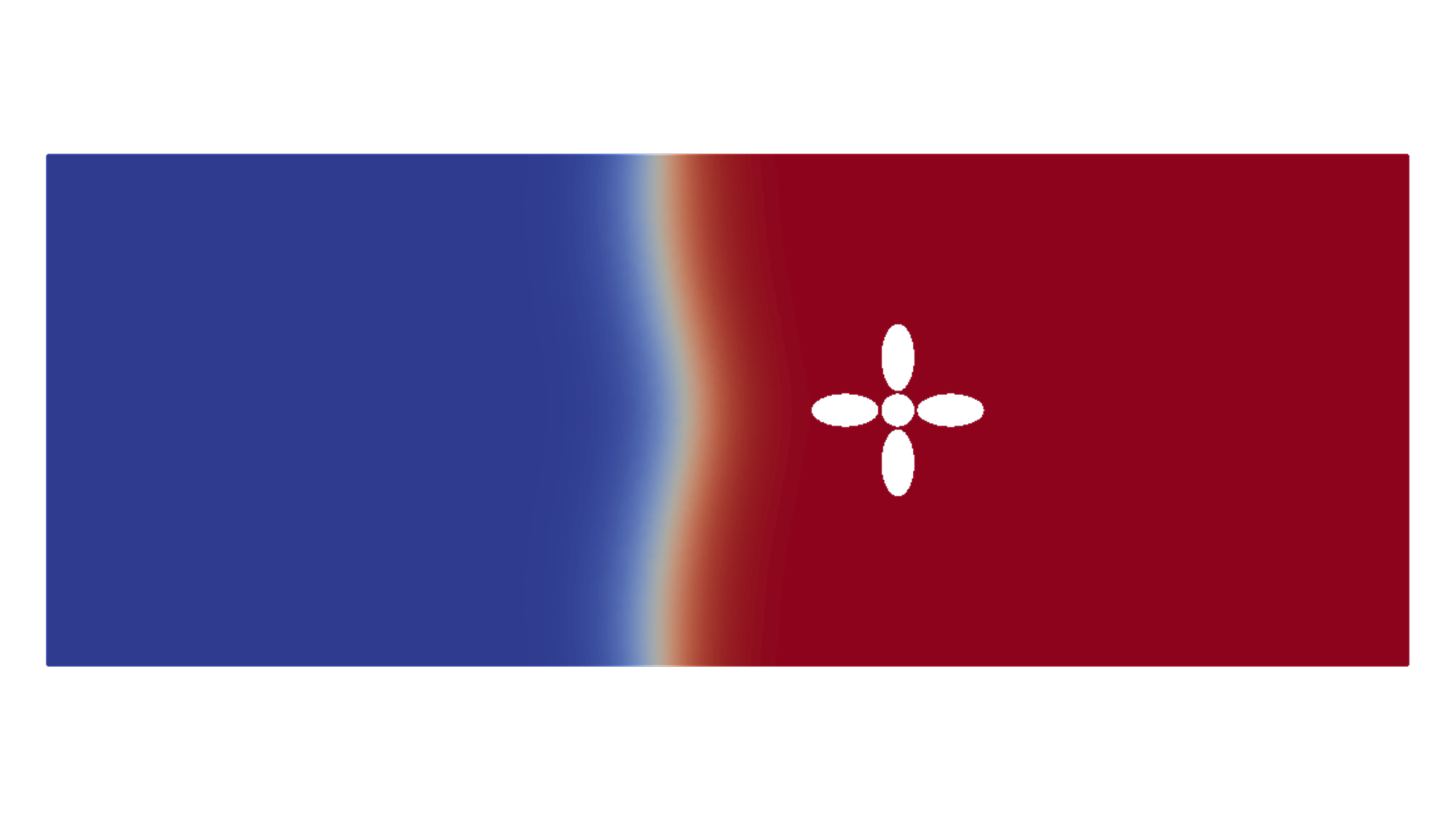}}
\subfloat[$t=750$]{\includegraphics[width=4cm,height=2.7cm]{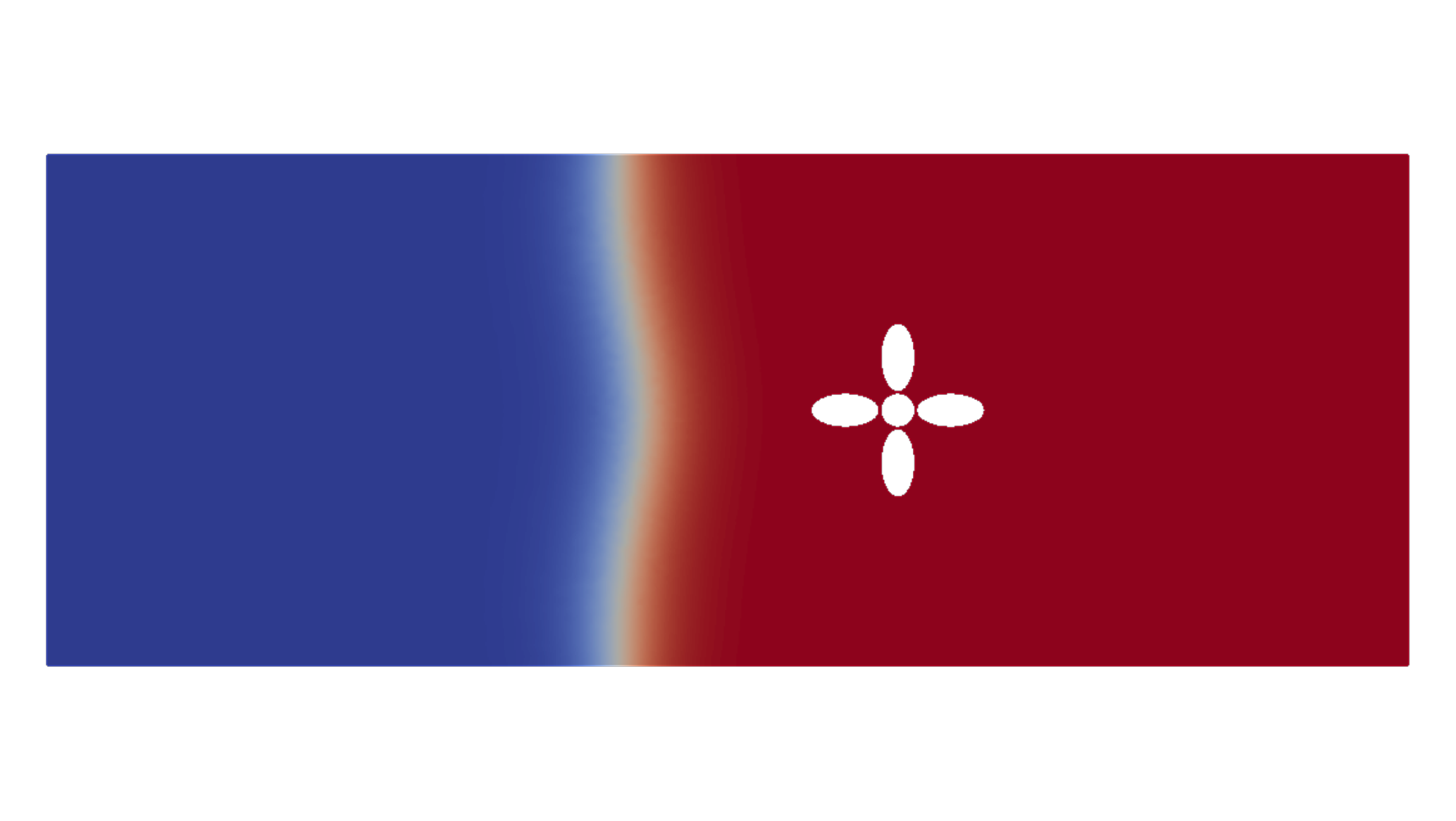}}
\caption{\footnotesize Numerical approximation of the solution of problem \eqref{P} at different times, starting from a Heaviside type initial density. For the simulation, $J(x)\sim e^{-|x|^2}\mathds{1}_{B_1}(x)$, the distance $\delta$ is the Euclidean distance and the obstacle $K$ is the union of the unit disk and four ellipsoids. On the domain $\O:=[30,-50]\times[-15,15]\setminus K$ we perform an IMEX Euler scheme in time combined with a finite element method in space with a time step of $0.075$. We observe that the solution behaves like a generalised transition wave.}
\end{figure}
\begin{remark}
We have stated, for simplicity, the existence of an entire solution that propagates in the direction $e_1=(1,0,\cdots,0)$. However, this restriction is immaterial and our arguments also yield that, for every $e\in\S^{N-1}$, there exists an entire solution propagating in the direction $e$ and satisfying the same properties as above.
\end{remark}
\begin{remark}
A consequence of the uniqueness part of Theorem \ref{TH:EXIST:ENTIRE} is that the entire solution $u(t,x)$ shares the same symmetry as $K$ in the hyperplane $\{x_1\}\times\R^{N-1}$. More precisely, if $\mathscr{T}$ is an isometry of $\R^{N-1}$ such that $(x_1,\mathscr{T}x')\in\overline{\Omega}$ for any $(x_1,x')\in\overline{\Omega}$, then
$$ u(t,x_1,\mathscr{T}x')=u(t,x_1,x') \, {\mbox{ for all }}(t,x)\in\R\times\overline{\Omega}. $$
\end{remark}
\begin{remark}
If $(\Omega,\delta)$ does not have the $J$-covering property, then we still have the existence of an entire solution satisfying \eqref{unique}, but we only have that $0\leq u(t,x)\leq 1$ and $\partial_tu(t,x)\geq0$ for any $(t,x)\in\R\times\overline{\Omega}$ (as opposed to the strict inequalities in Theorem~\ref{TH:EXIST:ENTIRE}). Moreover, the uniqueness may \emph{fail} because the strong maximum principle does \emph{not} hold in this case.
\end{remark}

\subsection{Large time behaviour}

As in the local case \cite{Berestycki2009d}, the large time behaviour of $u(t,x)$ depends on the geometry of $K$. Hamel, Valdinoci and the authors have shown in \cite{Brasseur2019} that, if $\delta$ is the Euclidean distance and $K$ is convex, then the problem \eqref{Pinf} admits a Liouville type property: namely, the only possible solution to \eqref{Pinf} is the trivial solution $u_\infty\equiv1$. We prove that this fact can be extended to arbitrary quasi-Euclidean distances (up to a slight additional assumption on $J$), which then results in the following theorem:
\begin{theo}\label{TH:LIOUVILLE}
Suppose all the assumptions of Theorem~\ref{TH:EXIST:ENTIRE} and that $K\subset\R^N$ is convex.
If $\delta(x,y)\not\equiv|x-y|$ suppose, in addition, that $J$ is non-increasing.
Then, there exists a unique entire solution $u(t,x)$ to \eqref{P} in $\overline{\Omega}$ such that $0<u(t,x)<1$ and $\partial_tu(t,x)>0$ for all $(t,x)\in\R\times\overline{\Omega}$ and
$$ |u(t,x)-\phi(x_1+ct)|\underset{t\to\pm\infty}{\longrightarrow} 0 \, {\mbox{ locally uniformly in }}x\in\overline{\Omega}. $$
\end{theo}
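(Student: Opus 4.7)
The theorem asserts uniqueness of the entire solution $u$ together with the sharpened limit $u(t,x) - \phi(x_1+ct) \to 0$ locally uniformly as $t \to \pm\infty$. Theorem~\ref{TH:EXIST:ENTIRE} already supplies the existence, uniqueness, strict bounds $0<u<1$, monotonicity $\partial_t u > 0$ and the limit as $t\to-\infty$, as well as the factorised large-time behaviour $u(t,x) - u_\infty(x)\phi(x_1+ct) \to 0$ locally uniformly, with $u_\infty:\overline{\Omega}\to(0,1]$ continuous, solving \eqref{Pinf}, and tending to $1$ at infinity. Since $\phi(x_1+ct) \to 1$ as $t \to +\infty$ for each fixed $x$, the whole theorem reduces to the Liouville-type property: \emph{under the hypotheses of Theorem~\ref{TH:LIOUVILLE}, any such $u_\infty$ equals $1$ identically}.

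I argue by contradiction: assume $u_\infty \not\equiv 1$ and let $m := \inf_{\overline{\Omega}}u_\infty$. Continuity of $u_\infty$, strict positivity $u_\infty>0$ and the limit $u_\infty\to1$ at infinity ensure $m\in(0,1)$ and that this infimum is attained at some $x_0\in\overline{\Omega}$. Evaluating \eqref{Pinf} at $x_0$ and using $u_\infty(y)-m\geq0$ on $\overline{\Omega}$ gives $Lu_\infty(x_0)\geq0$, hence $f(m)\leq0$. The bistable structure \eqref{C2} then forces $m\in(0,\theta]$. If $m=\theta$ then $f(m)=0$, so $Lu_\infty(x_0)=0$; by the $J$-covering property of $(\Omega,\delta)$, a standard strong-maximum-principle iteration propagates the equality $u_\infty\equiv m$ to all of $\overline{\Omega}$, contradicting $u_\infty\to1$ at infinity. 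Hence $m\in(0,\theta)$.

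The argument now splits on the distance. When $\delta(x,y) \equiv |x-y|$, the Liouville property for \eqref{Pinf} on a convex exterior domain is the main result of \cite{Brasseur2019}, which gives $u_\infty \equiv 1$ directly, contradicting $m<1$. When $\delta \not\equiv |\cdot|$, I appeal to the added hypothesis that $J$ is non-increasing: together with $\delta \geq |\cdot|$ this yields the pointwise bound $J(\delta(x,y)) \leq J(|x-y|)$ on $\overline{\Omega}^2$. At the minimum point $x_0$, since $u_\infty(y)-m\geq0$, integration immediately gives
\begin{equation*}
\int_\Omega J(|x_0-y|)\bigl[u_\infty(y)-m\bigr]\,\mathrm{d}y \;\geq\; Lu_\infty(x_0) \;=\; -f(m) \;>\; 0,
\end{equation*}
which encodes a strict differential inequality for the Euclidean operator $L_{|\cdot|}$ at the minimum. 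The plan is then to upgrade this pointwise discrepancy into a global comparison: either by approximating $u_\infty$ by solutions of the Euclidean problem on an exhausting sequence of balls and invoking the Euclidean Liouville theorem of \cite{Brasseur2019} in the limit, or by constructing an explicit barrier adapted from the travelling-wave profile $\phi$ along rays pointing outward from $K$—admissible thanks precisely to the convexity of $K$ and the monotonicity of $J$—and using it to force $u_\infty \equiv 1$.

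The main difficulty is precisely this final step. In \cite{Brasseur2019} the Euclidean sliding method relies in an essential way on translation invariance of the convolution kernel, which is lost once $\delta$ differs from the Euclidean distance. Overcoming this requires exploiting the monotonicity of $J$ and the convexity of $K$ jointly, together with the $L^1$-continuity of $x\mapsto J(\delta(x,\cdot))$ from \eqref{C3}, to control the discrepancy $J(\delta(x,\cdot)) - J(|x-\cdot|)$ in a way compatible with the comparison principle. This is where I expect the bulk of the technical work to lie.
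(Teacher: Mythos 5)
Your reduction of the theorem to the Liouville property for \eqref{Pinf} is correct, and your treatment of the Euclidean case by citing \cite{Brasseur2019} matches the paper. The problem is the non-Euclidean case, where you explicitly stop short: you set up the pointwise bound at the minimum point but then say ``this is where I expect the bulk of the technical work to lie'' and offer only two unproven strategies (exhaustion-by-balls, or a barrier ``along rays''). That is a genuine gap, and moreover the minimum-point argument you develop (showing $m\in(0,\theta)$, then extracting $\int J(|x_0-y|)(u_\infty(y)-m)\,\mathrm{d}y > 0$) is a dead end: a single strict inequality at one point does not interact with any comparison mechanism, since $u_\infty$ is not a solution of the Euclidean equation, and the paper does not use this route at all.

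The missing idea is how to adapt the \emph{sliding method} of \cite{Brasseur2019} to a general $\delta\in\mathcal{Q}(\overline{\Omega})$, and it is quite specific. One slides a planar barrier $\phi_{r_0,e}(x):=\phi(x\cdot e - r_0)$ in half-spaces $H_e=x_0+\{x\cdot e>0\}$ with $\overline{H_e}\subset\Omega$. The key observation is the decomposition
\begin{align*}
L\phi_{r_0,e}(x)&=\int_{\Omega}J(|x-y|)\bigl(\phi_{r_0,e}(y)-\phi_{r_0,e}(x)\bigr)\,\mathrm{d}y \\
&\qquad+\int_{\mathscr{C}(x)\setminus K}\bigl(J(\delta(x,y))-J(|x-y|)\bigr)\bigl(\phi_{r_0,e}(y)-\phi_{r_0,e}(x)\bigr)\,\mathrm{d}y,
\end{align*}
where $\mathscr{C}(x)$ is the convex cone with vertex $x$ tangent to $\partial K$: since $\delta$ is quasi-Euclidean, $\delta(x,y)=|x-y|$ everywhere except when the segment $[x,y]$ meets $K$, and those $y$ lie in $\mathscr{C}(x)\setminus K$. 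For $x\in H_e$ the cone opens away from $H_e$, so $y\cdot e\leq x\cdot e$ for $y\in\mathscr{C}(x)\setminus K$, hence $\phi_{r_0,e}(y)\leq\phi_{r_0,e}(x)$ (as $\phi'>0$); combined with $J(\delta(x,y))-J(|x-y|)\leq 0$ (from $\delta\geq|\cdot|$ and $J$ non-increasing), the correction integral is nonnegative. Thus $L\phi_{r_0,e}\geq L_{|\cdot|}\phi_{r_0,e}$ in $H_e$, and $\phi_{r_0,e}$ remains a subsolution exactly as in the Euclidean case — so the problem genuinely \emph{does} reduce to $\delta\equiv|\cdot|$, but via this cone decomposition applied to the sliding barrier, not via your minimum point or an exhaustion by Euclidean problems. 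Without this observation there is no argument.
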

In other words: if the obstacle $K$ is convex, then the entire solution $u(t,x)$ to \eqref{P} will eventually recover the shape of the planar travelling wave $\phi(x_1+ct)$ as $t\to+\infty$, i.e. the presence of an obstacle will not alter the large time behaviour of the solution $u(t,x)$. This is a consequence of the fact that \eqref{Pinf} satisfies a Liouville type property, see Figure \ref{fig-simu-convex}.
\begin{figure}[!ht]
\centering
\subfloat[$t=0$]{\includegraphics[height=2.25cm, width=3.9cm]{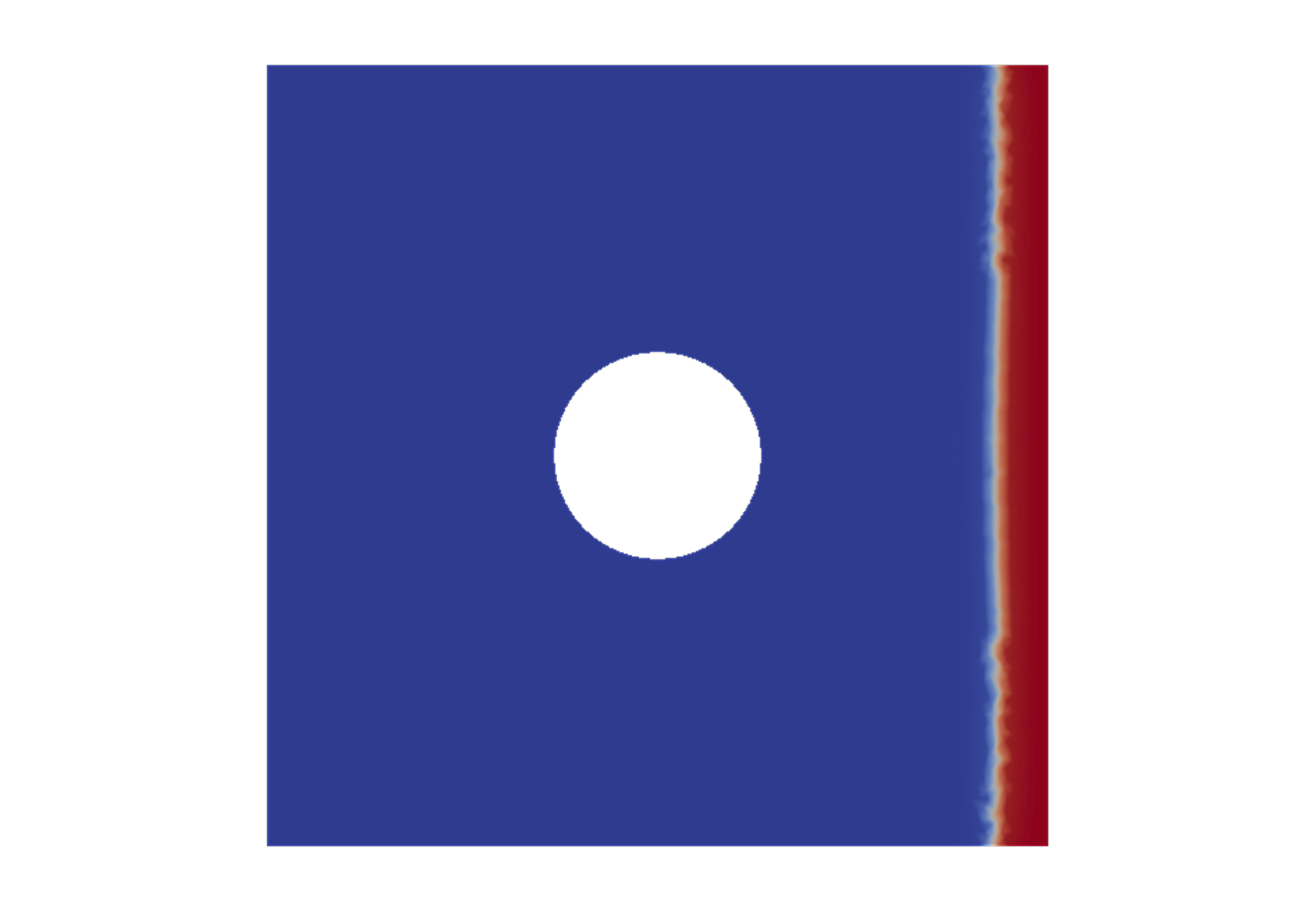}}
\subfloat[$t=150$]{\includegraphics[height=2.25cm,width=3.9cm]{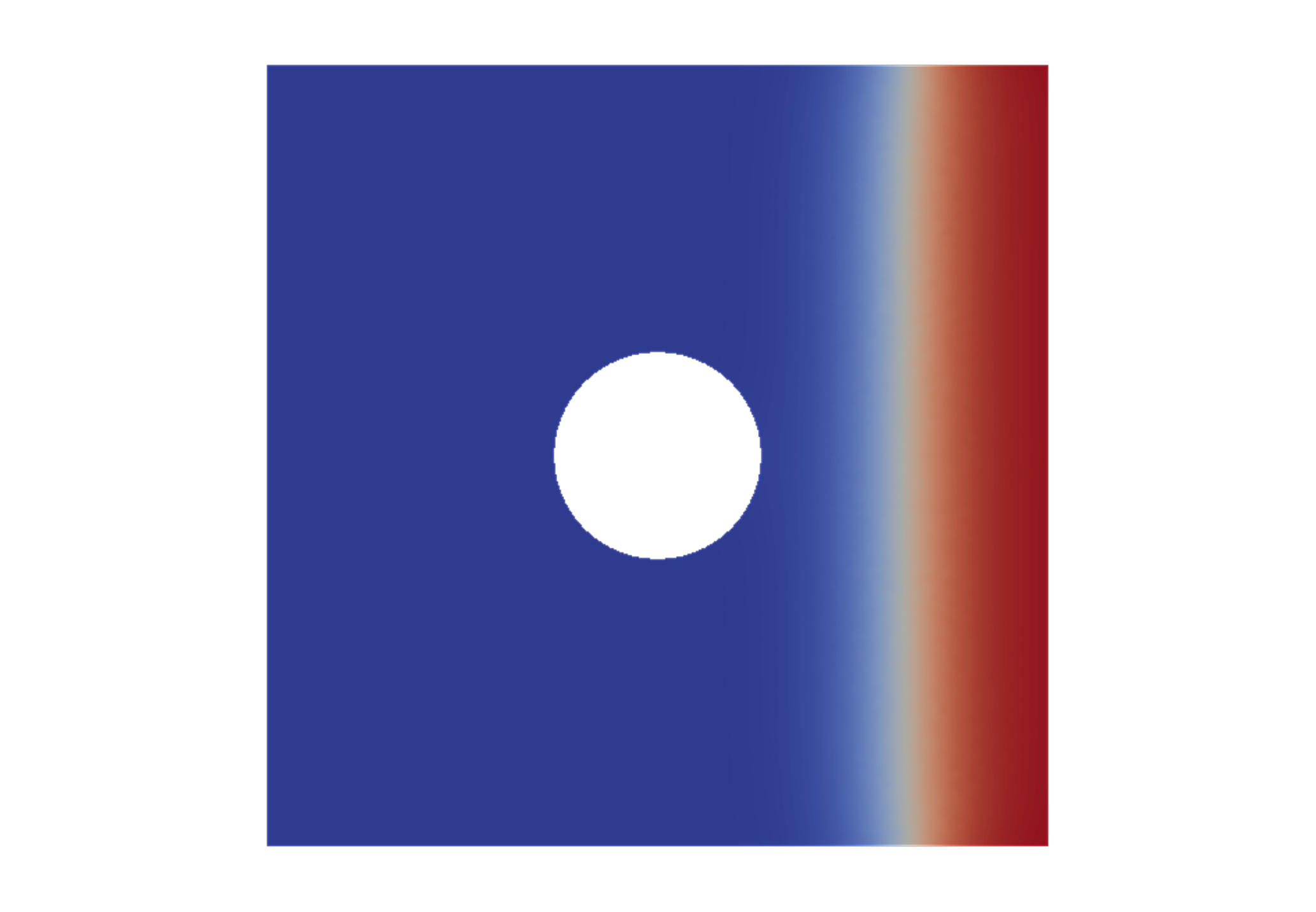}}
\subfloat[$t=300$]{\includegraphics[height=2.25cm,width=3.9cm]{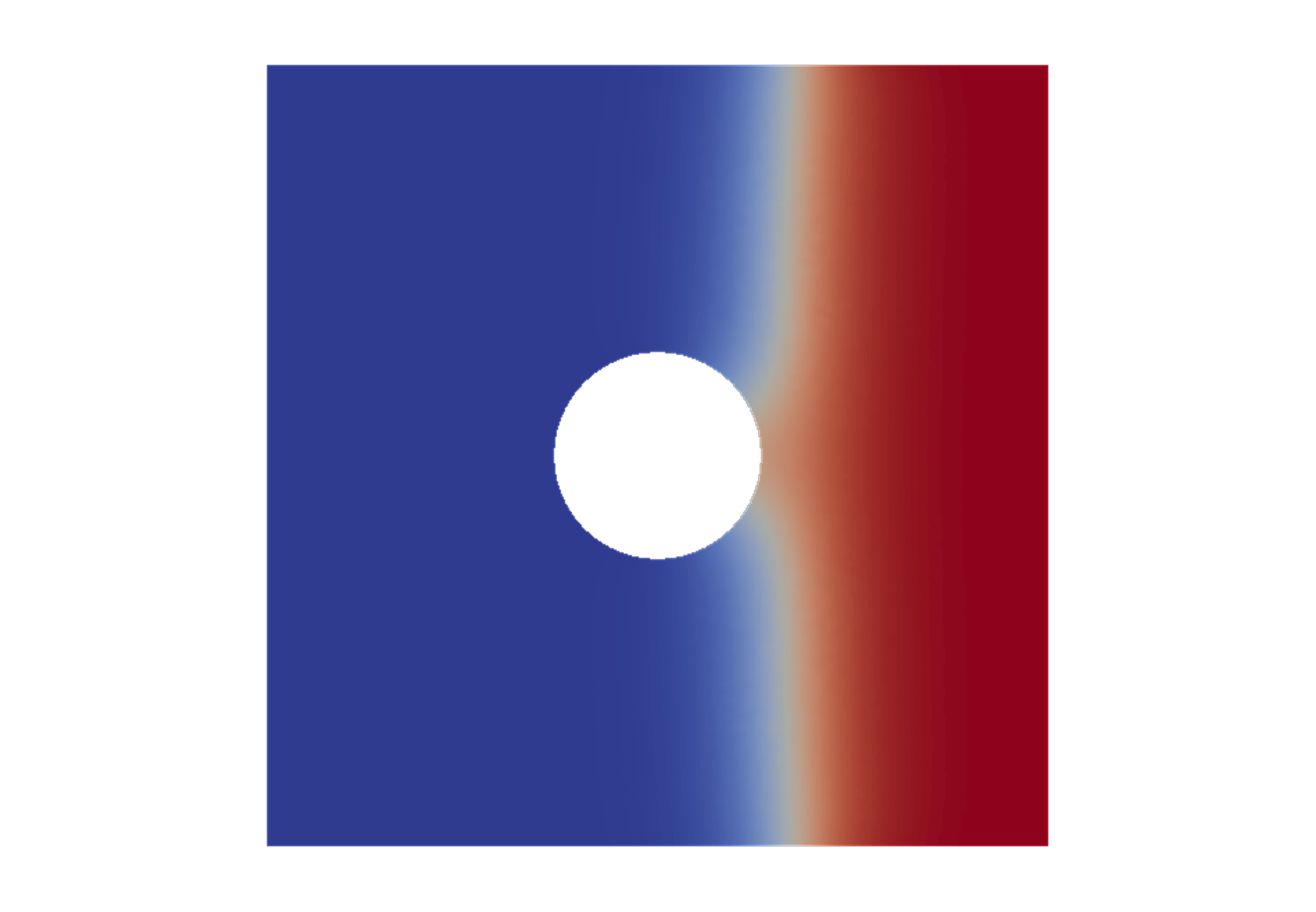}}
\subfloat[$t=450$]{\includegraphics[height=2.25cm,width=3.9cm]{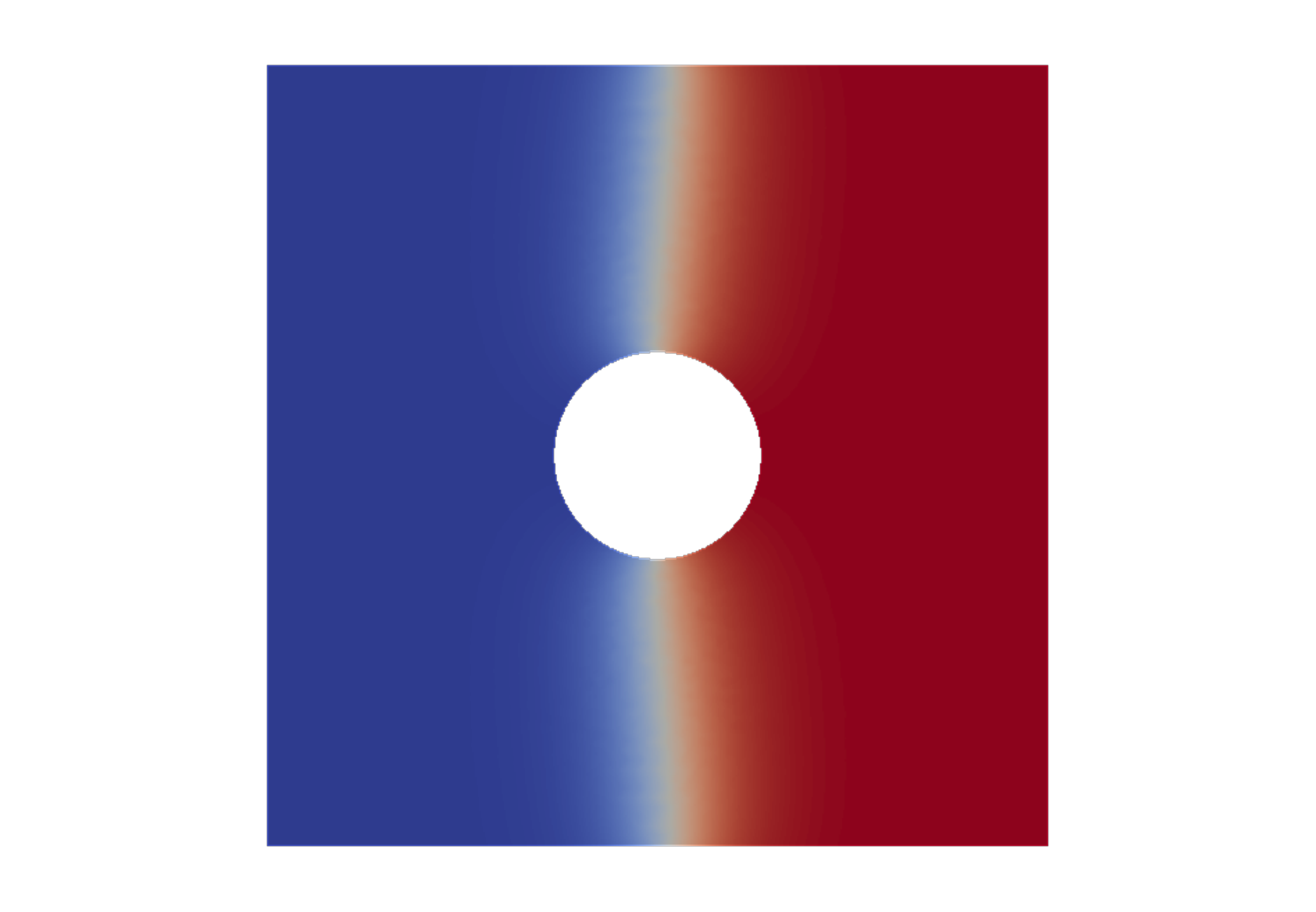}}

\subfloat[$t=600$]{\includegraphics[height=2.25cm,width=3.9cm]{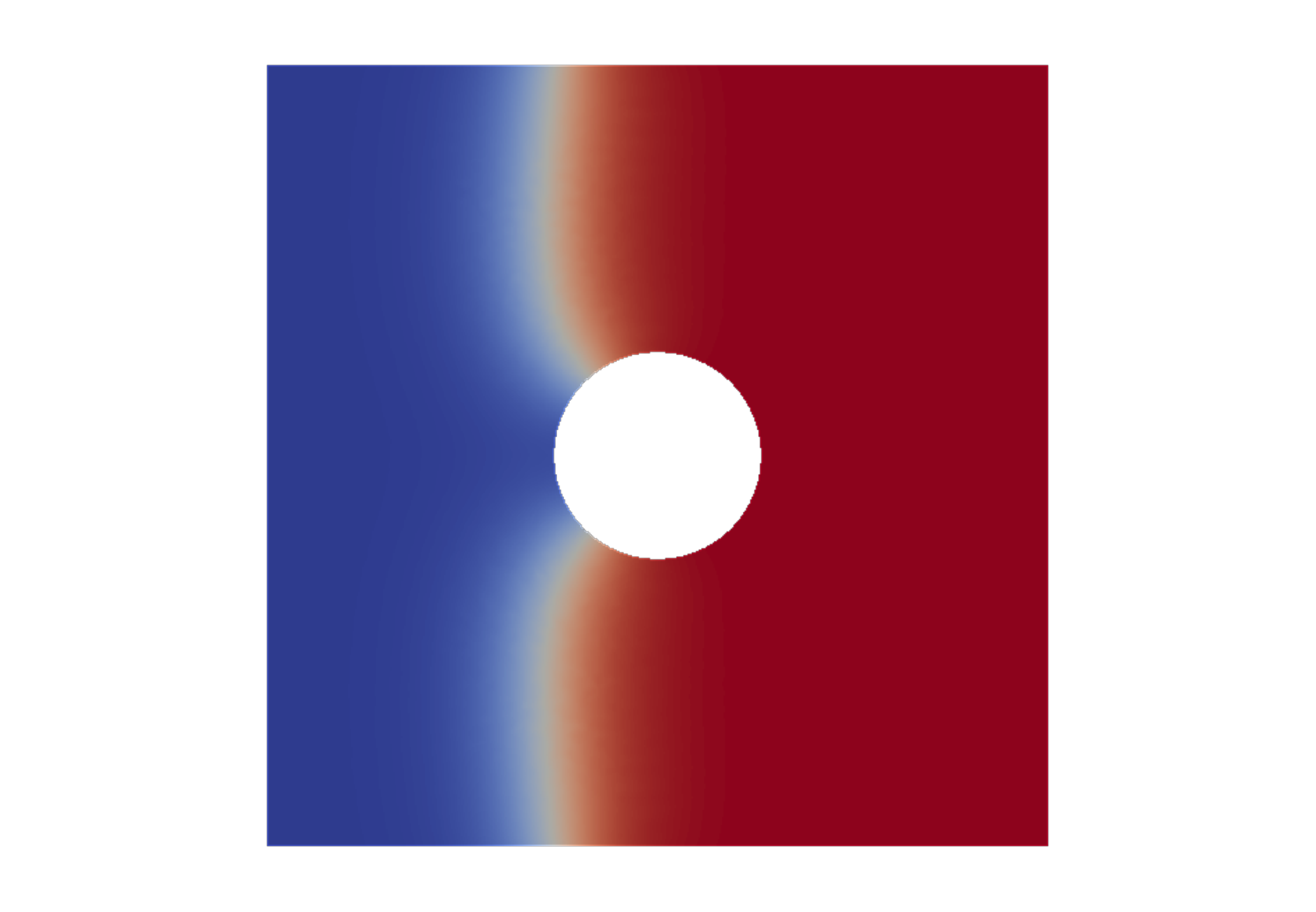}}
\subfloat[$t=750$]{\includegraphics[height=2.25cm,width=3.9cm]{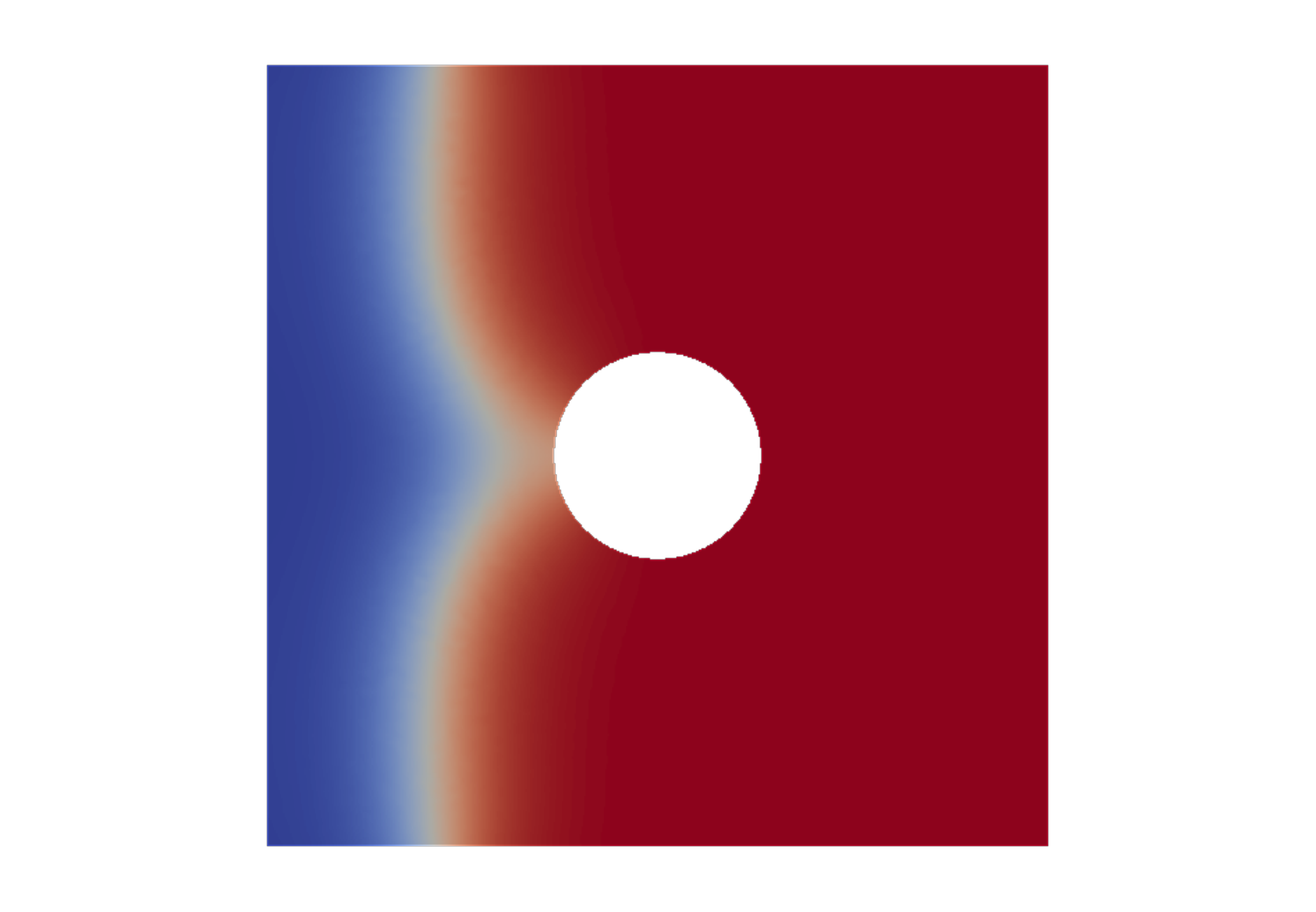}}
\subfloat[$t=900$]{\includegraphics[height=2.25cm,width=3.9cm]{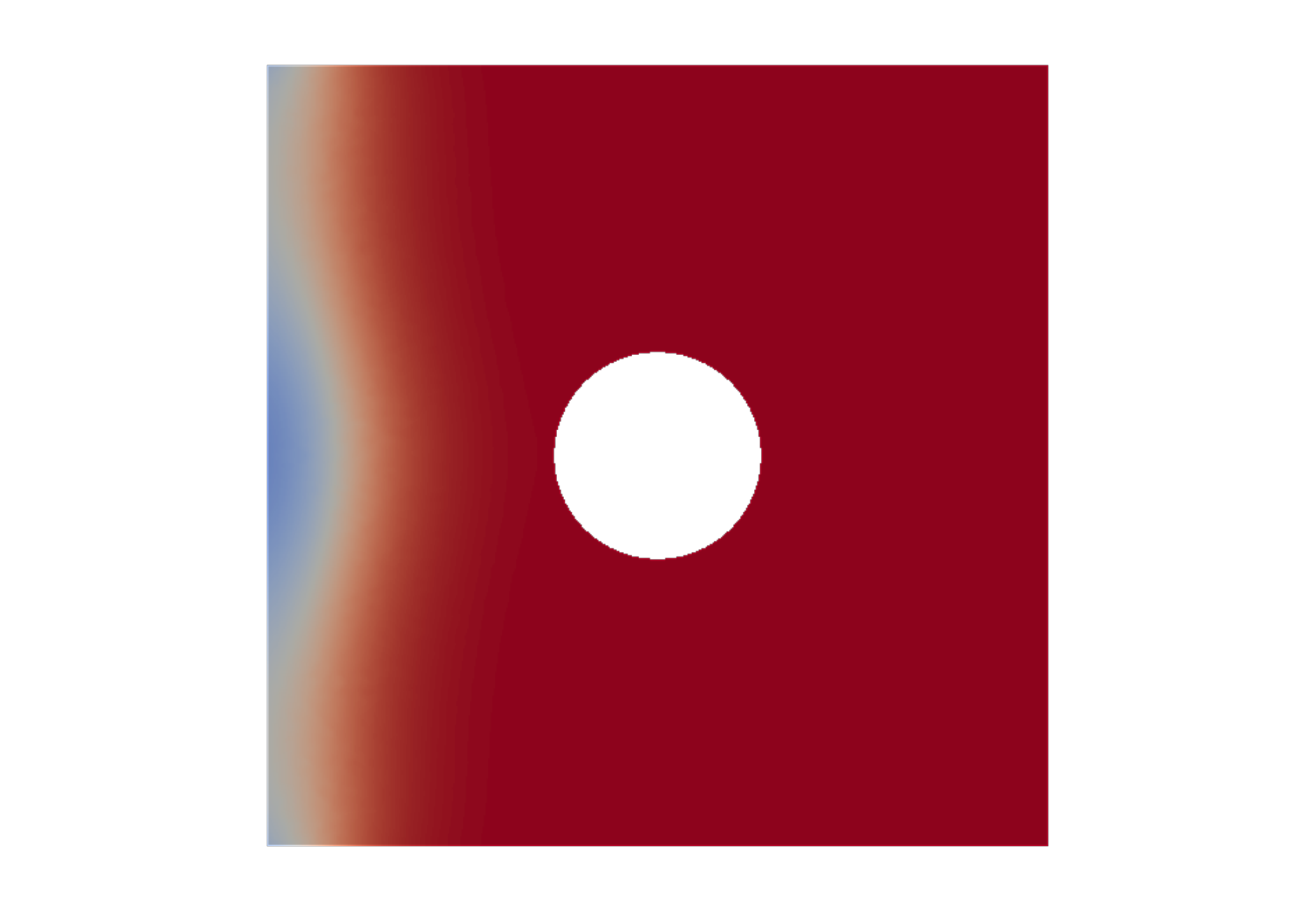}}
\subfloat[$t=1000$]{\includegraphics[height=2.25cm,width=3.9cm]{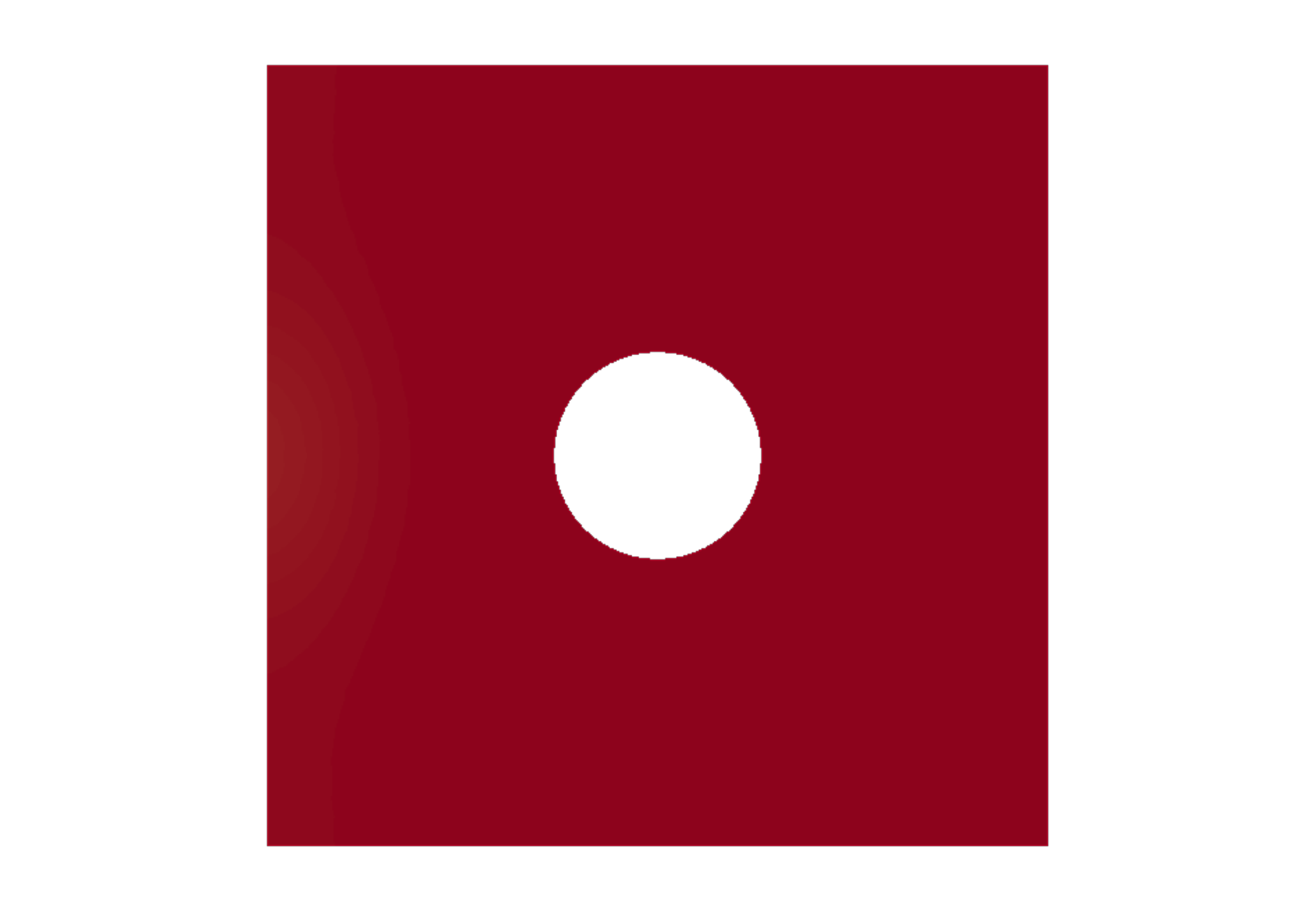}}
\caption{\footnotesize Numerical approximation of the solution of problem \eqref{P} at different times, starting from a Heaviside type initial density. For the simulation, $J(x)\sim e^{-|x|^2}\mathds{1}_{B_1}(x)$, the distance $\delta$ is the Euclidean distance and the obstacle $K$ is a disk of radius $4$. On the domain $\O:=[-15,15]^2\setminus K$ we perform an IMEX Euler scheme in time combined with a finite element method in space with a time step of $0.05$. We observe that the solution converges to a trivial asymptotic profile as $t\to\infty$, namely 1.}\label{fig-simu-convex}
\end{figure}

However, the authors have shown in \cite{Brasseur2018} that there exist obstacles $K$ as well as a datum $(J,f)$ for which this property is \emph{violated}, i.e. such that \eqref{Pinf} admits a non-trivial solution $\widetilde{u}_\infty\in C(\overline{\Omega})$ with $0<\widetilde{u}_\infty<1$ in $\overline{\Omega}$. Hence, the picture described at Theorem~\ref{TH:LIOUVILLE} \emph{cannot} be expected for general obstacles.
Nevertheless, this does not immediately imply that the solution $u_\infty$ to \eqref{Pinf} arising in Theorem~\ref{TH:EXIST:ENTIRE} is not constant.
We prove that, whether the unique entire solution $u(t,x)$ to \eqref{P} satisfying \eqref{unique} recovers the shape of the planar travelling wave $\phi(x_1+ct)$ as $t\to+\infty$ is \emph{equivalent} to the question of whether \eqref{Pinf} satisfies the Liouville type property. Precisely,
\begin{theo}\label{NonLiouville2}
Suppose all the assumptions of Theorem~\ref{TH:EXIST:ENTIRE}.
Let $u(t,x)$ be the unique bounded entire solution to \eqref{P} satisfying \eqref{unique}. Let $u_\infty\in C(\overline{\Omega})$ be the solution to \eqref{Pinf} such that \eqref{asympt:large:time} holds, i.e. such that
$$ |u(t,x)-u_\infty(x)\hspace{0.1em}\phi(x_1+ct)|\underset{t\to+\infty}{\longrightarrow} 0  {\mbox{ locally uniformly in }}x\in\overline{\Omega}.$$
Then, $u_\infty\equiv1$ in $\overline{\Omega}$ if, and only if, \eqref{Pinf} satisfies the Liouville property.
\end{theo}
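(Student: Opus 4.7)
The implication $(\Leftarrow)$ is immediate: by Theorem~\ref{TH:EXIST:ENTIRE}, the function $u_\infty\in C(\overline{\Omega})$ is itself a solution to \eqref{Pinf} satisfying $0<u_\infty\le 1$ on $\overline{\Omega}$ and $u_\infty(x)\to 1$ as $|x|\to\infty$; hence, if \eqref{Pinf} enjoys the Liouville property, $u_\infty$ is forced to be identically equal to $1$.

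For the converse $(\Rightarrow)$, I would argue by contrapositive. Assume that $\tilde u_\infty\in C(\overline{\Omega})$ is a solution to \eqref{Pinf} with $0<\tilde u_\infty\le 1$ and $\tilde u_\infty\not\equiv 1$; by the strong maximum principle, valid thanks to the $J$-covering property in \eqref{C3}, we have $\tilde u_\infty<1$ strictly on $\overline{\Omega}$. The key step is to establish the pointwise bound
\[
u(t,x)\le \tilde u_\infty(x)\qquad\text{for every }(t,x)\in\R\times\overline{\Omega}.
\]
Once this is proven, the strict monotonicity $\partial_t u>0$ from Theorem~\ref{TH:EXIST:ENTIRE} combined with \eqref{asympt:large:time} (and $\phi(x_1+ct)\to 1$) gives $u(t,x)\nearrow u_\infty(x)$ as $t\to+\infty$, so passing to the limit $t\to+\infty$ yields $u_\infty\le \tilde u_\infty<1$ on $\overline{\Omega}$, and in particular $u_\infty\not\equiv 1$, which is what we want.

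To obtain the pointwise inequality, I would revisit the construction of $u$ from Theorem~\ref{TH:EXIST:ENTIRE} and express $u$ as the monotone increasing limit, as $n\to+\infty$, of Cauchy solutions $U_n$ of \eqref{P} on $[T_n,+\infty)\times\overline{\Omega}$ (with $T_n\to-\infty$) starting from suitable subsolutions $\underline u(T_n,\cdot)$ of the planar wave, for instance of Fife--McLeod type $\underline u(T_n,x)=\phi(x_1+cT_n-\xi(T_n))-q\,e^{\mu T_n}$ adapted to the nonlocal bistable framework. Since $\tilde u_\infty$ is a stationary super-solution of \eqref{P}, if one can arrange that $\underline u(T_n,\cdot)\le \tilde u_\infty(\cdot)$ pointwise on $\overline{\Omega}$ for $T_n$ sufficiently negative, then the nonlocal parabolic comparison principle propagates the inequality forward in time, yielding $U_n(t,x)\le \tilde u_\infty(x)$ for every $t\ge T_n$; passing to the limit $n\to+\infty$ then delivers $u\le \tilde u_\infty$.

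The main technical obstacle is securing the initial comparison $\underline u(T_n,\cdot)\le \tilde u_\infty(\cdot)$ uniformly in $x\in\overline{\Omega}$. On compact sets and in the region behind the wavefront it is essentially free, since $\underline u(T_n,\cdot)\to 0$ as $T_n\to-\infty$ while $\tilde u_\infty$ is bounded below by a positive constant on any compact set (continuity combined with $\tilde u_\infty>0$). The delicate regime is far ahead of the wavefront, where $x_1+cT_n-\xi(T_n)$ is large positive and both $\underline u(T_n,x)$ and $\tilde u_\infty(x)$ are close to $1$: here the defect $q\,e^{\mu T_n}$ in the subsolution must be controlled against the spatial decay of $1-\tilde u_\infty(x)$ along $x_1\gtrsim |cT_n|$. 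Exploiting the bistable structure $f'(1)<0$ together with the linearization of $L$ at $1$, one expects an exponential bound $1-\tilde u_\infty(x)\lesssim e^{-\kappa|x|}$ at infinity; a judicious calibration of the Fife--McLeod parameters $\mu$, $q$ and $\xi$ in the spirit of \cite{Chen1997} then matches these two exponential rates and closes the estimate. This matching of decay rates is where the bulk of the technical work is concentrated.
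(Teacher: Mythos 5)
Your overall strategy is the right one and matches the paper's: the backward implication is trivial, and for the forward one you reduce to the pointwise bound $u(t,\cdot)\leq\widetilde u_\infty$ for all $t$, then let $t\to+\infty$. The paper proves exactly this (in fact, the stronger statement that $u_\infty$ is the \emph{minimal} solution of \eqref{Pinf}, Proposition~\ref{NonLiouville}), and your chain of deductions from that bound to $u_\infty\not\equiv1$ is correct. However, the step you flag as the ``bulk of the technical work'' is not closed in your proposal, and the speculative device you propose (a generic decay $1-\widetilde u_\infty(x)\lesssim e^{-\kappa|x|}$ from linearizing at $1$, followed by ``judicious calibration'') does not by itself constitute a proof, since the conclusion hinges on having the \emph{exact} exponential rate and not merely some $\kappa>0$; if $\kappa$ were strictly less than the rate $\mu$ governing $1-\phi$, the matching would fail.

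The paper closes this gap with two concrete devices you do not mention, and neither of them requires re-opening the Cauchy construction of $u$. First, it invokes the sliding lemma \cite[Lemma~5.1]{Brasseur2019}, which together with the strong maximum principle ($\inf_{\overline\Omega}\widetilde u_\infty>0$) yields the sharp comparison $\phi(|x|-r_0)\leq\widetilde u_\infty(x)$ for some $r_0>0$; this is exactly the right substitute for your vague exponential bound, and it pins the decay rate of $1-\widetilde u_\infty$ to $\mu$. Second, instead of matching rates in the half-space far ahead of the obstacle, the paper observes that $\partial_t u>0$ makes $u(\tau,\cdot)$ a \emph{stationary subsolution} of \eqref{Pinf}; combined with $\widetilde u_\infty\geq 1-s_0$ on a half-space $H$ beyond $K$ (where $s_0$ is such that $f'\leq -s_1<0$ on $[1-s_0,1]$) and with the inequality $u(\tau,\cdot)\leq\widetilde u_\infty$ already secured on $\overline\Omega\setminus H$ and at infinity, the elliptic weak maximum principle \cite[Lemma~4.1]{Brasseur2019} gives $u(\tau,\cdot)\leq\widetilde u_\infty$ on $H$ for free, with no exponential calibration whatsoever. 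The paper also works with the entire solution $u$ directly, using only the supersolution bound $u\leq w^+$ from \eqref{wuwENC}, which is cleaner than your detour through the Cauchy approximations $U_n$ and Fife--McLeod subsolutions. If you want to salvage your version of the argument, the missing ingredient is precisely $\phi(|x|-r_0)\leq\widetilde u_\infty$; once you have it, the initial comparison $w^-(T_n,\cdot)\leq\widetilde u_\infty$ becomes immediate from $w^-(T_n,x)\leq\phi(x_1+M^-(T_n))\leq\phi(|x|-r_0)$ for $x_1\geq 0$ and $w^-\equiv 0$ for $x_1<0$, and then the comparison principle does the rest.
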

As a consequence of Theorem~\ref{NonLiouville2} and of \cite[Theorems 1.1, 1.3]{Brasseur2018} we obtain
\begin{cor}\label{COR:CONTR}
There exist a smooth, simply connected, non-starshaped compact set $K\subset\R^N$, a quasi-Euclidean distance $\delta\in\mathcal{Q}(\overline{\Omega})$ and a datum $(J,f)$ satisfying all the assumptions of Theorem~\ref{TH:EXIST:ENTIRE}, such that the unique bounded entire solution $u(t,x)$ to \eqref{P} satisfying \eqref{unique} does not recover the shape of a planar travelling wave in the large time limit, that is
$$ |u(t,x)-u_\infty(x)\hspace{0.1em}\phi(x_1+ct)|\underset{t\to+\infty}{\longrightarrow} 0  {\mbox{ locally uniformly in }}x\in\overline{\Omega}, $$
where $u_\infty\in C(\overline{\Omega})$ is a solution to \eqref{Pinf} such that $0<u_\infty<1$ in $\overline{\Omega}$.
\end{cor}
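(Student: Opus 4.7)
The plan is to deduce the corollary by combining Theorem~\ref{NonLiouville2} with the counter-examples for the Liouville property supplied by \cite[Theorems~1.1, 1.3]{Brasseur2018}, essentially as an assembly argument. The statement asks only for the \emph{existence} of an offending quadruple $(K,\delta,J,f)$, so the core task is to identify an example from \cite{Brasseur2018} that also meets the working hypotheses of Theorem~\ref{TH:EXIST:ENTIRE} and of Theorem~\ref{NonLiouville2}.

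First I would invoke \cite[Theorems~1.1, 1.3]{Brasseur2018} to produce a smooth, simply connected, non-starshaped compact set $K\subset\R^N$, a quasi-Euclidean distance $\delta\in\mathcal{Q}(\overline{\Omega})$, a compactly supported kernel $J$, and a bistable nonlinearity $f$ for which \eqref{Pinf} admits a continuous stationary solution $\widetilde u_\infty$ with $0<\widetilde u_\infty<1$ in $\overline{\Omega}$. In other words, the Liouville type property underlying Theorem~\ref{TH:LIOUVILLE} \emph{fails} for this choice of datum. The main delicate point of the proof is really the bookkeeping at this stage: one has to check that the construction from \cite{Brasseur2018} can be made to simultaneously satisfy \eqref{C1}--\eqref{C5}, the regularity hypothesis $J\in\mathbb{B}_{1,\infty}^\alpha(\Omega;\delta)$ for some $\alpha\in(0,1)$, and the compatibility condition \eqref{fJdelta-pos}. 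I expect this to be essentially free, since \cite{Brasseur2018} constructs $K$, $J$ and $f$ with substantial flexibility (compactly supported smooth kernel, classical bistable profile, convex domain of freedom on $f'$), so all the additional quantitative requirements can be arranged by appropriately rescaling $J$ and $f$ by inessential positive constants without affecting the non-trivial solution $\widetilde u_\infty$ qualitatively. If needed, one also picks $\delta$ to be the Euclidean distance on $\overline{\Omega}$, which automatically yields the $J$-covering property via Remark~\ref{RE:COV:PROP} and simplifies the verification of the last three conditions in \eqref{C3}.

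Having secured a datum $(K,\delta,J,f)$ in the scope of Theorem~\ref{TH:EXIST:ENTIRE}, the second step is to apply that theorem to obtain the unique bounded entire solution $u(t,x)$ to \eqref{P} satisfying \eqref{unique}, together with an asymptotic profile $u_\infty\in C(\overline{\Omega})$ solving \eqref{Pinf} and realising \eqref{asympt:large:time}. By Theorem~\ref{NonLiouville2}, the identity $u_\infty\equiv1$ is equivalent to the Liouville property for \eqref{Pinf}. Since we have just arranged for that property to fail, we conclude $u_\infty\not\equiv 1$ in $\overline{\Omega}$.

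It remains to upgrade this to the strict two-sided bound $0<u_\infty<1$ in $\overline{\Omega}$. The lower bound is contained in Theorem~\ref{TH:EXIST:ENTIRE}. For the strict upper bound, I would argue by contradiction: if $u_\infty(x_0)=1$ for some $x_0\in\overline{\Omega}$, then since $u_\infty\leq 1$ and $Lu_\infty+f(u_\infty)=0$ with $f(1)=0$, a strong maximum principle for the nonlocal operator $L$ applies. More precisely, $x_0$ is an interior maximum, so expanding $Lu_\infty(x_0)=0$ forces $u_\infty\equiv 1$ on $\mathrm{supp}(J(\delta(\cdot,x_0)))$; iterating this through the sets $\Pi_j(J,x_0)$ of Definition~\ref{DE:CovProp} and using the $J$-covering property of $(\Omega,\delta)$ granted by \eqref{C3}, one propagates the identity $u_\infty\equiv 1$ to all of $\overline{\Omega}$, contradicting $u_\infty\not\equiv 1$. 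This yields $u_\infty<1$ everywhere and completes the proof.
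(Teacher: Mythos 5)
Your argument is correct and follows the route the paper itself has in mind: Corollary~\ref{COR:CONTR} is stated as a direct consequence of Theorem~\ref{NonLiouville2} together with the non-Liouville examples of \cite{Brasseur2018}, and you spell out exactly that deduction. One small remark on the final upgrade step: rather than re-deriving strictness via a strong maximum principle iteration through the sets $\Pi_j(J,x_0)$, you could get $u_\infty<1$ for free from Proposition~\ref{NonLiouville}, which shows that the profile $u_\infty$ appearing in \eqref{asympt:large:time} is the \emph{minimal} solution of \eqref{Pinf}; since $\widetilde u_\infty$ is a solution with $\widetilde u_\infty<1$ in $\overline\Omega$, minimality gives $u_\infty\le \widetilde u_\infty<1$ immediately. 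Your maximum-principle argument is nonetheless valid and self-contained, relying only on the $J$-covering property already assumed in \eqref{C3}, so either route closes the proof.
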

\begin{remark}
The distance $\delta\in\mathcal{Q}(\overline{\Omega})$ in Corollary~\ref{COR:CONTR} may be chosen to be either the Euclidean or the geodesic distance, see \cite{Brasseur2018}. See Figure~\ref{FIG:CONTREEX} for an example illustrating the conclusion of Corollary~\ref{COR:CONTR}. The obstacle that is pictured is the same as the one we constructed in \cite{Brasseur2018}.
\end{remark}
\begin{figure}[!ht]
\centering
\subfloat[$t=0$]{\includegraphics[height=2cm,width=4cm]{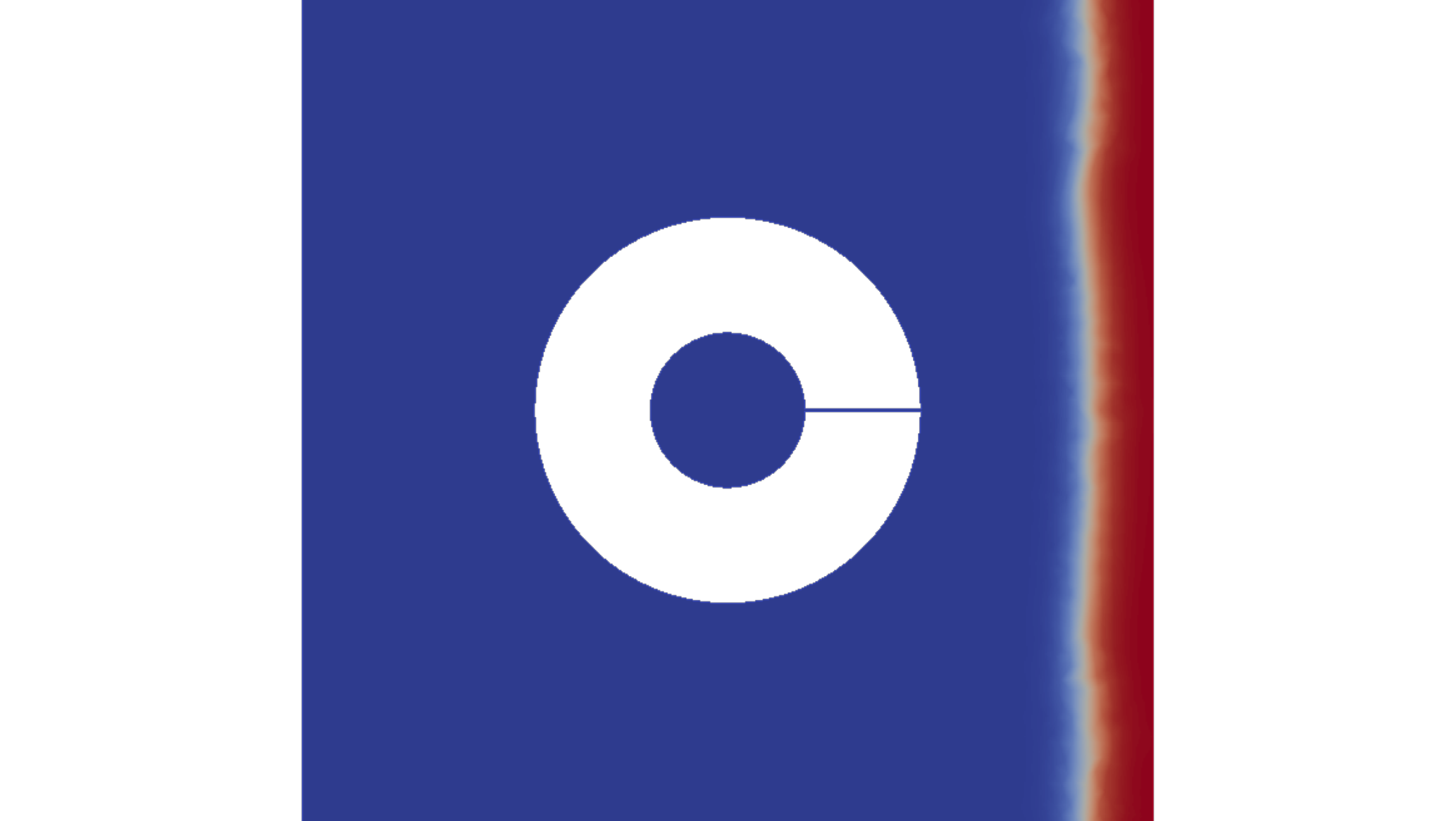}}
\subfloat[$t=40$]{\includegraphics[height=2cm,width=4cm]{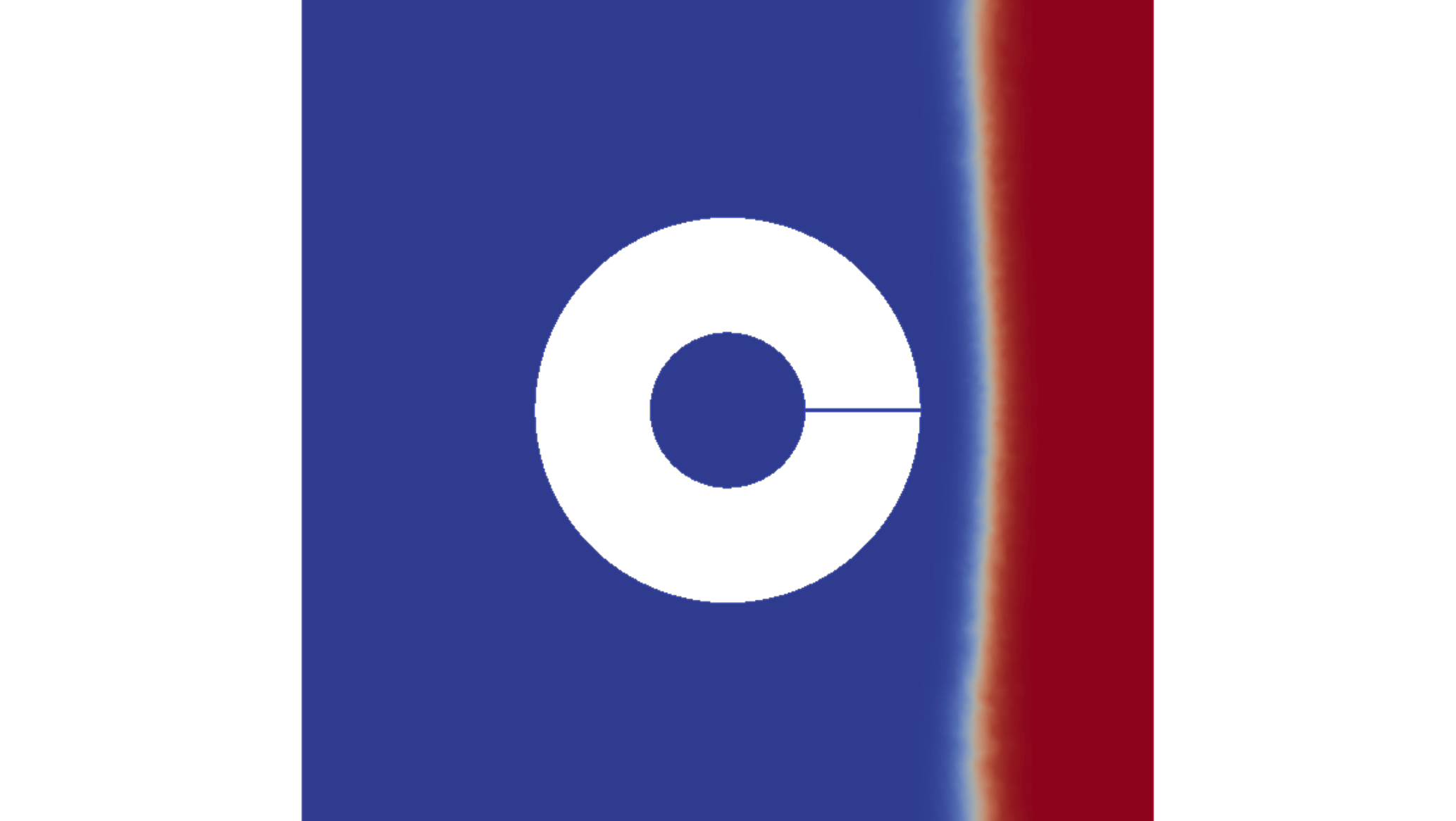}}
\subfloat[$t=80$]{\includegraphics[height=2cm,width=4cm]{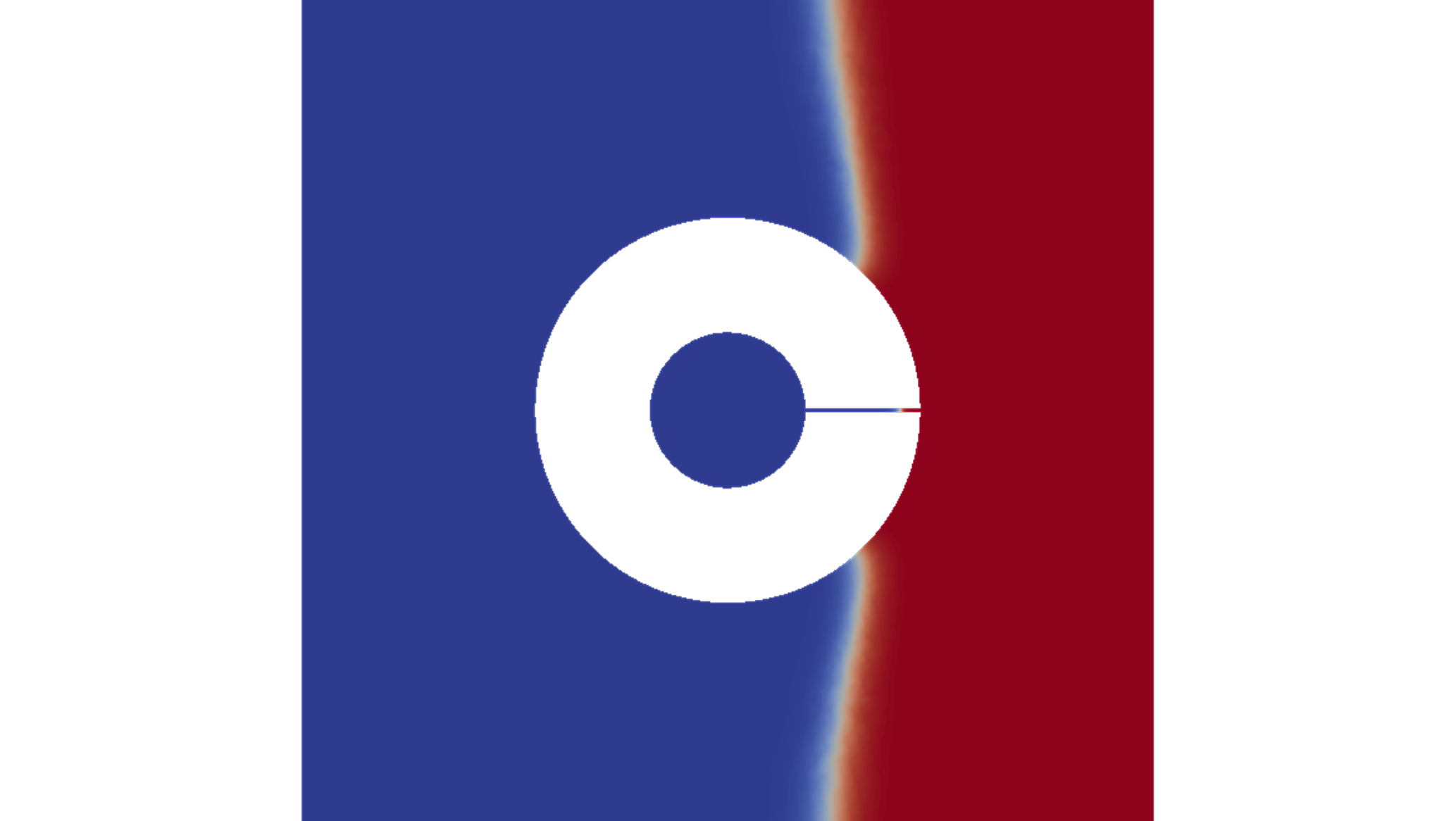}}
\subfloat[$t=120$]{\includegraphics[height=2cm,width=4cm]{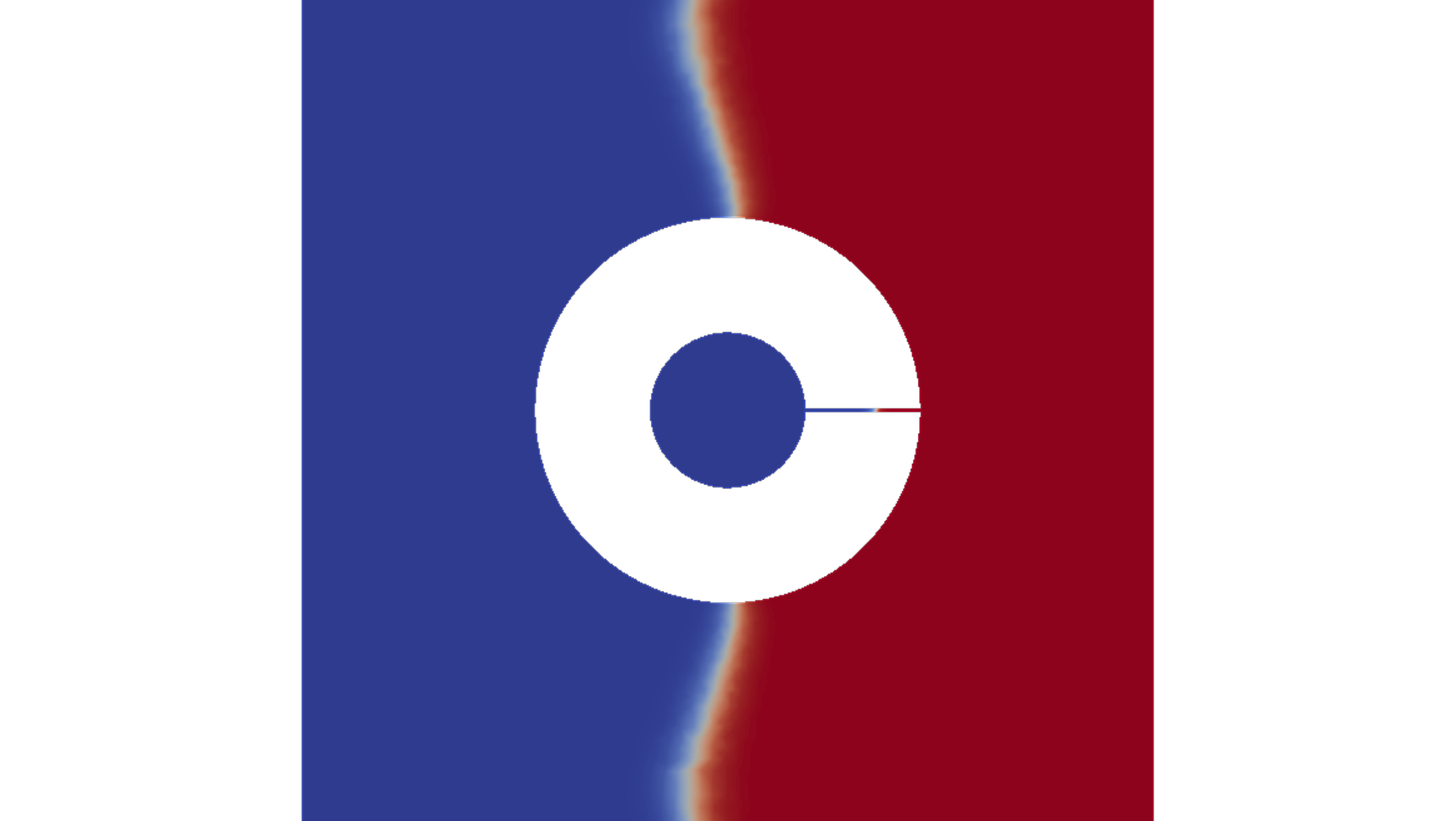}}

\subfloat[$t=160$]{\includegraphics[height=2cm,width=4cm]{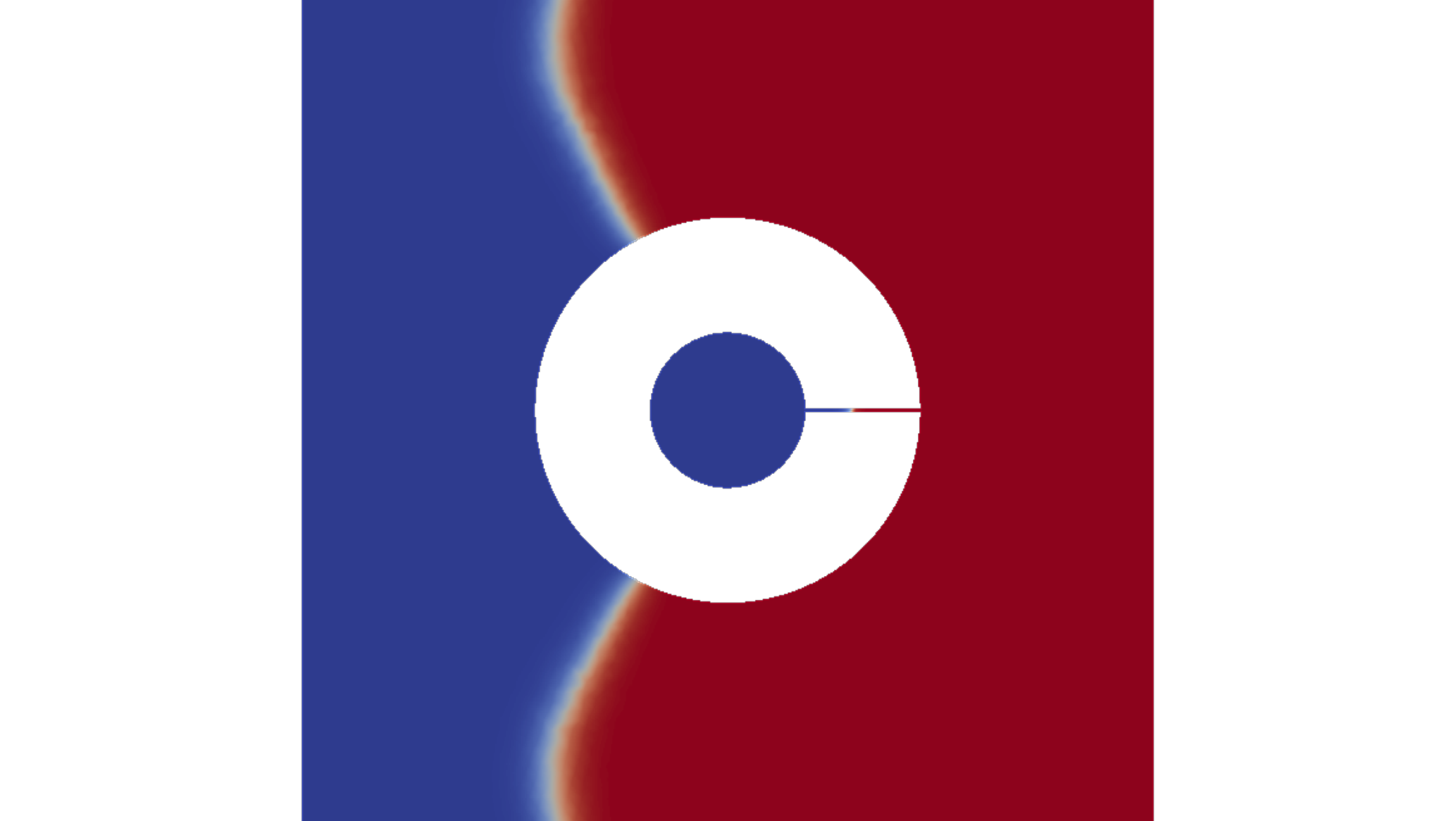}}
\subfloat[$t=200$]{\includegraphics[height=2cm,width=4cm]{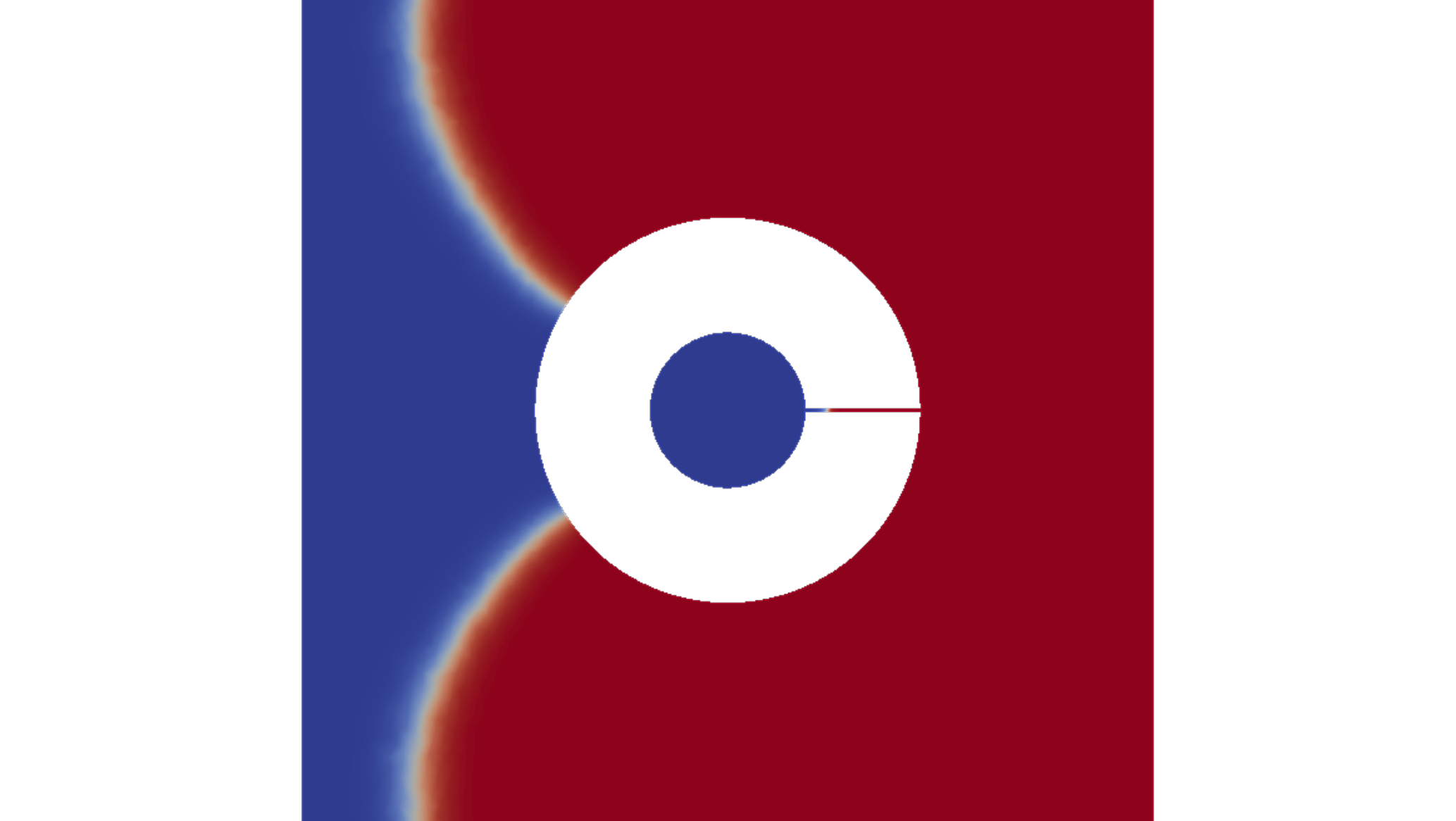}}
\subfloat[$t=240$]{\includegraphics[height=2cm,width=4cm]{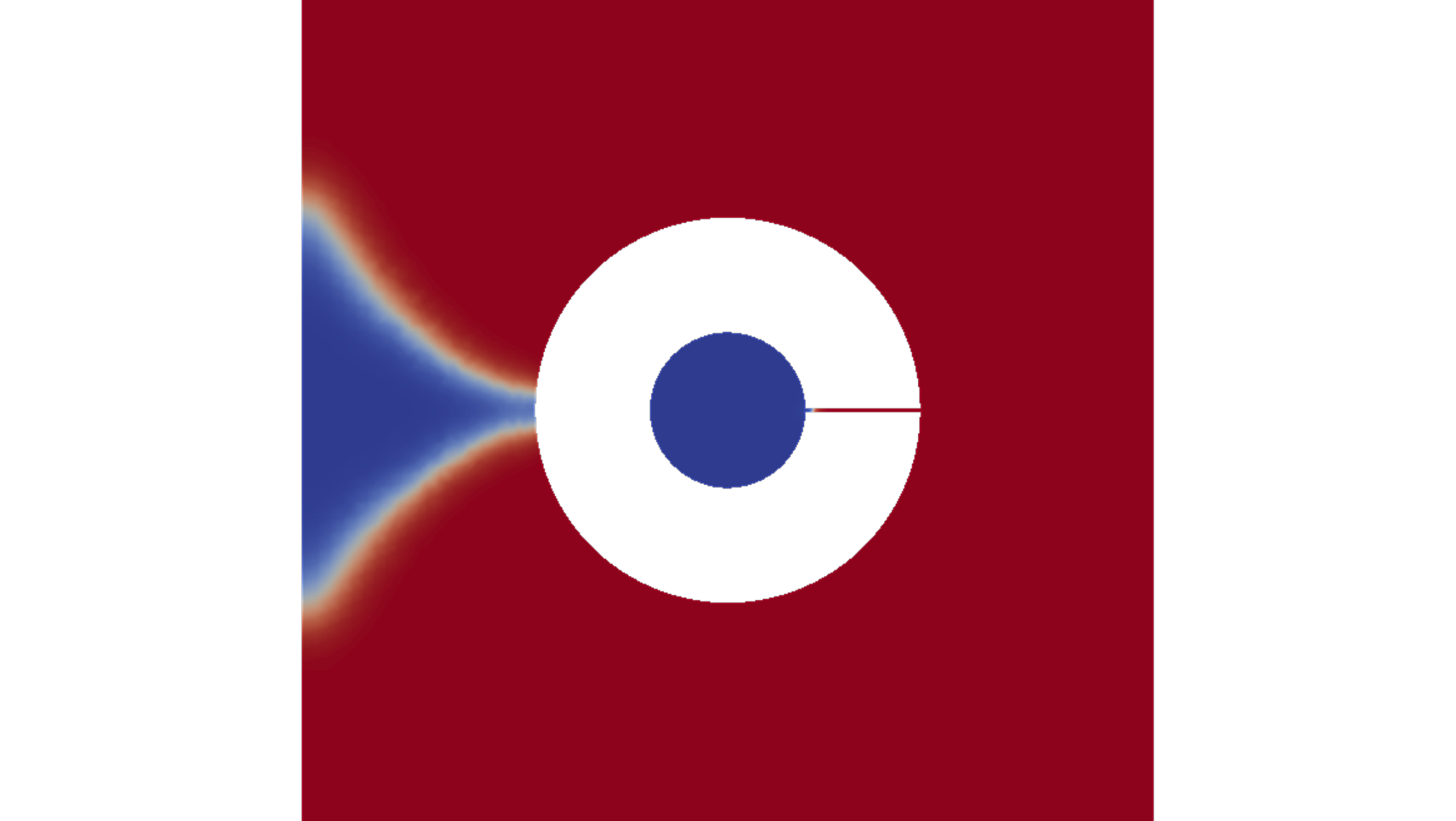}}
\subfloat[$t=280$]{\includegraphics[height=2cm,width=4cm]{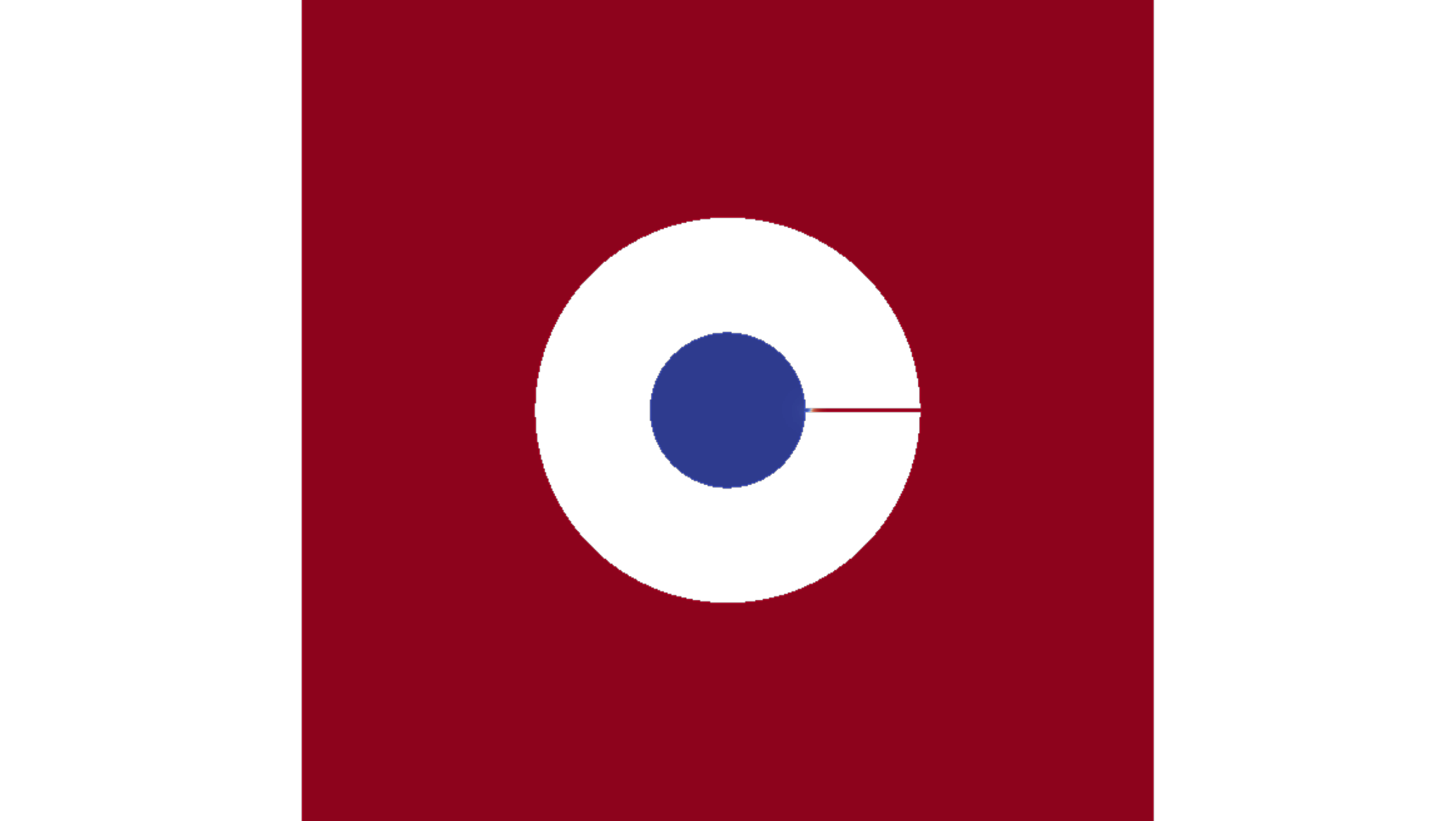}}
\caption{\footnotesize Numerical approximation of the solution of problem \eqref{P} at different times, starting from a Heaviside type initial density. For the simulation, $J(x)\sim e^{-|x|^2}\mathds{1}_{B_1}(x)$, the distance $\delta$ is the Euclidean distance and the obstacle $K$ is the annulus $\mathcal{A}(2,5)$ to which we have removed a small channel to make its complement connected. On the domain $\O:=[-11,11]^2\setminus K$, we perform an IMEX Euler scheme in time combined with a finite element method in space with a time step of $0.1$. We observe that the solution converges to a non-trivial asymptotic profile as $t\to\infty$.}\label{FIG:CONTREEX}
\end{figure}
\begin{remark}
If the convergence in \eqref{asympt:large:time} was known to be uniform in space, then the local uniform convergence in Theorems~\ref{TH:LIOUVILLE}-\ref{NonLiouville2} and in Corollary~\ref{COR:CONTR} could be replaced by a uniform convergence without modification in the proofs.
\end{remark}

\subsection{Global mean speed and transition fronts}

As we have stated in the previous subsection, the propagation may not always be \emph{complete} depending on the shape of the obstacle. That is, it may happen that the solution $u_\infty$ to the stationary problem \eqref{Pinf} arising in \eqref{asympt:large:time} is not identically $1$. An important question that remains to be addressed is the characterisation of the speed at which this solution propagates.
We prove that the entire solution $u(t,x)$ to \eqref{P} is a generalised transition front in the sense of Berestycki-Hamel \cite{Berestycki2007b} and that its global mean speed coincides with that of the planar front given by \eqref{C4}.

\begin{theo}\label{TH:FRONT}
Assume all the assumptions of Theorem~\ref{TH:EXIST:ENTIRE}. Let $u(t,x)$ be the unique entire solution to \eqref{P} satisfying \eqref{unique} and let $u_\infty\in C(\overline{\Omega})$ be the solution to \eqref{Pinf} such that \eqref{asympt:large:time} holds.
Then, $u(t,x)$ is a generalised transition almost-planar invasion front between $0$ and $u_\infty$ with global mean speed $c$, in the sense that
$$ \sup_{(t,x)\in\R\times\overline{\Omega},\,x_1+ct\geq A}|u(t,x)-u_\infty(x)|\underset{A\to\infty}{\longrightarrow} 0\ \text{ and } \sup_{(t,x)\in\R\times\overline{\Omega},\,x_1+ct\leq -A} u(t,x)\underset{A\to\infty}{\longrightarrow} 0. $$
\end{theo}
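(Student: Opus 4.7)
By the strict monotonicity $\partial_t u > 0$ from Theorem~\ref{TH:EXIST:ENTIRE}, the supremum in each limit is controlled by the values of $u$ along the boundary line $\{x_1 + ct = -A\}$ (resp.\ $= A$) of the respective region. It therefore suffices to prove: (i) $u(t_n, x_n) \to 0$ whenever $z_n := x_{n,1} + ct_n \to -\infty$, and (ii) $u_\infty(x_n) - u(t_n, x_n) \to 0$ whenever $z_n \to +\infty$ (using $u \leq u_\infty$, a consequence of monotonicity together with \eqref{asympt:large:time}). I would argue each by contradiction, translating and extracting a limit.

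Suppose (i) fails: $u(t_n, x_n) \geq \epsilon_0$ for some $\epsilon_0 > 0$ along a sequence with $z_n \to -\infty$. Consider the translates $v_n(s, y) := u(s + t_n, y + x_n)$. The regularity $u \in C^2(\R, C^{0,\alpha}(\overline{\Omega}))$ from Theorem~\ref{TH:EXIST:ENTIRE} furnishes equicontinuity, so up to extraction $v_n \to v$ locally uniformly, with $v \in [0,1]$ and $\partial_s v \geq 0$. Unless $t_n \to -\infty$ (in which case $|u(t_n, x_n) - \phi(z_n)| \to 0$ by \eqref{unique} gives $u(t_n, x_n) \to 0$ directly, contradiction), we must have $|x_n| \to \infty$. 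On any fixed bounded set $B$, the shifted points $y + x_n$ then lie outside $K$ and every segment $[y + x_n, z + x_n]$ with $y, z \in B$ lies in $\overline{\Omega}$; Definition~\ref{quasieuclid} then gives $\delta(y + x_n, z + x_n) = |y - z|$. Passing to the limit in the shifted \eqref{P}, using $\mathcal{J}^\delta \in L^\infty$ and the continuity condition in \eqref{C3}, $v$ solves the planar nonlocal equation $\partial_s v = J_{\mathrm{rad}} \ast v - v + f(v)$ on $\R \times \R^N$.

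The asymptotic \eqref{unique} now yields, for each $\eta > 0$, the estimate $v_n(s, y) \leq \phi(y_1 + cs + z_n) + \eta$ on $\{s \leq S_\eta - t_n\}$. If $t_n$ is bounded above (say $t_n \to t_\infty \in \R$), this window covers every half-line $\{s \leq s_0\}$ with $s_0 < S_\eta - t_\infty$ for $n$ large; sending $n \to \infty$, since $z_n \to -\infty$ and hence $\phi(y_1 + cs + z_n) \to 0$, we obtain $v(s, y) \leq \eta$ for $s \leq s_0$. Letting $\eta \to 0$ then forces $v(s, y) \to 0$ as $s \to -\infty$ uniformly in $y$. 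Comparison of $v$ with the spatially homogeneous supersolution $y_\eta$ solving $y_\eta' = f(y_\eta)$ with $y_\eta(s_0) = \eta \in (0, \theta)$ --- which decays exponentially as $s \to +\infty$ thanks to $f'(0) < 0$ in \eqref{C2} --- yields $v(s, y) \leq y_\eta(s) \to 0$, so $v \equiv 0$, contradicting $v(0, 0) = \lim u(t_n, x_n) \geq \epsilon_0$. Claim (ii) is handled symmetrically: $\phi \to 1$ as $z_n \to +\infty$ forces $v \to 1$ in the remote past, ODE comparison (starting from $1 - \eta > \theta$, where $f > 0$) yields $v \equiv 1$, and combining with $u_\infty(x_n) \to 1$ from \eqref{Pinf} produces the required contradiction.

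The main obstacle is the residual case $t_n \to +\infty$ (with $|x_n| \to \infty$), in which the past window $\{s \leq S_\eta - t_n\}$ of the translates shrinks out of every compact set and \eqref{unique} no longer directly pins down $v$. My plan is to use monotonicity to descend along $t \mapsto (t, x_n)$: let $\sigma_n \leq t_n$ be the unique time at which $u(\sigma_n, x_n) = \epsilon_0/2$ (resp.\ $u_\infty(x_n) - \epsilon_0/2$ in case (ii)), and show, by a squeezing argument adapted from the bistable wave technique used in \cite{Brasseur2019} --- based on super- and subsolutions of the form $\phi(x_1 + ct + \xi(t)) \pm q(t)$ with $\xi, q$ solving ODEs driven by the bistable structure $f'(0), f'(1) < 0$ --- that $\sigma_n$ must stay bounded. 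This reduces the case to the previous paragraph with $\epsilon_0$ replaced by $\epsilon_0/2$. Constructing super/subsolutions that handle simultaneously the obstacle $K$ and the non-Euclidean distance $\delta$ --- thereby controlling the error between the nonlocal operator $L$ and the planar one --- is the principal technical challenge.
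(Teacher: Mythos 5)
Your overall strategy (sequential reformulation, translation-compactness, extraction of a limit solving the planar equation, and ODE comparison via the bistable structure) is sound and correctly disposes of the subcases $t_n\to-\infty$ and $t_n$ bounded. The route you take there is in fact a bit different from the paper's: for the bounded-time subcases the paper invokes Proposition~\ref{PROP:LIMITE:X1} and Proposition~\ref{PROP:HORIZON} (the Fife--McLeod trick near the horizon) rather than a blow-up limit with ODE comparison. Both work; yours trades the explicit horizon lemma for a compactness argument, which is fine provided you justify the passage to the limit in $L$ carefully (the compact support of $J$ together with $\delta(x,y)\geq|x-y|$ and $|x_n|\to\infty$ is exactly what makes the localized convolution become $J_{\mathrm{rad}}\ast$, as you note).

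The genuine gap is the subcase $t_n\to+\infty$, which you flag yourself but do not close. This is not a residual technicality: it is the heart of the theorem. The ``squeezing'' sub/supersolutions $\phi(x_1+ct+\xi(t))\pm q(t)$ you propose cannot be applied globally, because the solution may stabilize to a nontrivial $u_\infty<1$ (Corollary~\ref{COR:CONTR}), so the solution is \emph{not} trapped between two shifted fronts up to small perturbations near the obstacle. The paper instead spends all of Section~\ref{SE:LargeTimeSLS} building the precise tools you would need: Lemma~\ref{LE:SUPERSOL:LEVEL} provides a global supersolution $w_\rho^+$ that yields the upper bound \eqref{superlevel:upper:est} (which settles your case (i) when $t_n\to+\infty$); Lemma~\ref{LE:sup:intermediaire}, Lemma~\ref{LE:intermediaire} and Claim~\ref{Claim:super} construct subsolutions that work in moving half-spaces to the left of the obstacle, first at subspeed $c'<c$ and then, by a delicate perturbation argument using a spatially-modulated amplitude $1-\eps_0 e^{\sigma(x_1+R_{\eps_0})}$, at the exact speed $c$. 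Your proposal identifies the right shape of the argument but leaves exactly this construction — which must simultaneously accommodate the obstacle, the quasi-Euclidean distance, and the possibly nontrivial limit profile — as an unresolved ``principal technical challenge.'' As written, the proof is therefore incomplete on the hardest case, and the missing content corresponds roughly to Proposition~\ref{PROP:SUPERLEVEL} in its entirety.
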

\begin{remark}
Notice that the presence of an arbitrary quasi-Euclidean distance, which introduces anisotropic features to the diffusion as well as a higher sensitivity to the geometry, does not slow down the propagation.
\end{remark}
\subsection{Some open problems}

Prior to proving our main results, let us mention some open questions which, we believe, are of interest.

We have considered, for simplicity, the case of compactly supported kernels only.
It would be of interest to investigate the validity of our results without this assumption. Although this remains an open question, we believe that our results remain true at least if $J$ decays faster than any exponential. However, if $J$ is too heavily tailed, then the asymptotic growth or decay of $\phi(x_1+ct)$ may no longer be of exponential type and new tools would be needed to conclude. Nevertheless, let us mention that Hamel, Valdinoci and the authors proved in \cite{Brasseur2019} that the Liouville type property holds for convex obstacles and the Euclidean distance as long as there exists an increasing subsolution to $J_1\ast\phi-\phi+f(\phi)=0$ in $\R$ connecting the two stable states $0$ and $1$, which is known to be the case even if $J$ decays slower than any exponential (e.g. if \eqref{C5} holds and $J_{1}$ admits a first order moment).

Another interesting problem, which would be of both mathematical and biological interest, is the study of the influence of the quasi-Euclidean distance on the large time dynamics. For example: given an obstacle $K$ and a datum $(J,f)$, may it happen that the solution successfully invades the whole of $\overline{\Omega}$ when $\delta$ is the Euclidean distance but fails to do so when $\delta$ is the geodesic distance?
Numerical simulations suggest that this phenomenon could indeed occur (see Figure \ref{FIG:CONTREEXG}), which leads us to formulate the following conjecture:
\begin{conj}\label{CONJ:delta}
There exists a compact obstacle $K\subset\R^N$ such that $\R^N\setminus K$ is connected, as well as a datum $(J,f)$ satisfying all the assumptions of Theorem~\ref{TH:EXIST:ENTIRE} such that $u_\infty\equiv 1$ in $\overline{\Omega}$ if $\delta$ is the Euclidean distance and such that $0<u_\infty<1$ in $\overline{\Omega}$ if $\delta$ is the geodesic distance (or any other quasi-Euclidean distance distinct from the Euclidean distance).
\end{conj}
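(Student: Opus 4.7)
By Theorem~\ref{NonLiouville2}, the conjecture is equivalent to exhibiting a common triple $(K,J,f)$ satisfying all the hypotheses of Theorem~\ref{TH:EXIST:ENTIRE} for both $\delta(x,y)=|x-y|$ and $\delta=d_{\overline{\Omega}}$ and such that the Liouville property for \eqref{Pinf} holds in the Euclidean case but fails in the geodesic case. The guiding strategy is to take $K$ to be a very thin, almost closed shell whose Euclidean width is much smaller than $\mathrm{diam}(\mathrm{supp}(J))$, so that the nonlocal operator ``sees through'' the obstacle in the Euclidean metric, while whose geodesic girth around an enclosed pocket is much larger than $\mathrm{diam}(\mathrm{supp}(J))$, so that the pocket is essentially decoupled from the exterior in the geodesic metric.

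A natural candidate is the family of thin annular shells with a single narrow opening,
\[
K_{\rho,\eta,\eta'}:=\big\{x\in\R^N:\rho\leq|x|\leq\rho+\eta\big\}\setminus\big\{x:\text{angle}(x,-e_1)<\eta'\big\},
\]
parametrised by $\rho\gg 1$, $\eta\ll 1$ and $\eta'\ll 1$. I would take a radially non-increasing kernel $J$ with $\mathrm{supp}(J)=[0,r]$, where $\eta<r<\rho\hspace{0.05em}\eta'$, and a bistable $f$ satisfying \eqref{C5} with $\int_0^1 f(s)\hspace{0.05em}\mathrm{d}s$ sufficiently large. For $\delta=|x-y|$, any two points on opposite sides of the thin wall are within Euclidean distance $\eta<r$, so $L^{\mathrm{eucl}}$ strongly couples $B_\rho$ with $\R^N\setminus B_{\rho+\eta}$. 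For $\delta=d_{\overline{\Omega}}$, any path from $B_\rho$ to $\R^N\setminus B_{\rho+\eta}$ that does not pass through the opening has length at least $\sim\rho\hspace{0.05em}\eta'>r$, so the restriction of $L^{\mathrm{geod}}$ to functions supported near $B_\rho$ (and away from the opening) essentially reduces, up to a small error, to a self-adjoint nonlocal operator on $B_\rho$ alone.

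For the geodesic regime, I would adapt the sub/supersolution construction of \cite{Brasseur2018}: pick $c\in(\theta,1)$ close enough to $1$ that $f(c)$ overwhelms the boundary defect of the cut-off $c\hspace{0.05em}\mathds{1}_{B_{\rho}}$ under $L^{\mathrm{geod}}$, promote this to a genuine stationary subsolution $\underline{u}$, and iterate monotonically between $\underline{u}$ and the supersolution~$1$ to obtain a stationary solution $\widetilde{u}_\infty$ of \eqref{Pinf} with $0<\widetilde{u}_\infty<1$ in $\overline{\Omega}$. For the Euclidean regime, I would argue by contradiction and perturbation as $\eta\to 0^+$. Assume, along some sequence $\eta_n\to 0$, the existence of a nontrivial stationary $u_\infty^{(n)}\in C(\overline{\Omega^{(n)}})$ solving \eqref{Pinf} with $u_\infty^{(n)}\to 1$ at infinity. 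Using \eqref{C3} and \eqref{fJdelta-pos}, derive uniform $C^{0,\alpha}$ estimates on $u_\infty^{(n)}$ on every compact set of $\R^N\setminus\overline{\{|x|=\rho\}}$, extract (via Arzel\`a--Ascoli) a subsequential limit $u^\star\in C(\R^N)$ with $0\leq u^\star\leq 1$ and $u^\star(x)\to 1$ as $|x|\to\infty$, and check by dominated convergence that $u^\star$ solves the full-space equation $J_{\mathrm{rad}}\ast u^\star-u^\star+f(u^\star)=0$ in $\R^N$. Since the only such solution in $(0,1]$ with $u^\star(x)\to 1$ at infinity is $u^\star\equiv 1$ (by \cite{Berestycki2017a,Brasseur2019}, using bistability of $f$), one arrives at a contradiction, proving the Liouville property for all sufficiently small $\eta$.

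The principal obstacle is making the Euclidean perturbation argument rigorous. The domain $\Omega^{(n)}$ itself depends on $\eta_n$ and degenerates onto all of $\R^N$ in the limit, so one has to obtain uniform compactness of $\{u_\infty^{(n)}\}$ on neighbourhoods of the vanishing obstacle and simultaneously prevent the ``defect'' $1-u_\infty^{(n)}$ from concentrating on $K_{\rho,\eta_n,\eta'}$. This requires, in particular, a uniform strict positivity bound $u_\infty^{(n)}\geq \kappa>0$ away from the obstacle, together with control of the contribution of integrals across the thin wall — where Euclidean-close points lie on opposite topological sides of a set of arbitrarily small measure. Devising such uniform estimates, and ruling out the emergence of a nontrivial limit profile concentrated along the limiting circle $\{|x|=\rho\}$, is in my view the crux of Conjecture~\ref{CONJ:delta} and the reason it remains open.
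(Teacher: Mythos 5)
The statement labelled Conjecture~\ref{CONJ:delta} is precisely that: a conjecture, which the paper does not prove and explicitly poses as an open problem, supported only by the numerical experiments shown in Figure~\ref{FIG:CONTREEXG}. Your proposal does not resolve it either — you offer a strategy and then candidly flag the step you cannot close. That honesty is appropriate, and your heuristic (a thin, nearly closed shell whose Euclidean width is below the kernel's jump range but whose geodesic circumference vastly exceeds it) is exactly the intuition the authors themselves articulate when motivating the conjecture. The reduction via Theorem~\ref{NonLiouville2} to the validity or failure of the Liouville property for \eqref{Pinf} in the two metrics is the correct first move, and for the geodesic side the construction of \cite{Brasseur2018} already produces a nontrivial stationary solution for annulus-with-channel obstacles, so that half is plausibly within reach.

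The Euclidean half, however, contains a logical gap beyond the uniform-compactness issue you raise. Suppose you succeed in extracting a limit $u^\star$ on $\R^N$ as $\eta_n\to0^+$ and showing $u^\star\equiv1$. That only gives $u_\infty^{(n)}\to1$ locally uniformly; it does not give $u_\infty^{(n)}\equiv1$ for any finite $n$, which is what the Liouville property requires — a sequence of genuinely nontrivial solutions can perfectly well converge to~$1$. Closing this would require a quantitative rigidity statement of the type ``a stationary solution of \eqref{Pinf} which is everywhere sufficiently close to~$1$ must coincide with~$1$'', i.e.\ a stability-of-Liouville result in the spirit of \cite{Bouhours2015} for the local Laplacian, and no nonlocal analogue is available in the paper's toolbox. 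Seen from this angle, it is really the same obstruction as the one you name: both reduce to the absence of a Liouville theorem that is robust under small perturbations of the obstacle. A small geometric slip as well: your lower bound $\sim\rho\hspace{0.05em}\eta'$ for the geodesic distance across the wall away from the opening is not correct (that quantity shrinks as $\eta'\to0$); the relevant arc length for points well away from the channel is of order $\rho$, which is what actually yields the decoupling for $\rho$ large — the qualitative conclusion survives, but the constraint $r<\rho\hspace{0.05em}\eta'$ you impose on $J$ is not the right one.
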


\begin{figure}[!ht]
\centering
\begin{tabular}{c|c}
\subfloat[$t=300$]{\includegraphics[height=2.45cm]{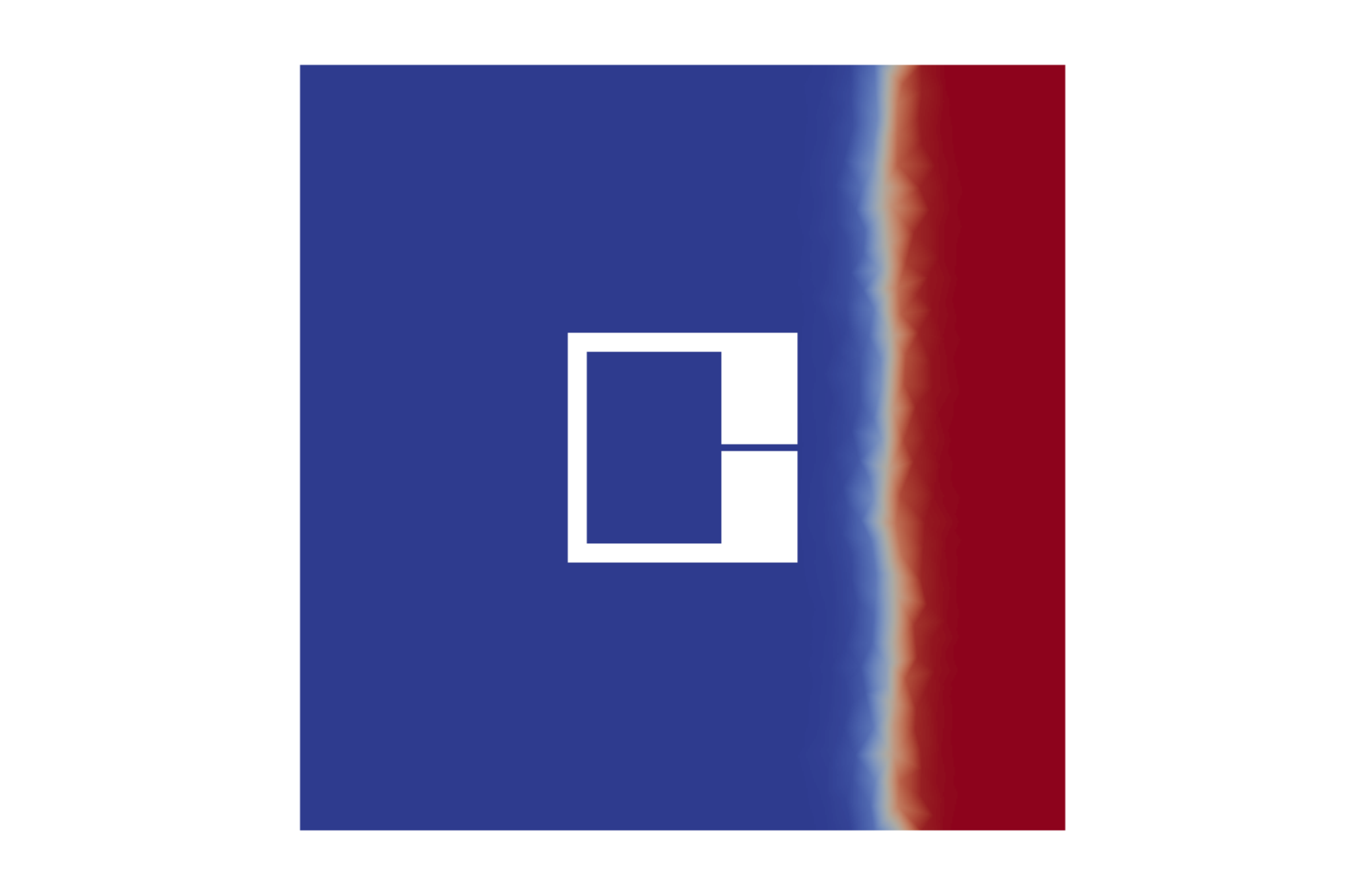}} \subfloat[$t=700$]{\includegraphics[height=2.45cm]{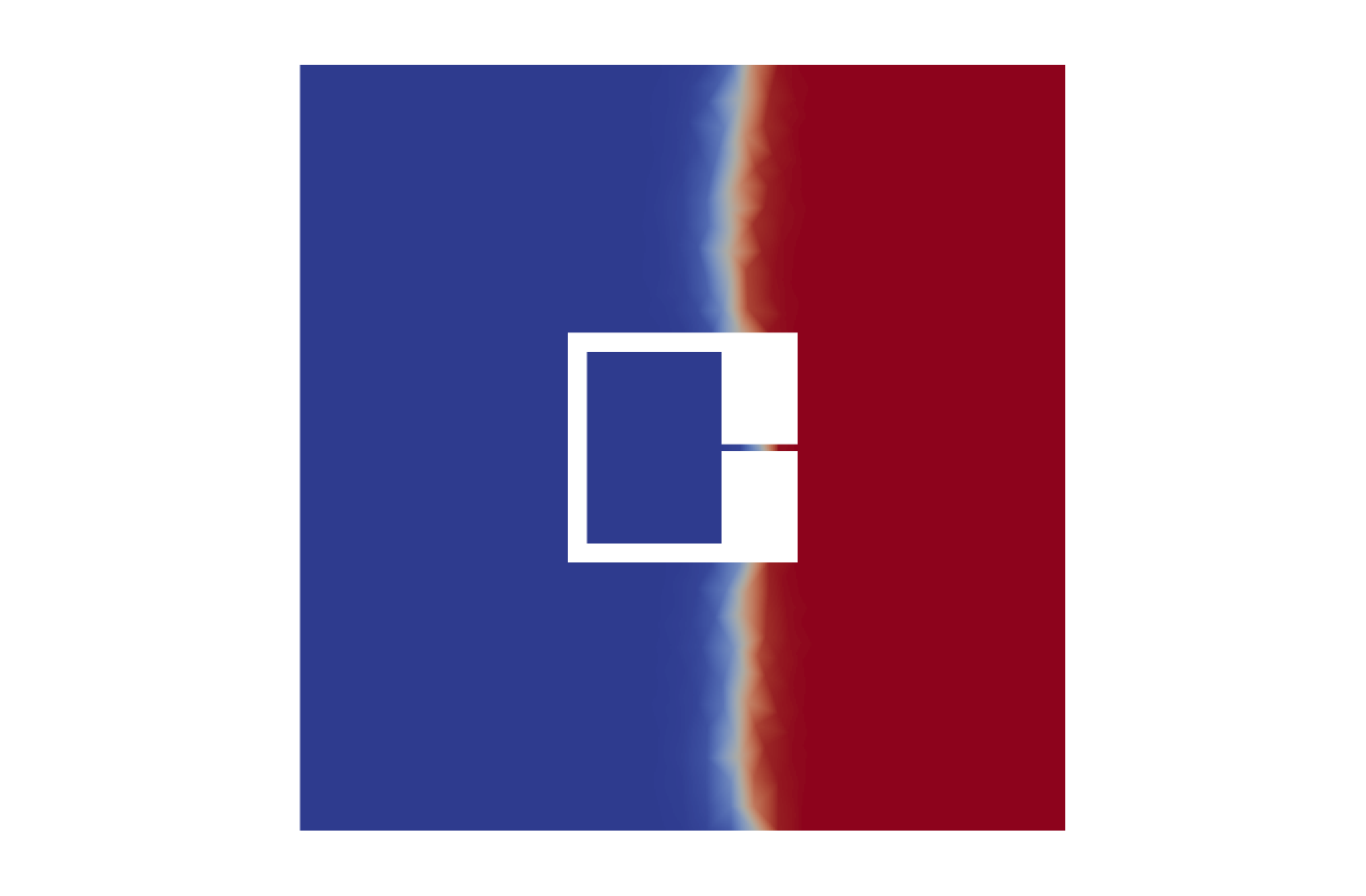}} & \subfloat[$t=300$]{\includegraphics[height=2.45cm]{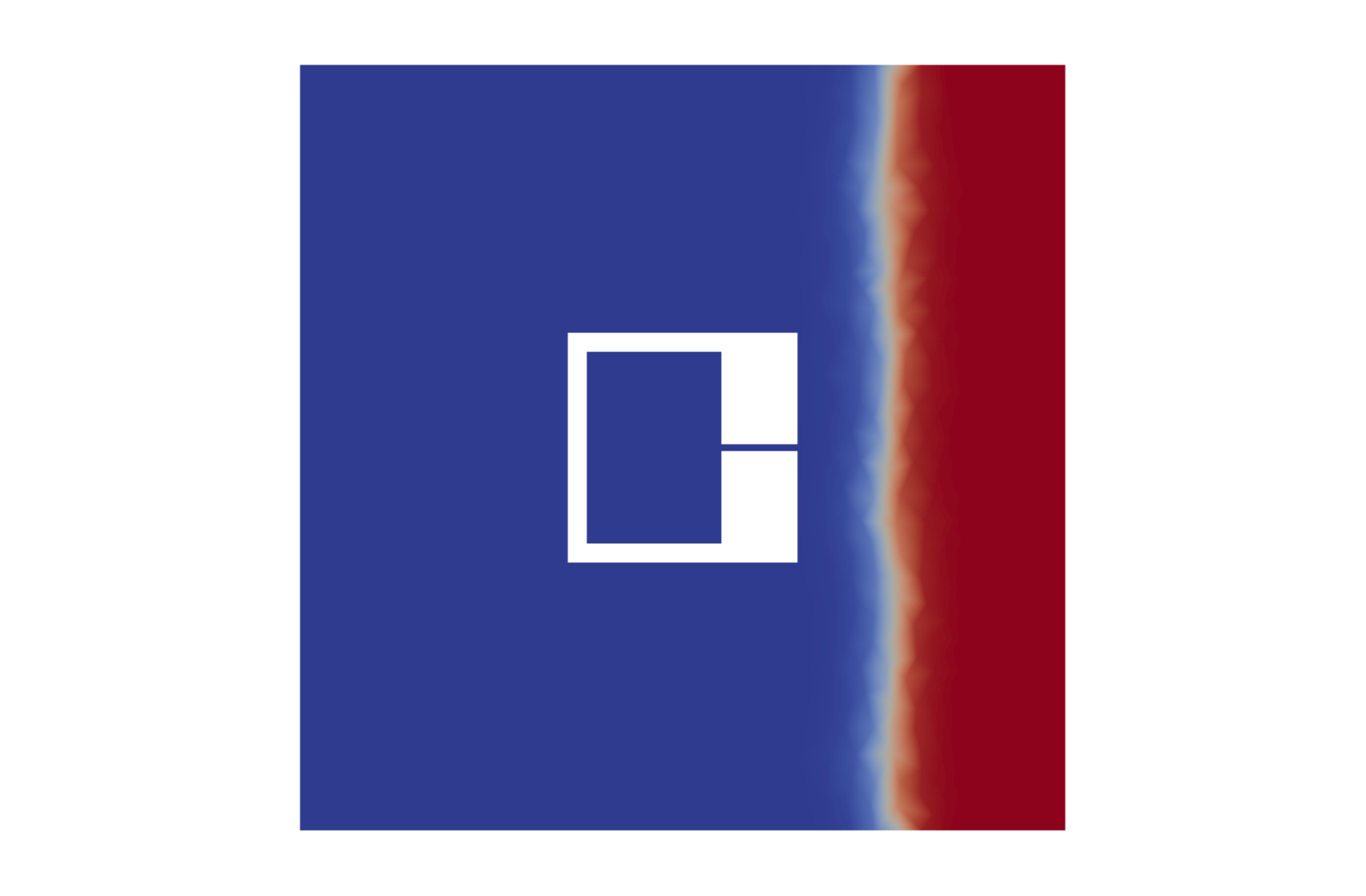}} \subfloat[$t=700$]{\includegraphics[height=2.45cm]{solg-t-700.pdf}} \\
\subfloat[$t=1100$]{\includegraphics[height=2.45cm]{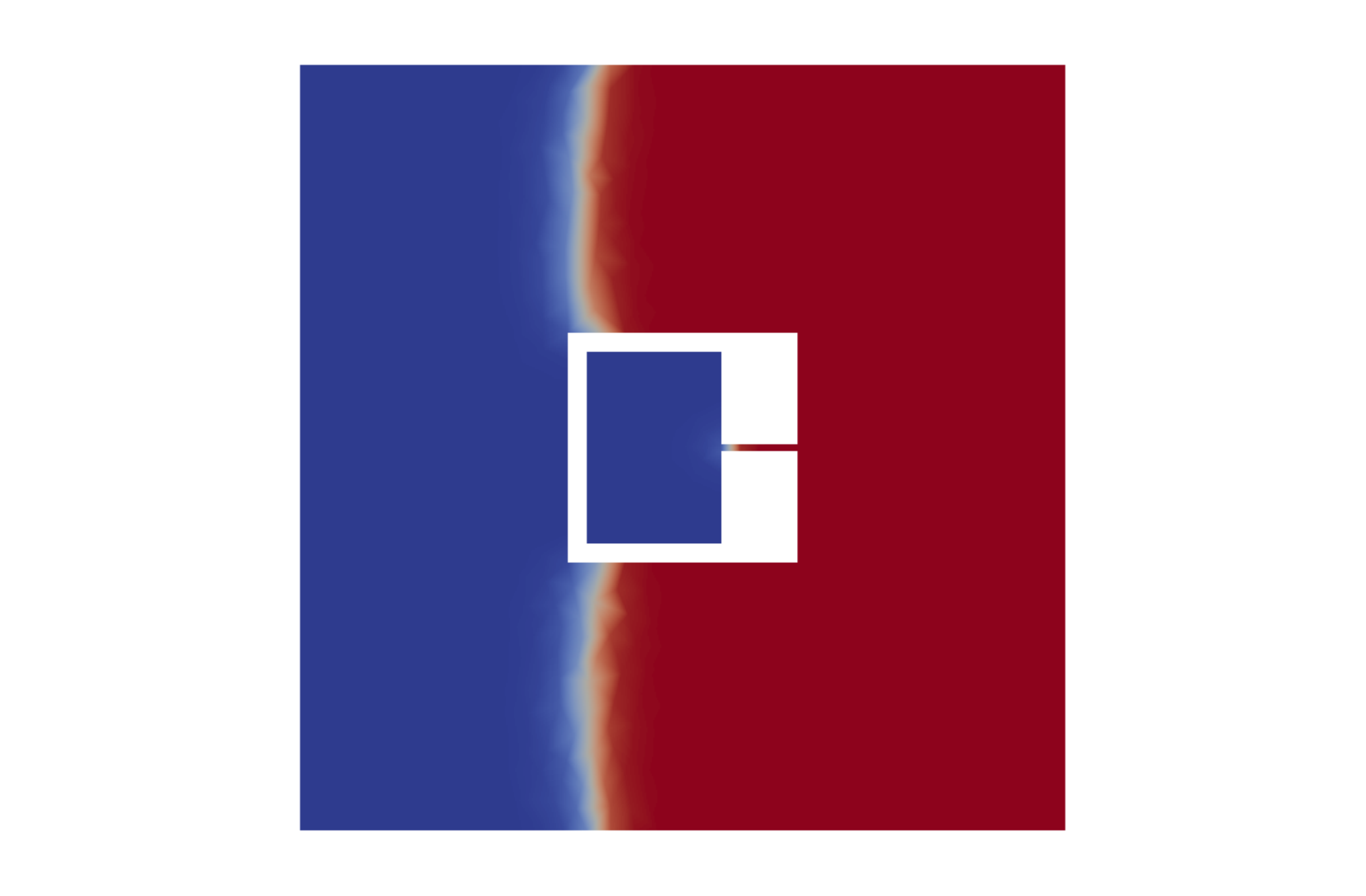}} \subfloat[$t=1200$]{\includegraphics[height=2.45cm]{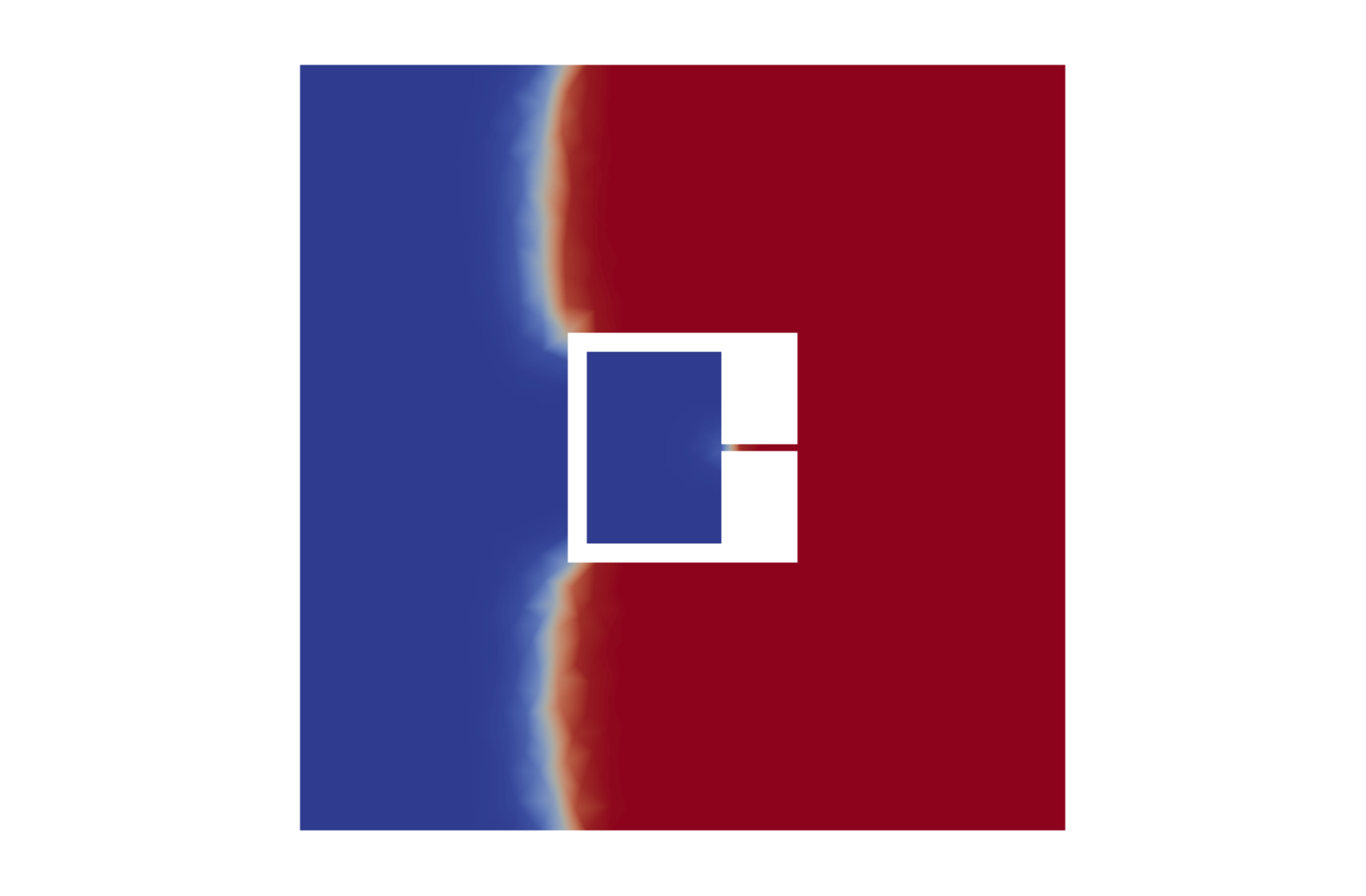}} & \subfloat[$t=1100$]{\includegraphics[height=2.45cm]{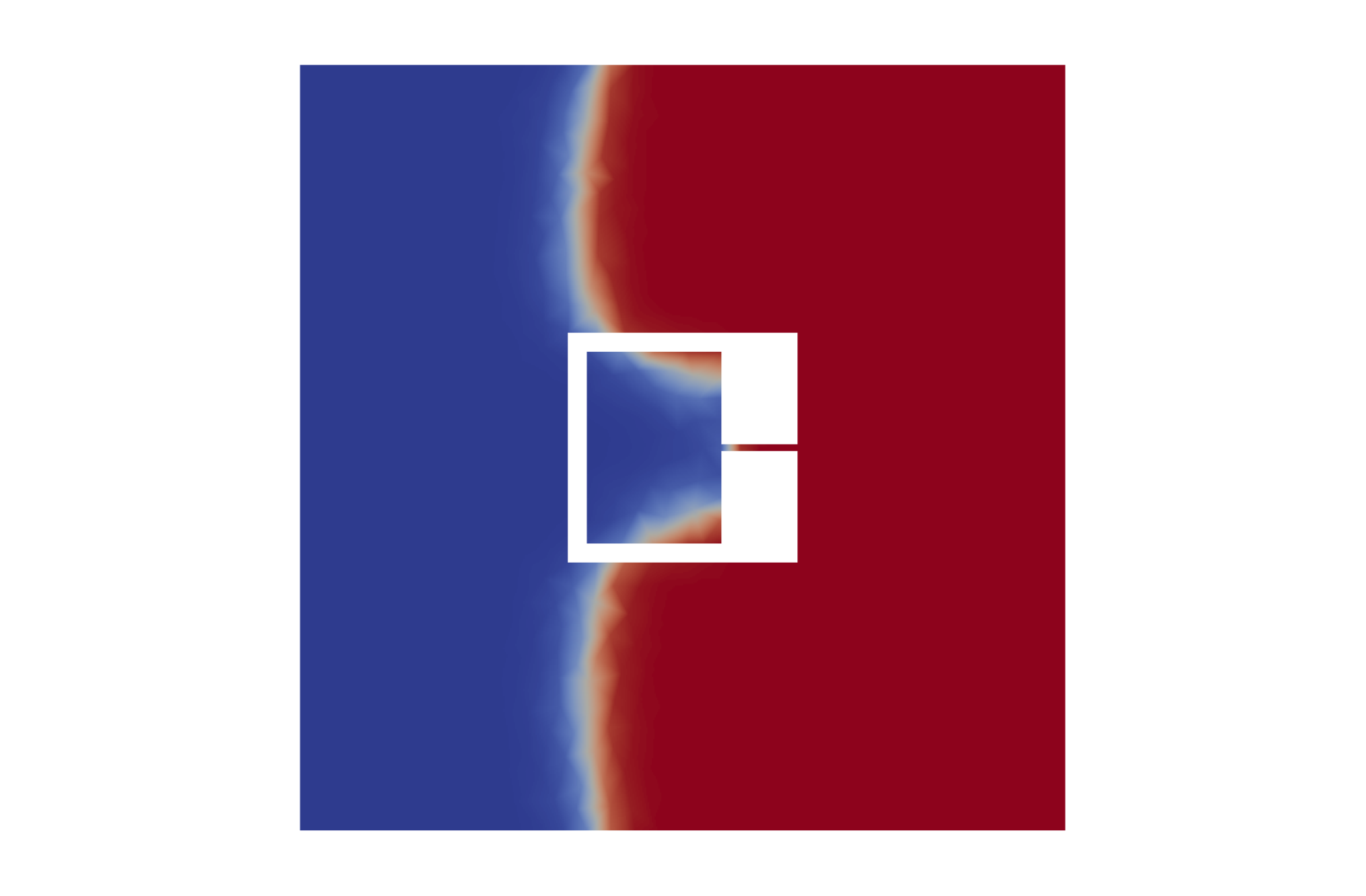}}
\subfloat[$t=1200$]{\includegraphics[height=2.45cm]{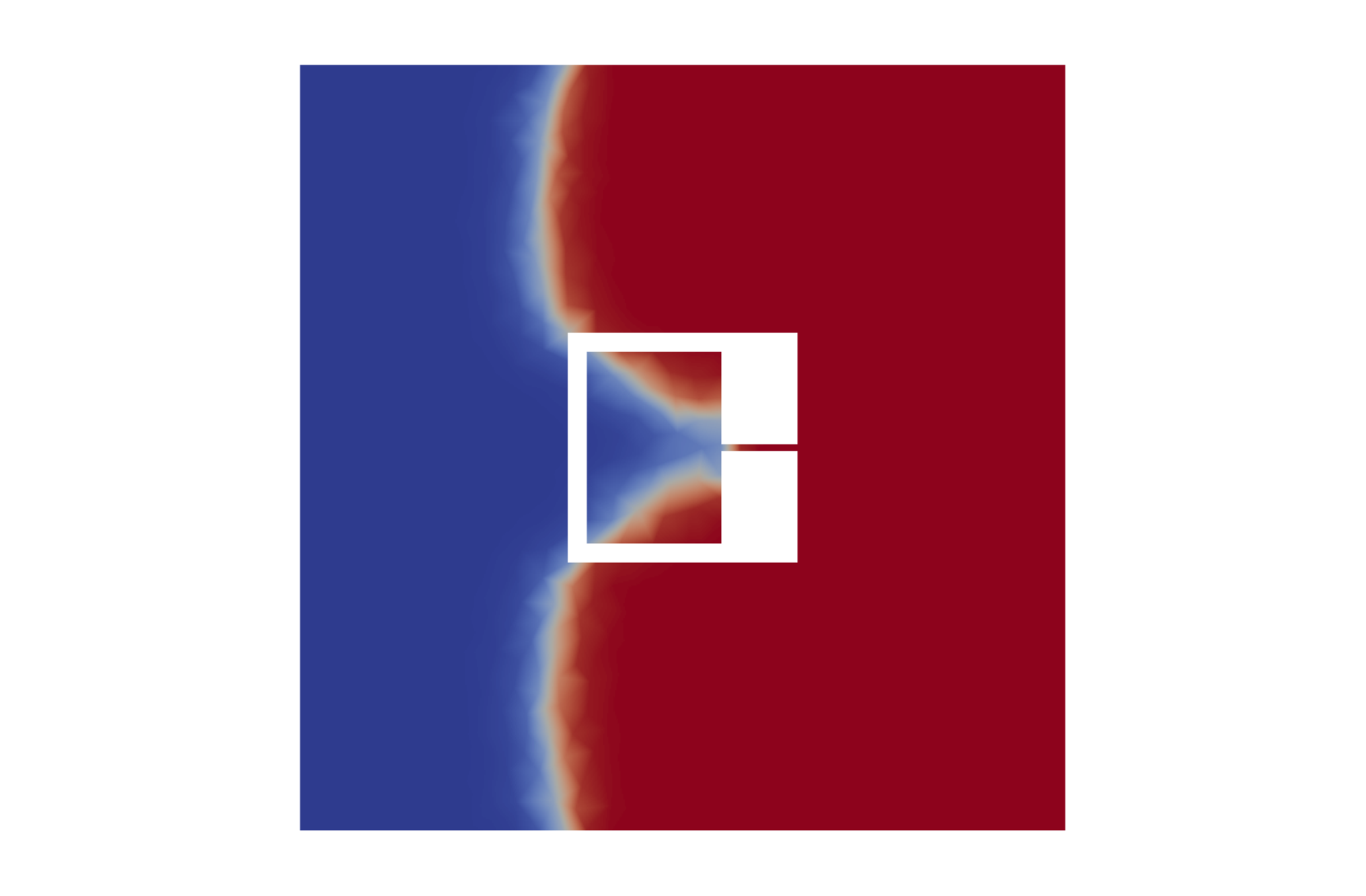}} \\
\subfloat[$t=1300$]{\includegraphics[height=2.45cm]{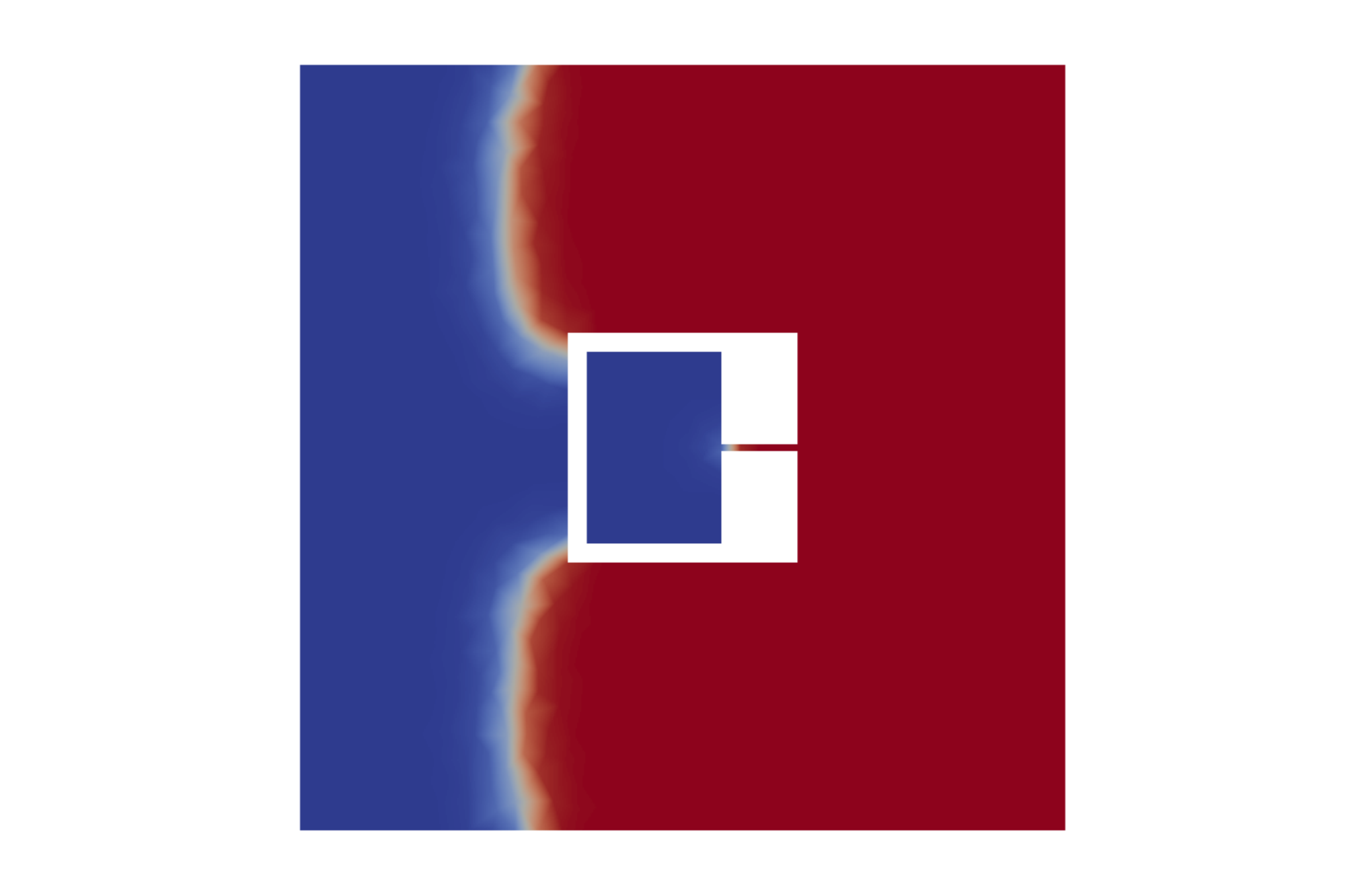}}
\subfloat[$t=1500$]{\includegraphics[height=2.45cm]{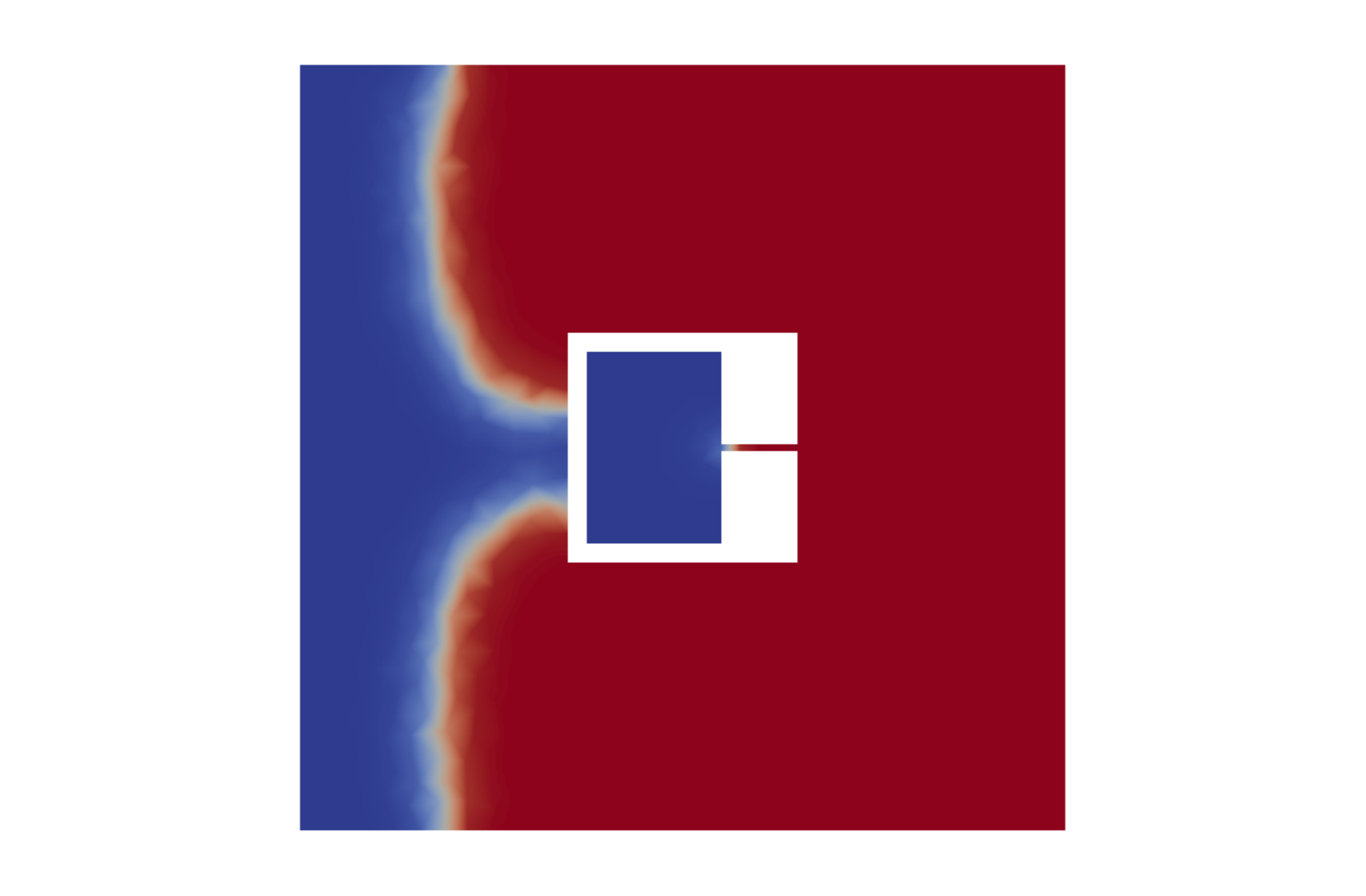}}
&
\subfloat[$t=1300$]{\includegraphics[height=2.45cm]{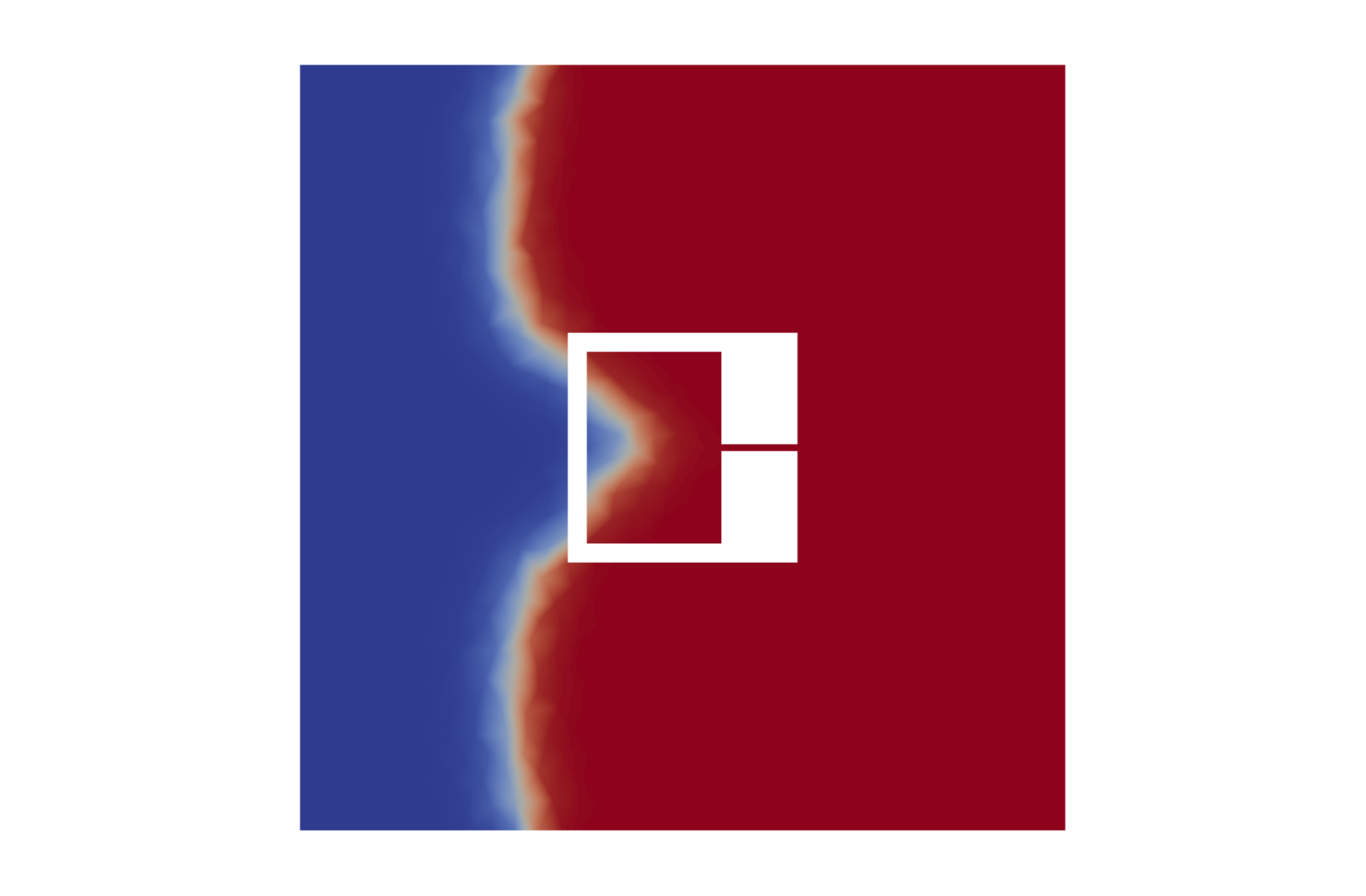}}
\subfloat[$t=1500$]{\includegraphics[height=2.45cm]{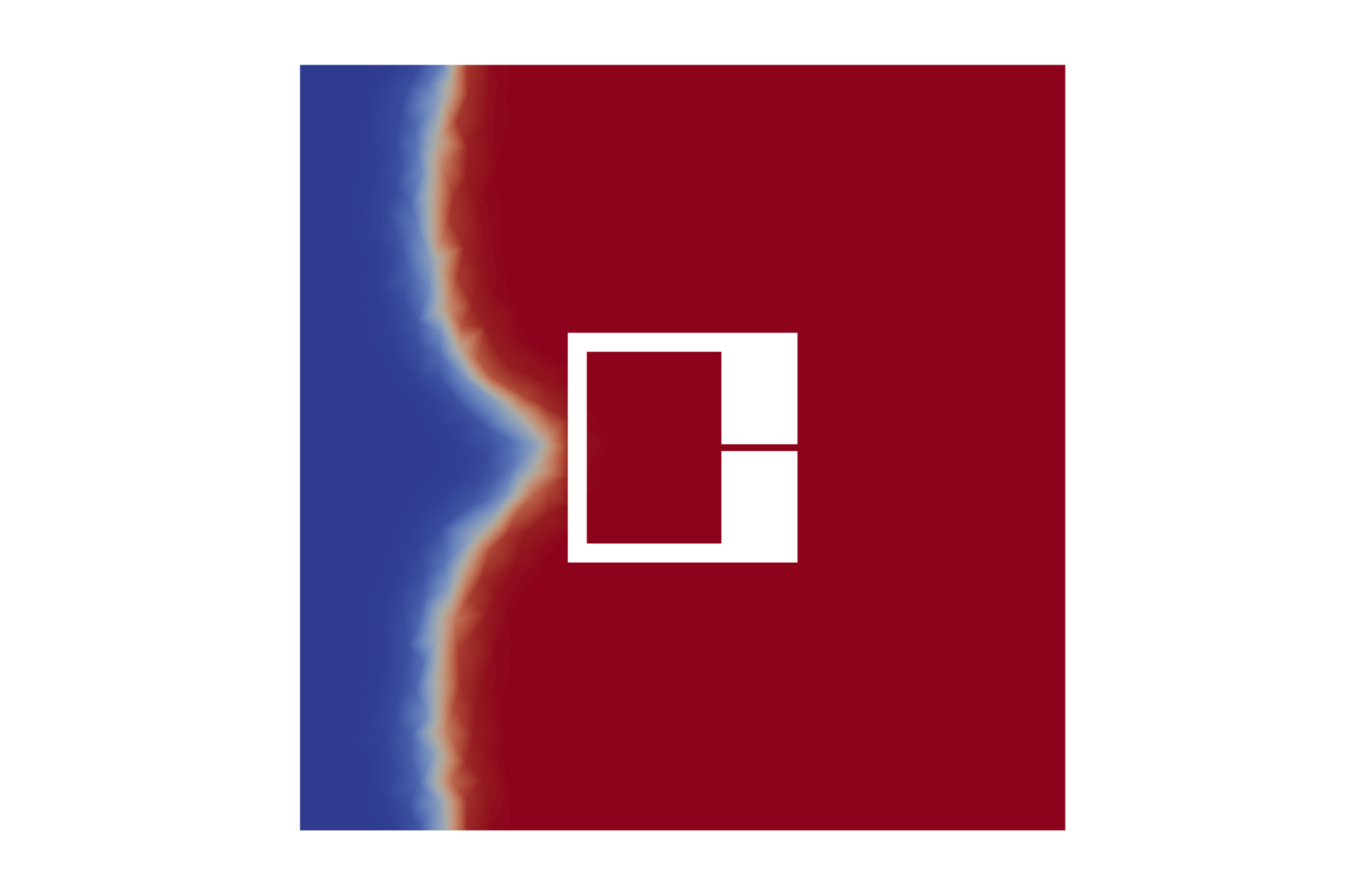}} \\
\subfloat[$t=1900$]{\includegraphics[height=2.45cm]{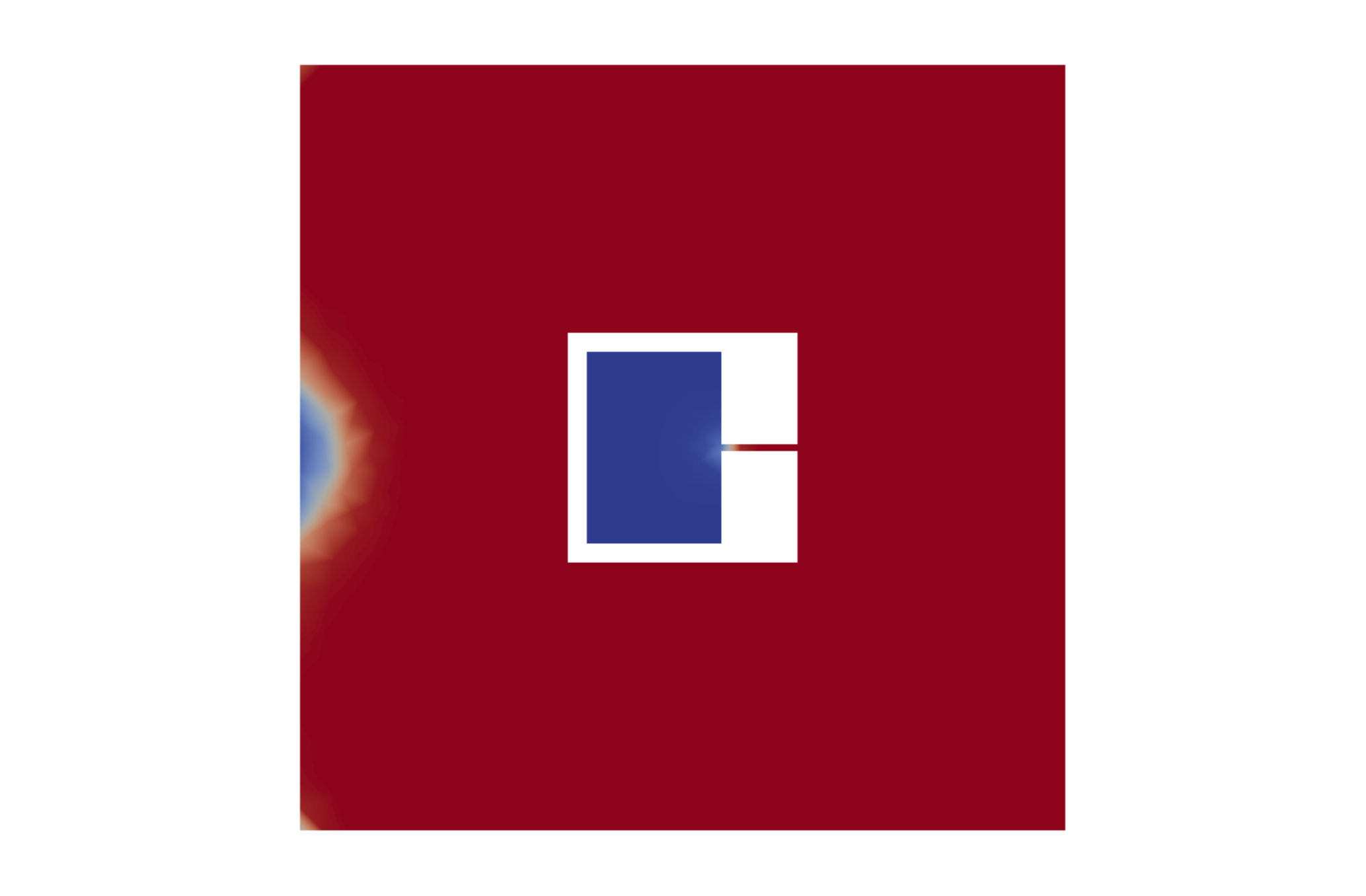}}
\subfloat[$t=2000$]{\includegraphics[height=2.45cm]{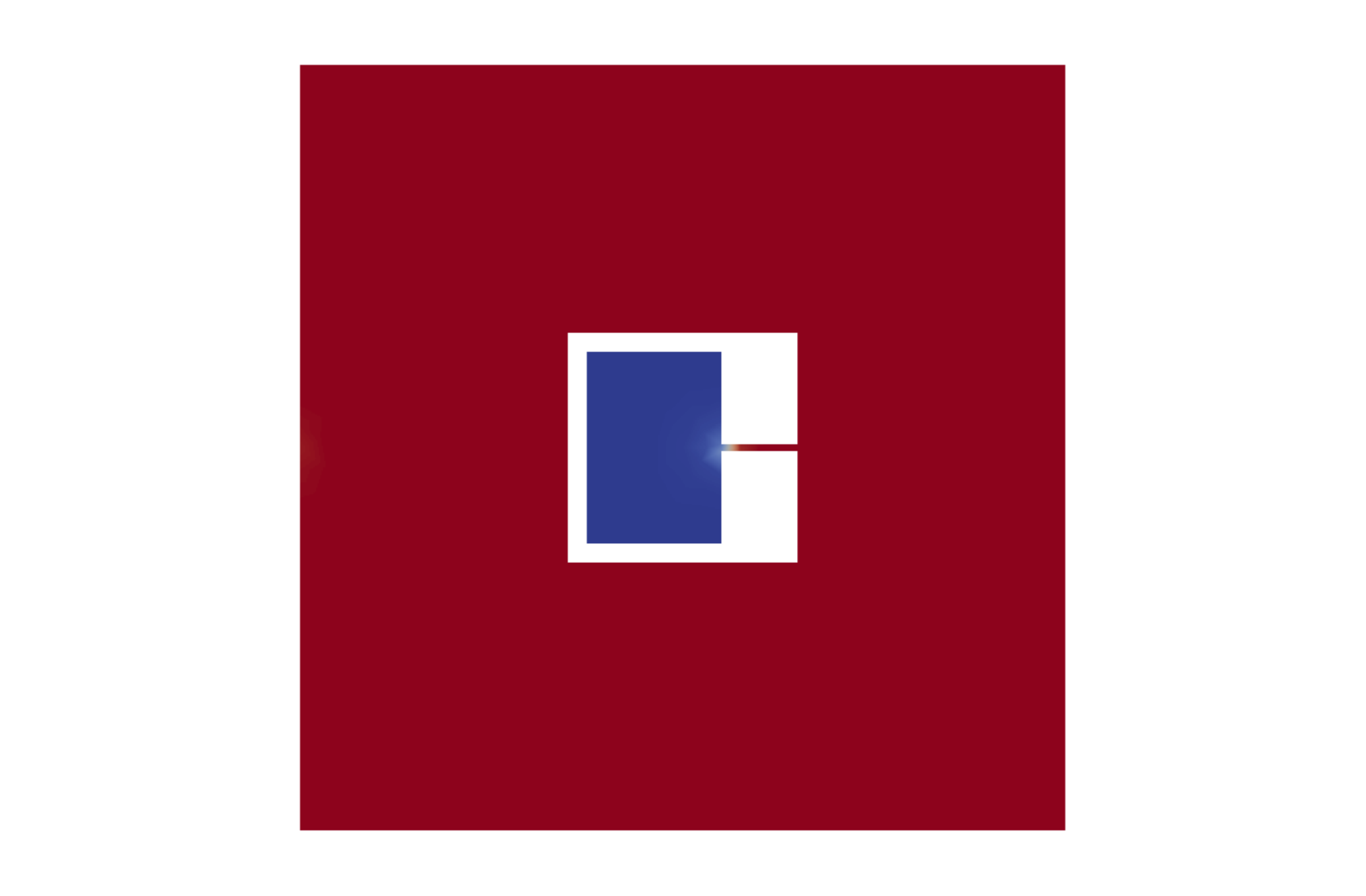}}
&
\subfloat[$t=1900$]{\includegraphics[height=2.45cm]{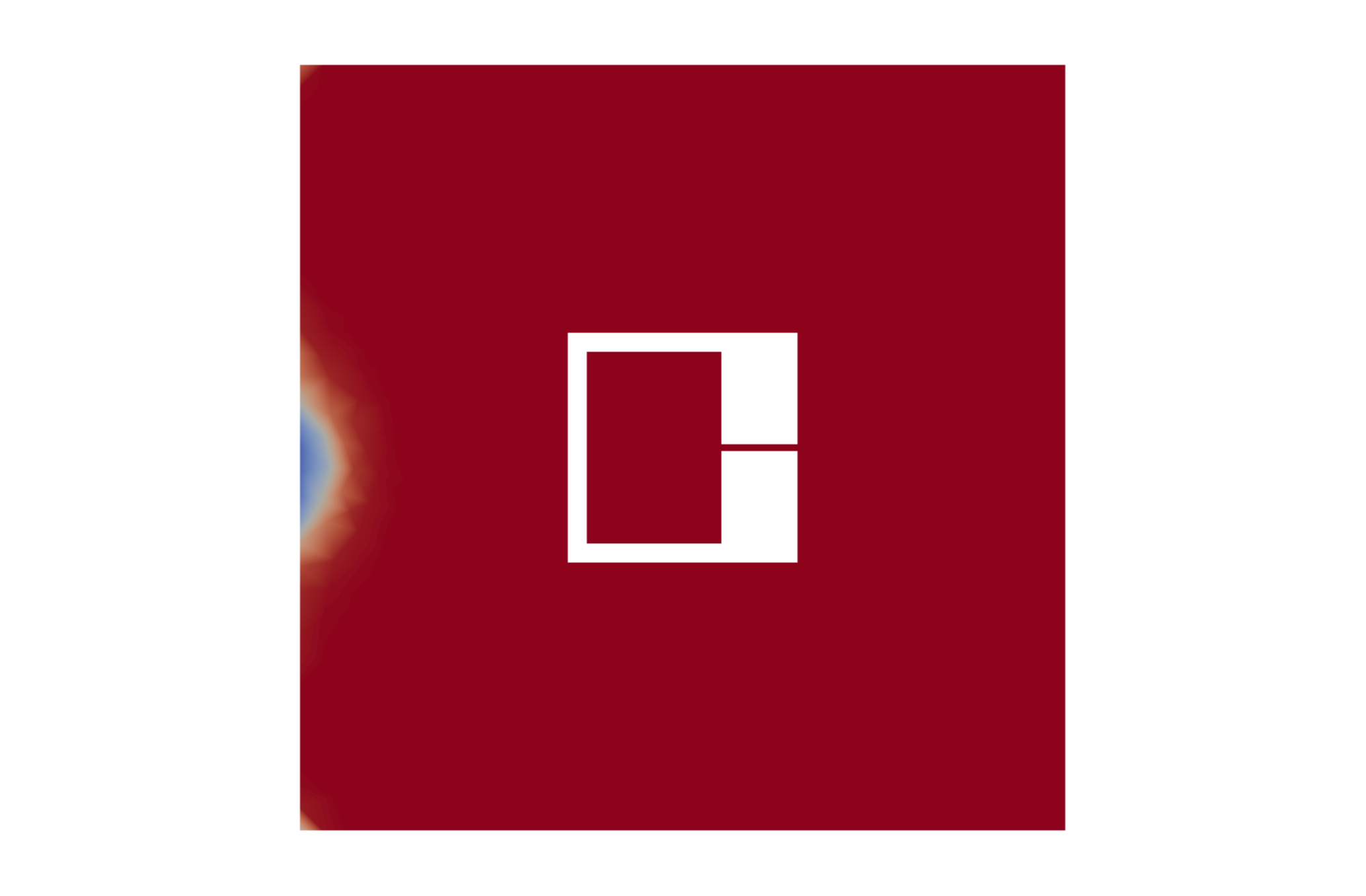}}
\subfloat[$t=2000$]{\includegraphics[height=2.45cm]{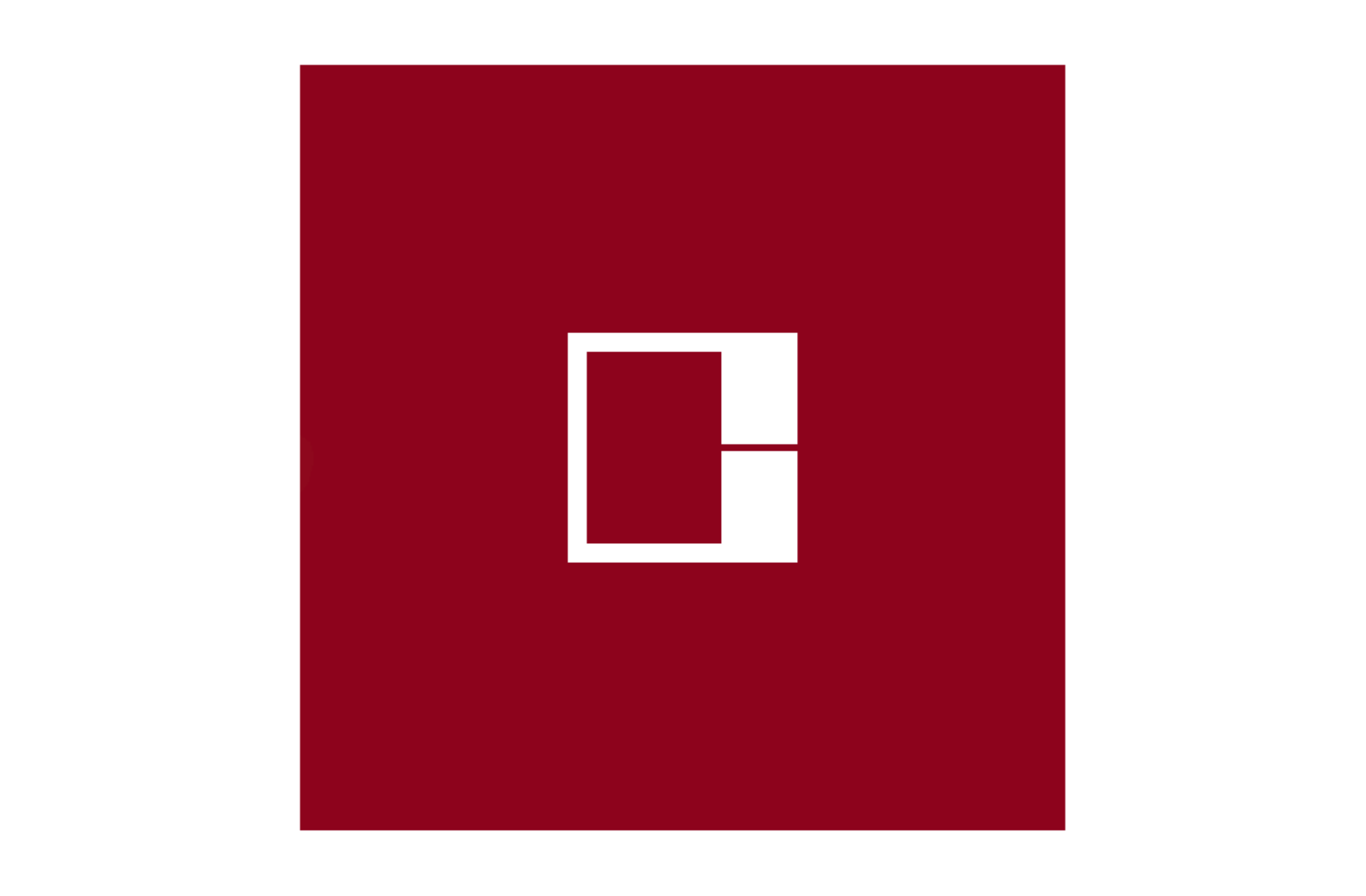}}
\end{tabular}
\caption{\footnotesize Numerical approximation of the solution of problem \eqref{P} at different times, starting from a Heaviside type initial density. For the simulation, $J(x)\sim e^{-|x|^2}\mathds{1}_{B_1}(x)$, the distance $\delta$ is either the \textbf{geodesic distance} (left rows (A, B, E, F, I, J, M, N)) or the \textbf{Euclidean distance} (right rows (C, D, G, H, K, L, O, P)) and the obstacle $K$ is a ``square annulus" (the difference between two axis-parallel rectangles) to which we have removed a small channel to make its complement connected. On the domain $\O:=[-5,5]^2\setminus K$, we perform an IMEX Euler scheme in time combined with a finite element method in space with a time step of $0.1$. The scheme is implemented in Python relying on the FEM library DOLFIN \cite{Logg2012} and the VisiLibity library \cite{Obermeyer2008} in order to evaluate the geodesic distance. We observe that the respective solutions converge to different asymptotic profiles as $t\to\infty$ with a significative difference in the qualitative behaviour of their dynamics. These simulations clearly highlight the importance of the distance in the final outcome of the propagation and on the transition behaviour.}\label{FIG:CONTREEXG}
\end{figure}

Note that, besides being supported by our numerical simulations, this conjecture is fairly reasonable. Indeed, it is quite natural to expect the solution to \eqref{P} to be more sensitive to the geometry of $K$ when $\delta$ is the geodesic distance.
In the case of the obstacle that is pictured in Figure~ \ref{FIG:CONTREEXG}, the individuals do not reach the rectangle inscribed by the obstacle in the same fashion: when $\delta$ is the Euclidean distance, they can simply ``jump" through $K$, but, when $\delta$ is the geodesic distance, they have no choice but to go through a small channel which considerably penalises the propagation.
So the success of the invasion may not depend on the interplay between $K$, $J$ and $f$ \emph{only} but on the interplay between $K$, $J$, $f$ \emph{and} $\delta$.

A rigorous proof of Conjecture~\ref{CONJ:delta} would provide us with a very interesting result from the point of view of ecology. Indeed, it would suggest that, for a given ecological niche, the success of an invasion may crucially depend on the characteristic trait of the species determining its perception of the environment, namely the ``effective distance" between locations.

Lastly, as in \cite{Berestycki2009d}, it would be interesting to determine whether the convergence in \eqref{asympt:large:time} is uniform, namely whether it holds that $\lim_{t\to+\infty}\sup_{x\in\overline{\Omega}}|u(t,x)-u_\infty(x)\hspace{0.1em}\phi(x_1+ct)|=0$.
Based on our numerical simulations and on the analogy with the local case, we conjecture that this holds true in general.
We have been recently aware of some recent works \cite{Qiao2020} in this direction dealing with convex obstacles in the particular case where $\delta$ is the Euclidean distance.

\subsection{Organization of the paper}

In the following Section~\ref{SE:CAUCHY}, we focus on the properties of the Cauchy problem associated to \eqref{P}. This will pave the way towards the construction of an entire solution to \eqref{P}. There, we will establish various comparison principles, existence and uniqueness results as well as some parabolic-type estimates. Section~\ref{SE:APRIORI} deals with the a priori regularity of entire solutions. Indeed, it is not clear whether parabolic-type estimates hold for entire solutions, but we prove that, in some circumstances, such estimates can be shown to hold. In Section~\ref{SE:BEFORE}, relying on the results collected in the previous sections and on a sub- and super-solution technique, we prove the existence and uniqueness of an entire solution converging uniformly to $\phi(x_1+ct)$ as $t\to-\infty$. Next, in Section~\ref{SE:LARGETIME}, we study the local behaviour of the entire solution in the large time limit. In Section~\ref{SE:GEOMETRY}, we study the influence of the geometry of $K$ on the large time behaviour of the entire solution. Finally, in Sections~\ref{SE:LargeTimeSLS} and \ref{SE:EntireSol} we respectively study the long time behaviour of the entire solution and prove that this entire solution is actually a generalised transition wave.


\section{The Cauchy problem}\label{SE:CAUCHY}

This section is devoted to the study of the Cauchy problem
\begin{align}
\left\{
\begin{array}{rll}
\partial_tu\!\!\!&=Lu+f(u) & \text{a.e. in }(t_0,\infty)\times\Omega, \vspace{3pt}\\
u(t_0,\cdot)\!\!\!&=u_0(\cdot) & \text{a.e. in }\Omega,
\end{array}
\right. \label{CauchyPB}
\end{align}
where $t_0\in\R$ and $u_0$ is a given data. The study of \eqref{CauchyPB} is essential to our purposes in that it shall pave the way towards the construction of an entire solution to \eqref{P}.

We will establish various comparison principles, existence and uniqueness results for \eqref{CauchyPB} as well as some a priori estimates under appropriate assumptions on the datum $(J,f)$ and the initial datum $u_0$.

\subsection{Some comparison principles}

In this section, we prove several comparison principle that fit for our purposes.

\begin{lemma}[Comparison principle]\label{LE:COMPARISON}
Assume \eqref{C1}, \eqref{C3} and suppose that $f\in C_{\mathrm{loc}}^{0,1}(\R)$. Let $t_0,t_1\in\R$ with $t_0<t_1$ and let $u_1$ and $u_2$ be two bounded measurable functions defined in $[t_0,t_1]\times\Omega$ and such that, for all $i\in\{1,2\}$,
\begin{align*}
u_i(t,\cdot),\,\partial_tu_i(t,\cdot)\in C(\Omega) \text{ for all }t\in(t_0,t_1] \text{ and } u_i(t_0,\cdot)\in C(\Omega),  
\end{align*}
that
\begin{align}
u_i(\cdot,x)\in C([t_0,t_1])\cap C^1((t_0,t_1]) \text{ for all }x\in \Omega,  \label{H:Ct}
\end{align}
and that
\begin{align}
\sup_{(t,x)\in (t_0,t_1]\times\Omega}\,|\partial_tu_i(t,x)|<\infty. \label{unif:bdd:deriv}
\end{align}
Suppose that
\begin{align}
\left\{
\begin{array}{rll}
\partial_tu_1-Lu_1-f(u_1)\!\!&\geq \,\partial_tu_2-Lu_2-f(u_2) & \mbox{in }(t_0,t_1]\times\Omega, \vspace{3pt}\\
u_1(t_0,\cdot)\!\!&\geq \,u_2(t_0,\cdot) & \mbox{in }\Omega.
\end{array}
\right. \label{COMPARAISON:LEMME}
\end{align}
Then,
$$u_1(t,x)\geq u_2(t,x)\ \text{ for all }(t,x)\in [t_0,t_1]\times\Omega.$$
\end{lemma}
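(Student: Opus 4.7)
\textbf{Plan for the proof of Lemma~\ref{LE:COMPARISON}.}

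I would set $W := u_2 - u_1$ and aim to show $W \le 0$ on $[t_0,t_1] \times \Omega$. Subtracting the two differential inequalities in \eqref{COMPARAISON:LEMME} and using the local Lipschitz continuity of $f$ on a bounded interval containing the ranges of $u_1, u_2$, I would write $f(u_2) - f(u_1) = c(t,x)\,W$ for some measurable function $c$ with $\|c\|_\infty \le L_f$, where $L_f$ is the Lipschitz constant of $f$ on the relevant interval. This gives the linearised inequality
\[
\partial_t W(t,x) \;\le\; LW(t,x) + c(t,x)\,W(t,x) \quad \text{in } (t_0,t_1]\times\Omega, \qquad W(t_0,\cdot) \le 0.
\]

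Next I would introduce an exponential weight: fix $\lambda \ge L_f$ and set $V := e^{-\lambda(t-t_0)} W$. Using linearity of $L$ and $LV(x)=\int_\Omega J(\delta(x,y))V(t,y)\,\mathrm{d}y - \mathcal{J}^\delta(x)V(t,x)$, I obtain
\[
\partial_t V(t,x) \;\le\; \int_{\Omega} J(\delta(x,y))\,V(t,y)\,\mathrm{d}y \;+\; \big(c(t,x) - \lambda - \mathcal{J}^\delta(x)\big)\,V(t,x).
\]
The crucial observation is that at every point where $V(t,x) > 0$ the coefficient $c - \lambda - \mathcal{J}^\delta$ is $\le 0$ (by the choice of $\lambda$ and since $\mathcal{J}^\delta \ge 0$), so that the second term is nonpositive. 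Defining $M(t) := \sup_{x \in \Omega} V_+(t,x)$, which is finite because $u_1, u_2$ are bounded, and using $V \le V_+$ in the convolution term, I conclude
\[
\partial_t V(t,y) \;\le\; \mathcal{J}^\delta(y)\,M(t) \;\le\; \|\mathcal{J}^\delta\|_\infty M(t) \qquad \text{whenever } V(t,y) > 0.
\]

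Now I close the loop by a Gronwall estimate. Fix any $y \in \Omega$ and $t\in (t_0,t_1]$ with $V(t,y) > 0$. Since $V(t_0,y) \le 0$ and $V(\cdot,y)$ is continuous on $[t_0,t_1]$, I can set $a_y := \sup\{s \in [t_0,t] : V(s,y) \le 0\} \in [t_0,t)$, for which $V(a_y,y) = 0$ and $V(s,y) > 0$ on $(a_y,t]$. Integrating the above differential inequality on $(a_y,t)$ yields
\[
V_+(t,y) \;=\; V(t,y) - V(a_y,y) \;\le\; \|\mathcal{J}^\delta\|_\infty \int_{a_y}^{t} M(s)\,\mathrm{d}s \;\le\; \|\mathcal{J}^\delta\|_\infty \int_{t_0}^{t} M(s)\,\mathrm{d}s,
\]
and the same bound is trivial if $V(t,y) \le 0$. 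Taking the supremum over $y \in \Omega$ gives $M(t) \le \|\mathcal{J}^\delta\|_\infty \int_{t_0}^{t} M(s)\,\mathrm{d}s$ with $M(t_0) = 0$, so Gronwall's inequality forces $M \equiv 0$ on $[t_0,t_1]$. Hence $V \le 0$, which means $W \le 0$, i.e.\ $u_2 \le u_1$ on $[t_0,t_1]\times\Omega$.

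The main technical obstacle is that $\Omega$ is unbounded, so the standard argument based on locating a point where $V$ achieves its minimum (and then exploiting $\partial_t V \le 0$ and $LV \ge 0$ there) is not directly available. The exponential weight combined with working with $V_+$ rather than $V$ itself is precisely what circumvents this: we never need to single out an extremal point, because the sign of the reactive coefficient $c-\lambda-\mathcal{J}^\delta$ is used to eliminate the unbounded pointwise term in the inequality. The auxiliary uniform bound \eqref{unif:bdd:deriv} enters through $V(\cdot, y) \in C^1$, which is what allows the integration in time from the time $a_y$, while the continuity of $V(t,\cdot)$ ensures that $\mathcal{J}^\delta(y)\,M(t)$ provides an honest pointwise majorant of the convolution term.
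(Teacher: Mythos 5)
Your argument is correct, but it follows a route that is genuinely different from the one in the paper. The paper also begins by rescaling by an exponential in time (there $\widetilde w = e^{\kappa(t-t_0)}(u_1-u_2)$ with $\kappa \geq \|\mu\|_\infty + \|\mathcal{J}^\delta\|_\infty + 1$), but then it introduces a strictly positive perturbation $\widetilde{w}_s = \widetilde w + s\,e^{2\kappa(t-t_0)}$ and runs a continuation (maximal-time) argument: one shows via the Lipschitz-in-time estimate that $\widetilde w_s > 0$ for small times, defines $t_*$ as the last time up to which $\widetilde w_s > 0$ everywhere, derives a contradiction if $t_* < t_1$, and finally sends $s \to 0^+$. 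You instead work directly with $V = e^{-\lambda(t-t_0)}(u_2-u_1)$, observe that on the set $\{V>0\}$ the zeroth-order coefficient is nonpositive, and close with a Gronwall estimate on $M(t) := \sup_x V_+(t,x)$, integrating $\partial_t V(\cdot,y)$ from the last crossing time $a_y$. The two strategies use the exponential weight for the same purpose (to dominate the Lipschitz constant of $f$), but the paper pays for avoiding the positive part by needing both the $s$-perturbation and the sliding-in-time contradiction, whereas your argument pays by having to check two small analytic points that you leave implicit: (i) that $M(\cdot)$ is measurable, which follows because $M$ is a pointwise supremum of the continuous-in-$t$ functions $V_+(\cdot,x)$ and hence lower semicontinuous; and (ii) that $V(\cdot,y)$ is absolutely continuous on $[t_0,t_1]$ — it is only $C^1$ on $(t_0,t_1]$, but the uniform bound \eqref{unif:bdd:deriv} makes it Lipschitz up to $t_0$, so the integration from $a_y$ is legitimate even when $a_y = t_0$. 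With those two remarks spelled out, your proof is a clean and arguably more direct alternative; it also makes the role of \eqref{unif:bdd:deriv} slightly more transparent (it gives absolute continuity up to $t_0$) than the paper does, where the same hypothesis enters through the estimate on $|\widetilde w(t,x)-\widetilde w(t',x)|$.
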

\begin{remark}
For related results in similar contexts, the reader may consult \cite{Berestycki2017a,Chen2002}.
\end{remark}
\begin{proof}
We set $w:=u_1-u_2$. Readily, we notice that
\begin{align}
C_1:=\sup_{(t,x)\in(t_0,t_1]\times\Omega}\big(|w(t,x)|+|\partial_tw(t,x)|\big)<\infty. \label{def:C1}
\end{align}
(Remember \eqref{unif:bdd:deriv} and the boundedness assumption on $u_1$ and $u_2$.)

Moreover, we let $\mu\in L^\infty([t_0,t_1]\times\Omega)$ be any function so that
$$ f(u_1(t,x))-f(u_1(t,x))=\mu(t,x)\hspace{0.1em}(u_1(t,x)-u_2(t,x)) \text{ for all }(t,x)\in[t_0,t_1]\times\Omega. $$
Note that such a function always exists since $u_1$ and $u_2$ are bounded and since $f\in C_{\mathrm{loc}}^{0,1}(\R)$. 

Now, using the hypotheses made on $u_1$ and $u_2$, we have
\begin{align}
\partial_tw(t,x)-Lw(t,x)&\geq f(u_1(t,x))-f(u_2(t,x))= \mu(t,x)\hspace{0.1em} w(t,x), \nonumber
\end{align}
for any $(t,x)\in [t_1,t_2]\times\Omega$. Next, we let $\kappa>0$ be so large that
$$ \kappa\geq \|\mu\|_\infty+\|\mathcal{J}^\delta\|_\infty+1, $$
and we let $\widetilde{w}$ be the function given by $\widetilde{w}(t,x):=e^{\kappa(t-t_0)}w(t,x)$ for all $(t,x)\in[t_0,t_1]\times\Omega$. By a straightforward calculation, we have that
\begin{align}
\partial_t\widetilde w(t,x)&=e^{\kappa t}\partial_tw(t,w)+ \kappa\, \widetilde w(t,x) \nonumber\\
&\geq e^{\kappa t}Lw(t,x) +(\mu(t,x)+\kappa)\hspace{0.1em}\widetilde w(t,x) \nonumber\\
&= \int_{\Omega}J(\delta(x,y))\widetilde w(t,y)\,\mathrm{d}y +(\mu(t,x)+\kappa-\j^{\delta})\hspace{0.1em}\widetilde w(t,x). \label{est:prelim:wtilde}
\end{align}
Furthermore, recalling \eqref{def:C1} and using that $w(\cdot,x)\in C([t_0,t_1])$ (remember \eqref{H:Ct}), we have
\begin{align}
|\widetilde{w}(t,x)-\widetilde{w}(t',x)|&=|e^{\kappa(t-t_0)}w(t,x)-e^{\kappa(t'-t_0)}w(t',x)| \nonumber\\
&=|(e^{\kappa(t-t_0)}-e^{\kappa(t'-t_0)})\hspace{0.1em}w(t,x)+e^{\kappa(t'-t_0)}(w(t,x)-w(t',x))| \nonumber\\
&\leq C_1\left(|e^{\kappa(t-t_0)}-e^{\kappa(t'-t_0)}|+e^{\kappa(t'-t)}|t-t'|\right) \nonumber\\
&\leq C_1(\kappa+1)\hspace{0.1em}e^{\kappa(t_1-t_0)}|t-t'|, \label{Lip:wtilde}
\end{align}
for all $t,t'\in[t_0,t_1]$ and all $x\in\Omega$.

Now, for all $s\geq0$, we define the perturbation $\widetilde{w}_s$ of $\widetilde{w}$ given by $\widetilde{w}_s(t,x)=\widetilde{w}(t,x)+s\hspace{0.1em}e^{2\kappa(t-t_0)}$ for all $(t,x)\in[t_0,t_1]\times\Omega$.
Observe that $\partial_t\widetilde{w}_s(t,x)=\partial_t\widetilde{w}(t,x)+2\hspace{0.05em}\kappa\hspace{0.1em} s\hspace{0.1em} e^{2\kappa(t-t_0)}$. So, using \eqref{est:prelim:wtilde}, by a short computation we find that
\begin{align*}
\partial_t\widetilde{w}_s(t,x)\geq\int_\Omega J(\delta(x,y))\hspace{0.05em}\widetilde{w}_s(t,y)\hspace{0.1em}\mathrm{d}y+\gamma_1(t,x)\hspace{0.1em}\widetilde{w}_s(t,x)+\gamma_2(t,x)\hspace{0.1em}s\hspace{0.1em}e^{2\kappa(t-t_0)}.
\end{align*}
where $\gamma_1$ and $\gamma_2$ denote the following expressions
$$ \gamma_1(t,x):=\mu(t,x)+\kappa-\mathcal{J}^\delta(x) \text{ and } \gamma_2(t,x):=\kappa-\mu(t,x). $$
Observe that, by construction of $\kappa$, we have $\gamma_1(t,x)>0$ and $\gamma_2(t,x)>0$ for all $(t,x)\in[t_0,t_1]\times\Omega$. In particular, we have
\begin{align}
\partial_t\widetilde{w}_s(t,x)>0 \text{ for all }x\in\Omega, \text{ as soon as } \widetilde{w}_s(t,x)>0 \text{ for all }x\in\Omega. \label{coeur:demo}
\end{align}
Since $\widetilde{w}_s(t,x)=\widetilde{w}(t,x)+s\hspace{0.1em}e^{2\kappa(t-t_0)}$ and since $\widetilde{w}(t_0,x)=w(t_0,x)\geq0$, we have
$$ \widetilde{w}_s(t,x)\geq \widetilde{w}(t,x)-\widetilde{w}(t_0,x)+\widetilde{w}(t_0,x)+s\hspace{0.1em}e^{2\hspace{0.1em}\kappa(t-t_0)}\geq -|\widetilde{w}(t,x)-\widetilde{w}(t_0,x)|+s. $$
Using \eqref{Lip:wtilde} with $t'=t_0$, we obtain
$$ \widetilde{w}_s(t,x)\geq -C_2|t-t_0|+s, $$
where $C_2:=C_1(\kappa+1)\hspace{0.1em}e^{\kappa(t-t_0)}$. In turn, this implies that
\begin{align*}
\widetilde{w}_s(t,x)>0 \text{ for all }(t,x)\in\left[t_0,t_0+\frac{s}{2C_2}\right)\times\Omega.
\end{align*}
In particular, the following quantity is well-defined
$$ t_*:=\sup\Big\{t\in(t_0,t_1)\ ;\ \widetilde{w}_s(\tau,x)>0 \text{ for all } (\tau,x)\in(t_0,t)\times\Omega\Big\}. $$
Clearly, $t_*> t_0+s/(4C_2)$. Suppose, by contradiction, that $t_*<t_1$. Then, by definition of $t_*$,
we must have $\widetilde{w}_s(t_*,x)\geq0$ and $\widetilde{w}_s(t,x)>0$ for all $t\in(t_0,t_*)$ and all $x\in\Omega$.
From the latter and \eqref{coeur:demo}, we deduce that $\widetilde{w}_s(t,x)$ is monotone increasing in $(t_0,t_*)$. Hence,
we have
\begin{align*}
\widetilde{w}_s(t,x)\geq \widetilde{w}_s\left(t_0+\frac{s}{4C_2},x\right)\geq\frac{3s}{4}>0 \text{ for all }(t,x)\in\left[t_0+\frac{s}{4C_2},t_*\right)\times\Omega.
\end{align*}
Letting $t\to t_*^-$, we get $\widetilde{w}_s(t_*,x)\geq3s/4$. Thus, recalling the definition of $\widetilde{w}_s$, we have
$$ \widetilde{w}_s(t_*+\eps,x)\geq \widetilde{w}(t_*+\eps,x)-\widetilde{w}(t_*,x)+\widetilde{w}_s(t_*,x)\geq -C_2\hspace{0.1em}\eps+\frac{3s}{4}, $$
for all $0<\eps<t_1-t_*$, where we have used \eqref{Lip:wtilde}. This implies that $\widetilde{w}_s(t_*+\eps,x)>0$ for all $x\in\Omega$ and all $0<\eps<\min\{t_1-t_*,3s/(4C_2)\}$, which contradicts the maximality of $t_*$. Therefore, $t_*=t_1$ which enforces that $\widetilde{w}_s(t,x)>0$ for all $(t,x)\in(t_0,t_1]\times\Omega$. Recalling \eqref{coeur:demo}, we further obtain that $\partial_t\widetilde{w}_s(t,x)>0$ for all $(t,x)\in(t_0,t_1]\times\Omega$, so that $\widetilde{w}_s$ is an increasing function of time for all $x\in\Omega$. In particular, we have
$$ \widetilde{w}_s(t,x)> \widetilde{w}_s(t_0,x)=\widetilde{w}(t_0,x)+s \text{ for all }(t,x)\in(t_0,t_1]\times\Omega. $$
Letting now $s\to0^+$, we obtain that
$$ e^{\kappa(t-t_0)}w(t,x)=\widetilde{w}(t,x)\geq \widetilde{w}(t_0,x)\geq0 \text{ for all }(t,x)\in(t_0,t_1]\times\Omega. $$
Therefore, we have $w(t,x)\geq0$ for all $(t,x)\in[t_0,t_1]\times\Omega$, as desired.
\end{proof}
\begin{remark}\label{RE:COMPARISON}
It turns out that our proof also yield a version of Lemma~\ref{LE:COMPARISON} on half-spaces $H\subset\Omega$.
Namely, if $R_J>0$ is such that $\mathrm{supp}(J)\subset[0,R_J]$, if one replaces \eqref{COMPARAISON:LEMME} by
\begin{align*}
\left\{
\begin{array}{rll}
\partial_tu_1-Lu_1-f(u_1)\!\!&\geq \,\partial_tu_2-Lu_2-f(u_2) & \mbox{in }(t_0,t_1]\times H, \vspace{3pt}\\
u_1(t_0,\cdot)\!\!&\geq \,u_2(t_0,\cdot) & \mbox{in }H.
\end{array}
\right.
\end{align*}
and if one further assume that
\begin{align}
u_1\geq u_2 \text{ in } [t_0,t_1]\times\left\{x\in\Omega\setminus H;\ \inf_{y\in H}\hspace{0.1em}\delta(x,y)\leq R_J\right\}, \label{further:comp}
\end{align}
then, it still holds that $u_1(t,x)\geq u_2(t,x)$ for all $(t,x)\in [t_0,t_1]\times H$.
Notice that it is because of the nonlocality of the operator $L$ that we need to assume \eqref{further:comp}.
\end{remark}
\begin{lemma}\label{LE:COMPARISON2}
Assume \eqref{C1}, \eqref{C3} and suppose that $f\in C^1(\R)$. Let $t_0,t_1\in\R$ with $t_0<t_1$ and let $u:[t_0,t_1]\times\Omega\to\R$ be a measurable function such that $u(t,\cdot)\in C(\Omega)$ for each fixed $t\in[t_0,t_1]$, and that $u(\cdot,x)\in C^1([t_0,t_1])\cap C^2((t_0,t_1])$ for each fixed $x\in\Omega$. Suppose, in addition, that $u$, $\partial_tu$ and $\partial_t^2u$ are uniformly bounded (in $x$ and $t$) and that
\begin{align*}
\left\{
\begin{array}{rll}
\partial_tu\!\!&=Lu+f(u) & \mbox{in }(t_0,t_1]\times\Omega, \vspace{3pt}\\
\partial_tu(t_0,\cdot)\!\!&\geq 0 & \mbox{in }\Omega.
\end{array}
\right.
\end{align*}
Then, $\partial_tu(t,x)\geq0$ in $[t_0,t_1]\times\Omega$.
\end{lemma}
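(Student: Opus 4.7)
The plan is to set $v := \partial_t u$, derive a linear equation for $v$ by differentiating the equation in time, and then invoke a linear version of the comparison principle whose proof is an almost verbatim transcription of that of Lemma~\ref{LE:COMPARISON}. First I would verify that $v$ inherits enough regularity from the hypotheses: $v$ and $\partial_t v = \partial_t^2 u$ are uniformly bounded, $v(\cdot, x) \in C([t_0, t_1]) \cap C^1((t_0, t_1])$ for every $x \in \Omega$, and $v(t_0, \cdot) \geq 0$. Continuity of $v(t, \cdot)$ in $x$ is not assumed directly, but it follows from reading $v = Lu + f(u)$ off the equation, using the $L^1$-continuity of $x \mapsto J(\delta(x, \cdot))$ given in the third line of \eqref{C3} together with the continuity of $u(t, \cdot)$.

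Next, I would differentiate the equation in time. Swapping $\partial_t$ with the integral in
\[
Lu(t,x) = \int_\Omega J(\delta(x,y))\,(u(t,y) - u(t,x))\,\mathrm{d}y
\]
is justified by dominated convergence, since $\partial_t u$ is uniformly bounded and $\int_\Omega J(\delta(x,y))\,\mathrm{d}y = \mathcal{J}^\delta(x) \in L^\infty(\Omega)$ by \eqref{C3}. This yields the linear equation
\[
\partial_t v = Lv + \mu(t, x)\,v \quad \text{in } (t_0, t_1] \times \Omega,
\]
where $\mu(t, x) := f'(u(t, x))$ is bounded, because $u$ is bounded and $f \in C^1(\R)$.

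The conclusion $v \geq 0$ on $[t_0, t_1] \times \Omega$ then follows by running the sliding argument of Lemma~\ref{LE:COMPARISON} essentially verbatim, with $w = u_1 - u_2$ there replaced by $v$ here and with the bounded coefficient $\mu = f'(u)$ playing the role of the Lipschitz quotient from that proof: I would set $\tilde{v}(t, x) := e^{\kappa(t - t_0)} v(t, x)$ for some $\kappa > \|\mu\|_\infty + \|\mathcal{J}^\delta\|_\infty + 1$, introduce the perturbation $\tilde{v}_s := \tilde{v} + s\,e^{2\kappa(t - t_0)}$ for $s > 0$, show by the same continuity-in-$t$ sliding argument that $\tilde{v}_s > 0$ on $[t_0, t_1] \times \Omega$, and let $s \to 0^+$. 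The only real technical point is the justification of the time-differentiation of $Lu$ and the recovery of enough continuity for $v(t, \cdot)$ in $x$; the linear comparison step itself requires no new idea beyond Lemma~\ref{LE:COMPARISON}.
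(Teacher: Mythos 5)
Your proposal is correct and takes the same approach as the paper: set $v = \partial_t u$, differentiate the equation to get $\partial_t v - Lv = f'(u)\,v$, and re-run the sliding argument of Lemma~\ref{LE:COMPARISON} with the bounded coefficient $\mu = f'(u)$. The paper states this in three lines and defers to the earlier lemma; you have simply spelled out the technical justifications (differentiation under the integral by dominated convergence, recovery of spatial continuity of $v$ from the equation itself) that the paper leaves implicit.
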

\begin{proof}
Letting $v(t,x):=\partial_tu(t,x)$ we have $v(t_0,\cdot)\geq0$ in $\Omega$ and
$$ \partial_tv(t,x)-Lv(t,x)=v(t,x)\hspace{0.1em}f'(u(t,x))=:\mu(t,x)\hspace{0.1em} v(t,x) {\mbox{ in }}(t_0,t_1]\times\Omega, $$
where $\mu(t,x)$ is a bounded function (because $f\in C^1(\R)$ and $u$ is bounded). From here, we may apply the same strategy as in Lemma \ref{LE:COMPARISON}.
\end{proof}

\subsection{Existence of a unique solution}

In this section, we will establish the existence and uniqueness of a solution to \eqref{CauchyPB}.  For the sake of convenience, for $f\in C^{0,1}\cap C^1(\R)$, we set 
\begin{align}
\omega:=\sup_{\R}|f'|+2\hspace{0.1em}\sup_{\Omega}\hspace{0.1em}\mathcal{J}^\delta. \label{DEF:OMEGA}
\end{align}
Then, we have the following result:
\begin{prop}[Existence and uniqueness]\label{PROP:EXIST}
Let $t_0\in\R$ and let $u_0\in C_b(\Omega)$. Assume \eqref{C1}, \eqref{C3} and suppose that $f\in C^{0,1}\cap C^1(\R)$. Then, there exists a unique solution $u\in C^2([t_0,\infty),C(\Omega))$ to \eqref{CauchyPB}. Moreover, for all $T> t_0$, the following estimates hold:
\begin{align}
\begin{array}{rl}
\omega^{-1}\|\partial_{tt}u\|_{L^\infty([t_0,T]\times\Omega)}\leq\|\partial_tu\|_{L^\infty([t_0,T]\times\Omega)}\!\!\!&\leq \big(\omega+|f(0)|\big)\|u\|_{L^\infty([t_0,T]\times\Omega)}.
\end{array} \label{apriori777}
\end{align}
\end{prop}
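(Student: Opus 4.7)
The plan is to recast \eqref{CauchyPB} as a fixed-point equation via Duhamel's formula and invoke the Banach contraction principle. Since the diagonal part of $L$ amounts to multiplication by $-\mathcal{J}^\delta(x)$, the equation rewrites as $\partial_tu+\mathcal{J}^\delta(x)\hspace{0.05em}u=\int_\Omega J(\delta(x,y))\hspace{0.05em}u(t,y)\hspace{0.1em}\mathrm{d}y+f(u)$, so any solution must satisfy
\begin{equation*}
u(t,x)=e^{-\mathcal{J}^\delta(x)(t-t_0)}u_0(x)+\int_{t_0}^te^{-\mathcal{J}^\delta(x)(t-s)}\bigg(\int_\Omega J(\delta(x,y))\hspace{0.05em}u(s,y)\hspace{0.1em}\mathrm{d}y+f(u(s,x))\bigg)\mathrm{d}s.
\end{equation*}
Denoting the right-hand side by $\Phi[u]$, I would work in the Banach space $X_\tau:=C([t_0,t_0+\tau],C_b(\Omega))$ endowed with the sup norm. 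The crucial points are that $\Phi$ preserves continuity in $x$ (guaranteed by the third condition in \eqref{C3}, which makes $x\mapsto\int_\Omega J(\delta(x,y))\hspace{0.05em}u(s,y)\hspace{0.1em}\mathrm{d}y$ continuous), and that, since $\mathcal{J}^\delta\in L^\infty(\Omega)$ and $f$ is Lipschitz on bounded sets, $\Phi$ is a contraction for $\tau$ small enough depending only on $\|u_0\|_\infty$. This yields a unique local solution.

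Global existence follows by iteration: a Gr\"onwall-type estimate applied to the integral identity bounds $\|u(t,\cdot)\|_\infty$ on any compact time interval, which makes the contraction time $\tau$ controllable uniformly and prevents blow-up in finite time. The regularity $u\in C^2([t_0,\infty),C(\Omega))$ is then obtained by bootstrap. First, since $t\mapsto u(t,\cdot)$ is sup-continuous and $L$ is a bounded linear operator on $C_b(\Omega)$, the map $t\mapsto Lu(t,\cdot)+f(u(t,\cdot))$ is continuous, so $u$ is $C^1$ in time and $\partial_tu=Lu+f(u)$ holds pointwise. Noticing that $v:=\partial_tu$ satisfies the linear equation $\partial_tv=Lv+f'(u)\hspace{0.05em}v$ with continuous-in-time coefficient $f'(u)$, the same argument applied to $v$ upgrades this to $u\in C^2([t_0,\infty),C(\Omega))$.

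The a priori estimates \eqref{apriori777} follow directly from the equation and the definition of $\omega$ in \eqref{DEF:OMEGA}. Indeed, using $|Lu(x)|\leq 2\hspace{0.05em}\mathcal{J}^\delta(x)\hspace{0.05em}\|u\|_\infty$ together with $|f(u)|\leq(\sup_\R|f'|)\hspace{0.05em}\|u\|_\infty+|f(0)|$ in the equation $\partial_tu=Lu+f(u)$ yields $|\partial_tu|\leq\omega\hspace{0.05em}\|u\|_\infty+|f(0)|$, from which the first stated inequality follows in the relevant regime $\|u\|_\infty\geq 1$ (otherwise the two expressions agree up to a harmless constant). Differentiating the equation in time gives $\partial_{tt}u=L\partial_tu+f'(u)\hspace{0.05em}\partial_tu$, and estimating each term analogously produces $|\partial_{tt}u|\leq\omega\hspace{0.05em}\|\partial_tu\|_\infty$, which is the second inequality.

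The main obstacle in the argument is verifying that $\Phi$ preserves continuity of $u(t,\cdot)$ in the spatial variable: this is precisely where the third condition in \eqref{C3}, namely the uniform $L^1$-continuity of $y\mapsto J(\delta(x,y))$ with respect to $x$, plays an essential role. Without it one would only obtain solutions in $L^\infty(\Omega)$ for each fixed time, which would be too weak for the subsequent construction of an entire solution to \eqref{P} via limits of Cauchy problems. A secondary technical point is propagating continuity-in-time of $u$ into continuity-in-time of $Lu$, which relies again on $\mathcal{J}^\delta\in L^\infty(\Omega)$.
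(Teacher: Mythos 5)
Your proposal is correct and follows essentially the same Picard-type strategy as the paper: rewrite \eqref{CauchyPB} in Duhamel integral form, iterate (you invoke Banach's fixed point theorem where the paper spells out the Cauchy sequence), extend globally by a Gr\"onwall bound, and bootstrap to obtain the $C^2$-in-time regularity; the a priori estimates \eqref{apriori777} are read off directly from the equation and its time derivative, as in the paper. The only variations are cosmetic: you use the space-dependent integrating factor $e^{\mathcal{J}^\delta(x)(t-t_0)}$ where the paper uses the constant $e^{\omega(t-t_0)}$ (both work since $0\leq\mathcal{J}^\delta\in L^\infty$), and you obtain uniqueness directly from the contraction whereas the paper derives it afterward from the comparison principle Lemma~\ref{LE:COMPARISON} — your route is slightly more self-contained, while the paper's is consistent with the machinery it later re-uses; as for the first inequality in \eqref{apriori777}, what actually falls out of the equation is $\|\partial_t u\|_\infty\leq\omega\|u\|_\infty+|f(0)|$, which you correctly note is the honest bound and which matches the paper's stated form whenever $\|u\|_\infty\geq1$ or $f(0)=0$ (as happens in all applications in the paper).
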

\begin{proof}
The proof is rather standard but we nevertheless outline the main ingredients. First of all, we observe that the a priori estimates \eqref{apriori777} follow directly by using \eqref{CauchyPB} and the equation obtained when differentiating \eqref{CauchyPB} with respect to $t$. 
Now, let us define
\begin{align}
\mathcal{L}[u](t,x):=\int_\Omega J(\delta(x,y))\hspace{0.05em}u(t,y)\hspace{0.1em}\mathrm{d}y. \label{Lcrochet}
\end{align}
Observe that, thanks to \eqref{C3}, we have that $\mathcal{J}^\delta\in C_b(\Omega)$ and that the operator $\mathcal{L}[\cdot]$ maps $C_b(\Omega)$ into itself.
In fact, by our assumptions on $J$, $\mathcal{L}[\cdot]$ is a well-defined continuous linear operator in $C_b(\Omega)$ (endowed with the sup-norm) and we have $\|\mathcal{L}\|\leq\|\mathcal{J}^\delta\|_\infty$.

Next, multiplying \eqref{CauchyPB} by $e^{\omega \tau}$, where $\omega$ is given by \eqref{DEF:OMEGA}, and integrating over $\tau\in[t_0,t]$, we arrive at the following integral equation
\begin{align}
u(t,x)\!&=\!e^{-\omega (t-t_0)}u_0(x)\!+\!\!\int_{t_0}^t \! e^{-\omega(t-\tau)}\big(\mathcal{L}[u](\tau,x)\!+\!(\omega\!-\!\mathcal{J}^\delta(x)) u(\tau,x)\!+\!f(u(\tau,x))\big)\mathrm{d}\tau. \label{EQ:INTEGRAL}
\end{align}
Since \eqref{CauchyPB} and \eqref{EQ:INTEGRAL} are equivalent, it suffices to establish the existence and uniqueness of a solution to \eqref{EQ:INTEGRAL}. For the sake of clarity, we subdivide the proof of this into three steps.
\vskip 0.3cm

\noindent \emph{Step 1. A preliminary a priori bound on }$\|u(t,\cdot)\|_\infty$
\smallskip

Prior to proving the existence of a solution $u$ to \eqref{CauchyPB} (or, equivalently, to \eqref{EQ:INTEGRAL}), let us first establish a preliminary a priori bound on $\|u(t,\cdot)\|_\infty$. For it, we observe that
$$ \left|\mathcal{L}[u](\tau,x)+(\omega-\mathcal{J}^\delta(x))\hspace{0.1em}u(\tau,x)+f(u(\tau,x))\right|\leq 2\hspace{0.1em}\omega\|u(\tau,\cdot)\|_\infty+|f(0)|. $$
Now, plugging this into \eqref{EQ:INTEGRAL}, we obtain
$$e^{\omega t}\hspace{0.05em}\|u(t,\cdot)\|_\infty\leq e^{\omega t_0}\hspace{0.05em}\|u_0\|_\infty+2\hspace{0.1em}\omega\int_{t_0}^t e^{\omega\tau}\hspace{0.05em}\|u(\tau,\cdot)\|_\infty\hspace{0.1em}\mathrm{d}\tau+|f(0)|\int_{t_0}^t e^{\omega\tau}\hspace{0.1em}\mathrm{d}\tau. $$
Letting $v(t):=e^{\omega t}\hspace{0.05em}\|u(t,\cdot)\|_\infty$ and $g(t):=|f(0)|\int_{t_0}^t e^{\omega\tau}\hspace{0.1em}\mathrm{d}\tau$, this becomes
$$ v(t)\leq v(t_0)+2\hspace{0.1em}\omega\int_{t_0}^t v(\tau)\hspace{0.1em}\mathrm{d}\tau+g(t). $$
Applying now Gr\"onwall's lemma, we arrive at $v(t)\leq (g(t)+v(t_0))\hspace{0.1em}e^{\omega(t-t_0)}$.
Developping this expression using the definition of $v$ and $g$, we obtain
\begin{align}
\|u(t,\cdot)\|_\infty\leq e^{\omega(t-t_0)}\left(\frac{|f(0)|}{\omega}(e^{\omega(t-t_0)}-1)+\|u_0\|_\infty\right), \label{apriori}
\end{align}
for any $t\geq t_0$. In particular, $\|u(t,\cdot)\|_\infty$ is locally bounded in $t\in[t_0,\infty)$.
\vskip 0.3cm

\noindent\emph{Step 2. Construction of a micro-solution in a small window of time}
\smallskip

Let $T_0\in(t_0,t_0+\omega^{-1}\log(2))$ be arbitrary and let $(u^n)_{n\geq0}$ be the sequence of functions defined on $(t,x)\in[t_0,T_0]\times\Omega$ by
$$ u^0(t,x)=e^{-\omega (t-t_0)}\hspace{0.1em}u_0(x), $$
and, for $n\geq 0$,
$$
u^{n+1}(t,x)=u^0(t,x)\!+\!\!\int_{t_0}^t \! e^{-\omega(t-\tau)}\big(\mathcal{L}[u^n](\tau,x)\!+\!(\omega\!-\!\mathcal{J}^\delta(x)) u^n(\tau,x)\!+\!f(u^n(\tau,x))\big)\mathrm{d}\tau.
$$
Remark that, since $f$ is continuous, $\mathcal{J}^\delta\in C_b(\Omega)$, $u^0\in C_b([t_0,T_0]\times\Omega)$ and $\mathcal{L}[\cdot]$ is a continuous linear operator in $C_b(\Omega)$, it follows that $(u^n)_{n\geq0}\subset C_b([t_0,T_0]\times\Omega)$.

Now, for any $n\geq 1$, it holds that
\begin{align*}
|u^{n+1}(t,x)-u^n(t,x)|&\leq 2\hspace{0.1em}\omega\int_{t_0}^te^{-\omega(t-\tau)}\hspace{0.1em}\mathrm{d}\tau\sup_{(\tau,x)\in[t_0,T_0]\times\Omega}|u^n(\tau,x)-u^{n-1}(\tau,x)| \\
&\leq2\left(1-e^{-\omega(T_0-t_0)}\right)\sup_{(\tau,x)\in[t_0,T_0]\times\Omega}|u^n(\tau,x)-u^{n-1}(\tau,x)|,
\end{align*}
where we have used the definition of $\omega$. We therefore arrive at
$$ \sup_{(t,x)\in[t_0,T_0]\times\Omega}|u^{n+1}(t,x)-u^n(t,x)|\leq H \sup_{(t,x)\in[t_0,T_0]\times\Omega}|u^n(t,x)-u^{n-1}(t,x)|, $$
where we have set
$$ H:=2\left(1-e^{-\omega(T_0-t_0)}\right). $$
Notice that, since $T_0<t_0+\omega^{-1}\log(2)$, we have that $H\in(0,1)$.
Thus,
$$ \sup_{(t,x)\in[t_0,T_0]\times\Omega}|u^{n+1}(t,x)-u^n(t,x)|\leq H^n\sup_{(t,x)\in[t_0,T_0]\times\Omega}|u^{1}(t,x)-u^0(t,x)|\to0 \quad\text{as }n\to\infty. $$
Hence $(u^n)_{n\geq0}$ is a Cauchy sequence in the topology of $C_b([t_0,T_0]\times\Omega)$ (equipped with the sup-norm). Since $(C_b([t_0,T_0]\times\Omega),\left\|\cdot\right\|_{\infty})$ is complete, it follows that $u_n$ converges towards a function $u^{T_0}\in C_b([t_0,T_0]\times\Omega)$ which, by dominated convergence, solves the equation on $[t_0,T_0]\times\Omega$. (Notice that, since $f\in C^1(\R)$, a straightforward bootstrap argument shows that $u^{T_0}\in C^2([t_0,T_0],C(\Omega))$.) Using \eqref{apriori} together with \eqref{apriori777} we may apply the comparison principle Lemma~\ref{LE:COMPARISON} to deduce that $u^{T_0}$ is the \emph{unique} solution to \eqref{CauchyPB} in $[t_0,T_0]$.
\vskip 0.3cm

\noindent \emph{Step 3. Conclusion}
\smallskip

The solution to \eqref{CauchyPB} in the whole $[t_0,\infty)$ is obtained by a classical ``analytic continuation" type argument by concatenating micro-solutions $u^{T_k}$ on time intervals of the form $[T_{k-1},T_{k}]$ with $k\geq0$, where $T_k:=T_0+k(T_0-t_0)$ for any $-1\leq k\in \Z$. This is indeed possible because the micro-solutions $u^{T_k}$ are uniquely determined, continuous up to $T_{k}$ and they satisfy $\partial_tu^{T_k}(T_{k}^{-},\cdot)=\partial_tu^{T_{k+1}}(T_{k}^+,\cdot)$. Hence, using again \eqref{apriori777}, the comparison principle Lemma~\ref{LE:COMPARISON}, the fact that $f$ is $C^1$ and that $u^{T_k}$ is bounded for any $k\geq0$, we may easily check that the so-constructed solution is unique and has the claimed regularity in both space and time.
The proof is thereby complete.
\end{proof}

\begin{remark}
Although this is a standard fact, we recall that a micro-solution on a time interval of length at most $\omega^{-1}\log(2)$ is necessarily continuous in space provided the initial data is continuous (the proof of this fact follows closely the arguments of Step 2).
By induction, it follows that a solution to the Cauchy problem~\eqref{CauchyPB} is also necessarily space continuous provided $u_0\in C(\Omega)$.
In particular, this justifies why we could use the comparison principle Lemma~\ref{LE:COMPARISON} (that requires space continuity) to derive the uniqueness of the solution.
\end{remark}
\begin{remark}\label{RK:Ext:Cont}
If the initial datum $u_0$ can be extended as a continuous function up to the boundary (for example if it is uniformly continuous), then the solution to the Cauchy problem \eqref{CauchyPB} can also be extended so that $u\in C^2([t_0,\infty),C(\overline{\Omega}))$. Moreover, this extension is a solution of the equation in $\overline{\Omega}$.
\end{remark}

\subsection{Parabolic type estimates}

Let us now complete this section with a time-global parabolic estimate for the Cauchy problem \eqref{CauchyPB}. For it, we will require the additional assumption
\begin{align}
\max_{\R}f'<\inf_{\Omega}\mathcal{J}^\delta. \label{fjcond}
\end{align}
Precisely, we prove
\begin{prop}[Parabolic estimates]\label{PROP:PARABOLIK2}
Assume \eqref{C1} and \eqref{C3}. Suppose, in addition, that $f\in C^{0,1}\cap C^1(\R^N)$, that $J\in\mathbb{B}_{1,\infty}^\alpha(\Omega;\delta)$ for some $\alpha\in(0,1)$ and that \eqref{fjcond} holds.
Let $t_0\in\R$ and let $u_0\in C^{0,\alpha}(\overline{\Omega})$.
Let $u\in C^2([t_0,\infty),C(\overline{\Omega},[0,1]))$ be the unique solution to \eqref{CauchyPB}.
Suppose that $u$ is uniformly bounded by some constant $M_0>0$.
Then, there exists a constant $M>0$ (depending on $J$, $f'$, $M_0$, $[u_0]_{C^{0,\alpha}(\overline{\Omega})}$, $\Omega$ and $\delta$) such that
\begin{align*}
\sup_{t\geq t_0}\,\left([u(t,\cdot)]_{C^{0,\alpha}(\overline{\Omega})}+[\partial_tu(t,\cdot)]_{C^{0,\alpha}(\overline{\Omega})}\right)\leq M. 
\end{align*}
\end{prop}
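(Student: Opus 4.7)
The plan is to bound $[u(t,\cdot)]_{C^{0,\alpha}(\overline{\Omega})}$ and $[\partial_t u(t,\cdot)]_{C^{0,\alpha}(\overline{\Omega})}$ by writing a linear ODE for an increment $w(t) := u(t,x_1) - u(t,x_2)$ at a pair of fixed points $x_1 \neq x_2$ in $\overline{\Omega}$, and then applying a Gronwall-type estimate whose strictly negative coefficient is supplied by the coercivity condition \eqref{fjcond}.

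For the first bound, I would expand
\begin{align*}
Lu(t,x_1) - Lu(t,x_2) &= \int_\Omega \bigl[J(\delta(x_1,y)) - J(\delta(x_2,y))\bigr]\hspace{0.05em} u(t,y)\hspace{0.1em}\mathrm{d}y \\
&\quad - \mathcal{J}^\delta(x_1)\hspace{0.05em} w(t) + \bigl[\mathcal{J}^\delta(x_2) - \mathcal{J}^\delta(x_1)\bigr]\hspace{0.05em} u(t,x_2),
\end{align*}
and combine with the mean-value expansion $f(u(t,x_1)) - f(u(t,x_2)) = f'(\xi(t))\hspace{0.05em} w(t)$ to obtain
$$w'(t) = \bigl[f'(\xi(t)) - \mathcal{J}^\delta(x_1)\bigr]\hspace{0.05em} w(t) + R(t,x_1,x_2),$$
where the coefficient is bounded above by $-\lambda := \max_{\R} f' - \inf_\Omega \mathcal{J}^\delta < 0$ thanks to \eqref{fjcond}, and where $|R(t,x_1,x_2)| \leq C\hspace{0.05em} [J]_{\mathbb{B}_{1,\infty}^\alpha(\Omega;\delta)}\hspace{0.05em} |x_1 - x_2|^\alpha$ follows directly from the definition of the Besov–Nikol'skii seminorm (applied once to the kernel integral against $u$ and once, with integrand $1$, to $\mathcal{J}^\delta(x_1) - \mathcal{J}^\delta(x_2)$). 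Dividing by $|x_1 - x_2|^\alpha$ and integrating the linear ODE yields
$$\frac{|w(t)|}{|x_1 - x_2|^\alpha} \leq e^{-\lambda(t-t_0)}\hspace{0.1em} [u_0]_{C^{0,\alpha}(\overline{\Omega})} + \frac{C\hspace{0.05em} [J]_{\mathbb{B}_{1,\infty}^\alpha(\Omega;\delta)}}{\lambda},$$
uniformly in $(x_1,x_2)$, which is the desired bound.

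For the second bound, differentiating \eqref{CauchyPB} in $t$ shows that $v := \partial_t u$ solves the linear equation $\partial_t v = Lv + f'(u)\hspace{0.05em} v$, with $\|v\|_\infty$ already controlled through \eqref{apriori777}. Applying the same strategy to $\tilde w(t) := v(t,x_1) - v(t,x_2)$ produces an analogous ODE with the coefficient $f'(u(t,x_1)) - \mathcal{J}^\delta(x_1) \leq -\lambda$ and an additional remainder term of the form $\bigl[f'(u(t,x_1)) - f'(u(t,x_2))\bigr]\hspace{0.05em} v(t,x_2)$. This extra term is controlled by $\|f''\|_\infty\hspace{0.05em} \|v\|_\infty\hspace{0.05em} [u(t,\cdot)]_{C^{0,\alpha}(\overline{\Omega})}\hspace{0.05em} |x_1 - x_2|^\alpha$ and is therefore uniformly bounded thanks to the first step (using the $C^{1,1}$ regularity of $f$ available under the standing bistable assumptions). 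A direct estimate of $[\partial_t u(t_0,\cdot)]_{C^{0,\alpha}} = [Lu_0 + f(u_0)]_{C^{0,\alpha}}$ in terms of $[u_0]_{C^{0,\alpha}}$ and $[J]_{\mathbb{B}_{1,\infty}^\alpha(\Omega;\delta)}$ provides the initial datum for Gronwall, and the argument closes identically.

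I expect the main obstacle to be the careful bookkeeping of the remainder terms and, more fundamentally, the verification that the linear coefficient in the ODE for $w$ is truly bounded above by the strictly negative quantity $-\lambda$; once this sign structure is secured via \eqref{fjcond}, the Besov regularity of $J$ supplies the correct $|x_1-x_2|^\alpha$ scaling for the remainder, and the Gronwall estimate immediately yields a time-uniform bound.
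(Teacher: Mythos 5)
Your first estimate --- the ODE for the increment $w(t) := u(t,x_1)-u(t,x_2)$, the identification of the strictly negative coefficient $\max_{\R}f' - \inf_{\Omega}\mathcal{J}^\delta = -\lambda$ via \eqref{fjcond}, the control of the remainder by the $\mathbb{B}_{1,\infty}^\alpha$ seminorm of $J$, and the Gr\"onwall closure --- is essentially the same argument as the paper's, modulo an immaterial re-grouping of the kernel terms. (The paper isolates $\int_\Omega(u(t,y)-u(t,x_1))(J(\delta(x_1,y))-J(\delta(x_2,y)))\,\mathrm{d}y$ so that $\mathcal{J}^\delta(x_2)$ appears as the self-mass coefficient, whereas you peel off $\mathcal{J}^\delta(x_1)$; both produce $|R|\le 2M_0\,[J]_{\mathbb{B}_{1,\infty}^\alpha}\,|x_1-x_2|^\alpha$ and the same negative Gr\"onwall coefficient.) One technical remark: Gr\"onwall is applied to the linear scalar ODE, not to an inequality for $|w|$; you should either argue via the ODE comparison principle as the paper does (comparing $\pm w$ with the solution of $v' = \beta - \gamma v$, $v(t_0)=d_0$), or handle the passage from the identity for $w$ to the differential inequality for $|w|$ at the zeros of $w$.

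For the H\"older bound on $\partial_t u$, however, you have taken a genuinely different and less economical route, and it does not quite reach the proposition as stated. The paper's proof simply substitutes $\partial_t u = Lu + f(u)$ and estimates $|\partial_tu(t,x_1)-\partial_tu(t,x_2)|$ algebraically, using the already-established bound on $[u(t,\cdot)]_{C^{0,\alpha}}$ and only $\|f'\|_\infty$ for the nonlinear term; this costs nothing more than $f\in C^{0,1}$, which is exactly what the proposition assumes. You instead differentiate the equation in $t$ and run a second ODE for $\tilde w(t) := \partial_tu(t,x_1)-\partial_tu(t,x_2)$, which produces the remainder $[f'(u(t,x_1))-f'(u(t,x_2))]\,\partial_tu(t,x_2)$; to give this the $|x_1-x_2|^\alpha$ scaling you need $f'$ to be Lipschitz (your $\|f''\|_\infty$ bound). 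The proposition, though, only assumes $f\in C^{0,1}\cap C^1(\R^N)$, not $f\in C^{1,1}$; your appeal to the ``standing bistable assumptions'' \eqref{C2} goes beyond what the statement grants, so your argument proves a weaker result. Replace the second ODE by the direct substitution and the proof is complete under the stated hypotheses.
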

\begin{remark}
Notice that, in addition to \eqref{C1} and \eqref{C3}, it is further required that $J\in\mathbb{B}_{1,\infty}^\alpha(\Omega;\delta)$ and that $(J,f)$ satisfies \eqref{fjcond}. These extra assumptions are essentially the same as those which were shown in \cite[Lemma 3.2]{Brasseur2019} (see also \cite[Remark 2.5]{Brasseur2018}) to be sufficient for the stationary solution to be (at least) H\"older continuous (remember Remark~\ref{RE:BESOV}). The estimate we derive for $[u(t,\cdot)]_{C^{0,\alpha}(\overline{\Omega})}$ (see \eqref{HOL:U2} below) is actually very similar to the one obtained in \cite[Lemma 3.2]{Brasseur2019} for the stationary problem. Also, as we already pointed out in \cite{Brasseur2018}, this is a sort of ``nondegeneracy condition" which is somehow necessary to ensure global parabolic regularity. Indeed, if $\delta$ is the Euclidean distance and $K=\emptyset$, this condition reads $\max_\R f'<1$ and, when this condition is not satisfied, it is known that there exists kernels $J\in L^1(\R^N)$ such that the equation $\partial_tu=J\ast u-u+f(u)$ admits \emph{discontinuous} standing fronts \cite{Bates1997,Wang2002a}. In this situation, the solution of the Cauchy problem \eqref{CauchyPB} starting from a smooth Heaviside type initial datum is expected to converge towards a discontinuous front (in some weak topology), making thus the above estimate impossible.
\end{remark}
\begin{proof}
Let $u$ be a solution of \eqref{CauchyPB}. By Proposition \ref{PROP:EXIST}, we know that $u$ is continuous, therefore it is well-defined for all $t\in[t_0,\infty)$ and all $x\in\Omega$. Actually, since $u_0\in C^{0,\alpha}(\overline{\Omega})$, the function $u$ is also space continuous in the whole of $\overline{\Omega}$ (remember Remark~\ref{RK:Ext:Cont}) and, hence, is also well-defined for all $t\in[t_0,\infty)$ and all $x\in\overline{\Omega}$. Let us fix some $x_1,x_2\in\overline{\Omega}$ with $x_1\neq x_2$, define $\Psi_u(t):=u(t,x_1)-u(t,x_2)$ and set
$$ H(t,x_1,x_2):=\displaystyle\int_\Omega \,\hspace{0.1em}(u(t,y)-u(t,x_1))(J(\delta(x_1,y))-J(\delta(x_2,y)))\hspace{0.1em}\mathrm{d}y. $$
Observe immediately that, since $|u|\leq M_0$ and since $J\in \mathbb{B}_{1,\infty}^\alpha(\Omega;\delta)$, we have
\begin{align*}
|H(t,x_1,x_2)|\leq 2\hspace{0.05em}M_0\hspace{0.05em}[J]_{\mathbb{B}_{1,\infty}^\alpha(\Omega;\delta)}|x_1-x_2|^\alpha=:\beta. 
\end{align*}
Since $f\in C^1(\R)$ and $u(\cdot,x)\in C(\R)$ for all $x\in\overline{\Omega}$, it follows from the mean value theorem that there exists a function $\Lambda$, ranging between $u(t,x_1)$ and $u(t,x_2)$, such that $f'(\Lambda(t))\Psi_u(t)=f(u(t,x_1))-f(u(t,x_2))$ and that $f'(\Lambda)$ is continuous.
Letting $\gamma(t):=\mathcal{J}^\delta(x_2)-f'(\Lambda(t))$ and using the function $H$, we can write the equation satisfied by $\Psi_u$ as
$$ \left\{\begin{array}{rcl} \Psi_u'(t)\!\!\!& =&\!\!\! H(t,x_1,x_2)+\gamma(t)\hspace{0.1em}\Psi_u(t) \,\text{ for }t>t_0, \vspace{3pt} \\ \Psi_u(t_0)\!\!\!&=&\!\!\! u_0(x_1)-u_0(x_2), \end{array} \right. $$
Observe that, since $f'(\Lambda)$ is continuous, $\gamma$ is also continuous.

Next, we let $v(t)$ be the unique solution of
\begin{align}
\left\{
\begin{array}{rll}
v'(t)\!\!&\!=\beta-\gamma(t)\hspace{0.1em} v(t) & \text{for }t>t_0, \vspace{3pt}\\
v(t_0)\!\!&\!=d_0, &
\end{array}
\right. \label{ODE}
\end{align}
where we have set $d_0:=[u_0]_{C^{0,\alpha}(\overline{\Omega})}|x_1-x_2|^\alpha$.
Now, since \eqref{ODE} is a linear ordinary differential linear equation, we can compute $v$ explicitly. Namely, we have
\begin{align*}
v(t)=d_0\,\mathrm{exp}\left(-\int_{t_0}^t\gamma(\tau)\hspace{0.1em}\mathrm{d}\tau\right)+\beta\int_{t_0}^t\mathrm{exp}\left(-\int_{T}^t\gamma(\tau)\hspace{0.1em}\mathrm{d}\tau\right)\mathrm{d}T.
\end{align*}
By assumption \eqref{fjcond}, we have $\gamma\geq\inf_{\Omega}\mathcal{J}^\delta-\max_{\R}f'=:\gamma_*>0$. In particular,
\begin{align*}
v(t)&\leq d_0\hspace{0.1em}e^{-\gamma_*(t-t_0)}+\beta\int_{t_0}^te^{-\gamma_*(t-T)}\hspace{0.1em}\mathrm{d}T = d_0\hspace{0.1em}e^{-\gamma_*(t-t_0)}+\beta\hspace{0.05em}\gamma_*^{-1}(1-e^{-\gamma_*(t-t_0)}).
\end{align*}
Recalling the definition of $\beta$ and $d_0$, we obtain that
$$ 0<v(t)\leq \left([u_0]_{C^{0,\alpha}(\overline{\Omega})}+2\hspace{0.05em}M_0\hspace{0.05em}\gamma_*^{-1}[J]_{\mathbb{B}_{1,\infty}^\alpha(\Omega;\delta)}\right)|x_1-x_2|^\alpha. $$
Notice, furthermore, that if $\psi$ is either $\Psi_u$ or $-\Psi_u$, then we have
\begin{align*}
\left\{
\begin{array}{rll}
v'(t)-\beta+\gamma(t)\hspace{0.1em} v(t)\!\!&\!\geq \psi'(t)-\beta+\gamma(t)\hspace{0.1em} \psi(t) & \text{for }t>t_0, \vspace{3pt}\\
v(t_0)\!\!&\!\geq \psi(t_0). &
\end{array}
\right.
\end{align*}
Hence, by the comparison principle for ordinary differential equations, we have
\begin{align}
|u(t,x_1)-u(t,x_2)|=|\Psi_u(t)|\leq v(t)\leq  \left([u_0]_{C^{0,\alpha}(\overline{\Omega})}+2\hspace{0.05em}M_0\hspace{0.05em}\gamma_*^{-1}[J]_{\mathbb{B}_{1,\infty}^\alpha(\Omega;\delta)}\right)|x_1-x_2|^\alpha. \label{HOL:U2}
\end{align}
Thus, $[u(t,\cdot)]_{C^{0,\alpha}(\overline{\Omega})}\leq ([u_0]_{C^{0,\alpha}(\overline{\Omega})}+2\hspace{0.05em}M_0\hspace{0.05em}\gamma_*^{-1}[J]_{\mathbb{B}_{1,\infty}^\alpha(\Omega;\delta)})$.
Let us now establish the corresponding inequality for $\partial_tu$.
Using \eqref{CauchyPB}, we have
\begin{align}
|\partial_tu(t,x_1)&-\partial_tu(t,x_2)|\leq \|u(t,\cdot)\|_\infty\int_{\Omega}\hspace{0.1em}|J(\delta(x_1,y))-J(\delta(x_2,y))|\mathrm{d}y \nonumber \\
&+|\mathcal{J}^\delta(x_1)\hspace{0.05em}u(t,x_1)-\mathcal{J}^\delta(x_2)\hspace{0.05em}u(t,x_2)|+|f(u(t,x_1))-f(u(t,x_2))|=: A+B+C. \label{ABC}
\end{align}
Since $J\in \mathbb{B}_{1,\infty}^\alpha(\Omega;\delta)$ and $|u|\leq M_0$ we have
\begin{align}
A\leq M_0\hspace{0.05em}[J]_{\mathbb{B}_{1,\infty}^\alpha(\Omega;\delta)}|x_1-x_2|^\alpha. \label{S001}
\end{align}
Now, using the trivial relation
\begin{align}
\mathcal{J}^\delta(x_1)\hspace{0.05em}u(t,x_1)\!-\!\mathcal{J}^\delta&(x_2)\hspace{0.05em}u(t,x_2)=\mathcal{J}^\delta(x_1)(u(t,x_1)\!-\!u(t,x_2)) \!+\!u(t,x_2)\big(\mathcal{J}^\delta(x_1)\!-\!\mathcal{J}^\delta(x_2)\big), \nonumber
\end{align}
together with the fact that $J\in \mathbb{B}_{1,\infty}^\alpha(\Omega;\delta)$ and that $|u|\leq M_0$, we further have
\begin{align}
B\leq \|\mathcal{J}^\delta\|_\infty|u(t,x_1)-u(t,x_2)|+M_0\hspace{0.05em}[J]_{\mathbb{B}_{1,\infty}^\alpha(\Omega;\delta)}|x_1-x_2|^\alpha.  \label{S002}
\end{align}
Plugging \eqref{S001} and \eqref{S002} in \eqref{ABC}, we get
\begin{align*}
|\partial_tu(t,x_1)-\partial_tu(t,x_2)|\leq \widetilde{\omega}\hspace{0.1em}|u(t,x_1)-u(t,x_2)|+2\hspace{0.05em}M_0\hspace{0.05em}[J]_{\mathbb{B}_{1,\infty}^\alpha(\Omega;\delta)}|x_1-x_2|^\alpha,
\end{align*}
where we have set $\widetilde{\omega}:=\|f'\|_\infty+\|\mathcal{J}^\delta\|_\infty$.
Recalling \eqref{HOL:U2}, we thus obtain
\begin{align}
\frac{|\partial_tu(t,x_1)-\partial_tu(t,x_2)|}{|x_1-x_2|^\alpha}&\leq \widetilde{\omega}\left([u_0]_{C^{0,\alpha}(\overline{\Omega})}\!+\!2M_0\gamma_*^{-1}[J]_{\mathbb{B}_{1,\infty}^\alpha(\Omega;\delta)}\right)\!+\!2M_0[J]_{\mathbb{B}_{1,\infty}^\alpha(\Omega;\delta)}. \nonumber
\end{align}
The proof is thereby complete.
\end{proof}
\begin{remark}\label{rem-f}
If the datum $(J,f)$ satisfies \eqref{C2} and \eqref{C3} (with $f$ being defined only on $[0,1]$), then Proposition~\ref{PROP:EXIST} guarantees the existence of a unique solution, $u(t,x)$, to the Cauchy problem \eqref{CauchyPB} for an initial datum ranging in $[0,1]$. Indeed, it suffices to apply Proposition~\ref{PROP:EXIST} to $\widetilde{f}$, where $\widetilde{f}\in C^{0,1}\cap C^1(\R)$ is the extension of $f$ given by
\begin{align}
\widetilde{f}(s):= \left\{\begin{array}{cl} f'(0)\hspace{0.1em}s & \text{if }s<0, \vspace{3pt}\\ f(s) & \text{if }0\leq s\leq1, \vspace{3pt}\\ f'(1)(s-1) & \text{if }s>1. \end{array} \right. \label{extension:f}
\end{align}
The comparison principle Lemma~\ref{LE:COMPARISON} then guarantees that $0\leq u(t,x)\leq1$ so that \eqref{CauchyPB} (with $f$ being defined only on $[0,1]$) makes sense.
Moreover, if $(J,f)$ also satisfies \eqref{fJdelta-pos}, then $(J,\widetilde{f})$ satisfies \eqref{fjcond}. Indeed, this is because
$$\inf_{\O}\mathcal{J}^{\delta}-\max_\R\widetilde f'=\inf_{\O}\mathcal{J}^{\delta}-\max_{[0,1]}f'>0. $$
In particular, Proposition~\ref{PROP:PARABOLIK2} applies. Therefore, the unique solution to the Cauchy problem \eqref{CauchyPB} with $(J,f)$ satisfying \eqref{C2}, \eqref{C3}, \eqref{fJdelta-pos} and $J\in \mathbb{B}_{1,\infty}^\alpha(\Omega;\delta)$ for some $\alpha\in(0,1)$ enjoys parabolic type estimates.
\end{remark}

\section{A priori bounds for entire solutions}\label{SE:APRIORI}

There are no a priori regularity estimates for entire solutions to \eqref{P}. In absence of specific assumptions on the datum $(f,J)$, entire solutions may even not be continuous at all. In this section, we provide some results which show that, under some circumstances, a parabolic-type estimate holds true.


\begin{lemma}[A priori estimates]\label{apriori:entire}
Assume \eqref{C1} and \eqref{C3}. Suppose that $f\in C^{0,1}\cap C^1(\R^N)$, that $J\in\mathbb{B}_{1,\infty}^\alpha(\Omega;\delta)$ for some $\alpha\in(0,1)$ and that \eqref{fjcond} holds.
Let $\phi\in C^{0,\alpha}(\R)$ and $c>0$.
Suppose that there exists an uniformly bounded measurable function $u:\R\times\Omega\to\R$ satisfying
\begin{numcases}{~}
\,\partial_tu=Lu+f(u)\, \mbox{ for a.e. }(t,x)\in\R\times\Omega, & \vspace{3pt} \label{P_ae}\\
\lim_{t\to-\infty}\underset{x\in\Omega}{\mathrm{ess}\,\mathrm{sup}}\,|u(t,x)-\phi(x_1+ct)|=0. & \label{unique2}
\end{numcases}
Then, there exists a constant $M>0$ (depending on $J$, $f'$, $\phi$, $\|u\|_{L^\infty(\R\times\Omega)}$, $\Omega$ and $\delta$) such that
\begin{align*}
\sup_{x\in\Omega}\hspace{0.1em}\|u(\cdot,x)\|_{C^{1,1}(\R)}+\sup_{t\in\R}\,\Big([u(t,\cdot)]_{C^{0,\alpha}(\overline{\Omega})}+[\partial_tu(t,\cdot)]_{C^{0,\alpha}(\overline{\Omega})}\Big)\leq M.
\end{align*}
\end{lemma}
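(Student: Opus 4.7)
The plan is to mimic the ODE argument from the proof of Proposition~\ref{PROP:PARABOLIK2}, adapted from the Cauchy-problem setting to the entire-solution setting by letting the initial time go to $-\infty$ instead of fixing it at some $t_0$. The crucial ingredient is the positive spectral gap $\gamma_* := \inf_{\Omega}\mathcal{J}^\delta - \max_{\R} f' > 0$ provided by assumption \eqref{fjcond}, which makes the linear ODE for the spatial difference $\Psi(t) := u(t,x_1) - u(t,x_2)$ exponentially contractive in $t$. This contractivity will let the value of $\Psi$ at a very negative time be ``forgotten'' in the limit $s \to -\infty$, so that the uniform boundedness of $u$ alone (with no regularity assumption on any initial data) is enough to yield Hölder regularity in $x$.

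Before doing this, I must upgrade the a.e. measurable solution $u$ to one with pointwise regularity in $t$. Since $\|u\|_\infty < \infty$, $L$ is bounded on $L^\infty(\Omega)$ (with norm $\|\mathcal{J}^\delta\|_\infty$, see \eqref{C3}), and $f$ is locally Lipschitz, the right-hand side $Lu + f(u)$ lies in $L^\infty(\R\times\Omega)$, so $u(\cdot,x)$ is Lipschitz in $t$ for a.e.~$x$. Rewriting the equation in the integral form \eqref{EQ:INTEGRAL}, starting from any $t_0$ at which $u(t_0,\cdot)$ is pointwise well-defined and bounded, produces a representative of $u$ that is continuous (and, by the bootstrap used in the proof of Proposition~\ref{PROP:EXIST}, $C^1$) in $t$ for every $x \in \overline{\Omega}$, with the PDE holding pointwise. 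This is the main technical obstacle of the proof: the null-set modifications must be compatible with the nonlocal operator $L$, which is the reason to work with the integral representation that propagates pointwise regularity forward in $t$.

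With this regularity in hand, fix $x_1, x_2 \in \overline{\Omega}$ with $x_1 \neq x_2$ and set $\Psi(t) := u(t,x_1) - u(t,x_2)$. Exactly as in the proof of Proposition~\ref{PROP:PARABOLIK2}, $\Psi$ solves the linear ODE
\[
\Psi'(t) + \gamma(t)\hspace{0.1em}\Psi(t) = H(t,x_1,x_2),
\]
with $|H(t,x_1,x_2)| \leq \beta := 2\|u\|_\infty [J]_{\mathbb{B}_{1,\infty}^\alpha(\Omega;\delta)} |x_1-x_2|^\alpha$ and $\gamma(t) \geq \gamma_*$. Variation of parameters gives, for any $s < t$,
\[
|\Psi(t)| \leq |\Psi(s)|\hspace{0.05em}e^{-\gamma_*(t-s)} + \frac{\beta}{\gamma_*}\hspace{0.05em}(1 - e^{-\gamma_*(t-s)}).
\]
Since $|\Psi(s)| \leq 2\|u\|_\infty$ uniformly in $s$, letting $s \to -\infty$ yields $|\Psi(t)| \leq \beta/\gamma_*$, hence $[u(t,\cdot)]_{C^{0,\alpha}(\overline{\Omega})} \leq 2\|u\|_\infty\gamma_*^{-1}[J]_{\mathbb{B}_{1,\infty}^\alpha(\Omega;\delta)}$, uniformly in $t \in \R$. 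This is the crux of the argument: it is here that the exchange between the contractivity coming from \eqref{fjcond} and the limit $s \to -\infty$ avoids any reference to initial-data regularity.

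The Hölder bound on $\partial_t u(t,\cdot)$ now follows verbatim from the decomposition used at the end of the proof of Proposition~\ref{PROP:PARABOLIK2}: starting from $\partial_t u = Lu + f(u)$, split $\partial_t u(t,x_1) - \partial_t u(t,x_2)$ into contributions coming from $J(\delta(x_1,\cdot)) - J(\delta(x_2,\cdot))$, $\mathcal{J}^\delta(x_1) - \mathcal{J}^\delta(x_2)$ and $f(u(t,x_1)) - f(u(t,x_2))$, and apply the Besov regularity $J \in \mathbb{B}_{1,\infty}^\alpha(\Omega;\delta)$ together with the spatial Hölder estimate already proved. For the $C^{1,1}$ bound in $t$ uniformly in $x$, differentiate the equation once more in $t$ to get $\partial_{tt}u = L(\partial_t u) + f'(u)\hspace{0.05em}\partial_t u$, whose $L^\infty$-norm is controlled by $(\|\mathcal{J}^\delta\|_\infty + \|f'\|_\infty)\|\partial_t u\|_\infty$; since $\|\partial_t u\|_\infty \leq 2\|u\|_\infty\|\mathcal{J}^\delta\|_\infty + \|f\|_{L^\infty([-\|u\|_\infty,\|u\|_\infty])}$ is finite, this shows $\partial_t u$ is globally Lipschitz in $t$ with a constant independent of $x$, completing the proof.
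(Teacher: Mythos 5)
Your proof is correct and the core argument genuinely differs from the paper's. Both proofs set up the same ODE $\Psi'(t) + \gamma(t)\Psi(t) = H(t,x_1,x_2)$ for the spatial difference (you have the correct sign; the ODE displayed in the paper's Proposition~\ref{PROP:PARABOLIK2} has a sign typo, as your derivation and the subsequent comparison with $v' = \beta - \gamma v$ make clear). Where you diverge is in how the ``initial data'' term is handled. The paper drops the contractive factor $e^{-\gamma_*(t - t_n)}$, so the Hölder seminorm of $u(t_n,\cdot)$ survives in the estimate and must be controlled; they do this by invoking the hypothesis \eqref{unique2} and the Hölder regularity of $\phi$, obtaining $\limsup_{n\to\infty}|u(t_n,z) - u(t_n,z')|/|z-z'|^\alpha \leq [\phi]_{C^{0,\alpha}(\R)}$. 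You instead retain the exponential factor from the variation-of-parameters formula; since $|\Psi(s)| \leq 2\|u\|_\infty$ uniformly, the factor $e^{-\gamma_*(t-s)}$ kills the initial-data contribution as $s \to -\infty$, and you never need \eqref{unique2} or any regularity of $\phi$. Your resulting constant $2\|u\|_\infty\gamma_*^{-1}[J]_{\mathbb{B}_{1,\infty}^\alpha(\Omega;\delta)}$ is sharper than the paper's $[\phi]_{C^{0,\alpha}} + 2M_0\gamma_*^{-1}[J]_{\mathbb{B}_{1,\infty}^\alpha(\Omega;\delta)}$ and, more to the point, is independent of $\phi$ — a strictly stronger conclusion than the lemma states. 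In short, you have found a simplification of the paper's proof that exploits the full strength of the spectral gap $\gamma_* > 0$.

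Two small caveats on bookkeeping. First, after upgrading the solution to a time-regular representative, $u(t,\cdot)$ is initially only defined for $x \in \Omega \setminus \mathcal{N}$ for some null set $\mathcal{N}$; the ODE argument must therefore be carried out for $x_1, x_2 \in \Omega \setminus \mathcal{N}$, with the extension to all of $\overline{\Omega}$ coming a posteriori from the uniform Hölder bound. Your proposal reverses the order in the phrase ``for every $x \in \overline{\Omega}$'', which isn't available until after the Hölder estimate is established. Second, $\|Lv\|_\infty \leq 2\|\mathcal{J}^\delta\|_\infty \|v\|_\infty$ rather than $\|\mathcal{J}^\delta\|_\infty\|v\|_\infty$, so a factor of $2$ is missing in your final $C^{1,1}$ constant. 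Neither affects the validity of the argument.
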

\begin{remark}
As it was already observed by Berestycki, Hamel and Matano in the local case \cite{Berestycki2009d}, the condition \eqref{unique2} plays the role of an ``initial condition" at $-\infty$.
\end{remark}
\begin{proof}
Let $u:\R\times\Omega\to\R$ be an uniformly bounded solution of \eqref{P_ae} with \eqref{unique2}, and let $M_0>0$ be such that ${\mathrm{ess}\,\mathrm{sup}}_{(t,x)\in\R\times\Omega}|u(t,x)|\leq M_0$.
Using the equation \eqref{P_ae} satisfied by $u$, the fact that $f$ is $C^1$ and the boundedness assumption on $u$, it follows directly using the equation \eqref{P_ae} satisfied by $u$ and the one obtained by differentiating \eqref{P_ae} with respect to $t$, that
${\mathrm{ess}\,\mathrm{sup}}_{x\in\Omega}\,\|u(\cdot,x)\|_{C^{1,1}(\R)}\leq M_0(1+\omega+|f(0)|+\omega(\omega+|f(0)|)),$
where $\omega$ is as in \eqref{DEF:OMEGA}. Thus, up to redefine $u$ in a set of measure zero, we may assume that $u(\cdot,x)$ is a $C^{1,1}(\R)$ function for a.e. $x\in\Omega$. Then, $u$ is defined for all $x\in\Omega\setminus \mathcal{N}$ and for all $t\in\R$ where $\mathcal{N}\subset\Omega$ is a set of Lebesgue measure zero. Notice that
\begin{align}
\partial_tu(t,x)\text{ is well-defined whenever }u(t,x)\text{ is,} \label{WellDefNess}
\end{align}
as follows from the equation satisfied by $u$. Let $(t_n)_{n\geq0}\subset(-\infty,0)$ be a decreasing sequence with $t_n\to-\infty$ as $n\to\infty$. Let us now fix some $n\geq0$, let $t>t_n$, let $z,z'\in\Omega\setminus\mathcal{N}$ with $z\neq z'$ and define $\Psi_u(t):=u(t,z)-u(t,z')$. At this stage, using \eqref{fjcond} and recalling \eqref{WellDefNess}, we may apply the same trick as in Proposition \ref{PROP:PARABOLIK2}, to get
\begin{align}
|\Psi_u(t)|\leq  \left(\frac{|u(t_n,z)-u(t_n,z')|}{|z-z'|^\alpha}+2M_0\gamma_*^{-1}[J]_{\mathbb{B}_{1,\infty}^\alpha(\Omega;\delta)}\right)|z-z'|^\alpha, \label{HOL:U3}
\end{align}
for all $t>t_n$, where $\gamma_*:=\inf_{\Omega}\mathcal{J}^\delta-\max_{\R}f'>0$. Next, using \eqref{unique2}, we have
$$ \limsup_{n\to\infty}\,\frac{|u(t_n,z)-u(t_n,z')|}{|z-z'|^\alpha}=\limsup_{n\to\infty}\,\frac{|\phi(z_1+c\hspace{0.05em}t_n)-\phi(z_1'+c\hspace{0.05em}t_n)|}{|z-z'|^\alpha}\leq [\phi]_{C^{0,\alpha}(\R)}. $$
Therefore, letting $n\to\infty$ in \eqref{HOL:U3} and recalling that $\Psi_u(t)=u(t,z)-u(t,z')$, we obtain
$$ |u(t,z)-u(t,z')|\leq  \big([\phi]_{C^{0,\alpha}(\R)}+2M_0\gamma_*^{-1}[J]_{\mathbb{B}_{1,\infty}^\alpha(\Omega;\delta)}\big)|z-z'|^\alpha. $$
Hence, $(u(t,\cdot))_{t\in\R}$ is uniformly H\"older continuous.
The corresponding inequality for $\partial_tu$ follows from the same arguments as in the proof of Proposition \ref{PROP:PARABOLIK2}.
\end{proof}
\begin{remark}
If $(J,f)$ satisfy \eqref{C2}, \eqref{C3}, \eqref{C4} and \eqref{fJdelta-pos}, then Lemma~\ref{apriori:entire} implies that every solution to \eqref{P_ae} ranging in $[0,1]$ and satisfying \eqref{unique2} (where $(\phi,c)$ is as in \eqref{C4}) satisfy parabolic type estimates. To see this it suffices to argue as in Remark~\ref{rem-f} by extending $f$ linearly outside $[0,1]$ and to recall that $\phi\in C^2(\R)$ (remember Remark~\ref{rem:boot}).
\end{remark}


\section{Time before reaching the obstacle}\label{SE:BEFORE}

In this section we prove the existence of an entire solution to \eqref{P} that is monotone increasing with $t$ and which converges to a planar wave $\phi(x_1+ct)$ as $t\to-\infty$. In addition, we show that this limit condition at $-\infty$ is somehow comparable to an initial value problem in that it determines a unique bounded entire solution.

More precisely, we prove the following
\begin{theo}\label{timebefore}
Assume \eqref{C1}, \eqref{C2}, \eqref{C3}, \eqref{C4} and \eqref{fJdelta-pos}. Suppose that $J\in \mathbb{B}_{1,\infty}^\alpha(\Omega;\delta)$ for some $\alpha\in(0,1)$. Then, there exists an entire solution $u\in C^{2}(\R,C^{0,\alpha}(\overline{\Omega}))$ to \eqref{P} such that
\begin{align}
0<u(t,x)<1 \, \text{ and }\,\partial_tu(t,x)>0 \, {\mbox{ for all }}(t,x)\in\R\times\overline{\Omega}. \label{qualit-u}
\end{align}
Moreover,
\begin{align}
\lim_{t\to-\infty}|u(t,x)-\phi(x_1+ct)|=0 \, {\mbox{ uniformly in }}x\in\overline{\Omega}, \label{origintimes}
\end{align}
and \eqref{origintimes} determines a unique bounded entire solution to \eqref{P}.
\end{theo}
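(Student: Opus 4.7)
The plan is a classical sub- and super-solution construction, combined with a Cauchy-approximation scheme and a sandwich argument for uniqueness. The scheme parallels Berestycki--Hamel--Matano \cite{Berestycki2009d} in the local case and Bates--Fife--Ren--Wang \cite{Bates1997} or Chen \cite{Chen1997} for bistable nonlocal fronts in the full space; the novelty lies in controlling the perturbation introduced by the obstacle.

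First, for small $\sigma,q>0$ and some $\lambda\in(0,\lambda_0)$ tied to the spectral gaps $-f'(0)$, $-f'(1)$ and to the exponential decay rate of $\phi$ at $-\infty$, I would define
$$\overline{u}(t,x):=\min\{\phi(x_1+ct+\sigma e^{\lambda t})+q\,e^{\lambda t},\,1\},\quad \underline{u}(t,x):=\max\{\phi(x_1+ct-\sigma e^{\lambda t})-q\,e^{\lambda t},\,0\},$$
and verify that on a half-line $(-\infty,T_0]\times\overline{\Omega}$, $\overline{u}$ (resp.\ $\underline{u}$) is a super- (resp.\ sub-) solution of \eqref{P}. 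Plugging these into $\partial_tw-Lw-f(w)$ and invoking the profile equation \eqref{C4}, the output splits into the standard bistable contribution (handled as in \cite{Bates1997,Chen1997,Coville2007d}) and the obstacle residue
$$R(t,x):=\int_\Omega J(\delta(x,y))\,\phi(y_1+ct\pm\sigma e^{\lambda t})\,\mathrm{d}y - J_1\ast\phi(x_1+ct\pm\sigma e^{\lambda t}) + (1-\mathcal{J}^\delta(x))\,\phi(x_1+ct\pm\sigma e^{\lambda t}).$$
Since $c>0$, as $t\to-\infty$ the argument of $\phi$ on any bounded neighbourhood of $K$ tends to $-\infty$, so the exponential decay of $\phi$ and $\phi'$ at $-\infty$ forces $R(t,x)=o(e^{\lambda t})$ uniformly in $x\in\overline{\Omega}$; far from $K$ the residue vanishes outright thanks to the compact support of $J$ and the quasi-Euclidean property $\delta(x,y)\geq|x-y|$.

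Next I would extend $f$ as in \eqref{extension:f}, and for each $n\geq n_0$ (chosen so that $-n_0\leq T_0$) let $u_n$ be the unique solution of the Cauchy problem \eqref{CauchyPB} on $[-n,\infty)\times\overline{\Omega}$ with initial datum $\underline{u}(-n,\cdot)$, furnished by Proposition~\ref{PROP:EXIST} and Remark~\ref{RK:Ext:Cont}. Lemma~\ref{LE:COMPARISON} gives $\underline{u}\leq u_n\leq\overline{u}$; comparing $u_{n'}$ and $u_n$ at time $t=-n$ for $n'>n$ shows that $(u_n)_n$ is non-decreasing on its common domain. Applying Lemma~\ref{LE:COMPARISON2} to $u_n$ (after checking that $\partial_tu_n(-n,\cdot)=L\underline{u}(-n,\cdot)+f(\underline{u}(-n,\cdot))\geq 0$ thanks to the identity $L_0\phi+f(\phi)=c\phi'\geq 0$, the smallness of $R$ and the spectral sign of $f'$ at $0,1$) yields $\partial_tu_n\geq 0$. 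The monotone limit $u:=\lim_n u_n$ then solves \eqref{P}, inherits $\underline{u}\leq u\leq\overline{u}$, and hence satisfies \eqref{origintimes}. Proposition~\ref{PROP:PARABOLIK2} (applied uniformly in $n$) combined with Lemma~\ref{apriori:entire} supplies the $C^2(\R,C^{0,\alpha}(\overline{\Omega}))$ regularity needed to pass to the limit in the equation, and the strong maximum principle, available thanks to the $J$-covering property in \eqref{C3}, upgrades the weak bounds $0\leq u\leq 1$ and $\partial_tu\geq 0$ to the strict inequalities \eqref{qualit-u}.

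For uniqueness, given two bounded entire solutions $u,v$ satisfying \eqref{origintimes} and $\eta>0$, I would pick $T_\eta\ll 0$ so that $\|u(T_\eta,\cdot)-\phi(\cdot_1+cT_\eta)\|_\infty$ and $\|v(T_\eta,\cdot)-\phi(\cdot_1+cT_\eta)\|_\infty$ are both below $\eta$, and then trap both solutions on $[T_\eta,\infty)\times\overline{\Omega}$ between shifted profiles of the same form as in Step~1, but propagated forward in time (i.e.\ with the exponential correction $e^{-\lambda(t-T_\eta)}$ rather than $e^{\lambda t}$), using Lemma~\ref{LE:COMPARISON}. The sandwich tightens to a bound of the form $|u-v|\leq C\eta$ on $\R\times\overline{\Omega}$; letting $\eta\to 0$ yields $u\equiv v$. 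The crux throughout, and the step I expect to be the main obstacle, is the uniform estimate of the obstacle residue $R(t,x)$ in Step~1: it requires balancing the exponential decay rate of $\phi$ at $-\infty$ against the parameter $\lambda$, and carefully exploiting the quasi-Euclidean property together with the compact support of $J$ to confine the support of $J(\delta(x,\cdot))-J_{\mathrm{rad}}(x-\cdot)$ to a bounded neighbourhood of $K$ where $\phi$ is exponentially small.
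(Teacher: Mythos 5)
Your existence construction — Fife--McLeod style sub/supersolutions $\phi(x_1+ct\mp\sigma e^{\lambda t})\mp q\,e^{\lambda t}$, truncated at $0$ and $1$ — is a genuine and workable alternative to the paper's ``reflected-wave'' construction $w^{\pm}$ in \eqref{wplus}--\eqref{wmoins}. The paper deliberately makes $w^+$ constant in $x$ and $w^-\equiv 0$ on $\{x_1<0\}$ (after translating so that $K\subset\{x_1<-R_J\}$), so that $L$ contributes essentially nothing near the obstacle; you instead absorb the obstacle residue into the $q\lambda e^{\lambda t}$ slack, which works provided $\lambda<\lambda_0 c$ where $\lambda_0$ is the exponential decay rate of $\phi$ at $-\infty$. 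You correctly flag the residue estimate as ``the main obstacle''. So the existence part is sound in outline and is closer in spirit to the classical bistable-front literature than the paper's route.

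The uniqueness step, however, has a genuine gap. You propose to trap two solutions on $[T_\eta,\infty)\times\overline{\Omega}$ between forward-propagated planar profiles $\phi(x_1+ct\mp\sigma\eta(1-e^{-\lambda(t-T_\eta)}))\mp q\eta\, e^{-\lambda(t-T_\eta)}$. For these to be sub/supersolutions on \emph{all} of $[T_\eta,\infty)\times\overline{\Omega}$ you need the obstacle residue $R(t,x)$ to be dominated by the available slack, which is $O(\eta\,e^{-\lambda(t-T_\eta)})$. The estimate $R=o(e^{\lambda t})$ from your Step~1 relies on $\phi$ being exponentially small near $K$; this is true as $t\to-\infty$ and again as $t\to+\infty$ (where $1-\phi$ is small near $K$), but fails precisely when the front is crossing the obstacle: for $x$ in a fixed neighbourhood of $K$ and $t\approx -x_1/c$, the argument of $\phi$ is $O(1)$, $\phi$ is bounded away from $0$ and $1$, and $R(t,x)$ is order one. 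Meanwhile $e^{-\lambda(t-T_\eta)}$ is already tiny in that range because $T_\eta\ll 0$ is fixed once $\eta$ is. So the sub/supersolution inequalities fail across this intermediate time window, Lemma~\ref{LE:COMPARISON} cannot be invoked through it, and $|u-v|\leq C\eta$ on all of $\R\times\overline{\Omega}$ is not justified. The paper's uniqueness (Step~4, deferring to Section~3 of \cite{Berestycki2009d}) instead slides in time, comparing $u_1(t,\cdot)$ with the time-translate $u_2(t+\tau,\cdot)$: both comparands are exact solutions of \eqref{P}, so no obstacle residue ever appears and the argument survives the front's passage past $K$. That structural fact — compare solution against solution, not solution against planar profile — is the missing ingredient in your uniqueness sketch.
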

We will rely on a strategy already used in \cite{Berestycki2009d}. That is, we will construct a continuous subsolution $w^-$ and a continuous supersolution $w^+$ to \eqref{P} satisfying $w^-\leq w^+$ and we will use these functions to construct an entire solution to \eqref{P} satisfying the desired requirements.

\subsection{Preliminaries} Let us start by collecting some known facts on the travelling waves defined at \eqref{C4}. Let $(\phi,c)$ be the unique (up to shifts) \emph{increasing} solution of
\begin{align}\left\{
\begin{array}{c}
c\,\phi'=J_1\ast \phi-\phi+f(\phi) \text{ in }\R, \vspace{3pt}\\
\displaystyle\lim_{z\to +\infty}\phi(z)=1,\, \lim_{z\to -\infty}\phi(z)=0,
\end{array}
\right. \label{couplephic}
\end{align}
where $J_1$ is given by \eqref{J1}. In the remaining part of the paper we shall assume, for simplicity, that the function $\phi$ is normalized by
\begin{align}
\phi(0)=\theta. \label{NORMALISATION:PHI}
\end{align}
Notice that \eqref{couplephic} and \eqref{NORMALISATION:PHI} determine $\phi$ uniquely.

Let us now introduce two numbers which will play an important role in the sequel. We define $\lambda,\mu>0$ as the respective \emph{positive} solutions of
\begin{align}
\int_{\R}J_1(h)\hspace{0.05em}e^{\lambda h}\hspace{0.1em}\mathrm{d}h -1-c\lambda +f'(0)=0, \label{EQ:CHARAC}
\end{align}
and
\begin{align}
\int_{\R}J_1(h)\hspace{0.05em}e^{\mu h}\hspace{0.1em}\mathrm{d}h-1-c\mu +f'(1)=0. \label{EQ:CHARAC2}
\end{align}
Since $f$ and $J$ satisfy \eqref{C2} and \eqref{C3}, respectively, and since $J$ is compactly supported, the existence of such $\lambda$ and $\mu$ is a simple exercise (see e.g. \cite[Lemma 2.5]{Sun2011}). We will sometimes refer to \eqref{EQ:CHARAC} and \eqref{EQ:CHARAC2} as the \emph{characteristic equation} satisfied by $\lambda$ and $\mu$.

An important property of $\lambda$ and $\mu$ is that they ``encode" the asymptotic behaviour of $\phi$ and $\phi'$. More precisely:
\begin{lemma}\label{PHI:ASYM}
Assume \eqref{C2}, \eqref{C3} and \eqref{C4}. Let $(\phi,c)$ be a solution to \eqref{couplephic} and let $\lambda,\mu>0$ be the respective positive solutions to \eqref{EQ:CHARAC} and \eqref{EQ:CHARAC2}. Then, it holds that
\begin{align*}
A_0:=\lim_{z\to-\infty}\,e^{-\lambda z}\phi(z)=\lim_{z\to-\infty}\,\frac{e^{-\lambda z}\phi'(z)}{\lambda}\in(0,\infty), 
\end{align*}
and
\begin{align*}
A_1:=\lim_{z\to\infty}\,e^{\mu z}(1-\phi(z))=\lim_{z\to\infty}\,\frac{e^{\mu z}\phi'(z)}{\mu}\in(0,\infty). 
\end{align*}
Moreover,
\begin{align*}
\lim_{z\to-\infty}\,e^{-\lambda z}J_1\ast\phi(z)=A_0\int_{\R}J(h)\hspace{0.05em}e^{\lambda h}\hspace{0.1em}\mathrm{d}h.
\end{align*}
\end{lemma}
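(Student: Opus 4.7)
The plan is to establish the three claims as $z\to-\infty$; the statements at $z\to+\infty$ then follow by applying the same argument to $\Phi(z) := 1-\phi(-z)$, which satisfies a traveling-wave problem of the same form with nonlinearity $\tilde f(s) := -f(1-s)$ (so $\tilde f'(0) = f'(1) < 0$) and with $\mu$ playing the role of $\lambda$. Existence of $\lambda$ and $\mu$ as positive roots of \eqref{EQ:CHARAC}--\eqref{EQ:CHARAC2} is standard, following from strict convexity of the left-hand sides together with their negative values at $\sigma = 0$ and their divergence as $\sigma \to +\infty$ (using the compact support of $J_1$).

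I would first establish two-sided exponential bounds $c_0\, e^{\lambda z} \le \phi(z) \le C_0\, e^{\lambda z}$ for $z$ sufficiently negative. Since $f\in C^{1,1}$ and $f'(0)<0$, for any $\eps > 0$ there exists $\theta_0 > 0$ with $f(s) \le (f'(0)+\eps)s$ on $[0,\theta_0]$; the characteristic equation \eqref{EQ:CHARAC} then makes $\overline{\phi}_A(z):=A\,e^{\lambda z}$ a super-solution of the linearized equation on the half-line where $\phi$ is small, and a sliding comparison (taking $A$ minimal so that $\overline\phi_A \ge \phi$) yields the upper bound. The matching lower bound follows from a sub-solution of the form $a e^{\lambda z} - b e^{(\lambda+\eta)z}$ with $\eta > 0$ small enough to control the linearization error, combined with strict positivity of $\phi$ on any compact interval.

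The main obstacle is upgrading these bounds into a genuine limit. Setting $\psi(z) := e^{-\lambda z}\phi(z)$, using $f(\phi) - f'(0)\phi = O(\phi^2)$ and the evenness of $J_1$ (which gives $\int_\R J_1(h)e^{-\lambda h}dh = \int_\R J_1(h)e^{\lambda h}dh$), one derives
\[
c\,\psi'(z) = \int_\R J_1(h)\,e^{-\lambda h}\bigl[\psi(z-h) - \psi(z)\bigr]\,dh + R(z), \qquad R(z) = O(e^{\lambda z}).
\]
Setting $a := \liminf_{z\to-\infty}\psi(z)$ and $b := \limsup_{z\to-\infty}\psi(z)$, both in $(0,\infty)$ by the previous step, I would pick $z_k \to -\infty$ with $\psi(z_k)\to b$; the monotonicity $\phi' > 0$ (equivalently $\psi' + \lambda\psi > 0$) together with the displayed equation force the nonlocal averages of $\psi(z_k-h) - \psi(z_k)$ to be asymptotically nonnegative, and combined with the upper bound $\psi(z_k - h) \le b + o(1)$ this forces $\psi(z_k - h) \to b$ uniformly on $\mathrm{supp}(J_1)$. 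Iterating across translates of $\mathrm{supp}(J_1)$ yields $a = b =: A_0 \in (0,\infty)$. A cleaner but less elementary alternative is Ikehara's Tauberian theorem applied to the Laplace transform of $\phi$, which is meromorphic in a neighborhood of $\{\mathrm{Re}\,p = -\lambda\}$ with a simple pole at $p = -\lambda$ whose residue is $A_0$.

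Once $\psi(z)\to A_0$ is obtained, the remaining limits follow easily. The convolution limit is derived by dominated convergence from
\[
e^{-\lambda z}\,J_1\ast\phi(z) = \int_\R J_1(h)\,e^{-\lambda h}\bigl[\,e^{-\lambda(z-h)}\phi(z-h)\,\bigr]\,dh,
\]
using the uniform bound $e^{-\lambda(z-h)}\phi(z-h) \le C_0$ and the compact support of $J_1$, which yields the limit $A_0\int_\R J_1(h)e^{-\lambda h}dh = A_0 \int_\R J_1(h)e^{\lambda h}dh$. Finally, rewriting $c\,e^{-\lambda z}\phi'(z) = e^{-\lambda z}J_1\ast\phi(z) - \psi(z) + e^{-\lambda z}f(\phi(z))$ and using $e^{-\lambda z}f(\phi(z)) \to f'(0) A_0$ (since $f(s)/s \to f'(0)$ as $s\to0$), the characteristic equation \eqref{EQ:CHARAC} yields $e^{-\lambda z}\phi'(z)/\lambda \to A_0$, as claimed. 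Transposing the whole argument to $\Phi$ completes the proof at $+\infty$.
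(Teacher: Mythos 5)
The paper's own proof is very short: it cites \cite[Theorem~2.7]{Sun2011} for the asymptotics of $\phi$ and $\phi'$, and then proves the $J_1\ast\phi$ claim by the exact same rewriting $e^{-\lambda z}J_1\ast\phi(z)=\int_\R J_1(h)\hspace{0.05em}e^{\lambda h}\hspace{0.05em}e^{-\lambda(z+h)}\phi(z+h)\,\mathrm{d}h$ followed by dominated convergence that you use. So your last paragraph matches the paper exactly; the rest of your write-up is a self-contained proof of what the paper outsources, which is a genuinely different (and more ambitious) route. Unfortunately that self-contained part has two problems.

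First, the reflection is wrong. If $\Phi(z):=1-\phi(-z)$, then a direct computation using the evenness of $J_1$ and $\int J_1=1$ gives $c\,\Phi'=\Phi-J_1\ast\Phi+f(1-\Phi)$, \emph{not} $c\,\Phi'=J_1\ast\Phi-\Phi+\tilde f(\Phi)$: the nonlocal term carries the wrong sign, which is equivalent to the travelling wave moving with speed $-c$ rather than $c$, so the $-\infty$ argument cannot simply be re-applied to $\Phi$. The substitution that does preserve the form is $\Phi(z):=1-\phi(z)$ (no reflection), with $\tilde f(s):=-f(1-s)$, and one then studies the decay of $\Phi$ as $z\to+\infty$; linearising there reproduces \eqref{EQ:CHARAC2}.

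Second, the core step, upgrading the two-sided bounds $c_0e^{\lambda z}\le\phi(z)\le C_0e^{\lambda z}$ to an actual limit, is not justified. You claim that $\phi'>0$, i.e.\ $\psi'+\lambda\psi>0$, forces the nonlocal averages in the equation $c\,\psi'(z)=\int J_1(h)e^{-\lambda h}\bigl[\psi(z-h)-\psi(z)\bigr]\,\mathrm{d}h+O(e^{\lambda z})$ to be asymptotically nonnegative along a limsup-achieving sequence $z_k$. But $\psi'+\lambda\psi>0$ only gives $\psi'(z_k)>-\lambda\psi(z_k)\to-\lambda b<0$, i.e.\ the nonlocal term is bounded below by the strictly negative quantity $-c\lambda b+o(1)$; the monotonicity of $\phi$ by itself does not give $\psi'(z_k)\ge 0$. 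One can instead use the boundedness of $\psi$ to select $z_k$ along which $\psi'(z_k)\to 0^+$, but even then the conclusion one extracts from reverse Fatou is only $\limsup_k\psi(z_k-h)=b$ for a.e.\ $h\in\mathrm{supp}(J_1)$, not a genuine limit, so the announced iteration ``across translates of $\mathrm{supp}(J_1)$'' does not close without an additional compactness step (Arzelà--Ascoli and a Liouville argument for the linearised nonlocal equation), or, as you yourself note, an appeal to Ikehara's Tauberian theorem. The Ikehara route (or the compactness route) is what the cited reference actually uses, and is the right way to make this rigorous. The two-sided exponential bounds themselves also require a small bootstrap that your sketch omits: the perturbed characteristic roots $\lambda_{\pm\eps}$ straddle $\lambda$, so the naive sliding comparison only gives $c_\eps e^{(\lambda+\eta')z}\le\phi(z)\le C_\eps e^{(\lambda-\eta)z}$; one then has to feed the resulting $O(\phi^2)=O(e^{2(\lambda-\eta)z})$ error back into the equation to recover the exact exponent.

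Your derivation of the $\phi'$ limit from the equation and the characteristic relation, and the convolution limit by dominated convergence, are correct and coincide with the paper's argument.
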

\begin{proof}
See e.g. Li \emph{et al.} \cite[Theorem 2.7]{Sun2011} for the proof of the  behaviour of $\phi$ and $\phi'$. 
To obtain the asymptotic of $J_1\ast \phi(z)$, it suffices to observe that
$$e^{-\lambda z}J_1\ast \phi(z)=\int_{\R}J_1(h)\hspace{0.05em}e^{\lambda h}\hspace{0.05em}e^{-\lambda (z+h)}\hspace{0.05em}\phi(z+h)\hspace{0.1em}\mathrm{d}h. $$
Now, since, for all $h\in\R$, we have $e^{-\lambda(z+h)}\phi(z+h)\to A_0$ as $z\to-\infty$ and since $J_1$ is compactly supported, the asymptotic behaviour of $J_1\ast\phi(z)$ follows by a simple application of the Lebesgue dominated convergence theorem.
\end{proof}
A rather direct consequence of Lemma \ref{PHI:ASYM} is that it ensures the existence of numbers $\alpha_0,\beta_0,\gamma_0,\delta_0>0$ such that
\begin{align}
\alpha_0 e^{\lambda z}\leq \phi(z)\leq \beta_0 e^{\lambda z}\text{ and }\gamma_0e^{\lambda z}\leq \phi'(z)\leq \delta_0 e^{\lambda z} {\mbox{ if }}z\leq0, \label{EST:PHI:NEG}
\end{align}
and numbers $\alpha_1,\beta_1,\gamma_1,\delta_1>0$ such that
\begin{align}
\alpha_1 e^{-\mu z}\leq 1-\phi(z)\leq \beta_1 e^{-\mu z}\text{ and }\gamma_1 e^{-\mu z}\leq \phi'(z)\leq \delta_1 e^{-\mu z} {\mbox{ if }}z>0. \label{EST:PHI:POS}
\end{align}

Finally, let us state a lemma that guarantees that $\phi$ is convex near $-\infty$.
\begin{lemma}\label{LE:CONVEXITE:PHI}
Let $(\phi,c)$ be a solution to \eqref{couplephic}. Then, there exists some $z_*<0$ such that
$$ \phi''(z)\geq \frac{\lambda}{8}\,\phi'(z) \text{ for any } z\leq z_*, $$
where $\lambda$ is the positive solution to \eqref{EQ:CHARAC}.
In particular,
$\phi$ is convex in $(-\infty,z_*]$ and we have
$$ \phi\left(\frac{z_1+z_2}{2}\right)\leq\frac{\phi(z_1)+\phi(z_2)}{2} \text{ for any }z_1,z_2\leq z_*. $$
\end{lemma}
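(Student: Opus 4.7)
}

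My plan is to establish the stronger asymptotic statement $\phi''(z)/\phi'(z) \to \lambda$ as $z \to -\infty$; the pointwise lower bound $\phi'' \geq (\lambda/8)\phi'$ for $z \leq z_*$ then follows immediately, and since $\phi' > 0$ this gives convexity on $(-\infty,z_*]$ and, in turn, the desired midpoint inequality.

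First, since $f \in C^{1,1}([0,1])$ and $\phi \in C^2(\R)$ (cf.\ Remark~\ref{rem:boot}), I can differentiate the travelling wave equation \eqref{couplephic} in $z$ to get
\begin{equation*}
c\,\phi''(z) = J_1 \ast \phi'(z) - \phi'(z) + f'(\phi(z))\,\phi'(z) \quad \text{in } \R.
\end{equation*}
The strategy is to multiply by $e^{-\lambda z}$ and identify the limit of each term as $z \to -\infty$, using that $e^{-\lambda z}\phi'(z) \to A_0\lambda$ from Lemma~\ref{PHI:ASYM} and the characteristic equation \eqref{EQ:CHARAC}.

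The only nontrivial term is the convolution. Writing
\begin{equation*}
e^{-\lambda z}\,J_1\ast\phi'(z) = \int_{\R} J_1(h)\,e^{-\lambda h}\,\bigl[e^{-\lambda(z-h)}\phi'(z-h)\bigr]\,\mathrm{d}h,
\end{equation*}
I observe that, since $J_1$ is compactly supported, for each $h$ in its support $z-h \to -\infty$ as $z \to -\infty$, so by Lemma~\ref{PHI:ASYM} the bracket converges pointwise to $A_0\lambda$. The upper bound $\phi'(z-h) \leq \delta_0 e^{\lambda(z-h)}$ from \eqref{EST:PHI:NEG} (valid for $z-h \leq 0$, hence for $z$ sufficiently negative) provides a dominating function $\delta_0\,J_1(h)\,e^{-\lambda h}$ (integrable since $J_1$ is compactly supported), so the dominated convergence theorem yields
\begin{equation*}
e^{-\lambda z}\,J_1\ast\phi'(z) \xrightarrow[z\to-\infty]{} A_0\lambda \int_{\R} J_1(h)\,e^{-\lambda h}\,\mathrm{d}h = A_0\lambda\bigl(1+c\lambda - f'(0)\bigr),
\end{equation*}
where the last equality uses the evenness of $J_1$ (to replace $e^{-\lambda h}$ by $e^{\lambda h}$) and the characteristic equation \eqref{EQ:CHARAC}. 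Combining this with $e^{-\lambda z}\phi'(z) \to A_0\lambda$ and $f'(\phi(z)) \to f'(0)$ in the differentiated equation gives
\begin{equation*}
c\,e^{-\lambda z}\phi''(z) \xrightarrow[z\to-\infty]{} A_0\lambda\bigl(1+c\lambda - f'(0)\bigr) - A_0\lambda + f'(0)\,A_0\lambda = A_0\,c\,\lambda^2,
\end{equation*}
so $e^{-\lambda z}\phi''(z) \to A_0\lambda^2$.

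Putting the two asymptotics together,
\begin{equation*}
\frac{\phi''(z)}{\phi'(z)} = \frac{e^{-\lambda z}\phi''(z)}{e^{-\lambda z}\phi'(z)} \xrightarrow[z\to-\infty]{} \lambda,
\end{equation*}
so there exists $z_* < 0$ with $\phi''(z) \geq (\lambda/8)\,\phi'(z)$ (in fact any fraction smaller than $1$ would do) for every $z \leq z_*$. Since $\phi' > 0$, this forces $\phi'' \geq 0$ on $(-\infty, z_*]$, so $\phi$ is convex there, and the midpoint inequality
$\phi\bigl((z_1+z_2)/2\bigr) \leq (\phi(z_1)+\phi(z_2))/2$ for $z_1,z_2 \leq z_*$ is standard.

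The only real obstacle is the convolution asymptotic, which is handled by the dominated convergence argument above; the rest is a bookkeeping exercise combining Lemma~\ref{PHI:ASYM}, the evenness of $J_1$, and the characteristic equation \eqref{EQ:CHARAC}.
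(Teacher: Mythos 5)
Your proof is correct and uses essentially the same ingredients as the paper's — differentiating the travelling-wave equation, the exponential asymptotics of $\phi'$ from Lemma~\ref{PHI:ASYM}, evenness of $J_1$, and the characteristic equation \eqref{EQ:CHARAC}. The only real difference is organizational: you compute the exact limit $\phi''(z)/\phi'(z)\to\lambda$ (which incidentally bypasses the $C^{1,1}$-Lipschitz bound $|f'(\phi)-f'(0)|\leq C\phi$ that the paper invokes, needing only continuity of $f'$), whereas the paper assembles a one-sided lower bound on $c\phi''$ term by term with an explicit $\eps$; both deliver the stated inequality with room to spare.
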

\begin{proof}
Let us first observe that, since $f\in C^{1,1}([0,1])$, by a classical bootstrap argument we automatically get that $\phi\in C^2(\R)$ and that
\begin{align}
c\,\phi''(z)=J_1\ast\phi'(z)-\phi'(z)+\phi'(z)f'(\phi(z)) \text{ for any }z\in\R. \label{Cvx:1}
\end{align}
The assumption that $f\in C^{1,1}([0,1])$ further gives that $|f'(\phi(z))-f'(0)|\leq C\,\phi(z)$ for some $C>0$ (depending on $f$) and for any $z\in\R$. In particular, we have
\begin{align}
f'(\phi(z))\geq f'(0)-C\hspace{0.1em}\phi(z) \text{ for any } z\in\R. \label{Cvx:2}
\end{align}
By Lemma~\ref{PHI:ASYM}, we know that, for all $\eps>0$, there exists $R_\eps>0$ such that
\begin{align}
\lambda\hspace{0.1em}(A_0-\eps)\hspace{0.1em}e^{\lambda z}\leq \phi'(z)\leq \lambda\hspace{0.1em}(A_0+\eps)\hspace{0.1em}e^{\lambda z},  \label{A0epsilon}
\end{align}
for all $z\leq -R_\eps$.
Hence, using \eqref{Cvx:1}, \eqref{Cvx:2} and \eqref{A0epsilon}, we obtain that
\begin{align*}
c\,\phi''(z)&\geq \lambda\hspace{0.05em}(A_0-\eps)\hspace{0.05em}e^{\lambda z}\int_{\R}J_1(h)\hspace{0.05em}e^{\lambda h}\hspace{0.1em}\mathrm{d}h+\lambda\hspace{0.05em}(A_0+\eps)\hspace{0.05em}e^{\lambda z}\left(f'(0)-C(A_0+\eps)\,e^{\lambda z}-1\right),
\end{align*}
where we have used that $J_1$ is even.
By rearranging the terms we may rewrite this as
\begin{align*}
c\,\phi''(z)&\geq \lambda\hspace{0.05em}A_0\hspace{0.05em} e^{\lambda z}\left(f'(0)-1+\int_{\R}J_1(h)\hspace{0.05em}e^{\lambda h}\hspace{0.1em}\mathrm{d}h\right)+\lambda\hspace{0.05em}\eps\hspace{0.05em}e^{\lambda z} \left(f'(0)-1-\int_{\R}J_1(h)\hspace{0.05em}e^{\lambda h}\hspace{0.1em}\mathrm{d}h\right) \\
&\qquad -C\hspace{0.05em}\lambda(A_0+\eps)^2e^{2\lambda z}.
\end{align*}
Using now the characteristic equation \eqref{EQ:CHARAC}, we find that
\begin{align*}
c\,\phi''(z)&\geq c\hspace{0.05em}\lambda^2\hspace{0.05em}A_0\hspace{0.05em} e^{\lambda z}+\lambda\hspace{0.05em}\eps\hspace{0.05em}e^{\lambda z} \left(c\hspace{0.05em}\lambda-2\int_{\R}J_1(h)\hspace{0.05em}e^{\lambda h}\hspace{0.1em}\mathrm{d}h\right)-C\hspace{0.05em}\lambda(A_0+\eps)^2e^{2\lambda z}.
\end{align*}
Choosing $\eps$ small enough, say $0<\eps<\eps_0$, where
$$ \eps_0:=A_0\,\min\left\{1, \frac{c\hspace{0.05em}\lambda}{2}\left|c\hspace{0.05em}\lambda-2\int_{\R}J_1(h)\hspace{0.05em}e^{\lambda h}\hspace{0.1em}\mathrm{d}h\right|^{-1}\right\}, $$
we obtain that
\begin{align*}
c\,\phi''(z)&\ge \frac{c\hspace{0.05em}\lambda^2}{2}\hspace{0.05em}A_0\hspace{0.05em} e^{\lambda z}-C\hspace{0.05em}\lambda(A_0+\eps)^2e^{2\lambda z}\geq \lambda\hspace{0.05em}(A_0+\eps)\hspace{0.05em} e^{\lambda z}\Big(\frac{c\hspace{0.05em}\lambda}{4}-2\hspace{0.05em}CA_0\hspace{0.05em}e^{\lambda z}\Big), 
\end{align*}
for all $z\leq-R_\eps$.
Up to choose $R_\eps>0$ larger, we may assume that $2\hspace{0.05em}CA_0e^{\lambda z}\leq c\lambda/8$ for all $z\leq-R_\eps$.
Therefore, recalling \eqref{A0epsilon}, we finally obtain that
$$ \phi''(z)\ge \frac{\lambda^2}{8}\hspace{0.1em}(A_0+\eps)\hspace{0.1em}e^{\lambda z}\geq \frac{\lambda}{8}\,\phi'(z) \text{ for any }z\leq -R_\eps, $$
which thereby completes the proof.
\end{proof}
\begin{remark}
Observe that the same arguments also yield the existence of some $z^*>0$ such that $\phi$ is concave in $[z^*,\infty)$.
\end{remark}

\subsection{Construction of sub- and supersolutions} Let us introduce some necessary notations. Let $k>0$ be a positive number to be fixed later on. We set
\begin{align}
\xi(t):=\frac{1}{\lambda}\log\left(\frac{1}{1-c^{-1}ke^{\lambda ct}}\right) \mbox{ for }t\in(-\infty,T), \label{xi(t)}
\end{align}
where $c$ is the speed of the travelling wave $\phi$, $\lambda$ is given by \eqref{EQ:CHARAC} and
\begin{align}
T:=\frac{1}{\lambda c}\log\left(\frac{c}{k}\right) . \label{T:xi}
\end{align}
To shorten our notations it will be convenient to set
\begin{align}
M^{\pm}(t):=c\hspace{0.1em}t\pm\xi(t). \label{M(t)}
\end{align}
Readily, we observe that $\xi(-\infty)=0$ and $\dot{\xi}(t)=k\hspace{0.05em}e^{\lambda M^+(t)}$. We now define two functions, $w^+$ and $w^-$, in $\R^N\times(-\infty,T_1]$ for some $T_1\in(-\infty,T)$, by
\begin{align}
w^+(t,x)=\left\{
\begin{array}{l l}
\phi(x_1+M^+(t))+\phi(-x_1+M^+(t)) & (x_1\geq0), \vspace{3pt}\\
2\phi(M^+(t)) & (x_1<0),
\end{array}
\right. \label{wplus}
\end{align}
and
\begin{align}
w^-(t,x)=\left\{
\begin{array}{l l}
\phi(x_1+M^-(t))-\phi(-x_1+M^-(t)) & (x_1\geq0), \vspace{3pt}\\
0 & (x_1<0).
\end{array}
\right. \label{wmoins}
\end{align}
Notice that, if $-\infty<T_1\ll T$, then $w^+$ and $w^-$ satisfy
\begin{align*}
0\leq w^-< w^+\leq 1 \mbox{ for any }(t,x)\in(-\infty,T_1]\times\R^N, 
\end{align*}
the last inequality being a consequence of the weak maximum principle \cite[Lemma 4.1]{Brasseur2019}.

We now claim the following
\begin{lemma}\label{subsuper}
Let $R_J>0$ be such that $\mathrm{supp}(J)\subset[0,R_J]$. Assume \eqref{C1}, \eqref{C2}, \eqref{C3}, \eqref{C4} and suppose that $K\subset\R^N$ is such that
\begin{align}
K\subset\left\{x_1<-R_J\right\}.    \label{loinK}
\end{align}
Then, for $k>0$ sufficiently large, $w^+$ and $w^-$ are, respectively, a supersolution and a subsolution to \eqref{P} in the time range $t\in(-\infty,T_1]$ for some $T_1\in(-\infty,T)$.
\end{lemma}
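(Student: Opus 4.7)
The strategy is a direct verification of the super/subsolution inequalities, carried out region by region, with a key geometric simplification of $L$ coming from the hypothesis \eqref{loinK}. The proof is structured around three observations: the reduction of $L$ to a plain convolution away from $K$, the use of the 1D profile equation \eqref{couplephic} combined with the Fubini-type identity $J_{\text{rad}}*\phi(\cdot_1+M^+) = J_1*\phi(\cdot_1+M^+)$, and the exponential decay estimates \eqref{EST:PHI:NEG}-\eqref{EST:PHI:POS} which allow absorption of the nonlinear error terms by choosing $k$ large.

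First, since $\mathrm{supp}(J) \subset [0,R_J]$ and $K \subset \{x_1 < -R_J\}$, for $x \in \overline{\Omega}$ with $x_1 \geq 0$ the ball $\{|y-x|\leq R_J\}$ does not meet $K$, and $\delta(x,y) = |x-y|$ on it (the segment $[x,y]$ stays in $\{x_1 \geq -R_J\} \subset \Omega$). Hence $L u(x) = J_{\text{rad}} * u(x) - u(x)$ for any bounded $u$ extended by its natural definition to $\R^N$. Symmetrically, when $x_1 \leq -R_J$ any $y$ contributing to the integral satisfies $y_1 \leq 0$.

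Next, on the region $\{x_1 \geq 0\}$ I will verify the supersolution inequality for $w^+$. A direct computation gives
\begin{equation*}
\partial_t w^+(t,x) = (c+\dot\xi(t))\bigl(\phi'(x_1+M^+) + \phi'(-x_1+M^+)\bigr).
\end{equation*}
Introduce the auxiliary function $v(t,y) := \phi(y_1+M^+) + \phi(-y_1+M^+)$ defined on $\R^N$. By Lemma~\ref{LE:CONVEXITE:PHI}, if $T_1$ is so negative that $M^+(t)+R_J \leq z_*$ for $t \leq T_1$, then $\phi$ is convex on $[M^+-R_J, M^++R_J]$, which yields $2\phi(M^+) \leq \phi(y_1+M^+)+\phi(-y_1+M^+)$ for $|y_1| \leq R_J$; equivalently $w^+ \leq v$ pointwise, with equality on $\{y_1 \geq 0\}$. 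Since $w^+(x)=v(x)$ for $x_1 \geq 0$, monotonicity of convolution against the nonnegative kernel $J_{\text{rad}}$ gives $L w^+(x) \leq J_{\text{rad}}*v(x) - v(x)$. Applying Fubini to separate the $y_1$-variable and using the 1D equation $c\phi' = J_1 * \phi - \phi + f(\phi)$ twice, this reduces to
\begin{equation*}
Lw^+(x) \leq c\bigl(\phi'(x_1+M^+)+\phi'(-x_1+M^+)\bigr) - f(a) - f(b),
\end{equation*}
with $a := \phi(x_1+M^+)$, $b := \phi(-x_1+M^+)$. Consequently
\begin{equation*}
\partial_t w^+ - Lw^+ - f(w^+) \geq \dot\xi(t)\bigl(\phi'(x_1+M^+)+\phi'(-x_1+M^+)\bigr) + E(a,b),
\end{equation*}
where $E(a,b) := f(a)+f(b)-f(a+b)$.

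The delicate step is to verify that this lower bound is nonnegative for $k$ large. Writing $E(a,b) = \int_0^b (f'(s)-f'(a+s))\,\mathrm{d}s$ and using the Lipschitz regularity of $f'$ from \eqref{C2} yields the crucial estimate $|E(a,b)| \leq L_{f'}\, a\, b$. A case analysis based on the sign of $z_1:=x_1+M^+$ and $z_2:=-x_1+M^+$, together with $\dot\xi(t) = k\,e^{\lambda M^+(t)}$ and the sharp exponential bounds of \eqref{EST:PHI:NEG}-\eqref{EST:PHI:POS}, then shows that $\dot\xi(\phi'(z_1)+\phi'(z_2))$ controls $L_{f'}\,\phi(z_1)\phi(z_2)$ provided $k$ is chosen sufficiently large (compared with $L_{f'}\beta_0^2/\gamma_0$ in the symmetric regime where $z_1,z_2 \leq 0$, and with the analogous constant involving $\mu$ in the transitional regime). \emph{This exponential balancing is the main technical obstacle}: the two $\phi'$ contributions must jointly dominate $|E|$ uniformly in $x_1 \geq 0$, and the sharp asymptotics of $\phi$ together with the characteristic equation~\eqref{EQ:CHARAC} are what make this possible.

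For $x_1 < 0$, $w^+$ is the constant $2\phi(M^+)$ in $x$, so $\partial_t w^+ = 2(c+\dot\xi)\phi'(M^+)$. When $x_1 \leq -R_J$, the first observation forces $Lw^+(x) = 0$, and the bistable structure ($f'(0)<0$) ensures $-f(2\phi(M^+)) \geq |f'(0)|\alpha_0 e^{\lambda M^+}$ for $M^+$ small enough, so the inequality is immediate. For $x_1 \in (-R_J, 0)$, the contribution to $Lw^+(x)$ comes only from $\{y_1 > 0\}$, where $w^+(y)-2\phi(M^+) = \tfrac{y_1^2}{2}(\phi''(\xi_1)+\phi''(\xi_2))$ with $\xi_i \in [M^+-R_J, M^++R_J]$ by Taylor's theorem. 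Using the asymptotic $\phi''(z) \sim A_0\lambda^2 e^{\lambda z}$ (which is sharp by the characteristic equation and matches Lemma~\ref{LE:CONVEXITE:PHI}), one bounds $Lw^+(x) \leq C e^{\lambda M^+}$, and combines it with the lower bounds on $\partial_t w^+$ and $-f(2\phi(M^+))$.

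Finally, the subsolution property for $w^-$ follows from an entirely analogous calculation with the sign of the second reflection reversed. The decomposition $w^-(t,x) = \phi(x_1+M^-) - \phi(-x_1+M^-)$ leads, on $\{x_1 \geq 0\}$, to an analogous inequality where the error now reads $f(a-b) - f(a) + f(b)$ (with $a,b$ being the corresponding $\phi$-values at the $M^-$-shifted points), which is again controlled by $L_{f'}\,ab$; on $\{x_1 \leq 0\}$, $w^- \equiv 0$ is trivially a subsolution since $f(0)=0$. The continuity constraint across $\{x_1 = 0\}$ (where $w^-$ vanishes) is automatic and the proof concludes by choosing $k$ large enough to simultaneously absorb all error terms in both inequalities.
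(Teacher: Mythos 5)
Your architecture mirrors the paper's: for $x_1\ge 0$ you reduce $L$ to a free-space convolution thanks to \eqref{loinK}, extend $w^+$ to the auxiliary function $v$ and exploit convexity of $\phi$ near $-\infty$ (this is exactly the paper's $I_0\ge 0$ step, rephrased through ``monotonicity of convolution''), then use the 1D profile equation to isolate the error $E(a,b)=f(a)+f(b)-f(a+b)$, and finally absorb it via the exponential bounds \eqref{EST:PHI:NEG}--\eqref{EST:PHI:POS} and a large $k$. So this is not a new route. However, there is a genuine gap: the crude Lipschitz estimate $|E(a,b)|\le L_{f'}\,ab$ does \emph{not} suffice in the regime $x_1>-M^+(t)$ when $\lambda<\mu$.

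To see why, set $z_1=x_1+M^+>0$, $z_2=-x_1+M^+<0$, so $z_1+z_2=2M^+$. You need
$$k\,e^{\lambda M^+}\bigl(\phi'(z_1)+\phi'(z_2)\bigr)\;\ge\;L_{f'}\,\phi(z_1)\phi(z_2),$$
uniformly for $z_1>0$ and $t\le T_1$. Using $\phi(z_1)\le 1$, $\phi(z_2)\asymp e^{\lambda z_2}$, $\phi'(z_1)\asymp e^{-\mu z_1}$, $\phi'(z_2)\asymp e^{\lambda z_2}$ and dividing through by $e^{\lambda z_2}=e^{\lambda(2M^+-z_1)}$, the inequality becomes
$$k\Bigl(\gamma_1\,e^{(\lambda-\mu)z_1-2\lambda M^+}+\gamma_0\Bigr)\;\ge\;L_{f'}\beta_0\,e^{-\lambda M^+}.$$
If $\lambda\ge\mu$ the exponent $(\lambda-\mu)z_1-2\lambda M^+$ is $\ge -2\lambda M^+$, so the left-hand side dominates and the inequality even improves as $M^+\to-\infty$. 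But if $\lambda<\mu$, letting $z_1\to+\infty$ kills the first term, leaving $k\gamma_0\ge L_{f'}\beta_0\,e^{-\lambda M^+}$; as $t\to-\infty$ the right-hand side blows up, so no fixed $k$ works. The paper's remedy, which you do not reproduce, is the refined expansion
$$f(a)+f(b)-f(a+b)=(f'(0)-f'(1))\,b+\mathcal{O}(b^2)+\mathcal{O}(|b(1-a)|),\qquad a\to1,\ b\to0,$$
combined with the observation that the characteristic equations \eqref{EQ:CHARAC} and \eqref{EQ:CHARAC2} force $f'(0)>f'(1)$ whenever $\lambda<\mu$. This gives $E(a,b)\ge0$ for $x_1\ge -M^+(t)+L_0$, so the Lipschitz bound is only needed on the bounded transitional strip $-M^+(t)\le x_1\le -M^+(t)+L_0$, where it succeeds. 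Merely invoking ``the analogous constant involving $\mu$'' skips this sign argument, and the same omission afflicts your treatment of $w^-$ (where the paper again splits by the sign of $\lambda-\mu$ and uses $f'(0)>f'(1)$). Finally, in the strip $-R_J<x_1<0$ your claim that $\partial_t w^+ - f(w^+)$ absorbs $Lw^+(t,x)\lesssim e^{\lambda M^+}$ is asserted but not balanced against the constants; the comparison is delicate precisely because all terms are of the same order $e^{\lambda M^+}$, and the characteristic equation \eqref{EQ:CHARAC} must be invoked to close it.
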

\begin{remark}
Just as in the local case, the boundedness assumption on $K$ in \eqref{C1} can be relaxed since one only need \eqref{loinK} to hold. In particular, this still holds when $K$ is, say, an infinite wall with one or several holes pierced in it.
\end{remark}
\begin{proof}

For the sake of convenience, we introduce the operator $\mathcal{P}$ given by
\begin{align*}
\opp{w}(t,x):=\partial_tw(t,x)-Lw(t,x)-f(w(t,x)). 
\end{align*}
Notice that if $T_1\in(-\infty,T)$ is sufficiently negative, then $M^{\pm}(t)<0$ for any $t\in(-\infty,T_1]$.

\vskip 0.3cm
\noindent\emph{Step 1. Supersolution}
\smallskip

We aim to prove that the function $w^+$ given by \eqref{wplus} is a \emph{supersolution} to \eqref{P}. More precisely, we want to show that
\begin{align*}
\opp{w^+}(t,x)\geq0 \,{\mbox{ for any }}(t,x)\in(-\infty,T_1]\times\Omega, 
\end{align*}
and some $T_1\in(-\infty,T]$. We consider the cases $x\in\{x_1\geq0\}$ and $x\in \{x_1<0\}$ separately. \\

\noindent \underline{\textsc{Case} $x_1\geq0$}. A straightforward calculation gives
\begin{equation}
\partial_tw^+(t,x)-f(w^+(t,x))=(c+\dot{\xi}(t))(\phi'(z_+)+\phi'(z_-))-f\big(\phi(z_+)+\phi(z_-)\big), \label{plug1}
\end{equation}
where $z_+:=x_1+M^+(t)$ and $z_-:=-x_1+M^+(t)$. Furthermore, using \eqref{loinK} and the fact that $\mathrm{supp}(J)\subset[0,R_J]$, we have
$$
Lw^+(t,x)= \int_{\Omega}J(\delta(x,y))(w^+(t,y)-w^+(t,x))\hspace{0.1em}\mathrm{d}y = \int_{\R^N}J(|x-y|)(w^+(t,y)-w^+(t,x))\hspace{0.1em}\mathrm{d}y. $$
Consequently,
\begin{multline*}
-Lw^+(t,x)= -\int_{\R^N}J(|x-y|)\big(\phi(y_1+M^+(t))-\phi(x_1+M^+(t))\big)\mathrm{d}y  \\
-\int_{\R^N}J(|x-y|)\big(\phi(-y_1+M^+(t))-\phi(-x_1+M^+(t))\big)\mathrm{d}y+I_0(t,x),
\end{multline*}
where we have set
\begin{align*}
I_0(t,x)&:=
-\int_{\{y_1<0\}}J(|x-y|)\hspace{0.05em}\Delta_{y_1}^2\phi(M^+(t))\hspace{0.1em}\mathrm{d}y,
\end{align*}
where the operator $\Delta_{y_1}^2$ is as defined in Section~\ref{SE:NOTATIONS}.
Notice that, since $x_1\geq0$ and since $\mathrm{supp}(J)\subset[0,R_J]$, the integral over $\{y_1<0\}$ can be replaced by an integral over $\{-R_J\leq y_1<0\}$. But given that $M^+(t)\to-\infty$ as $t\to-\infty$ and that $\phi$ is convex near $-\infty$ (by Lemma~\ref{LE:CONVEXITE:PHI}), we have $\Delta_{y_1}^2\phi(M^+(t))\le 0$ for all $t\le T_1$ and all $-R_J\le y_1\le 0$ (up to take $T_1$ sufficiently negative). Thus, we have that
$$
I_0(t,x)\geq0.
$$
Hence, using the equation satisfied by $\phi$, we obtain
\begin{align*}
-Lw^+(t,x)&\geq-c\hspace{0.1em}(\phi'(z_+)+\phi'(z_-))+f(\phi(z_+))+f(\phi(z_-)).
\end{align*}
Plugging this in \eqref{plug1}, we get
\begin{equation}
\opp{w^+}(t,x)\ge k\hspace{0.1em}e^{\lambda M^+(t)}\left(\phi'(z_+)+\phi'(z_-)\right)+f(\phi(z_+))+f(\phi(z_-))-f(\phi(z_+)+\phi(z_{-})).\label{plug2}
\end{equation}
Using the fact that $f$ is of class $C^{1,1}$, we may find a constant $\varrho>0$ such that
\begin{equation}
|f(a)+f(b)-f(a+b)|\leq \varrho\, ab. \label{hypcf}
\end{equation}
Hence, \eqref{plug2} becomes
\begin{equation}
\opp{w^+}(t,x)\ge k\hspace{0.05em}e^{\lambda M^+(t)}\left(\phi'(z_+)+\phi'(z_-)\right)-\varrho\,\phi(z_+)\hspace{0.1em}\phi(z_{-}).\label{plug3}
\end{equation}
Let us now treat the cases $x\in \{x_1> -M^+(t)\}$ and $x\in \{0\le x_1\le -M^+(t)\}$ separately.
In the latter case, we have $z_-\le z_+\le 0$. Hence, using \eqref{EST:PHI:NEG}, \eqref{plug3} and the fact that $\phi'>0$, we get
\begin{equation*}
\opp{w^+}(t,x)\ge \gamma_0\hspace{0.05em}k\hspace{0.1em}e^{\lambda x_1+2\lambda M^+(t)}-\varrho\hspace{0.05em}\beta_0^2\hspace{0.1em}e^{2\lambda M^+(t)}\ge e^{2\lambda M^+(t)}\left(\gamma_0\hspace{0.05em} k\hspace{0.1em}e^{\lambda x_1}-\varrho\hspace{0.05em}\beta_0^2\right).
\end{equation*}
Thus, we have $\opp{w^+}(t,x)\geq0$ for all $x\in \{0\le x_1\le -M^+(t)\}$ as soon as $k$ is chosen so that
\begin{align}
k\geq\frac{\varrho\beta_0^2}{\gamma_0}. \label{stepeins}
\end{align}
Let us now treat the case $x\in \{x_1>-M^+(t)\}$. In this case, we have $z_-<0<z_+$ and, again, we treat two situations independently, depending on whether $\lambda<\mu$ or $\lambda\ge \mu$.

Assume first that $\lambda\ge \mu$. Then, using \eqref{EST:PHI:NEG}, \eqref{EST:PHI:POS}, \eqref{plug3} and the fact that $\phi'>0$ and $\phi\le 1$, we deduce that
\begin{align*}
\opp{w^+}(t,x)&\geq k\hspace{0.05em}\gamma_1\hspace{0.1em}e^{\lambda M^+(t)}e^{-\mu z_+}-\varrho\hspace{0.05em}\beta_0\hspace{0.05em}e^{\lambda z_-}\\
& \geq \,e^{\lambda M^+(t)}k\hspace{0.05em}\gamma_1\hspace{0.1em}e^{-\lambda (x_1+M^+(t))}-\varrho\hspace{0.05em}\beta_0\hspace{0.1em}e^{-\lambda x_1 +\lambda M^+(t)}\\
& \geq \,e^{-\lambda x_1}\big(k\hspace{0.05em}\gamma_1-\varrho\hspace{0.05em}\beta_0\hspace{0.1em}e^{\lambda M^+(t)}\big).
\end{align*}
Since $M^+(t)\le 0$ for all $t\le T_1$, we then have $\opp{w^+}(t,x)\geq0$ as soon as $k$ is chosen so that
\begin{align}
k\geq\frac{\varrho\beta_0}{\gamma_1}. \label{lambdageqmu}
\end{align}
The remaining case $\lambda<\mu$ is treated using the same trick as in \cite{Berestycki2009d}. Namely, we notice that, if $\lambda<\mu$, then, thanks to the characteristic equations \eqref{EQ:CHARAC} and \eqref{EQ:CHARAC2}, we must necessarily have $f'(0)>f'(1)$ and
$$ f(a)+f(b)-f(a+b)=(f'(0)-f'(1))\hspace{0.1em}b+\mathcal{O}(b^2)+\mathcal{O}\big(|b(1-a)|\big), $$
for $a$ and $b$ close to $1$ and $0$, respectively. In particular, if $z_+\gg 1$ and $z_-\ll -1$, then
$$  f(\phi(z_+))+f(\phi(z_-))-f(\phi(z_+)+\phi(z_-))\geq 0.  $$
Now, by definition of $z_+$ and $z_-$, there is some $L_0>0$ such that the above inequality holds true for all $t\le T_1$ and all $x_1\in[-M^+(t)+L_0,\infty)$ (up to take $T_1$ sufficiently negative).
Consequently, using \eqref{plug2} and the fact that $\xi'$ and $\phi'$ are positive quantities, we infer that $\opp{w^+}(t,x)\geq0$ for all $t\leq T_1$ and all $x\in \{x_1\ge -M^+(t)+L_0\}$.

Lastly, let us treat the case $x\in \{-M^+(t)< x_1 < -M^+(t)+L_0\}$. Using again \eqref{EST:PHI:NEG}, \eqref{EST:PHI:POS}, \eqref{plug3} and the fact that $\phi'>0$ and $\phi\le 1$, we obtain that
\begin{align*}
\opp{w^+}(t,x)&\geq k\hspace{0.05em}\gamma_1\hspace{0.1em}e^{\lambda M^+(t)}e^{-\mu z_+}-\varrho\hspace{0.05em}\beta_0\hspace{0.1em}e^{\lambda z_-}\\
&\ge e^{\lambda M^+(t)}\big(k\hspace{0.05em}\gamma_1\hspace{0.1em}e^{-\mu L_0}-\varrho\hspace{0.05em}\beta_0\hspace{0.1em}e^{-\lambda x_1}\big).
\end{align*}
Therefore, we have $\opp{w^+}(t,x)\geq0$ as soon as $k$ is chosen so that
\begin{align}
k\geq\frac{\varrho\beta_0}{\gamma_1}\,e^{\mu L_0}. \label{stepdrei}
\end{align}
Finally, by \eqref{stepeins}, \eqref{lambdageqmu}, and \eqref{stepdrei}, we have
\begin{align*}
\opp{w^+}(t,x)\geq0  \,{\mbox{ whenever }}k\geq\max\left\{\frac{\varrho\beta_0^2}{\gamma_0}, \frac{\varrho\beta_0}{\gamma_1}\,e^{\mu L_0} \,\right\},
\end{align*}
in the set $(t,x)\in(-\infty,T_1]\times\{x_1\ge 0\}$, provided $T_1$ is sufficiently negative. \\

\noindent \underline{\textsc{Case} $x_1<0$}.
Readily, we see that
\begin{align*}
\partial_tw^+(t,x)-f(w^+(t,x))&=2(c+\dot{\xi}(t))\hspace{0.05em}\phi'(M^+(t))-f\left(2\phi(M^+(t))\right). 
\end{align*}
Now, since $\phi(0)=\theta$ and $\phi'>0$, we have $f(2\phi(M^+(t)))\le 0$ as soon as $\phi(M^+(t))\le \theta/2$. Thus, since $M^+(t)$ is increasing, since $\lim_{t\to -\infty }M(t)= -\infty$ and since $\lim_{z\to-\infty}\phi(z)=0 $, up to decrease further $T_1$, we can assume that $\phi(M^+(t))\le \theta/2$ for all $t\le T_1$. Hence, we have
\begin{equation}
\partial_tw^+(t,x)-f(w^+(t,x))\ge 2\hspace{0.05em}(c+\dot{\xi}(t))\hspace{0.05em}\phi'(M^+(t))\geq0. \label{AA1'}
\end{equation}
Let us now estimate $Lw^+(t,x)$. For it, let us denote by $H^+$ and $H^-$ the half-spaces given by
$$ H^+:=\{x\in\R^N; x_1>0\} \text{ and } H^-:=\{x\in\R^N; x_1\leq0\}, $$
respectively.
By definition of $w^+(t,x)$ we have
\begin{align*}
Lw^+(t,x)&=\int_{\O}J(\delta(x,y))(w^+(t,y) - w^+(t,x))\hspace{0.1em}\mathrm{d}y\\
&= \int_{\O\cap H^-}J(\delta(x,y))(2\phi(M^+(t)) - 2\phi(M^+(t)))\hspace{0.1em}\mathrm{d}y \\
&\qquad\quad+ \int_{\O\cap H^+}J(\delta(x,y))(w^+(t,y) - 2\phi(M^+(t)))\hspace{0.1em}\mathrm{d}y\\
&=\int_{\O\cap H^+}J(\delta(x,y))(w^+(t,y) - 2\phi(M^+(t)))\hspace{0.1em}\mathrm{d}y.
\end{align*}
Now since $K\subset\left\{x_1<-R_J\right\}$, we have $\O\cap H^+=H^+\setminus K=H^+$, and so
\begin{equation}\label{AA2}
Lw^+(t,x)=\int_{H^+}J(\delta(x,y))(w^+(t,y) - 2\phi(M^+(t)))\hspace{0.1em}\mathrm{d}y.
\end{equation}
Observe that $\delta(x,y)\ge R_J$ for all $x\in H_{R_J}^-:=\{x_1<-R_J\}$ and all $y\in H^+$. But since $\mathrm{supp}(J)\subset[0,R_J]$, we then have that $J(\delta(x,y))=0$ for all $(x,y)\in H_{R_J}^-\setminus K\times H^+$.
Therefore, recalling \eqref{AA2}, we have $Lw^+(t,x)=0$ for all $(t,x)\in (-\infty,T_1]\times H_{R_J}^-\setminus K$.
Combining this with \eqref{AA1'}, we obtain that $\opp{w^+}(t,x)\ge 0$ for all $(t,x)\in (-\infty,T_1]\times (H_{R_J}^-\setminus K)$.

Let us now treat the case $x\in \{-R_J\leq x_1<0\}$. For it, we observe that $\delta(x,y)=|x-y|$ for all $(x,y)\in [-R_J,0)\times H^+$. Consequently,  \eqref{AA2} rewrites
\begin{align*}
Lw^+(t,x)&=\int_{H^+}J(|x-y|)\big(\phi(y_1+M^+(t))+\phi(-y_1+M^+(t))-2\phi(M^+(t))\big)\mathrm{d}y  \\
&= \int_{0}^{+\infty}J_1(x_1-y_1)\hspace{0.1em}\Delta_{y_1}^2\phi(M^+(t))\hspace{0.1em}\mathrm{d}y_1.
\end{align*}
Since $\mathrm{supp}(J_1) \subset [0,R_J]$ and $-R_J\le x_1<0$, the above equality may be rewritten as
$$Lw^+(t,x)= \int_{0}^{R_J}J_1(x_1-y_1)\hspace{0.1em}\Delta_{y_1}^2\phi(M^+(t))\hspace{0.1em}\mathrm{d}y_1. $$
But given that $M^+(t)\to-\infty$ as $t\to-\infty$ and that $\phi$ is convex near $-\infty$ (by Lemma~\ref{LE:CONVEXITE:PHI}), we have $\Delta_{y_1}^2\phi(M^+(t))\le 0$ for all $t\le T_1$ and all $0\le y_1\le R_J$ (up to take $T_1$ sufficiently negative). Thus, we have
$$Lw^+(t,x)= \int_{0}^{R_J}J_1(x_1-y_1)\hspace{0.1em}\Delta_{y_1}^2\phi(M^+(t))\hspace{0.1em}\mathrm{d}y_1\le 0, $$
for all $t\leq T_1$ and all $x\in \{-R_J\le x_1<0\}$. Hence, recalling \eqref{AA1'}, we obtain that
$$ \opp{w^+}(t,x)\geq 2(c+\dot{\xi}(t))\phi'(M^+(t))\ge 0, $$
for all $t\leq T_1$ and all $x\in\{-R_J\le x_1<0\}$.
Summing up, we have shown that, for every $(t,x)\in (-\infty,T_1]\times\O$ and $T_1\in(-\infty,T)$ sufficiently negative, it holds that
\begin{align*}
\opp{w^+}(t,x)\geq0 \, {\mbox{ whenever }}k\geq\max\left\{\frac{\varrho\beta_0^2}{\gamma_0}, \frac{\varrho\beta_0}{\gamma_1}\,e^{\mu L_0}\,\right\}.
\end{align*}
This proves that $w^+$ is indeed a supersolution to \eqref{P}.
\vskip 0.3cm

\noindent \emph{Step 2. Subsolution}
\smallskip

We will follow the same strategy as above. We aim to prove that the function $w^-$ given by \eqref{wmoins} is a \emph{subsolution} to \eqref{P}. More precisely, we want to show that
\begin{align}
\opp{w^-}(t,x)\leq0  \,{\mbox{ for any }}(t,x)\in (-\infty,T_1]\times\Omega, \nonumber
\end{align}
and some $T_1\in(-\infty,T)$. A direct calculation gives
\begin{align}
\partial_tw^-(t,x)\! -\!f(w^-(t,x))\!&=\!\left\{
\begin{array}{l l}
\!(c\!-\!\dot{\xi}(t))(\phi'(\zeta_+)\!-\!\phi'(\zeta_-))\!-\!f\big(\phi(\zeta_+)\!-\!\phi(\zeta_-)\big) & \!(x_1\geq0), \vspace{3pt}\\
\!0 & \!(x_1<0).
\end{array}
\right. \label{STEPUN}
\end{align}
where $\zeta_+=x_1+M^-(t)$, $\zeta_-=-x_1+M^-(t)$. Let us now estimate $Lw^-(t,x)$. \\

\noindent \underline{\textsc{Case} $x_1<0$}. This case is straightforward. Indeed, as above, we can check that
\begin{align}
Lw^-(t,x)&=\int_{H^+}J(\delta(x,y))\big(\phi(y_1+M^-(t))-\phi(-y_1+M^-(t))\big)\mathrm{d}y. \nonumber
\end{align}
But, since $\phi$ is increasing, the integrand above is nonnegative, and so $Lw^-(t,x)\ge 0$.
Hence, recalling \eqref{STEPUN}, we find that $\opp{w^-}(t,x)\leq 0$ for any $x\in \{x_1<0\}$. \\

\noindent \underline{\textsc{Case} $x_1\geq0$}. Observe that, since $\mathrm{supp}(J)\subset [0,R_J]$ and since $K\subset \{x_1\le -R_J\}$, we have
$$Lw^-(t,x)=\int_{\R^N}J(|x-y|)(w^-(t,y)-w^-(t,x)))\hspace{0.1em}\mathrm{d}y,$$
for all $x\in \{x_1\ge 0\}$.
Using the definition of $w^-$, we have
\begin{multline*}
Lw^-(t,x)= \int_{\R^N}J(|x-y|)\big(\phi(y_1+M^-(t))-\phi(x_1+M^-(t))\big)\mathrm{d}y  \\
-\int_{\R^N}J(|x-y|)\big(\phi(-y_1+M^-(t))-\phi(-x_1+M^-(t))\big)\mathrm{d}y -I_1(t,x),
\end{multline*}
where we have set
$$ I_1(t,x):=\int_{\{-R_J\leq y_1\leq 0\}}J(|x-y|)\big(\phi(y_1+M^-(t))-\phi(-y_1+M^-(t))\big)\mathrm{d}y. $$
Since $y_1+M^-(t)\le -y_1+M^-(t)$ for all $ -R_J\leq y_1\leq 0$ and since $\phi$ is increasing, it holds that $-I_1(t,x)\ge 0$. Therefore, by using \eqref{couplephic}, we get
$$
Lw^-(t,x)\geq  c\hspace{0.1em}(\phi'(\zeta_+)-\phi'(\zeta_-))-(f(\phi(\zeta_+))-f(\phi(\zeta_-))).
$$
Recalling \eqref{STEPUN}, we obtain
\begin{equation}
\opp{w^-}(t,x)\leq -\dot{\xi}(t)(\phi'(\zeta_+)-\phi'(\zeta_-))+f(\phi(\zeta_+))-f(\phi(\zeta_-))-f(\phi(\zeta_+)-\phi(\zeta_-)). \label{jc-subsol}
\end{equation}
Let us suppose that $x\in \{x_1\ge -M^-(t)\}$. Then, using \eqref{hypcf} and \eqref{jc-subsol}, we have
\begin{equation}
\opp{w^-}(t,x)\leq -\dot{\xi}(t)(\phi'(\zeta_+)-\phi'(\zeta_-))+\varrho\,\phi(\zeta_-)(\phi(\zeta_+)-\phi(\zeta_-)).  \label{zpm}
\end{equation}
We consider the cases $\lambda\geq\mu$ and $\lambda<\mu$ separately. Let us suppose that $\lambda\geq\mu$. Then, since $\zeta_-\le 0\le \zeta_+$, using \eqref{EST:PHI:NEG} and \eqref{EST:PHI:POS}, we deduce from \eqref{zpm} that
\begin{align}
\opp{w^-}(t,x)&\leq -k\hspace{0.1em}e^{\lambda M^+(t)}\big(\gamma_0\hspace{0.1em}e^{-\mu(x_1+M^-(t))}-\delta_0\hspace{0.1em}e^{\lambda(-x_1+M^-(t))}\big)+\varrho\beta_0\hspace{0.1em}e^{\lambda(-x_1+M^-(t))} \nonumber \\
&= -e^{\lambda (-x_1+M^+(t))}\big(k\gamma_0\hspace{0.1em}e^{-\mu M^-(t)+(\lambda-\mu)x_1}-\delta_0\hspace{0.1em}e^{\lambda M^-(t)}-\varrho\beta_0\hspace{0.1em}e^{-2\lambda\xi(t)}\big) \label{third} \\
&\leq -e^{\lambda (-x_1+M^+(t))}\big(k\gamma_0-\delta_0-\varrho\beta_0\big), \nonumber
\end{align}
since $\lambda,\mu>0$, $M^{-}(t)\le 0$ and $\xi(t)\ge 0$ for all $t \le T_1$.
Whence, $\opp{w^-}(t,x)\leq0$ for $x\in \{x_1\ge -M^-(t)\}$ as soon as $k$ is chosen so that
\begin{equation*}
k\geq \frac{\delta_0+\varrho \beta_0}{\gamma_0}. 
\end{equation*}
Let us now consider the case $\lambda<\mu$. Arguing as in the Step 1, i.e. using the characteristic equations \eqref{EQ:CHARAC} and \eqref{EQ:CHARAC2}, we deduce that $f'(0)>f'(1)$ and that
$$ f(a+b)-f(a)-f(b)=-(f'(0)-f'(1))\,b+\mathcal{O}(b^2)+\mathcal{O}\big(|b(1-a)|\big), $$
for $a$ and $b$ close to $1$ and $0$, respectively. 
Hence, we have
\begin{align*}
f(\phi(\zeta_+))-f(\phi(\zeta_-))-&f\big(\phi(\zeta_+)-\phi(\zeta_-)\big) \\
&=-(f'(0)-f'(1))\hspace{0.1em}\phi(\zeta_-)+\mathcal{O}\big(\phi^2(\zeta_-)\big)+\mathcal{O}\big(\phi(\zeta_-)(1-\phi(\zeta_+))\big),
\end{align*}
provided $\zeta_-\ll-1$ and $\zeta_+\gg1$.  Thanks to the definition of $\zeta_{\pm}$ and since $\phi$ satisfies \eqref{couplephic}, we can then find a constant $L_1>0$ such that
\begin{equation*}
f(\phi(\zeta_+))-f(\phi(\zeta_-))-f\big(\phi(\zeta_+)-\phi(\zeta_-)\big)\le -\kappa\hspace{0.1em}\phi(\zeta_-), 
\end{equation*}
for all $x\in \{x_1\ge -M^-(t)+L_1\}$, where we have set $\kappa:=(f'(0)-f'(1))/2$.
This, together with \eqref{zpm} and \eqref{EST:PHI:NEG}, implies that
$$
\opp{w^-}(t,x)\leq e^{\lambda \zeta_-}(k\hspace{0.05em}\delta_0\hspace{0.1em}e^{\lambda M^+(t)}-\kappa\hspace{0.1em}\alpha_0).
$$
It follows that $\opp{w^-}(t,x)\leq0$ in the set $\{x_1\ge -M^-(t)+L_1\}$ provided that $T_1\in(-\infty,T]$ is chosen sufficiently negative so that
$$ k\hspace{0.05em}\delta_0\hspace{0.1em}e^{\lambda M^+(t)}\leq\kappa\hspace{0.1em}\alpha_0 \,{\mbox{ for any }}-\infty<t\leq T_1. $$
Now, suppose that $x\in\{-M^-(t)\le x_1< -M^-(t)+L_1 \}$. Then, it follows from \eqref{third} that
\begin{align*}
\opp{w^-}(t,x)&\leq -e^{\lambda (-x_1+M^+(t))}\big(k\gamma_0\hspace{0.1em}e^{-\mu M^-(t)-(\mu-\lambda)L_1}-\delta_0\hspace{0.1em}e^{\lambda M^-(t)}-\varrho\beta_0\hspace{0.1em}e^{-2\lambda\xi(t)}\big)\\
&\leq -e^{\lambda (-x_1+M^+(t))}\big(k\gamma_0\hspace{0.1em}e^{-\mu M^-(t)-(\mu-\lambda)L_1}-\delta_0-\varrho\beta_0\big).
\end{align*}
Thus, $\opp{w^-}(t,x)\leq0$ in the set $x_1\in\{-M^-(t)\le x_1< -M^-(t)+L_1\}$ provided that $T_1\in(-\infty,T]$ is chosen sufficiently negative so that
\begin{align*}
\gamma_0\hspace{0.05em}k\hspace{0.05em} e^{-\mu M^-(t)-(\mu-\lambda)L_0}-\delta_0-\varrho\hspace{0.05em}\beta_0\geq0 \,{\mbox{ for }}-\infty<t\leq T_1.
\end{align*}
Next, suppose that $x\in \{x_1< -M^-(t)\}$. Then, $\zeta_-\leq \zeta_+\leq0$ and by \eqref{EST:PHI:NEG}, \eqref{EST:PHI:POS} and \eqref{zpm} we have that
\begin{align}
\opp{w^-}(t,x)&\leq -k\hspace{0.1em}e^{\lambda M^+(t)}\big(\gamma_0\hspace{0.1em}e^{\lambda \zeta_+}-\delta_0\hspace{0.1em}e^{\lambda \zeta_-}\big)+\varrho \beta_0^2\hspace{0.1em} e^{\lambda\zeta_-}e^{\lambda\zeta_+} \nonumber \\
&\leq -k\hspace{0.1em}e^{\lambda M^+(t)}\big(\gamma_0\hspace{0.1em}e^{\lambda(x_1+M^-(t))}-\delta_0\hspace{0.1em}e^{\lambda(-x_1+M^{-}(t)}\big)+\varrho \beta_0^2 e^{\lambda(-x_1+M^{-}(t))}e^{\lambda(x_1+M^{-}(t))} \nonumber \\
& \le e^{\lambda (M^+(t)+M^{-}(t))}\big(-k\big(\gamma_0\hspace{0.1em}e^{\lambda x_1}-\delta_0\hspace{0.1em}e^{-\lambda x_1}\big)+\varrho \beta_0^2\hspace{0.1em}e^{2\lambda M^{-}(t)}\big) \nonumber \\
& \le e^{2\lambda ct}\left(-k\big(\gamma_0\hspace{0.1em}e^{\lambda x_1}-\delta_0\hspace{0.1em}e^{-\lambda x_1}\big) +\varrho \beta_0^2\hspace{0.1em}e^{-2\lambda \xi(t)}\right) \nonumber \\
& \le e^{2\lambda ct}\left(-k\gamma_0\hspace{0.1em}e^{\lambda x_1}+k\delta_0 +\varrho \beta_0^2\right). \label{est:m:m:m}
\end{align}
Let $R_0>0$ be the number given by
\begin{equation*}
R_0:=\frac{1}{\lambda}\log\left(\frac{\delta_0}{\gamma_0}+2\right).
\end{equation*}
Choosing $k$ large enough so that $k\geq\varrho\hspace{0.05em}\beta_0^2/\gamma_0$, we have
\begin{equation}
\label{defR0}
-k\hspace{0.05em}\gamma_0\hspace{0.1em}e^{\lambda R_0}+k\hspace{0.05em}\delta_0 +\varrho\hspace{0.05em} \beta_0^2\leq-\varrho\hspace{0.05em}\beta_0^2<0.
\end{equation}
Now, since $\lim_{t\to -\infty}M^-(t)=-\infty$, up to decrease further $T_1$ if necessary, we may assume that $-M^-(t)>R_0+1$. Hence, recalling \eqref{est:m:m:m} and \eqref{defR0}, we have
\begin{equation*}
\opp{w^-}(t,x)\le e^{2\lambda ct}\left(-k\gamma_0\hspace{0.1em}e^{\lambda R_0}+k\delta_0 +\varrho \beta_0^2\right)\leq -\beta_0^2\varrho\hspace{0.1em} e^{2\lambda ct}<0. 
\end{equation*}
for all $x\in \{R_0\le x_1<-M^{-}(t)\}$ and all $t\le T_1$.

Lastly, let us consider the case $x\in \{0\le x_1< R_0\}$. Then, up to take $T_1$ sufficiently negative, we have $\zeta_-<\zeta_+\leq z_*$ (where $z_*$ is as in Lemma~\ref{LE:CONVEXITE:PHI}), which then gives
$$ \phi'(\zeta_+)-\phi'(\zeta_-)=\int_{\zeta_-}^{\zeta_+}\phi''(z)\hspace{0.1em}\mathrm{d}z\geq \frac{\lambda}{8}\int_{\zeta_-}^{\zeta_+}\phi'(z)\hspace{0.1em}\mathrm{d}z=\frac{\lambda}{8}\hspace{0.1em}\big(\phi(\zeta_+)-\phi(\zeta_-)\big). $$
Going back to \eqref{zpm} and recalling that $\dot{\xi}(t)=k\hspace{0.05em}e^{\lambda M^{+}(t )}$, we obtain
\begin{align*}
\opp{w^-}(t,x)&\leq \left(\varrho\hspace{0.1em}\phi(\zeta_-)-\frac{\lambda\hspace{0.05em}k}{8}\hspace{0.1em}e^{\lambda M^{+}(t)}\right)\big(\phi(\zeta_+)-\phi(\zeta_-)\big) \\
&\leq \left(\varrho\beta_0\hspace{0.1em}e^{-\lambda x_1+\lambda M^{-}(t)}-\frac{\lambda\hspace{0.05em}k}{8}\hspace{0.1em}e^{\lambda M^+(t)}\right)\big(\phi(\zeta_+)-\phi(\zeta_-)\big) \\
&\leq e^{\lambda M^+(t)}\left(\varrho\beta_0\hspace{0.1em} e^{-\lambda x_1 -2\lambda \xi(t)}-\frac{\lambda\hspace{0.05em}k}{8}\hspace{0.1em}\right) \big(\phi(\zeta_+)-\phi(\zeta_-)\big) \\
&\le e^{\lambda M^+(t)}\left(\varrho\beta_0-\frac{\lambda\hspace{0.05em}k}{8}\hspace{0.1em}\right) \big(\phi(\zeta_+)-\phi(\zeta_-)\big).
\end{align*}
Therefore, $\opp{w^-}(t,x)\leq 0$ in the set $\{0\le x_1< R_0\}$ provided that $k\geq 8\hspace{0.05em}\lambda^{-1}\varrho\hspace{0.1em}\beta_0$ and that $T_1$ is sufficiently negative. This completes the proof.
\end{proof}

\subsection{Construction of the entire solution} In this subsection, we will use the subsolution and the supersolution constructed above to prove Theorem \ref{timebefore}.

\begin{proof}[Proof of Theorem \ref{timebefore}]
For the clarity of the exposure, we split the proof into four steps.
\vskip 0.3cm

\noindent \emph{Step 1. Construction of an entire solution}
\smallskip

Let $w^+$ and $w^-$ be the functions defined by \eqref{wplus} and \eqref{wmoins}, respectively. By Lemma \ref{subsuper}, we know that $w^+$ and $w^-$ are respectively a supersolution and a subsolution to \eqref{P} in the range $(t,x)\in(-\infty,T_1]\times\Omega$ for some $T_1\in(-\infty,T)$ where $T$ is given by \eqref{T:xi}. We will construct a solution to \eqref{P} using a monotone iterative scheme starting from $w^-$ and using $w^+$ as a barrier.

Let $n\geq0$ be so large that $-n<T_1-1$. By Proposition \ref{PROP:EXIST} and Remark~\ref{RK:Ext:Cont}, we know that there exists a unique solution $u_n(t,x)\in C^1([-n,\infty),C(\overline{\Omega}))$ to
\begin{align*}
\left\{
\begin{array}{rll}
\partial_tu_n\!\!\!&=Lu_n+f(u_n) & \text{in }(-n,\infty)\times\overline{\Omega}, \vspace{3pt}\\
u(-n,\cdot)\!\!\!&=w^-(-n,\cdot) & \text{in }\overline{\Omega}.
\end{array}
\right.
\end{align*}
In particular, we have
$$ w^-(-n,x)=u_n(-n,x)\leq w^+(-n,x) \, {\mbox{ for any }}x\in\overline{\Omega}. $$
In virtue of Proposition \ref{PROP:EXIST}, the functions $u_n$, $w^-$ and $w^+$ satisfy the regularity requirements of Lemma \ref{LE:COMPARISON} in the time segment $[-n,T_1]$. Therefore, by the comparison principle (Lemma \ref{LE:COMPARISON}), we deduce that
\begin{align}
w^-(t,x)\leq u_n(t,x)\leq w^+(t,x) \, {\mbox{ for any }}(t,x)\in(-n,T_1)\times\overline{\Omega}. \label{wplusoumoins}
\end{align}
Note that, by assumption, $-n+1\in(-n,T_1)$. In particular,
$$ u_{n-1}(-n+1,x):=w^-(-n+1,x)\leq u_n(-n+1,x)\leq w^+(-n+1,x) \, {\mbox{ for any }}x\in\overline{\Omega}. $$
Let $\tau>T_1$ be arbitrary. Using again the comparison principle Lemma \ref{LE:COMPARISON}, we obtain
\begin{align}
0\leq u_{n-1}(t,x)\leq u_n(t,x)\leq 1 \, {\mbox{ for any }}(t,x)\in(1-n,\tau)\times\overline{\Omega}. \label{0-1}
\end{align}
Since $\tau$ is arbitrary this still holds for any $(t,x)\in(1-n,\infty)\times\overline{\Omega}$. In particular, $(u_n)_{n> \lfloor1-T_1\rfloor}$ is monotone increasing with $n$. Hence, $u_n$ converges pointwise to some entire function $\bar{u}(t,x)$ defined in $\R\times\overline{\Omega}$. Moreover, by \eqref{0-1} and estimate \eqref{apriori777} in Proposition \ref{PROP:EXIST}, we have
\begin{align}
\|u_n(\cdot,x)\|_{C^{1,1}([-n,\infty))}\leq 1+\omega+\omega^2=:C_0 \, {\mbox{ for any }}x\in\overline{\Omega}, \label{C11:temps}
\end{align}
where $\omega=\sup_{[0,1]}|f'|+2\hspace{0.1em}\sup_{\Omega}\hspace{0.1em}\mathcal{J}^\delta$. Also, given \eqref{0-1} and since $[w^-(-n,\cdot)]_{C^{0,\alpha}(\overline{\Omega})}$ is independent of $n$, we may apply Proposition \ref{PROP:PARABOLIK2} and deduce that
$$ [u_n(t,\cdot)]_{C^{0,\alpha}(\overline{\Omega})}+[\partial_tu_n(t,\cdot)]_{C^{0,\alpha}(\overline{\Omega})}\leq C_1 \, {\mbox{ for any }}t\geq-n, $$
for some constant $C_1>0$.
Passing to the limit as $n\to\infty$ we obtain that
\begin{align}
\sup_{x\in\overline{\Omega}}\,\|\bar{u}(\cdot,x)\|_{C^{1,1}(\R)}+\sup_{t\in\R}\,\Big([\bar{u}(t,\cdot)]_{C^{0,\alpha}(\overline{\Omega})}+[\partial_t\bar{u}(t,\cdot)]_{C^{0,\alpha}(\overline{\Omega})}\Big)\leq C_2, \label{ubar:parabolic}
\end{align}
where $C_2:=C_0+C_1$.
Therefore, $\bar{u}\in C^{1,1}(\R,C^{0,\alpha}(\overline{\Omega}))$. Furthermore, by \eqref{0-1}, we have
\begin{align}
0\leq\bar{u}(t,x)\leq1 \, {\mbox{ for all }}(t,x)\in\R\times\overline{\Omega}. \label{0u1}
\end{align}
Let us now check that $\bar{u}$ solves \eqref{P}. Clearly, $f(u_n)\to f(u)$ as $n\to\infty$. Now, let $k\geq -T_1+1$ and $n\geq k$. Then, by Dini's theorem, for any $(t,x)\in(-k,\infty]\times\overline{\Omega}$, we have
\begin{align}
|Lu_n(t,x)-L\bar{u}(t,x)|&\leq 2\hspace{0.1em}\|\mathcal{J}^\delta\|_\infty\!\sup_{z\in B_{R_J}(x)} |\bar{u}(t,z)-u_n(t,z)|\underset{n\to\infty}{\longrightarrow}0, \label{LjLl}
\end{align}
where $\mathcal{J}^\delta$ is as in \eqref{C3}. Furthermore, using \eqref{C11:temps} we obtain that, up to extract a subsequence, $\partial_tu_n(\cdot,x)\to\partial_t\bar{u}(\cdot,x)$ in $C_{\mathrm{loc}}^{1,\alpha}(\R)$ for any $\alpha\in(0,1)$. Therefore, recalling \eqref{LjLl} and since $k$ can be taken arbitrarily large, we deduce that $\bar{u}$ is indeed an entire solution to \eqref{P} in $\overline{\Omega}\times\R$. Notice that a consequence of this and the fact that $f\in C^1([0,1])$ is that
\begin{align}
\bar{u}\in C^{1,1}(\R,C^{0,\alpha}(\overline{\Omega}))\cap C^2(\R,C^{0,\alpha}(\overline{\Omega})), \label{REG:ubar:2}
\end{align}
as can be seen by a standard bootstrap argument.
\vskip 0.3cm
\noindent \emph{Step 2. Asymptotic behaviour as} $t\to-\infty$
\smallskip

Letting $n\to\infty$ in \eqref{wplusoumoins} we obtain
\begin{align}
w^-(t,x)\leq \bar{u}(t,x)\leq w^+(t,x) \, {\mbox{ for any }} (t,x)\in(-\infty,T_1]\times\overline{\Omega}. \label{wuwENC}
\end{align}
Consequently, if $x_1<0$ and $t\leq T_1$, we have
\begin{align}
|\bar{u}(t,x)\!-\!\phi(x_1\!+\!ct)|\!&\leq \!|\bar{u}(t,x)\!-\!2\hspace{0.1em}\phi(M^+(t))|\!+\!|2\hspace{0.1em}\phi(M^+(t))\!-\!\phi(x_1\!+\!ct)|\leq \!4\hspace{0.1em}\phi(M^+(t)), \label{unifC1}
\end{align}
where $M^\pm(t)$ has the same meaning as in \eqref{M(t)}. Similarly, if $x_1\geq0$ and $t\leq T_1$, then
\begin{align}
|\bar{u}(t,x)\!-\!\phi(x_1\!+\!ct)|&\leq |w^+(t,x)-\phi(x_1\!+\!ct)|+|w^+(t,x)-\bar{u}(t,x)|. \nonumber
\end{align}
Using \eqref{wuwENC} we get
\begin{align}
|\bar{u}(t,x)-\phi(x_1+ct)|&\leq |w^+(t,x)-\phi(x_1+ct)|+|w^+(t,x)-w^-(t,x)| \nonumber \\
&\leq \big\{\|\phi'\|_\infty\hspace{0.1em}\xi(t)+\phi(M^+(t))\big\}+\big\{\phi(M^+(t))+\phi(M^-(t))+2\hspace{0.1em}\|\phi'\|_\infty\hspace{0.1em}\xi(t)\big\} \nonumber \\
&=3\hspace{0.1em}\|\phi'\|_\infty\hspace{0.1em}\xi(t)+2\hspace{0.1em}\phi(M^+(t))+\phi(M^-(t)). \label{unifC2}
\end{align}
By \eqref{unifC1} and \eqref{unifC2}, we obtain
$$ |\bar{u}(t,x)-\phi(x_1+ct)|\underset{t\to-\infty}{\longrightarrow} 0 \, {\mbox{ uniformly in }}x\in\overline{\Omega}, $$
since $\xi(t)\to0$ and $\phi(M^\pm(t))\to0$ as $t\to-\infty$.
\vskip 0.3cm

\noindent \emph{Step 3. Monotonicity of the entire solution}
\smallskip

Let us now prove that $\bar{u}$ is monotone increasing in $t\in\R$. Note that, once this is done, we automatically get the following sharpening of \eqref{0u1}:
$$ 0<u(t,x)<1 \, {\mbox{ for any }}(t,x)\in\R\times\overline{\Omega}. $$
To show that $\partial_t\bar{u}(t,x)>0$ we first notice that
\begin{align*}
\left\{
\begin{array}{rll}
u_n(t,x)\!\!\!&\geq w^-(t,x)  &\text{in }(-n,T_1]\times\overline{\Omega}, \vspace{3pt}\\
u_n(-n,\cdot)\!\!\!&=w^-(-n,\cdot) &\text{in }\overline{\Omega},  \vspace{3pt}\\
\partial_tw^-(-n,\cdot)\!\!\!&\geq0 &\text{in }\overline{\Omega}.
\end{array}
\right.
\end{align*}
In particular, we obtain that $\partial_tu_n(-n,x)\geq0$. By \eqref{REG:ubar:2}, we may apply Lemma \ref{LE:COMPARISON2} to obtain that $\partial_tu_n\geq0$ in $t\in[-n,\infty)$. By the uniform boundedness of $\partial_tu_n(\cdot,x)$ in $C^{0,1}([-n,\infty))$ (remember \eqref{C11:temps}), we may take the limit as $n\to\infty$ to obtain
\begin{align}
\partial_t\bar{u}(t,x)\geq0 \, {\mbox{ for all }}(t,x)\in\R\times\overline{\Omega}. \label{positivite}
\end{align}
Let us now set $\mu:=\inf_{s\in[0,1]}\,f'(s)$. By \eqref{REG:ubar:2}, we can differentiate with respect to $t$ the equation satisfied by $\bar{u}$ to get
\begin{align}
\partial_t^2\bar{u}=L\left(\partial_t\bar{u}\right)+f'(\bar{u})\partial_t\bar{u}\geq L\left(\partial_t\bar{u}\right)+\mu\,\partial_t\bar{u}, \label{acceleration}
\end{align}
which makes sense everywhere. We conclude by contradiction. Suppose that there exists $(T_0,x_0)\in\R\times\overline{\Omega}$ such that $\partial_t\bar{u}(T_0,x_0)=0$. Choose any $t\leq T_0$ and let $\lambda>0$ be some large number to be fixed later on. Multiplying \eqref{acceleration} by $e^{\lambda\tau}$ and integrating over $\tau\in[t,T_0]$, we come up with
$$ e^{\lambda T_0}\partial_t\bar{u}(T_0,x_0)\geq e^{\lambda t}\partial_t\bar{u}(t,x_0)+\int_{t}^{T_0} e^{\lambda\tau}\left(\mathcal{L}\left[\partial_t\bar{u}\right](\tau,x_0)+(\lambda-\mathcal{J}^\delta(x_0)+\mu)\partial_t\bar{u}(\tau,x_0)\right)\mathrm{d}\tau, $$
where the operator $\mathcal{L}[\cdot]$ is given by \eqref{Lcrochet}. We now choose $\lambda>0$ large enough so that $\lambda>\|\mathcal{J}^\delta\|_\infty-\mu$. Then, on account of \eqref{positivite}, we obtain
\begin{align*}
0=\partial_t\bar{u}(T_0,x_0)\geq e^{\lambda(t-T_0)}\partial_t\bar{u}(t,x_0)\geq0 {\mbox{ for any }}t\leq T_0. 
\end{align*}
As a result we infer that $\partial_t\bar{u}(t,x_0)=0$ for any $t\leq T_0$. In particular, $L\bar{u}(t,x_0)+f(\bar{u}(t,x_0))=0$ for any $t\in(-\infty,T_0]$. Differentiating this with respect to $t$ and using again that $\partial_t\bar{u}(t,x_0)=0$ for any $t\leq T_0$ together with the dominated convergence theorem, we arrive at
$$ \int_\Omega J(\delta(x_0,y))\hspace{0.05em}\partial_t\bar{u}(t,y)\hspace{0.1em}\mathrm{d}y=0 \text{ for any } t\in(-\infty,T_0]. $$
In turn this implies that $\partial_t\bar{u}(t,y)=0$ for all $(t,y)\in(-\infty,T_0]\times\Pi_1(J,x_0)$ where $\Pi_1(J,x_0)$ is as in Definition \ref{DE:CovProp}. Applying the same arguments to the new set of stationary points $\Pi_1(J,x_0)$, we obtain that $\partial_t\bar{u}(t,y)=0$ for all $(t,y)\in(-\infty,T_0]\times\Pi_2(J,x_0)$. Iterating this procedure over again implies that $\partial_t\bar{u}(t,y)=0$ for all $(t,y)\in(-\infty,T_0]\times\Pi_j(J,x_0)$ and all $j\in\N$. Since $(\Omega,\delta)$ has the $J$-covering property, we therefore obtain that $\partial_t\bar{u}(t,y)=0$ for every $(t,y)\in(-\infty,T_0]\times\overline{\Omega}$. In particular, this is true for every $y\in\overline{\Omega}$ with $y_1=y\cdot e_1>0$ and, for any such fixed $y$ and any $t<\min\{T_0,T_1\}$, it holds
$$ 0<w^-(t,y)\leq \bar{u}(t,y)\equiv \lim_{\tau\to-\infty}\,\bar{u}(\tau,y)=\lim_{\tau\to-\infty}\,\phi(y_1+c\tau)=0, $$
a contradiction. Therefore, $\partial_t\bar{u}(t,x)>0$ for all $(t,x)\in\R\times\overline{\Omega}$.
\vskip 0.3cm

\noindent \emph{Step 4. Uniqueness of the entire solution}
\smallskip

The proof is almost identical to that given in \cite[Section 3]{Berestycki2009d}. The only difference with the local case is that the solution does no longer satisfy parabolic estimates. However, this is compensated by Lemma \ref{apriori:entire} and \eqref{ubar:parabolic}.
\end{proof}

\subsection{Further properties of the entire solution} In this section, we prove that the unique entire solution to \eqref{P} satisfying \eqref{qualit-u} and \eqref{origintimes} shares the same limit as $x_1\to\pm\infty$ than the planar wave $\phi(x_1+ct)$. Precisely,
\begin{prop}\label{PROP:LIMITE:X1}
Assume \eqref{C1}, \eqref{C2}, \eqref{C3}, \eqref{C4} and \eqref{fJdelta-pos}. Suppose that $J\in \mathbb{B}_{1,\infty}^\alpha(\Omega;\delta)$ for some $\alpha\in(0,1)$. Let $u(t,x)$ be the unique entire solution to \eqref{P} satisfying \eqref{qualit-u} and \eqref{origintimes}. Then, denoting a point $x\in\overline{\Omega}$ by $x=(x_1,x')\in\R\times\R^{N-1}$, we have
$$ \lim_{x_1\to-\infty}\hspace{0.1em}u(t,x)=0 \text{ and } \lim_{x_1\to\infty}\hspace{0.1em}u(t,x)=1\ \text{ for all }(t,x')\in\R\times\R^{N-1}. $$
\end{prop}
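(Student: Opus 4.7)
The plan is to work with the extremal functions
\[ m(t):=\limsup_{x_1\to-\infty,\,x\in\overline{\Omega}}u(t,x),\qquad M(t):=\liminf_{x_1\to+\infty,\,x\in\overline{\Omega}}u(t,x), \]
and to show that $m\equiv 0$ and $M\equiv 1$. First, the uniform $C^{1,1}$-in-time estimate from Lemma~\ref{apriori:entire} makes both $m$ and $M$ Lipschitz on $\R$, and the monotonicity $\partial_tu>0$ given by Theorem~\ref{timebefore} makes them non-decreasing. Using the uniform convergence \eqref{origintimes} together with $\phi(-\infty)=0$, $\phi(+\infty)=1$, one gets for each $\varepsilon>0$ the inequalities $m(t)\leq\varepsilon$ and $M(t)\geq 1-\varepsilon$ for all $t\leq T_\varepsilon$ sufficiently negative, whence $\lim_{t\to-\infty}m(t)=0$ and $\lim_{t\to-\infty}M(t)=1$.

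The case $x_1\to+\infty$ is then immediate: a non-decreasing function bounded by $1$ and tending to $1$ at $-\infty$ must be identically $1$, so $M\equiv 1$, which yields $u(t,x)\to 1$ as $x_1\to+\infty$ for every $(t,x')$.

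The case $x_1\to-\infty$ is the main obstacle, since a non-decreasing Lipschitz function vanishing at $-\infty$ need not vanish identically. I will argue by contradiction and show that at any $t_0$ with $m(t_0)>0$, necessarily $m(t_0)\geq\theta$; this will be incompatible with $m$ continuous, non-decreasing, and with $m(-\infty)=0$ by a connectedness argument. So pick a sequence $(x^{(n)})\subset\overline{\Omega}$ with $x_1^{(n)}\to-\infty$ and $u(t_0,x^{(n)})\to m(t_0)$, and consider the translates $u_n(t,y):=u(t,y+x^{(n)})$, defined on $\R\times(\Omega-x^{(n)})$. Because $K$ is compact, for any bounded set of $y$'s and $n$ large, $\Omega-x^{(n)}$ contains a ball of radius $R_J$ around each such $y$ and the segments between nearby points stay in $\overline{\Omega}$; the quasi-Euclidean property $\delta=|\cdot|$ on such segments then shows that $u_n$ satisfies
\[ \partial_tu_n(t,y)=\int_{\R^N}J(|y-z|)\bigl(u_n(t,z)-u_n(t,y)\bigr)\,\mathrm{d}z+f(u_n(t,y)) \]
at those $y$'s. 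The uniform H\"older-in-space / $C^{1,1}$-in-time estimates of Lemma~\ref{apriori:entire} transfer to $(u_n)$, which is therefore precompact in $C_{\mathrm{loc}}(\R\times\R^N)$; extracting a converging subsequence $u_{n_k}\to u^*$ and passing to the limit gives a bounded entire solution $u^*$ on $\R\times\R^N$ of the whole-space equation above, non-decreasing in $t$, with $u^*(t_0,0)=m(t_0)$ and $u^*(t,y)\leq m(t)$ for every $(t,y)$ since $(y+x^{(n_k)})_1\to-\infty$.

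Hence $y=0$ is a global maximum of $u^*(t_0,\cdot)$, so the convolution term in the equation at $(t_0,0)$ is non-positive and $\partial_tu^*(t_0,0)\leq f(m(t_0))$. On the other hand, the monotonicity in $t$ forces $\partial_tu^*(t_0,0)\geq 0$, so $f(m(t_0))\geq 0$, and the bistable structure \eqref{C2} gives $m(t_0)\in\{0\}\cup[\theta,1]$, i.e. $m(t_0)\geq\theta$. Thus the closed sets $\{t:m(t)=0\}$ and $\{t:m(t)\geq\theta\}$ partition $\R$; the first contains a neighbourhood of $-\infty$ and is non-empty, so connectedness of $\R$ forces the second to be empty, giving $m\equiv 0$ and therefore $u(t,x)\to 0$ as $x_1\to-\infty$ for every $(t,x')$.
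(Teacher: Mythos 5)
Your proof is correct, and it takes a genuinely different route from the paper for the harder half (the limit as $x_1\to-\infty$). Both proofs handle $x_1\to+\infty$ the same way in spirit — the lower barrier $w^-$ (or, in your formulation, the limit $M(-\infty)=1$) plus the monotonicity $\partial_tu>0$ settles it immediately — so the real comparison is at $-\infty$. The paper proceeds constructively: it builds an auxiliary ignition-type nonlinearity $g\geq f$, uses the existence of a monotone front $\varphi$ with speed $c'>0$ for $g$, shifts it to get $w_{\varrho,A_\varrho}(t,x)=\varrho+\varphi(x_1+A_\varrho+c't)$, and shows via a half-space comparison principle (Remark~\ref{RE:COMPARISON}) that $u\leq w_{\varrho,A_\varrho}$ for $t\geq t_\varrho$; sending $x_1\to-\infty$ and $\varrho\to0$ gives the decay. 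Your argument instead is a translation--compactness (Liouville-type) argument: you extract a whole-space entire solution $u^*$ as a limit of translates $u(t,\cdot+x^{(n)})$ (legitimately, since the far field decouples from the obstacle and the a priori bounds of Lemma~\ref{apriori:entire} give equicontinuity), note that $y=0$ is a global space maximum of $u^*(t_0,\cdot)$ realizing $m(t_0)$, and combine $L_{\R^N}u^*(t_0,0)\leq 0$ with $\partial_tu^*(t_0,0)\geq 0$ to force $f(m(t_0))\geq 0$, i.e. $m(t_0)\in\{0\}\cup[\theta,1]$; the continuity of $m$, its non-decrease, and $m(-\infty)=0$ then kill the $[\theta,1]$ branch by connectedness. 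Your route avoids introducing the auxiliary ignition front and the tailored half-space comparison principle, relying instead on the standard translation/compactness machinery that the a priori estimates already make available; the paper's route is more constructive and perhaps more robust if one wanted quantitative decay rates in $x_1$. Both are complete and self-contained given the preceding lemmas.
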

\begin{proof}
Let us first prove that $\lim_{x_1\to\infty}\hspace{0.1em}u(t,x)=1$ for all $(t,x')\in\R\times\R^{N-1}$. To see this, it suffices to observe that $u(t,x)\geq w^-(t,x)$ for all $(t,x)\in(-\infty,T_1]\times\overline{\Omega}$.
Hence, using \eqref{couplephic} and the definition of $w^-$ (remember \eqref{wmoins}), we deduce that
$$ 1\geq \limsup_{x_1\to\infty}\hspace{0.1em}u(t,x)\geq \liminf_{x_1\to\infty}\hspace{0.1em}u(t,x) \geq \lim_{x_1\to\infty}\left\{\phi(x_1+M^-(t))-\phi(-x_1+M^-(t))\right\}=1, $$
for all $(t,x')\in(-\infty,T_1]\times\R^{N-1}$, where $M^-(t)$ has the same meaning as in \eqref{M(t)}. Now, since $\partial_tu(t,x)>0$ for all $(t,x)\in\R\times\overline{\Omega}$, we have
$$ 1\geq \limsup_{x_1\to\infty}\hspace{0.1em}u(t,x)\geq \liminf_{x_1\to\infty}\hspace{0.1em}u(t,x) \geq \lim_{x_1\to\infty} u(T_1,x)=1, $$
for all $(t,x')\in(T_1,\infty)\times\R^{N-1}$.
Therefore, $\lim_{x_1\to\infty}\hspace{0.1em}u(t,x)=1$ for all $(t,x')\in\R\times\R^{N-1}$.

To complete the proof, it remains to show that $\lim_{x_1\to-\infty}\hspace{0.1em}u(t,x)=0$ for all $(t,x')\in\R\times\R^{N-1}$. The proof of this is slightly more involved and we need to compare $u$ with the solution of an auxiliary problem.
To this end, we let $g\in C^1([0,2])$ be a nonlinearity of ``ignition" type, namely such that the following properties hold:
$$ \restriction{g}{[0,\theta/4]}\equiv0,\ \restriction{g}{(\theta/4,2)}>0,\ g(2)=0 \text{ and } g'(2)<0. $$
Let us assume, in addition, that $g(s)\geq \max_{[0,1]}\hspace{0.1em}f$ for all $s\in[\theta/2,1+\theta/2]$.
Now, using the existence result \cite[Theorems 1.2-1.3]{Coville2007d} (see in particular \cite[Lemma 5.1]{Coville2007d} and the remarks in \cite[Section 1.2]{Coville2007d} on page 5), we know that there exists a unique monotone increasing front $\varphi\in C(\R)$ with speed $c'>0$, satisfying $\varphi(0)=1$ and such that
\begin{align}
\left\{
\begin{array}{c}
c'\,\varphi'=J_1\ast \varphi-\varphi+g(\varphi) \text{ in }\R, \vspace{3pt}\\
\displaystyle\lim_{z\to +\infty}\varphi(z)=2,\, \lim_{z\to -\infty}\varphi(z)=0,
\end{array}
\right.\label{def:varlphi}
\end{align}
where $J_1$ is as in \eqref{J1}.
Now, let us define $g_\varrho(s):=g(s-\varrho)$, for all $\varrho>0$ and all $s\in[\varrho,2+\varrho]$.
By definition of $g_\varrho$, we can check that the function $\varphi_\varrho(x):=\varrho+\varphi(x)$ solves
\begin{align}
\left\{
\begin{array}{c}
c'\,\varphi_\varrho'=J_1\ast \varphi_\varrho-\varphi_\varrho+g_\varrho(\varphi_\varrho) \text{ in }\R, \vspace{3pt}\\
\displaystyle\lim_{z\to +\infty}\varphi_\varrho(z)=2+\varrho,\, \lim_{z\to -\infty}\varphi_\varrho(z)=\varrho.
\end{array}
\right. \label{def:varlphi:varrho}
\end{align}
Next, for all $\varrho\in(0,\theta/4]$ and all $A>0$, we let $w_{\varrho,A}(t,x):=\varphi_\varrho(x_1+A+c't)$. We claim that
\begin{claim}\label{Claim:1}
For all $\varrho\in(0,\theta/4]$, there exist $A_\varrho>0$ and $t_\varrho\in\R$ such that
$$ u(t,x)\leq w_{\varrho,A_\varrho}(t,x) \text{ for all } (t,x)\in[t_\varrho,\infty)\times\overline{\Omega}. $$
\end{claim}
Note that, by proving Claim~\ref{Claim:1}, we end the proof of Proposition~\ref{PROP:LIMITE:X1}. To see this, fix some $\eps>0$ and let $\varrho=\eps/2$. Also, for $R>0$, let $H_R^+$ and $H_R^-$ be the half-spaces given by
\begin{align}
H_R^+:=\left\{x\in\R^N; x_1>-R\right\} \text{ and } H_R^-:=\left\{x\in\R^N; x_1\leq-R\right\}, \label{Hplusoumoins}
\end{align}
respectively. Assume, for the moment, that $t\in[t_\varrho,\infty)$. By \eqref{def:varlphi}, we know that there exists some $R_\varrho>0$ such that $\varphi(z+A_\varrho)\leq\varrho$ for all $z\leq-R_\varrho$. In particular, we have
$$ w_{\varrho,A_\varrho}(t,x)=\varrho+\varphi(x_1+A_\varrho+c't)\leq2\varrho=\eps \text{ for all }(t,x)\in[t_\varrho,\infty)\times H_{R_\varrho+c't}^-. $$
Applying now Claim~\ref{Claim:1}, we then deduce that $u(t,x)\leq \eps$ for all $(t,x)\in[t_\varrho,\infty)\times\overline{\Omega}\cap H_{R_\varrho+c't}^-$, which, in turn, automatically implies that
\begin{align}
\limsup_{x_1\to-\infty}\,u(t,x)\leq\eps \text{ for all }(t,x')\in[t_\varrho,\infty)\times\R^{N-1}. \label{limsup:tdelta1}
\end{align}
The analogue of this for $t\in(-\infty,t_\varrho)$ is a simple consequence of the monotonicity of $u(t,x)$. Indeed, using that $u(\cdot,x)$ is increasing for all $x\in\overline{\Omega}$, we obtain $u(t,x)\leq u(t_\varrho,x)\leq w_{\varrho,A_\varrho}(t_\varrho,x)\leq\eps$, for all $(t,x)\in(-\infty,t_\varrho)\times\overline{\Omega}\cap H_{R_\varrho+c't}^-$, which, again, implies that
\begin{align}
\limsup_{x_1\to\infty}\,u(t,x)\leq\eps \text{ for all }(t,x')\in(-\infty,t_\varrho)\times\R^{N-1}.  \label{limsup:tdelta2}
\end{align}
Hence, collecting \eqref{limsup:tdelta1}, \eqref{limsup:tdelta2}, recalling that $u(t,x)>0$ for all $(t,x)\in\R\times\overline{\Omega}$ and that $\eps>0$ is arbitrary, we conclude that
$$ \lim_{x_1\to-\infty}u(t,x)=0 \text{ for all }(t,x')\in\R\times\R^{N-1}, $$
which thereby proves Proposition~\ref{PROP:LIMITE:X1}.
\end{proof}
To complete the proof of Proposition~\ref{PROP:LIMITE:X1} it remains to establish Claim~\ref{Claim:1}.
\begin{proof}[Proof of Claim~\ref{Claim:1}]
First of all, we notice that, since $K\subset\R^N$ is compact, we may always find some $R_K>0$ so that $K\subset H_{R_K}^+$ (we use the same notation as in \eqref{Hplusoumoins}). Furthermore, we observe that, by construction of $g_\varrho$, there holds $g_\varrho\geq \widetilde{f}^+\geq\widetilde{f}$ for all $s\in[\varrho,2+\varrho]$ and all $0<\varrho\leq\theta/4$, where $\widetilde{f}\in C^1(\R)$ is the extension of $f$ given by \eqref{extension:f}.
In particular, this implies that the function $w_{\varrho,A}$ satisfies
\begin{align}
\partial_tw_{\varrho,A}\geq J_{\mathrm{rad}}\ast w_{\varrho,A}-w_{\varrho,A}+\widetilde{f}(w_{\varrho,A}) \text{ in }\R\times\R^N. \label{Eq:de:W}
\end{align}
Now, let $R_1\geq R_J$, where $R_J>0$ is any number such that $\mathrm{supp}(J)\subset[0,R_J]$.
Since $u(t,x)$ satisfies \eqref{unique}, there is then some $t_\varrho\in\R$ such that
$$ u(t,x)\leq \phi(x_1+ct)+\frac{\varrho}{2} \text{ for all }(t,x)\in(-\infty,t_\varrho]\times\overline{\Omega}. $$
Since $\phi$ is increasing, we may assume that $\phi(-R_1-R_K+ct_\varrho)\leq\varrho/2$ (up to take $R_1$ larger if necessary).
Consequently, for all $A>0$, we have
\begin{align}
u(t_\varrho,x)\leq\varrho \leq w_{\varrho,A}(t_\varrho,x) \text{ for all }x\in H_{R_1+R_K}^-. \label{Init:Hmoins}
\end{align}
On the other hand, since $\varphi(0)=1$ and $\varphi'>0$, by taking $A_\varrho=R_1+R_K-c't_\varrho$, we get
$$ w_{\varrho,A_\varrho}(t_\varrho,x)=\varrho+\varphi(x_1+R_1+R_K)>\varrho+\varphi(0)=\varrho+1 \text{ for all }x\in H_{R_1+R_K}^+. $$
Since $u< 1$ in $\R\times\overline{\Omega}$ and since $w_{\varrho,A_\varrho}(\cdot,x)$ is increasing for all $x\in\overline{\Omega}$, we deduce that
\begin{align}
w_{\varrho,A_\varrho}(t,x)>\varrho+1> u(t,x) \text{ for all }(t,x)\in[t_\varrho,\infty)\times \overline{\Omega}\cap H_{R_1+R_K}^+. \label{Init:Complement}
\end{align}
On the other hand, since $K\subset H_{R_K}^+$ and since $\mathrm{supp}(J)\subset[0,R_J]$, it follows that
\begin{align}
\partial_tu=J_{\mathrm{rad}}\ast u-u+\widetilde{f}(u) \text{ in }\R\times H_{R_1+R_K}^-. \label{Eq:de:U}
\end{align}
Hence, collecting \eqref{Eq:de:W}, \eqref{Init:Hmoins}, \eqref{Init:Complement} and \eqref{Eq:de:U}, we find that
\begin{align*}
\left\{
\begin{array}{cl}
\partial_tw_{\varrho,A_\varrho}\geq J_{\mathrm{rad}}\ast w_{\varrho,A_\varrho}-w_{\varrho,A_\varrho}+\widetilde{f}(w_{\varrho,A_\varrho}) & \text{in }(t_\varrho,\infty)\times H_{R_1+R_K}^-, \vspace{3pt}\\
\partial_tu=J_{\mathrm{rad}}\ast u-u+\widetilde{f}(u) & \text{in }(t_\varrho,\infty)\times H_{R_1+R_K}^-, \vspace{3pt}\\
w_{\varrho,A_\varrho}>u & \text{in }[t_\varrho,\infty)\times \overline{\Omega}\cap H_{R_1+R_K}^+, \vspace{3pt}\\
w_{\varrho,A_\varrho}(t_\varrho,\cdot)\geq u(t_\varrho,\cdot) & \text{in }H_{R_1+R_K}^-.
\end{array}
\right.
\end{align*}
By a straightforward adaptation of the parabolic comparison principle Lemma~\ref{LE:COMPARISON}, we
deduce that $u(t,x)\leq w_{\varrho,A_\varrho}(t,x)$ for all $(t,x)\in[t_\varrho,\infty)\times H_{R_1+R_K}^-$ and, hence, this holds for all $(t,x)\in[t_\varrho,\infty)\times \overline{\Omega}$, which thereby establishes Claim~\ref{Claim:1}.
\end{proof}


\section{Local behaviour after the encounter with $K$}\label{SE:LARGETIME}

In this section, we study how the entire solution $u(t,x)$ to \eqref{P} with \eqref{qualit-u} and \eqref{origintimes} behaves after hitting the obstacle $K$. We will first show that it converges to $u_\infty(x)\hspace{0.1em}\phi(x_1+ct)$, locally uniformly in $x\in\overline{\Omega}$ as $t\to\infty$, where $u_\infty\in C(\overline{\Omega})$ solves
\begin{align}
\left\{
\begin{array}{r l}
Lu_\infty+f(u_\infty)=0 & \text{in } \overline{\Omega}, \vspace{3pt}\\
0\leq u_\infty\leq 1 & \text{in } \overline{\Omega}, \vspace{3pt}\\
u_\infty(x)\to1 & \text{as } |x|\to\infty.
\end{array} \nonumber
\right.
\end{align}
What is more, we will prove that $u(t,x)$ converges to the planar wave $\phi(x_1+ct)$ as $|x'|\to\infty$ when $(t,x_1)$ stays in some compact set or, otherwise said, that {the encounter with the obstacle does not much deform $u(t,x)$ in hyperplanes which are orthogonal to the $x_1$-direction}.
%

The results in this section are somehow independent of the geometry of $K$. The influence of the latter is in fact ``encoded" in the function $u_\infty$ as will be shown in the next section.

\subsection{Local uniform convergence to the stationary solution}

In this sub-section, we prove the local uniform convergence of $u(t,x)$ towards $u_\infty(x)\hspace{0.1em}\phi(x_1+ct)$ as $t\to\infty$.

\begin{prop}\label{TH:ainf}
Assume \eqref{C1}, \eqref{C2}, \eqref{C3}, \eqref{C5} and \eqref{fJdelta-pos}.
Suppose that $J\in \mathbb{B}_{1,\infty}^\alpha(\Omega;\delta)$ for some $\alpha\in(0,1)$. Let $u(t,x)$ be the unique entire solution to \eqref{P} satisfying \eqref{qualit-u} and \eqref{origintimes}. Then, there exists a solution $u_\infty\in C(\overline{\Omega})$ to \eqref{Pinf} such that
$$ |u(t,x)-u_\infty(x)|\underset{t\to+\infty}{\longrightarrow} 0 \mbox{  locally uniformly in  }x\in\overline{\Omega}. $$
\end{prop}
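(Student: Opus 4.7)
The plan is to realize $u_\infty$ as the monotone pointwise limit of $u(t,\cdot)$ as $t\to+\infty$ and then upgrade this to local uniform convergence using the a priori regularity. Since $0<u<1$ and $\partial_tu>0$ everywhere on $\mathbb{R}\times\overline{\Omega}$ by Theorem~\ref{timebefore}, for each $x\in\overline{\Omega}$ the monotone limit $u_\infty(x):=\lim_{t\to+\infty}u(t,x)\in(0,1]$ exists. Lemma~\ref{apriori:entire} provides a uniform bound on $[u(t,\cdot)]_{C^{0,\alpha}(\overline{\Omega})}$, so the family $\{u(t,\cdot)\}_{t\in\mathbb{R}}$ is equicontinuous; combined with the pointwise monotone convergence this yields (by Arzel\`a--Ascoli, or directly via Dini's theorem on compact sets) the local uniform convergence $u(t,\cdot)\to u_\infty$ on $\overline{\Omega}$ as $t\to+\infty$, as well as $u_\infty\in C^{0,\alpha}(\overline{\Omega})\subset C(\overline{\Omega})$.

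Next I would verify that $u_\infty$ satisfies $Lu_\infty+f(u_\infty)=0$ in $\overline{\Omega}$ by passing to the limit in a time-translated sequence $u_n(t,x):=u(t+n,x)$. By the uniform Hölder and $C^{1,1}$ estimates of Lemma~\ref{apriori:entire} (in $x$ and in $t$, respectively), both $(u_n)$ and $(\partial_t u_n)$ are precompact in $C_{\mathrm{loc}}(\mathbb{R}\times\overline{\Omega})$, and uniqueness of the pointwise limit $u(t+n,x)\to u_\infty(x)$ forces any subsequential limit to equal $u_\infty(x)$ independently of $t$. Inserting this into $\partial_tu_n=Lu_n+f(u_n)$ and passing to the limit via dominated convergence for the nonlocal term (the kernel $J(\delta(x,\cdot))$ is integrable thanks to \eqref{C3} and $u_n$ is uniformly bounded) yields $Lu_\infty+f(u_\infty)=0$ pointwise in $\overline{\Omega}$.

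The main obstacle is proving $u_\infty(x)\to 1$ as $|x|\to+\infty$, which is needed for $u_\infty$ to actually solve \eqref{Pinf}. Half of this follows easily from the subsolution construction of Lemma~\ref{subsuper}: by monotonicity of $u$ in $t$, $u_\infty(x)\geq w^-(T_1,x)$ for all $x\in\overline{\Omega}$, and the explicit formula \eqref{wmoins} gives $w^-(T_1,x)\to 1$ as $x_1\to+\infty$, uniformly in $x'$. For the remaining directions (bounded, or very negative, $x_1$ with $|x|\to+\infty$), the strategy is to exploit that $\mathrm{dist}(z_n,K)\to+\infty$ whenever $|z_n|\to+\infty$: near such a $z_n$ the operator $L$ reduces to the full-space convolution $J_{\mathrm{rad}}\ast\,\cdot\,-\,\cdot$. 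Translating by $z_n$ and extracting a locally uniform limit via the Hölder bound, a subsequence of $u_\infty(z_n+\cdot)$ converges to $U_\infty\colon\mathbb{R}^N\to[0,1]$ solving $J_{\mathrm{rad}}\ast U_\infty-U_\infty+f(U_\infty)=0$ in $\mathbb{R}^N$. A Liouville-type rigidity for this full-space nonlocal equation -- available under the structural assumption \eqref{C5}, in the spirit of the stationary theory developed in \cite{Brasseur2019,Brasseur2018} -- then identifies $U_\infty\equiv 1$, completing the argument.
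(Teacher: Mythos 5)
The first three paragraphs of your proposal (monotone pointwise limit via $\partial_t u>0$, upgrade to local uniform convergence by the uniform $C^{0,\alpha}$ bound of Lemma~\ref{apriori:entire}, and passage to the limit in the equation along the time-translated sequence $u(t+n,\cdot)$) match the paper's argument; the half of the boundary condition $u_\infty(x)\to 1$ coming from $x_1\to+\infty$ via $w^-$ (equivalently Proposition~\ref{PROP:LIMITE:X1} together with $u(t,\cdot)\le u_\infty$) is also the same.

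The final step, however, has a genuine gap. You translate $u_\infty$ by $z_n$ with $|z_n|\to\infty$, extract a locally uniform limit $U_\infty$ solving $J_{\mathrm{rad}}*U_\infty - U_\infty + f(U_\infty)=0$ on $\R^N$ with $0\le U_\infty\le1$, and then claim that ``a Liouville-type rigidity'' identifies $U_\infty\equiv1$. No such rigidity can hold at this level of generality: the constants $0$, $\theta$ and $1$ all solve that full-space equation, and your construction supplies no information that excludes $U_\infty\equiv0$ or $U_\infty\equiv\theta$ (pointwise positivity of $u_\infty$ does not yield a positive lower bound along $z_n$, and the anchor $u_\infty\to1$ as $x_1\to+\infty$ is simply lost under the translations when $z_{1,n}$ stays bounded or tends to $-\infty$). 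What is actually needed here is a mechanism that transports the ``$\sup_{\overline\Omega}u_\infty=1$'' information from far out in the $e_1$ direction to every other region far from $K$; the paper's Lemma~\ref{LE:sup-lim} (essentially \cite[Lemma 7.2]{Brasseur2019}) does exactly this through a sweeping argument with compactly supported subsolutions, which is not recoverable from a blow-up-at-infinity plus Liouville argument alone. To make your approach work you would have to first establish some uniform quantitative lower bound along the translates (or otherwise rule out the intermediate constants), and this is precisely the nontrivial content of Lemma~\ref{LE:sup-lim}.
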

\begin{remark}
Since the convergence is local uniform, we also have
\begin{align}
|u(t,x)-\phi(x_1+ct)\hspace{0.1em}u_\infty(x)|\underset{t\to+\infty}{\longrightarrow} 0 \mbox{ locally uniformly in  }x\in\overline{\Omega}. \label{asympt:large:time2}
\end{align}
\end{remark}
The proof of Proposition~\ref{TH:ainf} relies on the following lemma:
\begin{lemma}\label{LE:sup-lim}
Assume \eqref{C1}, \eqref{C2}, \eqref{C3} and \eqref{C5}. Let $u\in C(\overline{\Omega},[0,1])$ be a solution to the stationary equation $Lu+f(u)=0$ in $\overline{\Omega}$ satisfying $\sup_{\overline{\Omega}}u=1$. Then,
$$ \lim_{|x|\to\infty}\hspace{0.1em}u(x)=1. $$
\end{lemma}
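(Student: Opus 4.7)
The plan is to distinguish whether $\sup_{\overline\Omega} u = 1$ is attained in $\overline\Omega$ or only at infinity, and in the second case to reduce to a Liouville-type property on $\R^N$ via a translation-extraction argument exploiting the compactness of $K$. First, if $u(\bar x) = 1$ for some $\bar x \in \overline\Omega$, then evaluating the equation at $\bar x$ gives $\int_\Omega J(\delta(\bar x, y))(u(y) - 1)\,\mathrm{d}y = -f(1) = 0$; since the integrand is nonpositive, it must vanish on $\Pi_1(J, \bar x)$, and iterating together with the $J$-covering property in \eqref{C3} yields $u \equiv 1$ on $\overline\Omega$, whence the conclusion is trivial.

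Otherwise, pick $x_n \in \overline\Omega$ with $|x_n| \to \infty$ and $u(x_n) \to 1$, and fix $R_K, R_J > 0$ with $K \subset B_{R_K}$ and $\mathrm{supp}(J) \subset [0, R_J]$. For $|x| > R_K + R_J$, the distance $\delta(x, y)$ coincides with $|x - y|$ on the support of the integrand, so $u$ satisfies $J_{\rm rad} * u - u + f(u) = 0$ pointwise. Writing this as $G(u) = J_{\rm rad} * u$ with $G(s) := s - f(s)$, the assumption $\max_{[0,1]} f' < 1$ from \eqref{C5} makes $G$ a bi-Lipschitz bijection, while $J_{\rm rad} \in W^{1,1}(\R^N)$ (also from \eqref{C5}) makes $J_{\rm rad} * u$ globally Lipschitz with constant $\|\nabla J_{\rm rad}\|_{L^1}$. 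Hence $u$ is Lipschitz on $\{|x| > R_K + R_J\}$ with a constant independent of $u$. The translates $u_n(\cdot) := u(\cdot + x_n)$ are then uniformly Lipschitz on each compact of $\R^N$ for $n$ large, and bounded in $[0, 1]$; by Arzel\`a--Ascoli together with a diagonal extraction, $u_n \to v$ locally uniformly along a subsequence, with $v \in C_b(\R^N, [0, 1])$, $v(0) = 1$, and $v$ solving $J_{\rm rad} * v - v + f(v) = 0$ on $\R^N$ (by dominated convergence, using that the translated obstacles $K - x_n$ escape to infinity). Applying the same strong-maximum argument on $\R^N$ (where the Euclidean $J$-covering property always holds by Proposition~\ref{CovEucl}) yields $v \equiv 1$. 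In particular, for every $R, \eps > 0$, $u \geq 1 - \eps$ on $B_R(x_n)$ for all $n$ large enough.

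To promote this into the full statement $\lim_{|x| \to \infty} u(x) = 1$, I argue by contradiction: if there are $y_n \to \infty$ and $\eps_0 > 0$ with $u(y_n) \leq 1 - \eps_0$, I connect $y_n$ to $x_n$ by a path $\gamma_n \subset \overline\Omega$ which, using that $\overline\Omega$ is connected and $K$ compact, can be chosen to stay away from $K$ and to satisfy $\inf_t |\gamma_n(t)| \to \infty$ (for instance by first going radially outward from each endpoint before travelling). The intermediate value theorem applied to the continuous function $u \circ \gamma_n$ then produces $z_n \in \overline\Omega$ with $|z_n| \to \infty$ and $u(z_n) = m$ for any fixed $m \in (\theta, 1 - \eps_0)$. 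A final application of the translation-extraction procedure to $(z_n)$ yields a bounded entire solution $w \in C_b(\R^N, [0, 1])$ of $J_{\rm rad} * w - w + f(w) = 0$ with $w(0) = m \notin \{0, \theta, 1\}$. I expect the main obstacle to be ruling out the existence of such a $w$; I would handle this by invoking the Liouville-type property for the bistable nonlocal equation on $\R^N$ under \eqref{C5}, namely that any bounded entire solution must be a constant equilibrium in $\{0, \theta, 1\}$, which contradicts $m \in (\theta, 1)$ and establishes the lemma.
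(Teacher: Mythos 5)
Your argument has a genuine gap in its final step, and it also takes a fundamentally different route from the paper's proof, which relies on the sliding/sweeping method of \cite[Lemma 7.2]{Brasseur2019} (construct a maximal compactly supported sub-solution on balls $B_R(x_0)$ far from $K$, and slide these balls to propagate a lower bound on $u$, removing the $L^2$ integrability condition used in \cite{Brasseur2019} by a simple truncation of $J_{\mathrm{rad}}$). Your first case (maximum attained, hence $u\equiv 1$ by the strong maximum principle with the $J$-covering property) is correct. Your translation–extraction argument is also well executed: the observation that \eqref{C5} renders $G(s):=s-f(s)$ bi-Lipschitz and $J_{\mathrm{rad}}\ast u$ Lipschitz far from $K$ gives the uniform modulus of continuity required for Arzel\`a–Ascoli, and extracting a limit $v\equiv 1$ from translates around $x_n$ is fine.

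The gap is the concluding invocation of a ``Liouville-type property'' asserting that every bounded entire solution of $J_{\mathrm{rad}}\ast w-w+f(w)=0$ on $\R^N$ is one of the constants $0,\theta,1$. This is false, even in the local case $w''+f(w)=0$ with $\int_0^1 f>0$: the energy level $E=F(0)=0$ (where $F'=f$) carries a homoclinic ``bump'' orbit from $0$ back to $0$, passing through any prescribed value $m\in(\theta,u^*)$ where $u^*>\theta$ is defined by $F(u^*)=0$. Analogous non-constant bounded solutions are expected for the nonlocal operator, and at any rate no such classification is available in the literature. Your limit $w$ is exactly of this type (a bounded solution on $\R^N$ with $w(0)=m\in(\theta,1)$), so the contradiction you seek does not follow. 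The extra structure you could hope to extract from the path construction (e.g., $u>m$ along the portion of $\gamma_n$ between $z_n$ and $x_n$) does not obviously survive the translation limit, since $|x_n-z_n|$ may diverge; you would need to control the geometry of the level sets in a way that the abstract compactness argument does not provide. This is precisely what the paper's quantitative sweeping argument accomplishes: it produces explicit compactly supported sub-solutions that can be ``carried'' across $\Omega\setminus K$ so that the lower bound spreads uniformly, rather than only along a subsequence.
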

\begin{proof}
Let us first consider the case when $\delta\in\mathcal{Q}(\overline{\Omega})$ is the Euclidean distance. Then, Lemma~\ref{LE:sup-lim} is exactly \cite[Lemma 7.2]{Brasseur2019} \emph{without} the extra assumption that $J_{\text{rad}}\in L^2(\R^N)$ (that was required in \cite{Brasseur2019}). However, it turns out that the same arguments given there also yield Lemma~\ref{LE:sup-lim} with only minor changes. As a matter of fact, the only place where the assumption that $J_{\text{rad}}\in L^2(\R^N)$ comes into play is when showing the existence of a maximal solution $w$ to
\begin{align}
\int_{B_R(x_0)}J_{\text{rad}}(x-y)\hspace{0.05em}w(x)\hspace{0.1em}\mathrm{d}y-w(x)+f(w(x))=0 \text{ for all }x\in B_R(x_0), \label{AUX:JL2}
\end{align}
and for any $x_0\in\R^N$ (provided that $R\geq d_0$ for some $d_0=d_0(f,J)>0$ large enough). This technical assumption is here only to ensure that the equation satisfies some compactness property which, in turn, is needed to establish the existence of nontrivial solutions.\par

The strategy of proof used in \cite{Brasseur2019}, consists in using this function $w$ to construct a family of sub-solutions to \eqref{AUX:JL2} and to notice that any solution $u$ to $Lu+f(u)=0$ in $\Omega$ is a super-solution to \eqref{AUX:JL2} on balls $B_R(x_0)$ that are sufficiently far away from $K$. Then, using the sweeping-type principle \cite[Lemma 4.3]{Brasseur2019}, it can be shown that the so-constructed sub-solutions yield lower bounds on $u$ which can be propagated in a way that yields that $u(x)\to1$ as $|x|\to\infty$. This strategy still works if we replace $J_{\text{rad}}$ by the truncation, $J_{\psi}$, defined by $J_{\psi}(z)=J_{\text{rad}}(z)\hspace{0.1em}\psi(z)$, where $\psi\in C_c^\infty(\R^N,[0,1])$ is a radial cut-off function such that
\begin{center}
$|\mathrm{supp}(J_{\psi})|>0$ and $J_{\psi}\in L^2(\R^N)$.
\end{center}
Indeed, since $J_{\psi}\in L^2(\R^N)$ there will then exist a solution $w_\psi$ to \eqref{AUX:JL2} with $J_{\psi}$ instead of $J_{\text{rad}}$. Moreover, $u$ is also a super-solution to \eqref{AUX:JL2} with $J_{\psi}$ instead of $J_{\text{rad}}$ on balls $B_R(x_0)$ that are sufficiently far away from $K$ (since $J_{\psi}\leq J_{\text{rad}}$). We may then simply work with the kernel $J_{\psi}$ instead of $J_{\text{rad}}$. Of course, $J_{\psi}$ has no longer unit mass, but we still have that $0<\int_{\R^N}J_{\psi}\leq1$ which is enough to make the proof given in \cite{Brasseur2019} work, including that of the sweeping-type principle (notice that all the other properties of $J_{\text{rad}}$ are preserved). Arguing in this way, we may then remove the assumption that $J_{\text{rad}}$ is square integrable. In the case when  $\delta\in\mathcal{Q}(\overline{\Omega})$ is not the Euclidean distance, this strategy still works: indeed, as it was already explained in \cite[Remark 2.5]{Brasseur2018}, the proof requires only to work on convex regions far away from the obstacle $K$ in which it trivially holds that $\delta(x,y)=|x-y|$.
\end{proof}
We are now ready to prove Proposition~\ref{TH:ainf}.
\begin{proof}[Proof of Proposition \ref{TH:ainf}]
By \eqref{qualit-u}, one has that $u(t,x)\to u_\infty(x)\in(0,1]$ as $t\to\infty$ for all $x\in\overline{\Omega}$. Furthermore, using Lemma \ref{apriori:entire} (or \eqref{ubar:parabolic}) one has that the convergence is (at least) locally uniform and that $u_\infty$ is a continuous solution of $Lu_\infty+f(u_\infty)=0$ in $\overline{\Omega}$ (the continuity of $u_\infty$ follows straightforwardly from \eqref{fJdelta-pos} and the arguments in \cite[Lemma 3.2]{Brasseur2019}).

Let us now show that $u_\infty(x)\to1$ as $|x|\to\infty$.
By Proposition~\ref{PROP:LIMITE:X1} we know that $u(t,x)\to1$ as $x_1\to\infty$, for any fixed $(t,x)\in\R\times\R^{N-1}$.
But since $0<u(t,x)<1$ and since $\partial_tu(t,x)>0$ for all $(t,x)\in\R\times\overline{\Omega}$, we have $u(t,x)\leq u_\infty(x)\leq1$ for all $(t,x)\in\R\times\overline{\Omega}$. Hence letting $x_1\to\infty$, we
deduce that $u_\infty(x)\to1$ as $x_1\to\infty$. In particular, it holds that $\sup_{\overline{\Omega}}u_\infty=1$. The conclusion now follows from Lemma~\ref{LE:sup-lim}.
\end{proof}

\subsection{Convergence near the horizon}

Here, we shall prove that the encounter with $K$ does not alter too much the entire solution $u(t,x)$ to \eqref{P} with \eqref{qualit-u} and \eqref{origintimes} in hyperplanes orthogonal to the $x_1$-direction, in the sense that it remains close to the planar wave $\phi(x_1+ct)$ locally uniformly in $(t,x_1)$ when $|x'|\to\infty$.

\begin{prop}\label{PROP:HORIZON}
Assume \eqref{C1}, \eqref{C2}, \eqref{C3}, \eqref{C4} and \eqref{fJdelta-pos}.
Suppose that $J\in \mathbb{B}_{1,\infty}^\alpha(\Omega;\delta)$ for some $\alpha\in(0,1)$. Let $u(t,x)$ be the unique entire solution to \eqref{P} satisfying \eqref{qualit-u} and \eqref{origintimes}. Then, for any sequence $(x_n')_{n\geq0}\subset\R^{N-1}$ such that $|x_n'|\to\infty$ as $n\to\infty$, there holds
$$ |u(t,x_1,x'+x_n')-\phi(x_1+ct)|\underset{n\to\infty}{\longrightarrow}0 \text{ locally uniformly in }(t,x)=(t,x_1,x')\in\R\times\R^N. $$
\end{prop}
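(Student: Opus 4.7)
The plan is to translate $u$ by $x_n'$ in the directions perpendicular to $e_1$, pass to a local uniform limit, identify the limit with the planar wave $\phi(x_1+ct)$ via the uniqueness statement in Theorem~\ref{timebefore}, and then upgrade subsequential convergence to full convergence by a standard argument.

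First, I would set $u_n(t,x_1,x'):=u(t,x_1,x'+x_n')$, which is well-defined on $\R\times(\overline{\Omega}-(0,x_n'))$. For any fixed bounded set $V\subset\R\times\R^N$, the compactness of $K$ together with $|x_n'|\to\infty$ implies $V\subset\R\times(\overline{\Omega}-(0,x_n'))$ for $n$ large. Lemma~\ref{apriori:entire} applied to $u$ (whose assumptions are met thanks to Theorem~\ref{timebefore} and the remark after Lemma~\ref{apriori:entire}) gives a uniform bound on $\sup_{x}\|u(\cdot,x)\|_{C^{1,1}(\R)}+\sup_t[u(t,\cdot)]_{C^{0,\alpha}(\overline{\Omega})}+\sup_t[\partial_t u(t,\cdot)]_{C^{0,\alpha}(\overline{\Omega})}$. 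These bounds transfer verbatim to $u_n$. By Arzel\`a--Ascoli and a diagonal extraction, up to a subsequence, $u_n\to u_\infty$ locally uniformly on $\R\times\R^N$, with $u_\infty\in C(\R\times\R^N,[0,1])$ and $\partial_t u_\infty\geq 0$.

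Second, I would show that $u_\infty$ solves the limit equation
\begin{equation*}
\partial_t u_\infty(t,x)=\int_{\R^N}J_{\text{rad}}(x-y)\bigl(u_\infty(t,y)-u_\infty(t,x)\bigr)\,\mathrm{d}y+f(u_\infty(t,x))\quad\text{on }\R\times\R^N.
\end{equation*}
Writing the equation for $u_n$ and changing variables $y=(y_1,y'+x_n')$ gives
\begin{equation*}
\partial_t u_n(t,x)=\int_{\Omega-(0,x_n')}J\bigl(\delta((x_1,x'+x_n'),(y_1,y'+x_n'))\bigr)\bigl(u_n(t,y)-u_n(t,x)\bigr)\,\mathrm{d}y+f(u_n).
\end{equation*}
Because $J$ is compactly supported (say $\mathrm{supp}(J)\subset[0,R_J]$) and $\delta\geq|\cdot|$, the effective $y$-domain is bounded uniformly in $n$. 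For $(t,x)$ and $y$ in any fixed bounded set, the segment joining $(x_1,x'+x_n')$ to $(y_1,y'+x_n')$ is a translate of a bounded segment and therefore lies in $\Omega$ for $n$ large, so $\delta(\cdot,\cdot)=|\cdot-\cdot|$ and $J(\delta(\cdot,\cdot))=J_{\text{rad}}(x-y)$; moreover $\mathds{1}_{\Omega-(0,x_n')}\to 1$ pointwise. Dominated convergence, together with local uniform convergence of $u_n$ and uniform time-regularity, then delivers the desired limit equation.

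Third, the asymptotic condition at $-\infty$ is inherited from \eqref{origintimes}: since $|u_n(t,x_1,x')-\phi(x_1+ct)|=|u(t,x_1,x'+x_n')-\phi(x_1+ct)|$, the uniform convergence in \eqref{origintimes} gives $\sup_{x\in\R^N}|u_\infty(t,x)-\phi(x_1+ct)|\to 0$ as $t\to-\infty$. Now I would apply the uniqueness part of Theorem~\ref{timebefore} to the limit problem on $\R^N$ (with $K=\emptyset$ and $\delta$ the Euclidean distance): all hypotheses are satisfied, since then $\mathcal{J}^{\delta}\equiv 1$ and assumption \eqref{fJdelta-pos} for the original problem gives $\max_{[0,1]}f'<\inf_{\Omega}\mathcal{J}^{\delta}\leq 1=\inf_{\R^N}\mathcal{J}^{\delta}$. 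Since $(t,x)\mapsto\phi(x_1+ct)$ is a bounded entire solution to the limit equation satisfying the same asymptotics at $-\infty$, uniqueness forces $u_\infty(t,x)=\phi(x_1+ct)$. Finally, since the limit is independent of the extracted subsequence, the full sequence $u_n$ converges locally uniformly to $\phi(x_1+ct)$.

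The main obstacle is the second step: handling the interplay between the moving integration domain $\Omega-(0,x_n')$ and the possibly non-Euclidean quasi-distance $\delta$. The key lever is the compact support of $J$, which confines relevant $y$ to a bounded neighborhood of $x$, where both $\mathds{1}_{\Omega-(0,x_n')}\to 1$ and $\delta=|\cdot|$ hold for $n$ large, so no mass escapes or is created in the limit.
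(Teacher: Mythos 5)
Your proof is correct and follows essentially the same approach as the paper's: translate by $x_n'$, extract a locally uniform limit $V$ using the a priori estimates of Lemma~\ref{apriori:entire}, identify the limit equation as the one on $\R^N$ with convolution kernel $J_{\mathrm{rad}}$ (your Step~2, which spells out the role of the compact support of $J$ and the local Euclidean behaviour of $\delta$ in more detail than the paper does), show $V$ inherits the $t\to-\infty$ asymptotics, and conclude $V\equiv\phi(x_1+ct)$ by a uniqueness argument. The only mild variation is in the last step: the paper invokes the Fife--McLeod trick from \cite{Berestycki2009d} directly, whereas you invoke the uniqueness statement of Theorem~\ref{timebefore} applied with $K=\emptyset$ and $\delta$ Euclidean (correctly checking that \eqref{fJdelta-pos} passes to the limit since $\mathcal{J}^\delta\equiv1$ there); as that uniqueness proof itself rests on the same Fife--McLeod argument, the two routes are effectively equivalent.
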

\begin{proof}
The proof works essentially as in the local case, see \cite[Proposition 4.1]{Berestycki2009d}. Let us, however, outline the main ingredients of the proof. For each $n\geq0$, we set $\Omega_n:=\Omega-(0,x_n')$ and, for $(t,x)\in\R\times\Omega_n$, we let $u_n(t,x):=u(t,x_1,x'+x_n')$. By Lemma \ref{apriori:entire} and the boundedness assumption on $u$, up to extraction of a subsequence, we have that $u_n$ converges locally uniformly in $(t,x)\in\R\times\R^N$ to a solution $V$ of
\begin{align*}
\left\{
\begin{array}{rl}
\partial_tV=J_{\mathrm{rad}}\ast V-V+f(V) & \text{ in }\R\times\R^N, \vspace{3pt}\\
0\leq V\leq 1 & \text{ in }\R\times\R^N. \\
\end{array}
\right.
\end{align*}
By \eqref{origintimes}, $V$ inherits from the limit behaviour of $u$ as $-\infty$, namely:
$$ \lim_{t\to-\infty}\,\sup_{x\in\R^N}\,|V(t,x)-\phi(x_1+ct)|=0. $$
From here, we may reproduce the arguments in \cite{Berestycki2009d} using the trick of Fife and McLeod \cite{Fife1977}, to prove that $V(t,x)\equiv\phi(x_1+ct)$ which then completes the proof. Notice that the arguments in \cite{Berestycki2009d} adapt with no difficulty since the local structure of the operator $\Delta u$ does not come into play and can easily be replaced by $J_{\mathrm{rad}}\ast u-u$.
\end{proof}


\section{On the impact of the geometry}\label{SE:GEOMETRY}

So far, the geometry of $K$ has not played any role in our analysis. The main purpose of this section is to understand how the geometry of $K$ impacts the asymptotic behaviour of $u(t,x)$ as $t\to\infty$. In a nutshell, we will show that the main information on the large time behaviour is contained in the properties of the solution, $u_\infty$, to the stationary problem \eqref{Pinf}.

We will first discuss the validity of the Liouville-type property for \eqref{Pinf} depending on the geometry of $K$ (namely whether its only possible solution is $u_\infty\equiv1$). In particular, we extend some previous results of Hamel, Valdinoci and the authors to the case of a general $\delta\in\mathcal{Q}(\overline{\Omega})$ and we prove that, when $K$ is a convex set, then the Liouville-type property is satisfied (at least if $J$ is non-increasing). Second, the prove that whether $u(t,x)$ recovers the shape of the planar front $\phi(x_1+ct)$ as $t\to\infty$ is \emph{equivalent} to whether \eqref{Pinf} satisfies the Liouville-type property.

\subsection{A Liouville type result}

We establish a Liouville type result which extends the results obtained by Hamel, Valdinoci and the authors in \cite{Brasseur2019} to arbitrary quasi-Euclidean distances.
\begin{prop}[Liouville type result]\label{Liouville}
Let $K\subset\R^N$ be a compact convex set and let $\delta\in\mathcal{Q}(\overline{\Omega})$.
Assume \eqref{C2}, \eqref{C3}, \eqref{C4} and \eqref{fJdelta-pos}. If $\delta(x,y)\not\equiv|x-y|$ suppose, in addition, that $J$ is non-increasing. Let $u_\infty:\Omega\to[0,1]$ be a measurable function
satisfying
\begin{align}
\left\{
\begin{array}{rl}
Lu_\infty+f(u_\infty)=0 & \text{a.e. in }\Omega, \vspace{3pt}\\
u_\infty(x)\to1 & \text{as }|x|\to\infty.
\end{array}
\right. \label{PB:STATION}
\end{align}
Then, $u_\infty\equiv1$ a.e. in $\Omega$.
\end{prop}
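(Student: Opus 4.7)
The plan is to adapt the sweeping strategy developed in \cite{Brasseur2019} for the Euclidean case to an arbitrary $\delta \in \mathcal{Q}(\overline{\Omega})$, exploiting the monotonicity of $J$ to compare $L$ with its Euclidean counterpart. I would first analyse the infimum $m := \inf_{\overline{\Omega}} u_\infty$. The continuity of $u_\infty$, which follows from \eqref{fJdelta-pos} by arguments analogous to \cite[Lemma~3.2]{Brasseur2019}, together with the hypothesis $u_\infty(x) \to 1$ as $|x| \to \infty$, forces $m$ to be attained at some $x_0 \in \overline{\Omega}$ whenever $m < 1$. Evaluating the equation at $x_0$ then yields
$$ -f(m) \;=\; L u_\infty(x_0) \;=\; \int_{\Omega} J(\delta(x_0, y))\bigl(u_\infty(y) - m\bigr)\, dy \;\geq\; 0, $$
so $f(m) \leq 0$, i.e.\ $m \in [0,\theta]$.

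The endpoints $m \in \{0, \theta\}$ are ruled out by the strong maximum principle: $f(m) = 0$ combined with $Lu_\infty(x_0) = 0$ and the pointwise non-negativity of the integrand forces $u_\infty \equiv m$ a.e.\ on $\mathrm{supp}(J(\delta(x_0, \cdot)))$, and iterating this identity along the sets $\Pi_j(J, x_0)$ of Definition~\ref{DE:CovProp} and invoking the $J$-covering property propagates $u_\infty \equiv m$ to all of $\overline{\Omega}$, contradicting the limit at infinity. The only genuinely difficult case is therefore $m \in (0,\theta)$, where $f(m) < 0$ and the preceding argument breaks down.

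To treat $m \in (0,\theta)$, I would follow the sweeping scheme of \cite[Sections~6--7]{Brasseur2019}: build a family of compactly supported ``bump'' sub-solutions $w_{R,\zeta}$ on balls $B_R(\zeta) \subset \Omega$ whose values approach $1$ as $R \to \infty$; using $u_\infty(x) \to 1$ at infinity, choose $\zeta$ deep inside $\{u_\infty > 1-\varepsilon\}$ so that $w_{R,\zeta} \leq u_\infty$ initially; then slide $\zeta$ continuously back toward $x_0$ while preserving the comparison $u_\infty \geq w_{R,\zeta}$. Convexity of $K$ is crucial here, because it guarantees that for any target point in $\overline{\Omega}$, the centre $\zeta$ can be transported from infinity into a neighbourhood of the target along a path such that $B_R(\zeta) \subset \Omega$ throughout (the convex hull of $K$ never blocks such a sweep). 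The extra assumption that $J$ is non-increasing translates $\delta(x,y) \geq |x-y|$ into $J(\delta(x,y)) \leq J(|x-y|)$, which is precisely the one-sided comparison needed to run the sweeping principle \cite[Lemma~4.3]{Brasseur2019} at the internal minima of $w_{R,\zeta} - u_\infty$ verbatim in the quasi-Euclidean setting. Letting $R \to \infty$ will then yield $u_\infty(x_0) \geq 1$, contradicting $u_\infty(x_0) = m < 1$.

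The main obstacle I anticipate is ensuring that the inequality $J(\delta(\cdot,\cdot)) \leq J(|\cdot-\cdot|)$ is used in the \emph{right} direction throughout the sliding: at each step of the sweep the comparison must yield information on $u_\infty$ rather than on the more tractable Euclidean model, and near the boundary of $\Omega$ (where the ball cannot be pushed past $K$) one must combine this comparison with the convex geometry of $K$ to keep enough room for the sweep to proceed. Overcoming this requires a careful inspection of the arguments of \cite{Brasseur2019}, replacing Euclidean convolutions by their $\delta$-analogues in each inequality, which is where the bulk of the technical work will lie.
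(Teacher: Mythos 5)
Your proposal takes a genuinely different route from the paper, and the route has a gap precisely at the point you flag as ``the main obstacle.''

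The paper does not analyse $m=\inf u_\infty$ nor sweep with compactly supported bump subsolutions on balls. Instead it adapts the sliding method of \cite{Brasseur2019}: it compares $u_\infty$ to \emph{planar} travelling-wave profiles $\phi_{r_0,e}(x):=\phi(x\cdot e-r_0)$ in half-spaces $H_e$ with $\overline{H_e}\subset\Omega$, sliding $r_0$ from $+\infty$ to $-\infty$. The monotonicity of $J$ enters through one specific decomposition: for $x\in H_e$,
\begin{align*}
L\phi_{r_0,e}(x)=\int_{\Omega}J(|x-y|)(\phi_{r_0,e}(y)-\phi_{r_0,e}(x))\,\mathrm{d}y+\int_{\mathscr{C}(x)\setminus K}\bigl(J(\delta(x,y))-J(|x-y|)\bigr)(\phi_{r_0,e}(y)-\phi_{r_0,e}(x))\,\mathrm{d}y,
\end{align*}
where $\mathscr{C}(x)$ is the cone with vertex $x$ tangent to $\partial K$ (only there can $\delta\neq|\cdot|$). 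The second integral is $\geq 0$ because \emph{both} factors are nonpositive: $J(\delta)\leq J(|\cdot|)$ from the monotonicity assumption, and $\phi_{r_0,e}(y)\leq\phi_{r_0,e}(x)$ for $y\in\mathscr{C}(x)\setminus K$, $x\in H_e$, since $\phi$ is increasing and the cone lies in $\{y\cdot e\leq x\cdot e\}$ once $\overline{H_e}\subset\Omega$. That second sign is a property of the \emph{planar, monotone-in-$e$} comparison function and of the convexity of $K$; it has no analogue for a radially symmetric bump $w_{R,\zeta}$, whose gradient reverses sign on either side of $\zeta$. So the one-sided bound $J(\delta)\leq J(|\cdot|)$ does not combine with a ball subsolution the way it combines with $\phi_{r_0,e}$.

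This is where your sweep breaks down concretely. First, the constraint $B_R(\zeta)\subset\Omega$ means $\zeta$ can never come within distance $R$ of $K$; if $x_0$ (where $m$ is attained) lies at distance $<R$ from $\partial K$, the sweep cannot reach it, and you cannot simply send $R\to\infty$ since then the constraint $B_R(\zeta)\subset\Omega$ is never satisfiable. Second, at an interior touch point $z^*$ of $u_\infty-w_{R,\zeta}$ lying near $K$, the difference $J(\delta(z^*,y))-J(|z^*-y|)\leq 0$ multiplies $u_\infty(y)-u_\infty(z^*)$, which has no definite sign (you only know $u_\infty\geq m$ globally, not $u_\infty\geq u_\infty(z^*)$), so the comparison between $Lu_\infty(z^*)$ and the Euclidean operator does not come out with the needed sign. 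Both obstructions are removed in the paper's argument by the planar geometry of $\phi_{r_0,e}$ and the cone $\mathscr{C}(x)$, which is exactly the technical point your proposal defers. Your reduction to $m\in(0,\theta)$ via the infimum argument and the strong maximum principle plus $J$-covering is fine as far as it goes, but it is a different (and not obviously shorter) preamble than the paper uses, and the core of the proof --- the near-$K$ region --- remains unaddressed.
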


\begin{proof}
If $\delta$ is the Euclidean distance, then Proposition~\ref{Liouville} is covered by \cite[Theorem 2.2]{Brasseur2019} together with \cite[Lemma 3.2]{Brasseur2019}. So it remains only to address the case when $\delta$ is not the Euclidean distance.
It turns out that this case follows from the same arguments as in the case of the Euclidean distance, with only minor changes that we now explain in detail. First of all,
as we already pointed out in~\cite[Remark~2.5]{Brasseur2018}, we note that the proof of \cite[Lemma 3.2]{Brasseur2019} can be adapted to prove that~\eqref{fJdelta-pos} still implies that $u_\infty$ has a uniformly continuous representative in its class of equivalence. Hence, we may assume, without loss of generality, that $u_\infty\in C(\overline{\Omega})$.\par

The strategy of proof used in \cite{Brasseur2019} to show that $u_\infty$ is necessarily identically $1$ in the whole of $\overline{\Omega}$, consists in comparing a solution $u_\infty$ to \eqref{PB:STATION} to some planar function of the type $\phi(x\cdot e-r_0)$ with $e\in\S^{N-1}$, $r_0\in\R$ and where $\phi$ is as in \eqref{C4}. This is done using a sliding type method by letting $r$ vary from $+\infty$ to $-\infty$.

To implement this method, two ingredients are needed: first, we need to establish appropriate comparison principles and, second, we need to be able to compare a given solution $u_\infty$ to the planar function $\phi(x\cdot e-r_0)$ in half-spaces of the form
$$H_e:=x_0+\{x\in\R^N; x\cdot e>0\} \text{ with } \overline{H_e}\subset\Omega.$$

It turns out that these two ingredients adapt to our generalized setting with no difficulty. Indeed, the proof of the comparison principles \cite[Lemmata 4.1, 4.2]{Brasseur2019} require only that $(\Omega,\delta)$ has the $J$-covering property, that $L$ maps continuous functions to continuous functions and that $\mathcal{J}^\delta\in C(\overline{\Omega})$. But all these requirements are guaranteed by assumption \eqref{C3}.

\begin{figure}
\centering
\includegraphics[scale=0.5]{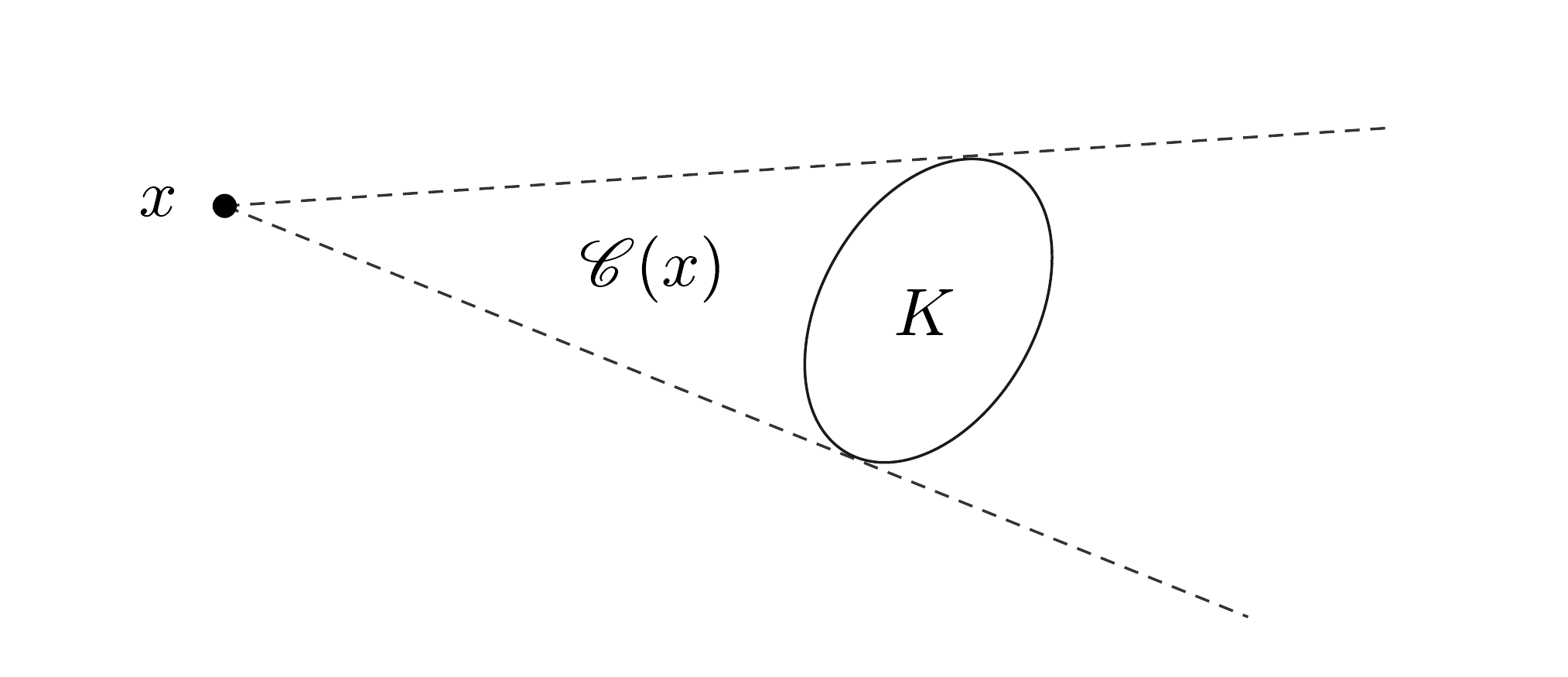} \\
\caption{The cone $\mathscr{C}(x)$ with boundary tangent to $K$. } \label{figCONE}
\end{figure}

On the other hand, to be able to compare $u_\infty$ with $\phi_{r_0,e}(x):=\phi(x\cdot e-r_0)$ in $H_e$, it suffices to make sure that $\phi_{r_0,e}$ is a sub-solution to $Lw+f(w)=0$ in $H_e$. For it, we notice that
\begin{align*}
L\phi_{r_0,e}(x)&=\int_{\Omega}J(|x-y|)(\phi_{r_0,e}(y)-\phi_{r_0,e}(x))\hspace{0.1em}\mathrm{d}y \\
&\qquad+\int_{\mathscr{C}(x)\setminus K}(J(\delta(x,y))-J(|x-y|))(\phi_{r_0,e}(y)-\phi_{r_0,e}(x))\hspace{0.1em}\mathrm{d}y,
\end{align*}
where $\mathscr{C}(x)$ is the cone with vertex $x$ tangent to $\partial K$ (see Figure~\ref{figCONE}).

Since $\phi_{r_0,e}(y)\leq\phi_{r_0,e}(x)$ for any $x\in H_e$ and any $y\in\mathscr{C}(x)\setminus K$ (because $\phi'>0$) and since $J$ is non-increasing, it then holds that
$$ L\phi_{r_0,e}(x)\geq\int_{\Omega}J(|x-y|)(\phi_{r_0,e}(y)-\phi_{r_0,e}(x))\hspace{0.1em}\mathrm{d}y \text{ for all }x\in H_e. $$
In other words, the problem reduces to the case $\delta(x,y)\equiv|x-y|$. At this stage, the arguments of \cite{Brasseur2019} can be adapted without modification.
\end{proof}

\subsection{The stationary solution encodes the geometry}

In this section, we prove that the large time behaviour of $u(t,x)$ is determined by the Liouville-type property of \eqref{Pinf}.

In fact, we will prove a bit more than what we stated above: we will prove that the stationary solution $u_\infty$ which arises in the large time limit is the \emph{minimal solution} to \eqref{Pinf}. More precisely, we prove the following result:

\begin{prop}\label{NonLiouville}
Assume that \eqref{C1}, \eqref{C2}, \eqref{C3}, \eqref{C5} and \eqref{fJdelta-pos} hold.
Suppose that $J\in \mathbb{B}_{1,\infty}^\alpha(\Omega;\delta)$ for some $\alpha\in(0,1)$.
Let $u(t,x)$ be the unique bounded entire solution to \eqref{P} satisfying \eqref{unique}. Let $u_\infty\in C(\overline{\Omega})$ be the solution to \eqref{Pinf} such that \eqref{asympt:large:time2} holds, i.e. such that
$$ |u(t,x)-u_\infty(x)\hspace{0.1em}\phi(x_1+ct)|\underset{t\to+\infty}{\longrightarrow} 0  {\mbox{ locally uniformly in }}x\in\overline{\Omega},$$
and let $\widetilde{u}_\infty\in C(\overline{\Omega})$ be any solution to \eqref{Pinf}. Then, $u_\infty\leq\widetilde{u}_\infty$ in $\overline{\Omega}$.
\end{prop}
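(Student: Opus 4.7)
The plan is to prove the stronger pointwise statement $u(t,x)\le\widetilde u_\infty(x)$ for every $(t,x)\in\R\times\overline{\Omega}$; passing to the limit $t\to+\infty$ and using the local uniform convergence $u(t,\cdot)\to u_\infty$ from Proposition~\ref{TH:ainf} then gives the desired $u_\infty\le\widetilde u_\infty$. The idea is to construct an auxiliary bounded entire solution $\underline V$ of \eqref{P} which satisfies $\underline V\le\widetilde u_\infty$ and which has the same asymptotic as $u$ as $t\to-\infty$, so that the uniqueness part of Theorem~\ref{timebefore} forces $\underline V=u$.

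First I would set $\tilde v(t,x):=\min\{u(t,x),\widetilde u_\infty(x)\}$. Since the pointwise minimum of two solutions of \eqref{P} is a supersolution (at a point $x_0$ where, say, $\tilde v=u$, the global bound $\tilde v\le u$ with equality at $x_0$ gives $L\tilde v(x_0)\le Lu(x_0)$, from which the supersolution inequality at $x_0$ follows), $\tilde v$ is a supersolution of \eqref{P}. A strong-maximum-principle argument using the $J$-covering property yields $\widetilde u_\infty\ge\rho>0$ on $\overline{\Omega}$ for some $\rho>0$, and combining this with $\widetilde u_\infty(x)\to1$ as $|x|\to\infty$, a simple case split on $\{|x|\le R_\eta\}$ versus its complement gives $(u-\widetilde u_\infty)^+(t,\cdot)\to 0$ uniformly as $t\to-\infty$. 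Together with $|u(t,x)-\phi(x_1+ct)|\to 0$ uniformly, this proves
\[
\sup_{x\in\overline{\Omega}}\bigl|\tilde v(t,x)-\phi(x_1+ct)\bigr|\underset{t\to-\infty}{\longrightarrow}0.
\]

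Next, for each $n$ sufficiently large I would let $V_n\in C^2([-n,\infty),C^{0,\alpha}(\overline{\Omega}))$ denote the unique Cauchy solution of \eqref{P} with $V_n(-n,\cdot)=\tilde v(-n,\cdot)$, supplied by Proposition~\ref{PROP:EXIST}. Because $u(-n,\cdot)$ is uniformly Hölder in $n$ (Lemma~\ref{apriori:entire}) and $\widetilde u_\infty$ is Hölder (from Proposition~\ref{TH:ainf} and \cite[Lemma~3.2]{Brasseur2019}), and the minimum of two uniformly Hölder functions is uniformly Hölder, Proposition~\ref{PROP:PARABOLIK2} yields parabolic estimates on $V_n$ that are uniform in $n$. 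Comparison (Lemma~\ref{LE:COMPARISON}) of $V_n$ with the supersolution $\tilde v$, matching at $t=-n$, gives $V_n\le\tilde v\le\widetilde u_\infty$ on $[-n,\infty)\times\overline{\Omega}$; applied instead to $V_m$ ($m>n$) on $[-m,\infty)$, the same comparison yields $V_m(-n,\cdot)\le\tilde v(-n,\cdot)=V_n(-n,\cdot)$, and then comparison on $[-n,\infty)$ gives $V_m\le V_n$. Thus $(V_n)$ is monotone decreasing in $n$, and the pointwise limit $\underline V:=\lim_n V_n$ is well-defined on $\R\times\overline{\Omega}$. Passing to the limit in the integral formulation using the uniform regularity shows $\underline V$ is a bounded entire solution of \eqref{P}, and by construction $\underline V\le\widetilde u_\infty$ and $\underline V\le\tilde v$, the latter giving $\limsup_{t\to-\infty}\sup_x(\underline V(t,x)-\phi(x_1+ct))\le 0$.

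The main obstacle will be the matching lower bound $\liminf_{t\to-\infty}\inf_x(\underline V(t,x)-\phi(x_1+ct))\ge 0$: once this is established, the uniqueness part of Theorem~\ref{timebefore} forces $\underline V=u$, and combined with $\underline V\le\widetilde u_\infty$ this closes the argument. The naive candidate subsolution $\min(w^-,\widetilde u_\infty)$ (with $w^-$ from Theorem~\ref{timebefore}) does \emph{not} remain a subsolution of \eqref{P} in the nonlocal setting, because $L$ does not commute with truncation in the cut-off region. Two routes seem workable: (i) a short-time continuity argument on intervals $[-n,-n+T_0]$ which propagates the uniform $o(1)$ closeness of $V_n(-n,\cdot)$ to $\phi(\cdot_1-cn)$ (from Step~1) with only an $O(1)$ loss, followed by a careful passage to the limit in $n$; or (ii) a sliding argument using shifted subsolutions $w^-(\cdot-a_n,\cdot)$ with $a_n\to 0$ chosen so that the shifted object genuinely lies below both $\widetilde u_\infty$ and $V_n(-n,\cdot)$ simultaneously. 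Either route relies crucially on the assumption \eqref{fJdelta-pos}, which via Proposition~\ref{PROP:PARABOLIK2} supplies the uniform Hölder-type parabolic regularity needed to propagate the bounds.
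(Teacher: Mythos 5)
Your proposal takes a fundamentally different route from the paper's, and it has a gap that you yourself flag as ``the main obstacle'' without resolving it. Let me first point out why the gap is genuine, and then explain the much shorter route the paper takes.

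The gap is in establishing $\liminf_{t\to-\infty}\inf_x(\underline V(t,x)-\phi(x_1+ct))\ge 0$. Your construction gives $\underline V\le\tilde v\le u$, so the \emph{upper} bound is automatic, but nothing you have written forces $\underline V$ from below by anything resembling a translate of $\phi$. Your two proposed ``routes'' are only sketched; route (ii) in particular is delicate because, as you correctly observe, the pointwise minimum of a subsolution and a solution is \emph{not} a subsolution of the nonlocal equation (the inequality $L\!\min\le L w^-$ goes the wrong way in $-Lw^-$), so it is not clear how one would get a barrier from below into the class covered by Lemma~\ref{LE:COMPARISON}. There is a further technical difficulty you do not flag: $\tilde v(\cdot,x)=\min(u(\cdot,x),\widetilde u_\infty(x))$ is only Lipschitz in $t$ (a kink occurs at the unique crossing time, since $\partial_t u>0$), so $\tilde v$ does not lie in the regularity class required by Lemma~\ref{LE:COMPARISON}, and applying the comparison principle to $\tilde v$ as a supersolution would need additional justification (e.g.\ a regularization or a pointwise argument at contact points).

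The paper avoids all of this by proving directly that $u(\tau,\cdot)\le\widetilde u_\infty$ for a single, sufficiently negative time $\tau$, and then simply propagating this bound forward with Lemma~\ref{LE:COMPARISON}, using that $\widetilde u_\infty$ (viewed as a time-independent function) is itself an entire solution of \eqref{P}. The initial comparison at time $\tau$ is obtained by combining three explicit facts: (a) $u\le w^+$ for $t\le T_1$ from the construction in Theorem~\ref{timebefore}; (b) the lower bound $\phi(|x|-r_0)\le\widetilde u_\infty(x)$ (from \cite[Lemma~5.1]{Brasseur2019} and the strong maximum principle, which apply here as in the proof of Proposition~\ref{Liouville}); and (c) the sharp asymptotics of $\phi$ from Lemma~\ref{PHI:ASYM} and \eqref{EST:PHI:NEG}--\eqref{EST:PHI:POS}. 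One splits on $x_1<0$, $0\le x_1\le-\Theta$, $-\Theta<x_1\le R_K-\Theta$, handled by these pointwise estimates, and finally $x_1>R_K-\Theta$, handled by the half-space weak maximum principle \cite[Lemma~4.1]{Brasseur2019}, using $\widetilde u_\infty\ge 1-s_0$ there. That one-shot comparison at time $\tau$ plus the forward comparison principle plus Proposition~\ref{TH:ainf} give $u_\infty\le\widetilde u_\infty$ with no auxiliary entire solution, no uniqueness argument, and no ``route (i)/(ii)'' left open. I would strongly recommend discarding the iterated-scheme strategy and instead establishing a single good time slice $\tau$ at which $u(\tau,\cdot)\le\widetilde u_\infty$ holds pointwise.
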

Let us explain why proving Proposition~\ref{NonLiouville} is indeed sufficient to establish Theorem~\ref{NonLiouville2}.
\begin{proof}[Proof of Theorem~\ref{NonLiouville2}]
If \eqref{Pinf} satisfies the Liouville property, then, since the trivial solution is the only possible one, we clearly have that $u_\infty\equiv1$. On the other hand, if $u_\infty\equiv 1$, then either \eqref{Pinf} satisfies the Liouville property or it does not. Suppose, by contradiction, that \eqref{Pinf} does not satisfy the Liouville property, namely that there exists a solution $\widetilde{u}_\infty$ to \eqref{Pinf} with $0<\widetilde{u}_\infty<1$ a.e. in $\overline{\Omega}$. Because of assumption \eqref{fJdelta-pos}, by \cite[Lemma 3.2]{Brasseur2019}, we know that every solution to \eqref{Pinf} admits a representative in its class of equivalence that is uniformly continuous. Hence, we may always assume that $\widetilde{u}_\infty\in C(\overline{\Omega})$. Applying now Proposition~\ref{NonLiouville}, we find that $1\equiv u_\infty\leq \widetilde{u}_\infty<1$ in $\overline{\Omega}$, a contradiction.
\end{proof}
Let us now prove Proposition~\ref{NonLiouville}.
\begin{proof}[Proof of Proposition~\ref{NonLiouville}]
For the convenience of the reader, the proof is split into three parts. After a preparatory step, where we collect some preliminary observations, we show that any solution to \eqref{Pinf} bounds $u(\tau,x)$ from above, for some time $\tau\in\R$ in a neighborhood of $-\infty$.
Lastly, we show that this estimate holds for all $t\in(\tau,\infty)$ using the comparison principle and we conclude using the convergence result obtained in Proposition~\ref{TH:ainf}.
\vskip 0.3cm

\noindent\emph{Step 1. Preliminary observations}
\smallskip

Let $\widetilde{u}_\infty\in C(\overline{\Omega})$ be any solution to \eqref{Pinf} and let $s_0,s_1>0$ be such that $f'\leq-s_1$ in $[1-s_0,1]$ (note that $s_0,s_1$ are well-defined since $f'(1)<0$).
Observe that, since $\widetilde{u}_\infty$ is independent of $t$, it also satisfies
\begin{align}
\partial_t\widetilde{u}_\infty-L\widetilde{u}_\infty-f(\widetilde{u}_\infty)=0 \text{ in }\R\times\overline{\Omega}. \label{stat-evol}
\end{align}
Furthermore, since $\inf_{\overline{\Omega}}\widetilde{u}_\infty>0$ (by the strong maximum principle \cite[Lemma 4.2]{Brasseur2019}) we may apply \cite[Lemma 5.1]{Brasseur2019} which yields the existence of a number $r_0>0$ such that
\begin{align}
\phi(|x|-r_0)\leq \widetilde{u}_\infty(x) \text{ for any }x\in\overline{\Omega}, \label{philequinfini}
\end{align}
where $\phi$ is as in \eqref{C4}. (Note that the use of \cite[Lemmata 4.2, 5.1]{Brasseur2019} in the case of a general $\delta\in\mathcal{Q}(\overline{\Omega})$ is licit, as can easily be seen by reasoning as in the proof of Proposition~\ref{Liouville}.)

Lastly, we recall that, by construction of $u(t,x)$, we have that
\begin{align}
u(t,x)\leq w^+(t,x) \text{ for any }(t,x)\in(-\infty,T_1]\times\overline{\Omega}, \label{uleqwplus}
\end{align}
where $w^+$ is given by \eqref{wplus} (remember \eqref{wuwENC}).
\vskip 0.3cm

\noindent\emph{Step 2. A first upper bound}
\smallskip

Let $R_K>0$ be such that $K\subset B_{R_K}$ and $\widetilde{u}_\infty\geq1-s_0$ in $\R^N\setminus B_{R_K}$. Also, let $\tau\in(-\infty,T_1]$ be sufficiently negative so that $c\tau+\xi(\tau)+r_0\leq0$ and
$$ \max\Big\{2\hspace{0.1em}\phi(c\tau\!+\!\xi(\tau))-\phi(-r_0),\ \beta_0+(c\tau\!+\!\xi(\tau)\!+\!r_0)\min\{\gamma_0,\gamma_1\hspace{0.1em}e^{-\mu R_K}\}\Big\}\leq0, $$
where $\mu$, $\beta_0$, $\gamma_0$, $\gamma_1$ and $\xi(t)$ are given by \eqref{EQ:CHARAC2}, \eqref{EST:PHI:NEG}, \eqref{EST:PHI:POS} and \eqref{xi(t)}, respectively. (Note that $\tau$ is well-defined since $\xi(t)\to0$ as $t\to-\infty$, since $\phi(z)\to0$ as $z\to-\infty$, since $\phi'(z)>0$ for all $z\in\R$ and since $\gamma_0,\gamma_1>0$.)

Now, we notice that, if $x_1<0$, then, by \eqref{philequinfini} and \eqref{uleqwplus}, we have
\begin{align*}
u(\tau,x)-\widetilde{u}_\infty(x)&\leq w^+(\tau,x)-\phi(|x|-r_0)\leq 2\hspace{0.1em}\phi(c\tau+\xi(\tau))-\phi(-r_0)\leq0,
\end{align*}
where we have used that $\phi$ is increasing.
In other words, we have that
\begin{align}
u(\tau,x)\leq \widetilde{u}_\infty(x) \text{ for any }x\in\overline{\Omega} \text{ with } x_1<0. \label{DEF:TAU1}
\end{align}
Similarly, if $x_1\geq0$, then $|x|\geq x_1$ and we have
\begin{align}
u(\tau,x)-\widetilde{u}_\infty(x)&\leq u(\tau,x)-\phi(|x|-r_0) \nonumber \\
&\leq \phi(x_1+c\tau+\xi(\tau))+\phi(-x_1+c\tau+\xi(\tau))-\phi(x_1-r_0) \nonumber \\
&\leq \phi(c\tau+\xi(\tau))+\phi'(x_1+\Theta)(c\tau+\xi(\tau)+r_0) \text{ for some }\Theta\in[c\tau+\xi(\tau),-r_0]. \nonumber
\end{align}
Let us now consider three subcases. First, if $0\leq x_1\leq-\Theta$, then, by \eqref{EST:PHI:NEG} and \eqref{EST:PHI:POS}, we have
\begin{align*}
u(\tau,x)-\widetilde{u}_\infty(x)&\leq e^{\lambda\Theta}\big(\beta_0+\gamma_0(c\tau+\xi(t)+r_0)\big)\leq0.
\end{align*}
Therefore, we have that
\begin{align}
u(\tau,x)\leq \widetilde{u}_\infty(x) \text{ for any }x\in\overline{\Omega} \text{ with } 0\leq x_1\leq-\Theta. \label{DEF:TAU2}
\end{align}
Now, if $-\Theta<x_1\leq R_K-\Theta$, then $\phi'(x_1+\Theta)\geq \gamma_1\hspace{0.1em} e^{-\mu(x_1+\Theta)}$ (by \eqref{EST:PHI:NEG} and \eqref{EST:PHI:POS}). Hence,
\begin{align*}
u(\tau,x)-\widetilde{u}_\infty(x)&\leq \beta_0\hspace{0.1em} e^{\lambda(c\tau+\xi(\tau))}+\gamma_1(c\tau+\xi(\tau)+r_0)e^{-\mu(x_1+\Theta)} \\
&\leq \beta_0+\gamma_1(c\tau+\xi(\tau)+r_0)\hspace{0.1em}e^{-\mu R_K}\leq0.
\end{align*}
Thus, we have that
\begin{align}
u(\tau,x)\leq \widetilde{u}_\infty(x) \text{ for any }x\in\overline{\Omega} \text{ with }-\Theta<x_1\leq R_K-\Theta. \label{DEF:TAU3}
\end{align}
Finally, let us consider the case $x_1>R_K-\Theta$. Let $H$ be the half-space given by
$$H:=\left\{x\in\R^N; x_1>R_K-\Theta\right\}\subset\R^N\setminus B_{R_K}. $$
Since $\partial_tu>0$ in $\R\times\overline{\Omega}$, using \eqref{DEF:TAU1}, \eqref{DEF:TAU2} and \eqref{DEF:TAU3}, we have
\begin{align*}
\left\{
\begin{array}{rl}
L\widetilde{u}_\infty+f(\widetilde{u}_\infty)= 0 & {\mbox{in }}\overline{H},\vspace{3pt}\\
Lu(\tau,\cdot)+f(u(\tau,\cdot))\ge 0 & {\mbox{in }}\overline{H},\vspace{3pt}\\
u(\tau,\cdot)\leq\widetilde{u}_\infty & {\mbox{in }}\overline{\Omega}\setminus H.
\end{array}
\right.
\end{align*}
Since, in addition, it holds that $\limsup_{|x|\to\infty}\left(u(\tau,x)-\widetilde{u}_\infty(x)\right)\leq0$ and that  $\widetilde{u}_\infty\geq1-s_0$ in $\overline{H}$ (remember the definition of $R_K$), we may then apply the weak maximum principle \cite[Lemma 4.1]{Brasseur2019} (which we can do as pointed out in the proof of Proposition~\ref{Liouville}) to obtain that
\begin{align}
u(\tau,x)\leq \widetilde{u}_\infty(x) \text{ for any }x\in\overline{\Omega}. \label{CDN:init:liouv}
\end{align}
It remains to show that this estimate holds for all $t\in(\tau,\infty)$.
\vskip 0.3cm

\noindent\emph{Step 3. Conclusion}
\smallskip

Let $T_*>\tau$ be arbitrary. Then, using \eqref{CDN:init:liouv} and recalling \eqref{stat-evol}, we arrive at
\begin{align*}
\left\{
\begin{array}{rll}
\partial_t\widetilde{u}_\infty-L\widetilde{u}_\infty-f(\widetilde{u}_\infty)\!\!&\geq \,\partial_tu-Lu-f(u) & \mbox{in }(\tau,T_*]\times\overline{\Omega}, \vspace{3pt}\\
\widetilde{u}_\infty(\cdot)\!\!&\geq \,u(\tau,\cdot) & \mbox{in }\overline{\Omega}.
\end{array}
\right.
\end{align*}
Hence, by the comparison principle Lemma~\ref{LE:COMPARISON}, we obtain that $u(t,x)\leq \widetilde{u}_\infty(x)$ for any $(t,x)\in[\tau,T_*]\times\overline{\Omega}$. But since $T_*>\tau$ is arbitrary, we find that
$$ u(t,x)\leq \widetilde{u}_\infty(x) \text{ for any }(t,x)\in[\tau,\infty)\times\overline{\Omega}. $$
Using Proposition~\ref{TH:ainf}, we obtain $u_\infty\leq \widetilde{u}_\infty$ in $\overline{\Omega}$, which completes the proof.
\end{proof}


\section{Large time behaviour of super-level sets}\label{SE:LargeTimeSLS}

In this section, we characterise further the large time behaviour of the entire solution $u(t,x)$. Precisely, we will prove that its super-level sets, namely the sets given by
$$ E_\lambda(t):=\left\{x\in\overline{\Omega}\hspace{0.1em};\ u(t,x)\geq\lambda\right\} \text{ for }\lambda\in(0,1)\text{ and }t\in\R, $$
are trapped between two moving frames which move at speed is $c$ when $t$ is large enough. In addition to the intrinsic interest of the results contained in this section, they will allow us to pave the way for the characterisation of the entire solution $u(t,x)$ as a generalised transition front with global mean speed $c$. The main result of this section is the following:
\begin{prop}[Large time behaviour of super-level sets]\label{PROP:SUPERLEVEL}
Assume \eqref{C1}, \eqref{C2}, \eqref{C3}, \eqref{C5} and \eqref{fJdelta-pos}.
Suppose that $J\in \mathbb{B}_{1,\infty}^\alpha(\Omega;\delta)$ for some $\alpha\in(0,1)$. Let $u(t,x)$ be the unique entire solution to \eqref{P} satisfying \eqref{qualit-u} and \eqref{origintimes}. Then, the following properties hold:
\begin{enumerate}
\item[(i)] (Upper bound) For all $\lambda\in(0,1)$, there exists a time $t_1=t_1(\lambda)\geq0$ and a constant $\Gamma_0=\Gamma_0(\lambda)\in\R$ such that, for all $t\geq t_1$, there holds
$$ E_\lambda(t)\subset\left\{x\in\overline{\Omega}\hspace{0.1em};\ x_1\geq \Gamma_0-ct\right\}. $$
\item[(ii)] (Lower bound) For all $\lambda\in(0,1)$, there are some $t_2=t_2(\lambda)\geq0$, $\Gamma_1=\Gamma_1(\lambda)\in\R$ and $\Gamma_2=\Gamma_2(\lambda)\in\R$ such that, for all $t\geq t_2$, there holds
$$ E_\lambda(t)\supset\left\{x\in\overline{\Omega}\hspace{0.1em};\ \Gamma_1\geq x_1\geq \Gamma_2-ct\right\}. $$
\end{enumerate}
\end{prop}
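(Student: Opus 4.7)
The plan is to construct Fife--McLeod-type super- and sub-solutions anchored at a time $t_0\ll 0$ where \eqref{origintimes} makes $u(t_0,x)$ essentially equal to $\phi(x_1+ct_0)$, and to propagate the control forward via the comparison principle Lemma~\ref{LE:COMPARISON}.

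For (i), fix $\lambda\in(0,1)$ and $\eta\in(0,\lambda/2)$, and choose $t_0\ll 0$ so that $u(t_0,x)\leq\phi(x_1+ct_0)+\eta$ on $\overline\Omega$. I shall verify that, for $\sigma\geq\eta$, $\omega>0$ small and $Q\geq\sigma$ sufficiently large,
\begin{equation*}
\overline u(t,x):=\phi\bigl(x_1+ct+Q(1-e^{-\omega(t-t_0)})\bigr)+\sigma e^{-\omega(t-t_0)}
\end{equation*}
is a super-solution to \eqref{P} on $(t_0,\infty)\times\overline\Omega$. Setting $z:=x_1+ct+Q(1-e^{-\omega(t-t_0)})$ and using the planar-wave equation $c\phi'=J_1\ast\phi-\phi+f(\phi)$, the super-solution inequality reduces, by a Taylor expansion of $f$, to
\begin{equation*}
Q\omega\phi'(z)e^{-\omega(t-t_0)}+\sigma e^{-\omega(t-t_0)}\bigl[\omega(\phi'(z)-1)-f'(\phi(z)+\vartheta\sigma e^{-\omega(t-t_0)})\bigr]+R(t,x)\geq 0
\end{equation*}
for some $\vartheta\in(0,1)$, where the obstacle correction
\begin{equation*}
R(t,x):=(J_1\ast\phi-\phi)(z)-L\bigl[\phi(y_1+ct+Q(1-e^{-\omega(t-t_0)}))\bigr](x)
\end{equation*}
encodes the discrepancy between the planar convolution operator and $L$. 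Outside the $x$-support of $R$ the computation is the classical bistable one: in the tails $\phi(z)\sim 0,1$ the restoring term $-f'(\phi(z))\sigma e^{-\omega(t-t_0)}$ is positive (since $f'(0),f'(1)<0$), while on the bounded middle range $\phi'(z)\geq m>0$ makes the gain $Q\omega\phi'(z)e^{-\omega(t-t_0)}$ dominate the unfavourable contribution, provided $Q$ is large. On the support of $R$, we use that $R$ is uniformly bounded and decays like $e^{-\mu ct}$ as $t\to\infty$, and we pick $\omega<\mu c$ so that the restoring exponential beats $R$'s decay, together with $Q$ large enough to absorb $|R|$ over the bounded initial time-window. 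Lemma~\ref{LE:COMPARISON} then gives $u\leq\overline u$; for $t\geq t_1$ large enough that $\sigma e^{-\omega(t-t_0)}\leq\eta$ we obtain $u(t,x)\leq\phi(x_1+ct+Q)+\eta$, hence $u(t,x)<\lambda$ whenever $x_1+ct<\Gamma_0:=\phi^{-1}(\lambda-\eta)-Q$, as desired.

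For (ii), the mirror construction uses the sub-solution
\begin{equation*}
\underline u(t,x):=\phi\bigl(x_1+ct-Q(1-e^{-\omega(t-t_0)})\bigr)-\sigma e^{-\omega(t-t_0)},
\end{equation*}
which satisfies $\underline u(t_0,\cdot)\leq u(t_0,\cdot)$ by \eqref{origintimes} and is a sub-solution by the sign-reversed version of the above computation. By comparison, $u\geq\underline u$ for $t\geq t_0$, so that $u(t,x)\geq\phi(x_1+ct-Q)-\eta\geq\lambda$ whenever $x_1+ct\geq\Gamma_2:=\phi^{-1}(\lambda+\eta)+Q$, for $t\geq t_2$ large. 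This yields the inclusion $E_\lambda(t)\supset\{x\in\overline\Omega:x_1\geq\Gamma_2-ct\}$; the upper cutoff $\Gamma_1$ in the statement is purely cosmetic and may be taken as any finite number larger than $\Gamma_2-ct_2$.

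The main technical obstacle is the correction $R(t,x)$, which is absent from the planar Fife--McLeod argument. Its treatment rests on three observations: (a) $R$ is compactly supported in $x$, because $J$ has compact support and $\delta(x,y)=|x-y|$ for $y$ in any ball around $x$ disjoint from $K$; (b) $R(t,x)$ tends to $0$ exponentially fast, at rate $\mu c$, as $t\to\infty$, because the common argument $z$ of the $\phi$-values tends to $+\infty$ and $1-\phi(z)\sim e^{-\mu z}$; and (c) the restoring exponential $\sigma e^{-\omega(t-t_0)}$ of the Fife--McLeod perturbation can be tuned, by taking $\omega<\mu c$ and $Q$ sufficiently large, to beat $R$ everywhere on its support, across all $t\geq t_0$. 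Apart from this, the argument is the classical bistable stability estimate in disguise.
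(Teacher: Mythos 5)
Your treatment of part (ii) contains a fundamental error. You dismiss the upper cutoff $\Gamma_1$ as ``purely cosmetic'' and claim the inclusion $E_\lambda(t)\supset\{x\in\overline\Omega:x_1\geq\Gamma_2-ct\}$, but this is false in general. By Corollary~\ref{COR:CONTR} there exist obstacles $K$ and data $(J,f)$ for which $u_\infty\not\equiv 1$, so that $u(t,\overline x)\to u_\infty(\overline x)<\lambda$ as $t\to\infty$ for some $\overline x$ near $K$; such $\overline x$ satisfies $\overline x_1\geq\Gamma_2-ct$ for $t$ large yet does \emph{not} belong to $E_\lambda(t)$. Correspondingly, your proposed sub-solution
\begin{equation*}
\underline u(t,x)=\phi\bigl(x_1+ct-Q(1-e^{-\omega(t-t_0)})\bigr)-\sigma e^{-\omega(t-t_0)}
\end{equation*}
cannot be a sub-solution to \eqref{P} on the whole of $\overline\Omega$: since $\underline u(t,\overline x)\to 1$ while $u(t,\overline x)\to u_\infty(\overline x)<1$, the comparison principle would yield $u_\infty\geq 1$, contradicting the existence of nontrivial $u_\infty$. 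Tracing where the sub-solution inequality breaks, observe that the gain term $Q\omega\phi'(z)e^{-\omega(t-t_0)}$, which must absorb the $O(1)$ obstacle correction $R$ in the transition range $z\approx 0$, occurs at times $t^*\approx(Q-x_1)/c$; because the sub-solution carries a \emph{backward} shift $-Q(1-e^{-\omega(t-t_0)})$, the transition time $t^*$ recedes to $+\infty$ as $Q\to\infty$, so $Qe^{-\omega(t^*-t_0)}\to 0$ and enlarging $Q$ does not help (by contrast, for the super-solution the $+Q$ shift pulls $t^*$ toward $t_0$, so the analogous step there does go through). The paper circumvents this precisely by multiplying the sub-solution by a cut-off $\chi_{R_0}(x_1)$, by allowing a factor $1-\eps$ and a sub-critical speed $c'=(1-1/n)c$ (Lemma~\ref{LE:sup:intermediaire} and Lemma~\ref{LE:intermediaire}), and by subsequently upgrading the speed via a contradiction argument with a spatially-exponential correction $\eta(x_1)=1-\eps_0 e^{\sigma(x_1+R_{\eps_0})}$ (Claim~\ref{Claim:super}); the cutoff is exactly what produces the finite $\Gamma_1$ in the statement.

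For part (i), your strategy of absorbing the discrepancy $R$ directly into the Fife--McLeod super-solution inequality differs from the paper's. The paper (Lemma~\ref{LE:SUPERSOL:LEVEL} plus Remark~\ref{RE:COMPARISON}) constructs $w_\rho^+$ as a super-solution to the translation-invariant equation $\partial_t v=J_{\mathrm{rad}}\ast v-v+f(v)$, arranges $w_\rho^+\geq 1>u$ on the half-space $\{x_1\geq -2R_J\}$ containing $K$ (using the $+\beta e^{-\alpha t}$ lift and the tail decay of $\phi$), and applies the half-space comparison principle on $\{x_1\leq -2R_J\}$, where $L$ and $J_{\mathrm{rad}}\ast\cdot-\cdot$ coincide; this avoids estimating $R$ entirely. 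Your approach could plausibly work for (i) with more care (the asymmetry noted above makes the $Q$-dilation favourable for the super-solution), but as written it leaves the sign and size of $R$ in the $z\approx 0$ regime imprecise, and contains a small algebra slip (the term $\sigma e^{-\omega(t-t_0)}\omega\phi'(z)$ does not arise from the ansatz you wrote). The decisive issue, however, is (ii).
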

Let us make some comments on the strategy of the proof of Proposition~\ref{PROP:SUPERLEVEL}.
The heart of the proof consists in comparing $u(t,x)$ with a sub- and a supersolution of the problem
\begin{align}
\partial_tv=L_{\R^N}v+f(v), \label{EQ:CONVOLUTION}
\end{align}
where $L_{\R^N}v:=J_{\mathrm{rad}}\ast v-v$. 
The construction of the subsolution is slightly more involved than that of the supersolution due to the fact that the large time limit of $u(t,x)$, namely $u_\infty(x)$, may not be identically $1$ in the whole of $\overline{\Omega}$. This forces us to consider subsolutions to \eqref{EQ:CONVOLUTION} in some strips of $\R^N$ which move at speed $c'<c$. This will allow us to obtain a preliminary lower bound showing that the super-level sets move asymptotically at speed $c$. The desired lower bound on $E_\lambda(t)$ will follow using a contradiction argument. For the convenience of the reader, the proof of Proposition~\ref{PROP:SUPERLEVEL} is divided into three subsections, each of which corresponds to a step of the proof of Proposition~\ref{PROP:SUPERLEVEL}.

\subsection{An upper bound: the super-level sets move at speed at most $c$}
In this subsection, we establish the upper bound on the super-level sets of $u(t,x)$ stated at Proposition~\ref{PROP:SUPERLEVEL} (namely assertion $(i)$). To this end, we need to construct a supersolution to an auxiliary problem. This supersolution and the comparison principle will provide an upper bound on $u(t,x)$, from which Proposition~\ref{PROP:SUPERLEVEL}(i) will follow.
So let us first prove the following lemma:
\begin{lemma}\label{LE:SUPERSOL:LEVEL}
There exists $\alpha_0,\beta_0>0$ such that, for any $0\leq\alpha\leq\alpha_0$ and any $0\leq\beta\leq\beta_0$, there exists some $\kappa_0=\kappa_0(\alpha)>0$ such that, for all $\kappa\geq\kappa_0$ and all $\rho\in\R$, the function
\begin{align}
w_\rho^+(t,x):=\phi\left(x_1+ct+\rho+\kappa\hspace{0.1em}\beta\hspace{0.1em}(1-e^{-\alpha t})\right)+\beta\hspace{0.1em}e^{-\alpha t}, \label{wplus:level}
\end{align}
is a supersolution to \eqref{EQ:CONVOLUTION} in $[0,\infty)\times\R^N$.
\end{lemma}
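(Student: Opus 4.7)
The plan is to compute the parabolic defect
$\mathcal{P}[w_\rho^+]:=\partial_t w_\rho^+ - L_{\R^N}w_\rho^+ - \widetilde{f}(w_\rho^+)$
explicitly and then verify $\mathcal{P}[w_\rho^+]\geq 0$ pointwise by case analysis on $\phi(z)$, where $z:=x_1+ct+\rho+\kappa\beta(1-e^{-\alpha t})$ and $\widetilde{f}$ is the $C^{1,1}$ extension of $f$ introduced in Remark~\ref{rem-f} (needed because $w_\rho^+$ may exceed $1$). Since $w_\rho^+$ depends on $x$ only through $x_1$ and $J_{\mathrm{rad}}$ is radial, Fubini reduces the full convolution to its one-dimensional marginal:
$$
J_{\mathrm{rad}}\ast w_\rho^+(t,x) = J_1\ast\phi(z)+\beta e^{-\alpha t},
$$
so that $L_{\R^N}w_\rho^+ = J_1\ast\phi(z)-\phi(z)$. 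Substituting the travelling wave equation $c\phi'=J_1\ast\phi-\phi+f(\phi)$ and differentiating $w_\rho^+$ directly in time, one obtains
$$
\mathcal{P}[w_\rho^+] = \kappa\beta\alpha e^{-\alpha t}\hspace{0.1em}\phi'(z) - \alpha\beta e^{-\alpha t} - \bigl[\widetilde{f}(\phi(z)+\beta e^{-\alpha t}) - f(\phi(z))\bigr].
$$

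Writing the bracketed term as $\widetilde{f}'(\xi)\hspace{0.1em}\beta e^{-\alpha t}$ via the mean value theorem for some $\xi\in[\phi(z),\phi(z)+\beta e^{-\alpha t}]$, and dividing by $\beta e^{-\alpha t}>0$, the required inequality $\mathcal{P}[w_\rho^+]\geq 0$ becomes the pointwise bound $\kappa\alpha\phi'(z)\geq \alpha+\widetilde{f}'(\xi)$. Exploiting the bistability of $f$ (i.e.\ $f'(0)<0$ and $f'(1)<0$) together with $\widetilde{f}'\equiv f'(0)$ on $(-\infty,0]$ and $\widetilde{f}'\equiv f'(1)$ on $[1,\infty)$, I would select $s_0\in(0,1/2)$ so small that $\widetilde{f}'\leq -\gamma$ on $[0,2s_0]\cup[1-2s_0,1+s_0]$ for some $\gamma>0$, and then set $\beta_0:=s_0$ and $\alpha_0:=\gamma/2$. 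For any such admissible $(\alpha,\beta)$ and any $t\geq 0$, one has $\beta e^{-\alpha t}\leq s_0$, so whenever $\phi(z)\leq s_0$ or $\phi(z)\geq 1-s_0$ the point $\xi$ lies in the bad region and $\alpha+\widetilde{f}'(\xi)\leq -\gamma/2<0$, so the inequality holds trivially from $\phi'>0$, independently of $\kappa$. On the intermediate regime $s_0\leq\phi(z)\leq 1-s_0$, the set $\phi^{-1}([s_0,1-s_0])$ is a compact interval of $\R$ on which $\phi'$ is continuous and strictly positive; this yields a uniform lower bound $m_0:=\min_{\phi^{-1}([s_0,1-s_0])}\phi'>0$. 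The inequality then follows by choosing
$$\kappa\geq\kappa_0(\alpha):=\frac{\alpha+\sup_{\R}|\widetilde{f}'|}{\alpha\,m_0}.$$

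The main subtlety is this intermediate regime: the lower bound on $\phi'$ is obtained at the cost of a threshold $\kappa_0$ that depends on $\alpha$ and blows up as $\alpha\to 0^+$, which explains why the statement only claims existence of $\kappa_0=\kappa_0(\alpha)$ rather than a uniform constant. Every other step, particularly the radial-to-marginal reduction of the convolution and the selection of $\alpha_0$ and $\beta_0$ from the strict negativity of $f'$ at the stable states, is routine once the bistable structure is used.
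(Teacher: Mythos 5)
Your proof is correct and follows essentially the same strategy as the paper's: reduce the convolution to its one-dimensional marginal, plug in the travelling-wave equation, and then split into the ``near-the-stable-states'' regime (where $-f'$ being bounded below by a positive constant kills the error term for any $\kappa$, provided $\alpha\leq\alpha_0$, $\beta\leq\beta_0$) and the ``intermediate'' regime (where the uniform positive lower bound on $\phi'$ over the compact set $\phi^{-1}([s_0,1-s_0])$ forces the choice $\kappa\geq\kappa_0(\alpha)$). The only cosmetic difference is that you phrase the case split directly in terms of $\phi(z)$ and invoke the mean value theorem on $\widetilde f$, whereas the paper phrases it in terms of $|z|\gtrless A$ and estimates the $f$-difference directly; both yield identical constants up to naming.
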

\begin{proof}
Let $\rho>0$ be arbitrary and set $\xi(t):=ct+\rho+\kappa\hspace{0.1em}\beta\hspace{0.1em}(1-e^{-\alpha t})$.
Let us extend $f$ linearly outside $[0,1]$ by the function $\widetilde{f}$ given by \eqref{extension:f}. For the sake of simplicity, let us still denote by $f$ this extension. Now, let $\beta_0>0$ be small enough so that
$$ -f'(s)\geq-\max\big\{f'(0),f'(1)\big\}=:\alpha_0 \text{ for all }s\in[-2\beta_0,2\beta_0]\cup[1-2\beta_0,1+2\beta_0]. $$
Furthermore, let $A>0$ be large enough so that
$$
\left\{
\begin{array}{ll}
\phi(z)\leq\beta_0 &\text{if }z<-A, \vspace{3pt}\\
\phi(z)\geq1-\beta_0&\text{if }z> A.
\end{array}
\right.
$$
Note that $A$ is well-defined since $\lim_{z\to-\infty}\phi(z)=1$ and $\lim_{z\to+\infty}\phi(z)=1$.
Now, let $(t,x)\in[0,\infty)\times\R^N$ be arbitrary. By a short computation, we obtain that
$$ \partial_tw_\rho^+(t,x)=\left(c+\alpha\beta\kappa\hspace{0.1em}e^{-\alpha t}\right)\phi'(x_1+\xi(t))-\alpha\beta\hspace{0.1em}e^{-\alpha t}. $$
Recalling \eqref{C4}, this rewrites
\begin{align}
\partial_tw_\rho^+(t,x)&=L_{\R^N}w_\rho^+(t,x)+f\left(w_\rho^+(t,x)-\beta\hspace{0.1em}e^{-\alpha t}\right)+\alpha\beta\kappa\hspace{0.1em}e^{-\alpha t}\phi'(x_1+\xi(t))-\alpha\beta\hspace{0.1em}e^{-\alpha t}. \label{eq-w+1}
\end{align}
Let us now distinguish between two different cases.
\vskip 0.3cm

\noindent\emph{Case 1: $|x_1+\xi(t)|> A$.}
\smallskip

Given the definition of $w_\rho^+(t,x)$ and $A$, we must have that either $w_\rho^+(t,x)-\beta\hspace{0.1em}e^{-\alpha t}\leq\beta_0$ or $w_\rho^+(t,x)-\beta\hspace{0.1em}e^{-\alpha t}\geq 1-\beta_0$. In both situations, we have
$$ f\left(w_\rho^+(t,x)-\beta\hspace{0.1em}e^{-\alpha t}\right)-f(w_\rho^+(t,x))\geq \alpha_0\beta\hspace{0.1em}e^{-\alpha t}, $$
for all $0\leq\beta\leq\beta_0$ (recall that $t\geq0$ so that $\beta\hspace{0.1em} e^{-\alpha t}\leq\beta$). Since $\phi'>0$, using \eqref{eq-w+1}, we get
$$ \partial_tw_\rho^+(t,x)\geq L_{\R^N}w_\rho^+(t,x)+f(w_\rho^+(t,x))+\beta(\alpha_0-\alpha)\hspace{0.1em}e^{-\alpha t}. $$
Thus, $w_\rho^+$ is a supersolution in the range $|x_1+\xi(t)|> A$ provided $0\leq\alpha\leq\alpha_0$ and $0\leq\beta\leq\beta_0$.
\vskip 0.3cm

\noindent\emph{Case 2: $|x_1+\xi(t)|\leq A$.}
\smallskip

Let $\zeta:=\inf_{z\in[-A,A]}\phi'(z)>0$. Since $f\in C^{0,1}(\R)$, it follows from \eqref{eq-w+1} that
$$ \partial_tw_\rho^+(t,x)\geq L_{\R^N}w_\rho^+(t,x)+f(w_\rho^+(t,x))+\beta\hspace{0.1em}(\zeta\alpha\kappa-\|f'\|_\infty-\alpha)\hspace{0.1em}e^{-\alpha t}. $$
Thus, we, again, have that $w_\rho^+$ is a supersolution in the range $|x_1+\xi(t)|\leq A$ provided $\kappa\geq\kappa_0(\alpha)$, where we have set $\kappa_0(\alpha):=(\|f'\|_\infty+\alpha)/(\zeta\alpha)$.

Summing up, we have shown that $w_\rho^+$ is a supersolution to \eqref{EQ:CONVOLUTION} for all $(t,x)\in[0,\infty)\times\R^N$ provided that $0\leq\alpha\leq\alpha_0$, that $0\leq\beta\leq\beta_0$ and that $\kappa\geq\kappa_0(\alpha)$, as desired.
\end{proof}
\begin{remark}
Observe that $\alpha_0$ and $\beta_0$ depend only on $f'$.
\end{remark}
We are now in position to prove Proposition~\ref{PROP:SUPERLEVEL}(i).
\begin{proof}[Proof of Proposition~\ref{PROP:SUPERLEVEL}(i)]
Since $K$ is compact, up to immaterial translations, we may assume, without loss of generality, that $K\subset\{x_1\geq-R_J\}$ where $R_J>0$ is such that $\mathrm{supp}(J)\subset[0,R_J]$. By Lemma~\ref{LE:SUPERSOL:LEVEL}, we know that there exists $\alpha_0>0$ and $\beta_0>0$ such that, for all $0\leq\alpha\leq\alpha_0$ and all $0\leq\beta\leq\beta_0$, there is some $\kappa_0=\kappa_0(\alpha)>0$ such that, for all $0\leq\kappa\leq\kappa_0$ and all $\rho\in\R$, the function $w_\rho^+$ given by \eqref{wplus:level}, namely
$$ w_\rho^+(t,x):=\phi\left(x_1+ct+\rho+\kappa\hspace{0.1em}\beta\hspace{0.1em}(1-e^{-\alpha t})\right)+\beta\hspace{0.1em}e^{-\alpha t}, $$
is a supersolution to \eqref{EQ:CONVOLUTION} for all $(t,x)\in[0,\infty)\times\R^N$.
Since $u(0,x)\to0$ as $x_1\to-\infty$ (by Proposition~\ref{PROP:LIMITE:X1}), since $w(0,x)\geq\beta$ for all $x\in\overline{\Omega}$ and since $\lim_{z_1\to\infty}w_\rho^+(0,z)=1+\beta>1>u(0,x)$ for all $x\in\overline{\Omega}$, up to take $\rho>0$ sufficiently large, we may always assume that
\begin{align}
w_\rho^+(0,x)\geq u(0,x) \text{ for all }x\in\overline{\Omega}. \label{super:lev:init}
\end{align}
On the other hand, by exploiting the asymptotic properties of $\phi$ (Lemma~\ref{PHI:ASYM}), we have
\begin{align*}
w_\rho^+(t,x)&\geq 1-\gamma_1 e^{-\mu(x_1+ct+\rho +\kappa\beta(1-e^{-\alpha t}))} +\beta\hspace{0.1em} e^{-\alpha t}\\
&= 1 +e^{-\alpha t}\left(\beta-\gamma_1e^{\mu(2R_J-\rho)}e^{-\mu\kappa\beta(1-e^{-\alpha t})}e^{-(\mu c-\alpha)t} \right),
\end{align*}
for all $(t,x)\in[0,\infty)\times\{x_1\geq-2R_J\}$.
Hence, choosing $0\leq\alpha\leq\min\left\{\alpha_0,\mu c\right\}$ and $\rho\geq 2R_J-\mu^{-1}\log(\beta/\gamma_1)$,
we have that
\begin{align}
w_\rho^+(t,x)\geq 1>u(t,x) \text{ for all }(t,x)\in[0,\infty)\times\{x_1\geq-2R_J\}. \label{super:lev:ext}
\end{align}
Since $K\subset\{x_1\geq-R_J\}$ it follows that $u(t,x)$ is also a solution to \eqref{EQ:CONVOLUTION} in $(t,x)\in[0,\infty)\times\{x_1\leq-2R_J\}$. Hence, recalling \eqref{super:lev:init} and \eqref{super:lev:ext}, it follows from the comparison principle Lemma~\ref{LE:COMPARISON} (remember Remark~\ref{RE:COMPARISON}) that
\begin{align}
u(t,x)\leq w_\rho^+(t,x)\leq \phi\left(x_1+ct+\rho+\kappa\hspace{0.1em}\beta\right)+\beta\hspace{0.1em}e^{-\alpha t}, \label{superlevel:upper:est}
\end{align}
for all $(t,x)\in[0,\infty)\times\overline{\Omega}$.
Now, let $\lambda\in(0,1)$ and let $t_1(\lambda):=\max\{0,\alpha^{-1}\log(\lambda/(2\beta))\}$. Then, for all $t\geq t_1(\lambda)$ and all $x\in E_\lambda(t)$, we have $\Gamma_0(\lambda):=\phi^{-1}(\lambda/2)-(\rho+\kappa\hspace{0.1em}\beta)\leq  x_1+ct$.
Therefore, the following inclusion holds
$$ E_\lambda(t)\subset \left\{x\in\overline{\Omega}\hspace{0.1em};\ x_1\geq \Gamma_0(\lambda)-ct\right\}, $$
for all $t\geq t_1(\lambda)$, which thereby proves Proposition~\ref{PROP:SUPERLEVEL}(i).
\end{proof}

\subsection{A lower bound: the super-level sets move asymptotically at speed $c$}
In this subsection, we establish a preliminary lower bound for the super-level sets of $u(t,x)$ which will be useful to complete the proof of Proposition~\ref{PROP:SUPERLEVEL}(ii) and, hence, of Proposition~\ref{PROP:SUPERLEVEL}. This preliminary lower bound will allow us to obtain an asymptotic version of Proposition~\ref{PROP:SUPERLEVEL}(ii). Namely, we will prove the following intermediary result:
\begin{lemma}\label{LE:intermediaire}
For all $\lambda\in(0,1)$, there is some $n_0=n_0(\lambda)\geq1$ such that, for all $n\geq n_0$, there exists $\Gamma_{1,n}\in\R$, $\Gamma_{2,n}(\lambda)\in\R$ and $t_{2,n}(\lambda)\geq0$ such that, for all $t\geq t_{2,n}(\lambda)$, there holds
$$ E_\lambda(t)\supset\left\{x\in\overline{\Omega}\hspace{0.1em};\ \Gamma_{1,n}\geq x_1\geq \Gamma_{2,n}(\lambda)-c\left(1-\frac{1}{n}\right)t\right\}. $$
\end{lemma}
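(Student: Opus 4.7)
The plan is to construct, for each sufficiently large $n$, a sub-solution of the pure convolution equation \eqref{EQ:CONVOLUTION} that propagates leftward at speed $c_n := c(1-1/n) < c$, and to feed it into the half-space comparison principle from Remark~\ref{RE:COMPARISON} on a set of the form $H := \{x_1 \leq \Gamma_{1,n}\} \cap \overline{\Omega}$. Concretely, mirroring the construction of Lemma~\ref{LE:SUPERSOL:LEVEL}, for each such $n$ I expect that one can find $\alpha_n, \beta_n > 0$ small and $\kappa_n > 0$ large enough that, for every shift $\rho \in \R$,
\begin{equation*}
\underline{w}_{n,\rho}(t,x) := \phi\bigl(x_1 + c_n t + \rho - \kappa_n \beta_n(1 - e^{-\alpha_n t})\bigr) - \beta_n e^{-\alpha_n t}
\end{equation*}
is a sub-solution of \eqref{EQ:CONVOLUTION} on $[0,\infty)\times\R^N$. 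The verification is entirely analogous to Lemma~\ref{LE:SUPERSOL:LEVEL}: in the tails where $\phi$ is close to $0$ or $1$, the strict negativity of $f'$ at the two stable states absorbs the $\alpha_n\beta_n e^{-\alpha_n t}$ term; in the transition region, the favourable slack $(c_n-c)\phi'<0$ together with the correction $-\kappa_n\alpha_n\beta_n e^{-\alpha_n t}\phi'$ absorbs $(\alpha_n + \|f'\|_\infty)\beta_n e^{-\alpha_n t}$, provided $\kappa_n$ is taken large enough.

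Next, I would set up the comparison region. I pick $\Gamma_{1,n}$ so large that $u_\infty \geq 1 - \beta_n/4$ on a suitably chosen compact subset of $\{\Gamma_{1,n} \leq x_1 \leq \Gamma_{1,n} + R_J\}\cap\overline{\Omega}$, using that $u_\infty(x) \to 1$ as $|x|\to\infty$, and then pick $T_n$ so large that: (i) by the local uniform convergence $u(t,\cdot) \to u_\infty$ from Proposition~\ref{TH:ainf}, $u(T_n,\cdot)$ is within $\beta_n/4$ of $u_\infty$ on that compact set; (ii) by Proposition~\ref{PROP:HORIZON} combined with the fact that $\phi(x_1+cT_n) \to 1$ as $T_n \to \infty$ for $x_1 \geq \Gamma_{1,n}$, the same lower bound $u(T_n,x) \geq 1 - \beta_n$ extends to the remaining part of the boundary strip where $|x'|$ is large. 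Monotonicity of $u$ in $t$ then propagates the inequality to every $t \geq T_n$. Finally, I choose the shift $\rho$ so negative that the front of $\underline{w}_{n,\rho}(T_n,\cdot)$, located at $x_1 \approx -c_nT_n - \rho + \kappa_n\beta_n$, lies well to the right of $\Gamma_{1,n}+R_J$; this guarantees simultaneously $\underline{w}_{n,\rho}(T_n,\cdot) \leq 0$ on $\{x_1 \leq \Gamma_{1,n}+R_J\}$ (trivial initial compatibility) and $\underline{w}_{n,\rho}(t,x) \leq 1-\beta_n \leq u(t,x)$ on the boundary strip (boundary compatibility).

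Applying the half-space comparison principle of Remark~\ref{RE:COMPARISON} on $H$ then delivers $u(t,x) \geq \underline{w}_{n,\rho}(t,x)$ for every $t \geq T_n$ and $x \in H$. Inverting $\underline{w}_{n,\rho}(t,x) \geq \lambda$ via the strict monotonicity of $\phi$ gives $x_1 \geq \Gamma_{2,n}(\lambda) - c_n t$ for an explicit $\Gamma_{2,n}(\lambda)$ and for all $t \geq t_{2,n}(\lambda)$, which upon intersection with $\{x_1 \leq \Gamma_{1,n}\}$ yields the claimed inclusion. The principal difficulty is in ensuring the sub-solution property of $\underline{w}_{n,\rho}$ for the operator $L$ (and not just for $L_{\R^N}$) in the part of $H$ lying within distance $R_J$ of $K$. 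The two operators do agree on $\underline{w}_{n,\rho}$ at every $x \in \overline{\Omega}$ with $\mathrm{dist}(x,K) > R_J$, since then $\delta(x,y) = |x-y|$ on the convex ball $B_{R_J}(x) \subset \Omega$ by the quasi-Euclidean property; but near the obstacle one must exploit the monotonicity of $\underline{w}_{n,\rho}$ in $x_1$ to extract a favourable sign for the correction $L\underline{w}_{n,\rho} - L_{\R^N}\underline{w}_{n,\rho}$. This bookkeeping near $K$, together with the complementary choice of $\Gamma_{1,n}$ so that the half-space $H$ still captures the required super-level inclusion, is the main technical hurdle.
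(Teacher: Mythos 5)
Your construction has a genuine gap that cannot be repaired by bookkeeping: the sub-solution $\underline{w}_{n,\rho}$ tends to $1$ (not to something strictly below $1$) as $t\to\infty$, and this destroys the boundary compatibility on the half-space. Concretely, for any fixed $x$ in the boundary strip, $\underline{w}_{n,\rho}(t,x)=\phi(x_1+c_nt+\cdots)-\beta_n e^{-\alpha_n t}\to 1$ as $t\to\infty$; meanwhile $u(t,x)\nearrow u_\infty(x)$, and $u_\infty$ is \emph{not} identically $1$ in general (this is the whole point of the problem — see Corollary~\ref{COR:CONTR}). So your claimed inequality $\underline{w}_{n,\rho}(t,x)\le 1-\beta_n\le u(t,x)$ on the boundary strip is simply false for large $t$: the left-hand side exceeds $1-\beta_n$ once $\beta_n e^{-\alpha_n t}<\beta_n$, i.e.\ for every $t>T_n$. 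This is precisely why the paper's sub-solution carries the additional multiplicative factor $(1-\eps)$: it caps the sub-solution's supremum at $1-\eps$, and the half-space parameter $R_n$ is then chosen so that $u_\infty\ge 1-\eps_n/2>1-\eps_n$ on $\{x_1\le -R_n\}$. The interdependence of $\eps_n$, $R_n$ and the retreating half-space is the actual mechanism of the proof, and your proposal omits it entirely. The paper also multiplies by the cut-off $\chi_{R_0}(x_1)$ (supported on $\{x_1\le R_0+2R_J\}$) so that the sub-solution is trivially nonpositive, hence trivially dominated by $u>0$, once one moves $2R_J$ to the right of the half-space; without it you have to contend with the slowly decaying tail of $\phi$ on that region.

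A second, independent problem is your claim that the correction $L\underline{w}_{n,\rho}-L_{\R^N}\underline{w}_{n,\rho}$ near $K$ has a favourable sign ``by monotonicity of $\underline{w}_{n,\rho}$ in $x_1$.'' For the Euclidean distance the correction at $x$ is $-\int_K J(|x-y|)(\underline{w}(y)-\underline{w}(x))\,\mathrm{d}y$, and for $x$ just to the \emph{left} of $K$ (the side the front sweeps through) the points $y\in K$ with $J(|x-y|)\ne 0$ have $y_1>x_1$, so monotonicity gives $\underline{w}(y)>\underline{w}(x)$ and the correction is \emph{negative} — the wrong sign for a sub-solution. The paper never has to face this issue: by choosing $R_n\ge 2R_J$ with $K\subset\{x_1\ge -R_J\}$, the comparison half-space $\{x_1\le -R_n\}$ sits entirely outside the $R_J$-neighbourhood of $K$, so $L$ and $L_{\R^N}$ literally coincide there. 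In short, $\Gamma_{1,n}$ must be a large \emph{negative} number $-R_n$ (growing with $n$ so as to keep $u_\infty\ge 1-\eps_n/2$ on the half-space), not a large positive one, and the whole construction has to live to the left of the obstacle.
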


For the convenience of the reader, let us first introduce a few notations which will be extensively used in this subsection.
For any fixed $R_0\in\R$, we let $\chi_{R_0}\in C^2(\R,[0,1])$ be a cut-off function such that
$$ \chi_{R_0}\equiv0 \text{ in }\{x_1\geq R_0+2R_J\} \text{ and } \chi_{R_0}\equiv 1 \text{ in }\{x_1\leq R_0+R_J\}. $$
Then, we have the following result:
\begin{lemma}\label{LE:sup:intermediaire}
There exists $\alpha_0,\beta_0>0$ such that, for any $c'<c$, there exists $\eps_0>0$ with the property that, for all $0<\eps\leq\eps_0$, all $0\leq\alpha\leq\alpha_0$ and all $0\leq\beta\leq\beta_0$, there is some constant $\kappa=\kappa(\alpha)>0$ for which the function
$$ w_\rho^-(t,x):=(1-\eps)\hspace{0.1em}\phi\left(x_1+c't-\rho-\kappa\hspace{0.1em}\beta(1-e^{-\alpha t})\right)\chi_{R_0}(x_1)-\beta\hspace{0.1em}e^{-\alpha t}, $$
is a subsolution to \eqref{EQ:CONVOLUTION} in $[0,\infty)\times\{x_1\leq R_0\}$ for all $\rho\in\R$ and all $R_0\in\R$.
\end{lemma}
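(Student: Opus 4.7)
The plan is to mimic the proof of Lemma \ref{LE:SUPERSOL:LEVEL}, but with two new ingredients to handle: the cut-off $\chi_{R_0}$ must be shown to be harmless on $\{x_1 \le R_0\}$, and the gain coming from the slower speed $c' < c$ must be identified and exploited.

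The first observation is that $\chi_{R_0}$ plays no role on $\{x_1\le R_0\}$: there $\chi_{R_0}(x_1)=1$, and if $|x-y|\le R_J$ then $y_1\le x_1+R_J\le R_0+R_J$, hence $\chi_{R_0}(y_1)=1$ as well. Since $\mathrm{supp}(J_{\mathrm{rad}})\subset B_{R_J}$, this means $w_\rho^-$ behaves exactly as $(1-\eps)\phi(\xi)-\beta e^{-\alpha t}$ on $\{x_1\le R_0\}$, with $\xi:=x_1+c't-\rho-\kappa\beta(1-e^{-\alpha t})$. Using the travelling wave equation \eqref{C4} for $\phi$ and $\int_{\R^N} J_{\mathrm{rad}}=1$, a direct computation gives
\begin{align*}
\partial_t w_\rho^- - L_{\R^N}w_\rho^- - f(w_\rho^-) &= (1-\eps)(c'-c)\phi'(\xi) - (1-\eps)\kappa\beta\alpha e^{-\alpha t}\phi'(\xi) \\
&\quad + \alpha\beta e^{-\alpha t} + \Delta,
\end{align*}
where $\Delta:=(1-\eps)f(\phi(\xi))-f((1-\eps)\phi(\xi)-\beta e^{-\alpha t})$. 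Since $c'<c$ and $\phi'>0$, the term $(1-\eps)(c'-c)\phi'(\xi)$ is strictly negative, and this is the key new gain that was absent in Lemma \ref{LE:SUPERSOL:LEVEL}.

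I then analyse three regimes. Pick $s_0,s_1>0$ such that $f'\le-s_1$ on $[-2s_0,2s_0]\cup[1-2s_0,1+2s_0]$, after extending $f$ linearly outside $[0,1]$ as in Remark \ref{rem-f}; set $\alpha_0:=s_1/4$ and take $\beta_0$ small. Choose $A>0$ so that $\phi<s_0/2$ for $\xi<-A$ and $\phi>1-s_0/2$ for $\xi>A$, and put $\zeta:=\min_{|\xi|\le A}\phi'(\xi)>0$. In the extreme regime $\xi<-A$, a Taylor expansion of $f$ at $0$ combined with the mean value theorem yields, for $\eps,\beta$ small, $\Delta\le C\eps\phi(\xi)-(s_1/2)\beta e^{-\alpha t}$, whence $\Delta+\alpha\beta e^{-\alpha t}\le C\eps\phi(\xi)$ since $\alpha\le\alpha_0$. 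The exponential asymptotics in Lemma \ref{PHI:ASYM} provide a constant $C_0>0$ with $\phi(\xi)\le C_0\phi'(\xi)$ as $\xi\to-\infty$, so this positive remainder is absorbed into $(1-\eps)(c-c')\phi'(\xi)$ provided $\eps\le\eps_0$ with $\eps_0$ depending on $c-c'$. The extreme regime $\xi>A$ is analogous, using $f'(1)<0$ and the comparability of $1-\phi(\xi)$ with $\phi'(\xi)$.

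The middle regime $|\xi|\le A$ is where $\kappa$ enters. A first-order Taylor expansion of $f$ yields $|\Delta+\alpha\beta e^{-\alpha t}|\le C(\eps+\beta e^{-\alpha t})$ for a constant depending only on $f$ and $\alpha_0$. Since $\phi'(\xi)\ge\zeta$ on this range, the two gain terms are bounded below by $(1-\eps)(c-c')\zeta$ and $(1-\eps)\kappa\beta\alpha\zeta e^{-\alpha t}$ respectively. Choosing $\eps_0=\eps_0(c')$ so that $(1-\eps)(c-c')\zeta\ge 2C\eps$, and then $\kappa=\kappa(\alpha)$ so that $(1-\eps)\kappa\alpha\zeta\ge 2C$, closes the estimate. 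The order of the parameter choices---$\alpha_0,\beta_0$ universal, then $\eps_0$ depending on $c-c'$, then $\kappa$ depending on $\alpha$---exactly matches the statement. The main technical subtlety is the extreme regime: one must verify that the residual $C\eps\phi$ is absorbed \emph{uniformly} as $\xi\to-\infty$, which is precisely what the exponential asymptotics of Lemma \ref{PHI:ASYM} guarantee.
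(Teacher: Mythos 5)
Your proposal is correct and follows essentially the same route as the paper: the cut-off is neutralised on $\{x_1\le R_0\}$ because $\mathrm{supp}(J)\subset[0,R_J]$, the travelling-wave equation isolates the gain $(1-\eps)(c'-c)\phi'(\xi)$, and the three-regime split (with $\phi\lesssim\phi'$ near $-\infty$ to absorb the $O(\eps\phi)$ remainder, and $\kappa$ large to absorb the bounded remainder on $\{|\xi|\le A\}$) is exactly the paper's decomposition into an $\mathcal{I}(t,x)\le0$ estimate. The only minor over-specification is your invocation of the comparability of $1-\phi$ with $\phi'$ for $\xi>A$: there the term $-\eps f(w_\rho^-)$ is already nonpositive (since $w_\rho^->\theta$), so no absorption into $(c-c')\phi'$ is needed and the paper handles that case without it.
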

\begin{proof}
Let $\rho>0$ and $c'<c$ be arbitrary and set $\xi(t):=c't-\rho-\kappa\hspace{0.1em}\beta\hspace{0.1em}(1-e^{-\alpha t})$.
Let us extend $f$ linearly outside $[0,1]$ by the function $\widetilde{f}$ given by \eqref{extension:f}. For the sake of simplicity, let us still denote by $f$ this extension. Now, let $0<\beta_0<\min\{1-\theta,\theta\}/4$ be small enough so that
\begin{align}
\left\{
\begin{array}{cl}
\frac{5}{4}\hspace{0.1em}f'(0)\leq f'(s)\leq \frac{3}{4}\hspace{0.1em}f'(0) &\text{for }s\in[-4\beta_0, 4\beta_0], \vspace{3pt}\\
f'(s)\leq \frac{1}{2}\hspace{0.1em}f'(1) &\text{for }s\in[1-4\beta_0,1].
\end{array}
\right. \label{bc-eq-f}
\end{align}
As in the proof of Lemma~\ref{LE:SUPERSOL:LEVEL}, we may find some number $A>0$ such that
\begin{align}
\left\{
\begin{array}{ll}
\phi(z)\leq\beta_0/2 &\text{if }z<-A, \vspace{3pt}\\
\phi(z)\geq1-\beta_0/2&\text{if }z> A.
\end{array}
\right. \label{bc-eq-phi:A}
\end{align}
Now, let $(t,x)\in[0,\infty)\times\{x_1\leq R_0\}$ be arbitrary. A short computation shows that
$$ \partial_tw_\rho^-(t,x)=(1-\eps)(c'-\alpha\beta\kappa\hspace{0.1em}e^{-\alpha t})\hspace{0.1em}\phi'(x_1+\xi(t))+\alpha\beta\hspace{0.1em}e^{-\alpha t}. $$
Recalling \eqref{C4}, this rewrites
\begin{align*}
\partial_tw_\rho^-(t,x)&=(1-\eps)L_{\R^N}\phi(x_1+\xi(t))+(1-\eps)\hspace{0.1em}f(\phi(x_1+\xi(t))) \\
&\qquad +(1-\eps)(c'-c-\alpha\beta\kappa\hspace{0.1em}e^{-\alpha t})\hspace{0.1em}\phi'(x_1+\xi(t))+\alpha\beta\hspace{0.1em}e^{-\alpha t}.
\end{align*}
Since $x_1\leq R_0$, since $\mathrm{supp}(J)\subset[0,R_J]$, since $\chi_{R_0}\equiv 1$ in $\{x_1\leq R_0+R_J\}$ and since $L_{\R^N}(\phi+v)=L_{\R^N}\phi$ for all functions $v$ independent of $x$, we have $(1-\eps)L_{\R^N}\phi(x_1+\xi(t))=L_{\R^N}w_\rho^-(t,x)$ for all $x\in\{x_1\leq R_0\}$.
Therefore, we have
$$ \partial_tw_\rho^-(t,x)=L_{\R^N}w_\rho^-(t,x)+f(w_\rho^-(t,x))+\mathcal{I}(t,x) \text{ for all }x\in\{x_1\leq R_0\}, $$
where $\mathcal{I}(t,x)$ denotes the following expression
\begin{align*}
\mathcal{I}(t,x)&:=(1-\eps)(c'-c-\alpha\beta\kappa\hspace{0.1em}e^{-\alpha t})\hspace{0.1em}\phi'(x_1+\xi(t))+\alpha\beta\hspace{0.1em}e^{-\alpha t} \\
&\qquad+(1-\eps)\hspace{0.1em}f(\phi(x_1+\xi(t)))-f(w_\rho^-(t,x)).
\end{align*}
Hence, to prove that $w_\rho^-(t,x)$ is a subsolution to \eqref{EQ:CONVOLUTION} for all $(t,x)\in[0,\infty)\times\{x_1\leq R_0\}$ it suffices to show that $\mathcal{I}(t,x)\leq0$ for all $(t,x)\in[0,\infty)\times\{x_1\leq R_0\}$. To this end, we distinguish between three different cases.
\vskip 0.3cm

\noindent\emph{Case 1: $x_1+\xi(t)<-A$.}
\smallskip

Let us first observe that
\begin{align*}
(1-\eps)\hspace{0.1em}f(\phi(x_1+\xi(t)))-\!f(w_\rho^-(t,x))=(1-\eps)\left[f(\phi(x_1+\xi(t)))-f(w_\rho^-(t,x))\right]-\eps\hspace{0.1em}f(w_\rho^-(t,x)).
\end{align*}
Since $x_1+\xi(t)<-A$, we have $\phi(x_1+\xi(t))\leq\beta_0/2$ (by definition of $A$). Since, in addition, $t\geq0$ and $\eta\equiv1$ in $\{x_1\leq R_0\}$, we also have $-3\beta_0/2\leq w_\rho^-(t,x)\leq \beta_0/2$. Hence, recalling the definition of $w_\rho^-(t,x)$ and of $\beta_0$ (remember \eqref{bc-eq-f}), we obtain that
\begin{align}
f(\phi(x_1+\xi(t)))-f(w_\rho^-(t,x))&\leq \frac{3}{4}f'(0)\left(\eps\hspace{0.1em}\phi(x_1+\xi(t))+\beta\hspace{0.1em}e^{-\alpha t}\right). \label{fxi:cond1:cas}
\end{align}
Now, if $w_\rho^-(t,x)=\phi(x_1+\xi(t))-\beta\hspace{0.1em}e^{-\alpha t}\leq0$, then, since $f(s)=f'(0)\hspace{0.1em}s\geq0$ for all $s\leq0$ (by construction), we have $-\eps\hspace{0.1em}f(w_\rho^-(t,x))\leq0$. In this situation, since $c>c'$, $\phi>0$ and $\phi'>0$, it follows that
$$ \mathcal{I}(t,x)\leq \alpha\beta\hspace{0.1em}e^{-\alpha t}+\frac{3}{4}f'(0)\left(\eps\hspace{0.1em}\phi(x_1+\xi(t))+\beta\hspace{0.1em}e^{-\alpha t}\right)\leq \beta\hspace{0.1em}e^{-\alpha t}\left(\alpha+\frac{3}{4}f'(0)\right). $$
Therefore, $\mathcal{I}(t,x)\leq0$ provided $0\leq\alpha\leq -3f'(0)/4$.

On the other case, if $w_\rho^-(t,x)=\phi(x_1+\xi(t))-\beta\hspace{0.1em}e^{-\alpha t}>0$, then, since $\phi(x_1+\xi(t))\leq\beta_0/2$, we also have $w_\rho^-(t,x)\leq\beta_0/2$ and, by \eqref{bc-eq-f}, there holds
\begin{align}
-\eps\hspace{0.1em}f(w_\rho^-(t,x))=\eps\left[f(0)-f(w_\rho^-(t,x))\right]\leq -\eps\hspace{0.1em}\frac{5}{4}f'(0)\hspace{0.1em}w_\rho^-(t,x). \label{fxi:cond2:cas}
\end{align}
Since the right-hand side of \eqref{fxi:cond1:cas} (resp. \eqref{fxi:cond2:cas}) is nonpositive (resp. nonnegative), we have
\begin{align*}
(1-\eps)\hspace{0.1em}&f(\phi(x_1+\xi(t)))-\!f(w_\rho^-(t,x))\leq \frac{f'(0)}{2}\left(\eps\hspace{0.1em}\phi(x_1+\xi(t))+\beta\hspace{0.1em}e^{-\alpha t}\right)-\eps\hspace{0.1em}\frac{3}{2}f'(0)\hspace{0.1em}w_\rho^-(t,x) \\
&=-(1-\eps)\hspace{0.1em}\eps\hspace{0.1em}\phi(x_1+\xi(t))f'(0)+2\eps\hspace{0.1em}f'(0)\beta\hspace{0.1em}e^{-\alpha t}+\eps^2\frac{f'(0)}{2}\phi(x_1+\xi(t))+\frac{f'(0)}{2}\beta\hspace{0.1em}e^{-\alpha t} \\
&\leq -(1-\eps)\hspace{0.1em}\eps\hspace{0.1em}\phi(x_1+\xi(t))f'(0)+\frac{f'(0)}{2}\hspace{0.1em}\beta\hspace{0.1em} e^{-\alpha t}\left(1+4\eps\right).
\end{align*}
Plugging this in the definition of $\mathcal{I}(t,x)$, we obtain
\begin{align*}
\mathcal{I}(t,x)&\leq \beta\hspace{0.1em}e^{-\alpha t}\left(\frac{1+4\eps}{2}f'(0)+\alpha-\alpha\kappa\hspace{0.1em}(1-\eps)\hspace{0.1em}\phi'(x_1+\xi(t))\right) \\
&\qquad+(1-\eps)\Big((c'-c)\hspace{0.1em}\phi'(x_1+\xi(t))-\eps\hspace{0.05em}\phi(x_1+\xi(t))f'(0)\Big).
\end{align*}
Recalling \eqref{EST:PHI:NEG}, we know that there is a constant $\sigma_0>0$ such that $\phi(z)\leq \sigma_0\hspace{0.1em}\phi'(z)$ for all $z<0$. Hence, using that $c'<c$ and that $\phi'>0$, we obtain
\begin{align*}
\mathcal{I}(t,x)&\leq \beta\hspace{0.1em}e^{-\alpha t}\left(\frac{1+4\eps}{2}f'(0)+\alpha\right)+(1-\eps)\left(\frac{c'-c}{\sigma_0}-\eps f'(0)\right)\phi(x_1+\xi(t)).
\end{align*}
Therefore, $\mathcal{I}(t,x)\leq0$ provided $0\leq\alpha\leq-f'(0)/2$ and $0<\eps\leq\min\{1/2,(c'-c)/(\sigma_0f'(0))\}$.
\vskip 0.3cm

\noindent\emph{Case 2: $x_1+\xi(t)>-A$.}
\smallskip

In this situation, we have $\phi(x_1+\xi(t))\geq 1-\beta_0/2$ (by construction of $A$). Since $\beta_0/2<1-\theta$ (by construction of $\beta_0$), we further have that $f(\phi(x_1+\xi(t)))>0$. Hence, using \eqref{bc-eq-f}, we find that, for all $0<\eps\leq\beta_0$, there holds
\begin{align}
(1-\eps)f(\phi(x_1+\xi(t)))-f((1-\eps)\phi(x_1+\xi(t)))&\leq \eps\hspace{0.1em}\frac{f'(0)}{2}\hspace{0.1em}\phi(x_1+\xi(t))\leq0. \label{cas:x1:suppp1}
\end{align}
On the other hand, since $1-\beta_0/2\leq\phi(x_1+\xi(t))\leq 1$, since $0\leq\beta\leq\beta_0$ and since $t\geq0$, for all $0<\eps<\beta_0$, we have
$$w_\rho^-(t,x)=\phi(x_1+\xi(t))-\eps\hspace{0.1em}\phi(x_1+\xi(t))-\beta\hspace{0.1em}e^{-\alpha t}\geq 1-\frac{\beta_0}{2}-2\beta_0>1-4\beta_0.$$
Otherwise said, we have
$$1-4\beta_0\leq w_\rho^-(t,x)\leq(1-\eps)\hspace{0.1em}\phi(x_1+\xi(t))\leq1,$$ which, together with \eqref{bc-eq-f}, implies that, for all $0<\eps<\beta_0$, there holds
\begin{align}
f((1-\eps)\hspace{0.1em}\phi(x_1+\xi(t)))-f(w_\rho^-(t,x))\leq \beta\hspace{0.1em}\frac{f'(0)}{2}\hspace{0.1em}e^{-\alpha t}. \label{cas:x1:suppp2}
\end{align}
Plugging \eqref{cas:x1:suppp1} and \eqref{cas:x1:suppp2} in the definition of $\mathcal{I}(t,x)$, we obtain
$$ \mathcal{I}(t,x)\leq \beta\left(\alpha+\frac{f'(0)}{2}\right)e^{-\alpha t}. $$
Therefore, $\mathcal{I}(t,x)\leq0$ provided $0\leq\alpha\leq -f'(0)/2$ and $0<\eps\leq\beta_0$.
\vskip 0.3cm

\noindent\emph{Case 3: $-A\leq x_1+\xi(t)\leq A$.}
\smallskip

Let us decompose the last two terms in the definition of $\mathcal{I}(t,x)$ as follows
\begin{align*}
(1-\eps)\hspace{0.1em}f(\phi(x_1+\xi(t&)))-f(w_\rho^-(t,x))=(1-\eps)\Big(f(\phi(x_1+\xi(t)))-f((1-\eps)\phi(x_1+\xi(t)))\Big) \\
&+\Big(f((1-\eps)\phi(x_1+\xi(t)))-f(w_\rho^-(t,x))\Big)-\eps\hspace{0.1em}f((1-\eps)\phi(x_1+\xi(t))).
\end{align*}
Since $f\in C^{0,1}(\R)$ and since $0<\phi<1$, we then obtain the following estimate
\begin{align*}
(1-\eps)\hspace{0.1em}f(\phi(x_1+\xi(t)))-f(w_\rho^-(t,x))\leq [f]_{C^{0,1}(\R)}\Big(\eps(1-\eps)+\beta\hspace{0.1em}e^{-\alpha t}\Big)+\eps\hspace{0.1em}\|f\|_{L^\infty([0,1])}.
\end{align*}
Letting $\zeta:=\inf_{z\in[-A,A]}\phi'(z)>0$ and recalling the definition of $\mathcal{I}(t,x)$, we get
$$ \mathcal{I}(t,x)\leq \beta\hspace{0.1em}e^{-\alpha t}\left(\alpha+[f]_{C^{0,1}(\R)}-\zeta\alpha\kappa(1-\eps)\right)+(1-\eps)\left(\zeta(c'-c)+\eps\hspace{0.1em}[f]_{C^{0,1}(\R)}\right)+\eps\hspace{0.1em}\|f\|_{L^\infty([0,1])}. $$
Therefore, $\mathcal{I}(t,x)\leq0$ provided we choose $\eps$ and $\alpha$ such that
$$0<\eps\leq\zeta(c-c')/(2\|f\|_{L^\infty([0,1])}+2[f]_{C^{0,1}(\R)})\ \text{ and }\  \kappa\geq\kappa(\alpha):=(\alpha+[f]_{C^{0,1}(\R)})/(\zeta\alpha).$$

Summing up, we have shown that $w_\rho^-$ is a subsolution to \eqref{EQ:CONVOLUTION} in $[0,\infty)\times\{x_1\leq R_0\}$ provided that $0\leq\alpha\leq\alpha_0:=-f'(0)/2$, that $0\leq\beta\leq\beta_0$, that $\kappa\geq\kappa(\alpha)$ and that $0<\eps\leq\eps_0$, where $\eps_0>0$ depends only on $c$, $c'$, $f$, $\beta_0$ and $\phi$, as desired.
\end{proof}
\begin{remark}\label{RK:intermediaire}
The constants $\alpha_0$ and $\beta_0$ depend only on $f'$ and the constant $\kappa(\alpha)$ depends only on $f$, $\phi$ and $\alpha$. Moreover, by taking $c':=(1-1/n)c$ for $n\in\N\setminus\{0\}$, it can easily be seen from the proof that there exists $n_0\geq1$ such that $\eps_0$ reads
$$ \eps_0=\frac{\vartheta\hspace{0.05em}c}{n}\ \text{ where }\ \vartheta:=\frac{\min\{\sigma_0^{-1},\zeta\}}{2([f]_{C^{0,1}(\R)}+\|f\|_{L^\infty([0,1])})}, $$
for all $n\geq n_0$. Therefore, the function
$$ w_{n,\rho}^-(t,x):=\left(1-\frac{\vartheta c}{n}\right)\phi\left(x_1+\left(1-\frac{1}{n}\right)ct-\rho-\kappa(\alpha_0)\beta_0\hspace{0.1em}(1-e^{-\alpha_0 t})\right)\chi_{R_0}(x_1)-\beta_0\hspace{0.1em}e^{-\alpha_0t}, $$
is a subsolution to \eqref{EQ:CONVOLUTION} in $[0,\infty)\times\{x_1\leq R_0\}$ for all $n\geq n_0$, all $\rho\in\R$ and all $R_0\in\R$.
\end{remark}
\begin{remark}
Notice that the same proof also works for the classical problem \eqref{eqlocale}.
\end{remark}
We are now in position to prove Lemma~\ref{LE:intermediaire}.
\begin{proof}[Proof of Lemma~\ref{LE:intermediaire}]
Let $\lambda\in(0,1)$ be arbitrary. Since $K$ is compact, up to immaterial translations, we may assume, without loss of generality, that $K\subset\{x_1\geq-R_J\}$. Let $n\geq n_0$ and $\eps_n:=\vartheta c/n$, where $n_0\geq1$ and $\vartheta>0$ have the same meaning as in Remark~\ref{RK:intermediaire}. Let $n_\lambda\geq n_0$ be such that $1-\eps_n>\lambda$ for all $n\geq n_\lambda$. Since $u_\infty(x)\to1$ as $|x|\to\infty$, there exists then an increasing sequence $(R_n)_{n\geq n_0}\subset[2R_J,\infty)$ such that
$$ u_\infty(x)\geq 1-\frac{\eps_n}{2} \text{ for all }x_1\leq-R_n\text{ and all }n\geq n_0. $$
Let us now fix some $n\geq n_\lambda\,(\geq n_0)$. Since $\lim_{z\to +\infty}\phi(z)=1$, there is some $t_n\geq0$ such that
$$\phi(x_1+ct)\geq 1-\eps_n/2  \text{ for all } (t,x)\in[t_n,\infty)\times\{-R_n\leq x_1\leq -R_n+2R_J\}.$$
Since $u(t_n,x_1,x')\to\phi(x_1+ct_n)$ for all $-R_n\leq x_1\leq-R_n+2R_J$ as $|x'|\to\infty$ (by Proposition~\ref{PROP:HORIZON}), there exists $R_n'>0$ such that 
$$u(t_n,x)\geq \phi(x_1+ct_n)-\eps_n/2>1-\eps_n, $$
for all $-R_n\leq x_1\leq -R_n+2R_J$ and all $|x'|\geq R_n'$.
Since $\partial_tu>0$, this implies that 
$$u(t,x)\geq 1-\eps_n, $$
for all $t\geq t_n,$ all $|x'|\geq R_n'$ and all $-R_n\leq x_1\leq -R_n+2R_J$.
Since, in addition, $u(t,x)$ converges locally uniformly towards $u_\infty(x)$ as $t\to\infty$, we may then find some $t_n'\geq0$ such that, for all $|x'|\leq R_n'$ and all $-R_n\leq x_1\leq -R_n+2R_J$, there holds
$$u(t_n',x)\geq u_\infty(x)-\eps_n/2\geq 1-\eps_n. $$
Therefore, letting $t_n^*:=\max\{t_n,t_n'\}$, we have
$$u(t,x)\geq 1-\eps_n \text{ for all } (t,x)\in[t_n^*,\infty)\times\{-R_n\leq x_1\leq -R_n+2R_J\}.$$
In particular, for all $\rho\in\R$, we have
$$u(t,x)\geq w_{n,\rho}^-(t,x) \text{ for all } (t,x)\in[t_n^*,\infty)\times\{-R_n\leq x_1\leq -R_n+2R_J\}, $$
where $w_{n,\rho}^-(t,x)$ is as in Remark~\ref{RK:intermediaire} with $R_0:=-R_n$. But since $w_{n,\rho}^-(t,x)\leq 0\leq u(t,x)$ for all $x_1\geq -R_n+2R_J$ and all $t\geq0$ (recall that $\chi_{\text{-}\!\text{-}R_n}\equiv0$ in $\{x_1\geq-R_n+2R_J\}$), we have
\begin{align}
u(t,x)\geq w_{n,\rho}^-(t,x) \text{ for all }t\geq t_n^*,\text{ all }x_1\geq-R_n\text{ and all }\rho\in\R. \label{asympt:level:1}
\end{align}
Moreover, since $\lim_{z\to -\infty}\phi(z)=0$, there is some $\rho_n>0$ such that
$$ (1-\eps_n)\hspace{0.1em}\phi\left(-R_n+\left(1-\frac{1}{n}\right)ct_n^*-\rho_n-\kappa(\alpha_0)\beta_0\hspace{0.1em}(1-e^{-\alpha_0 t_n^*})\right)\leq\beta_0\hspace{0.1em}e^{-\alpha_0t_n^*}. $$
Therefore, $w_{n,\rho_n}^-(t_n^*,x)\leq 0$ for all $x_1\leq-R_n$. But since $u$ is positive, we obtain that
\begin{align}
u(t_n^*,x)\geq w_{n,\rho_n}^-(t_n^*,x) \text{ for all }x\in\overline{\Omega}. \label{asympt:level:2}
\end{align}
Collecting \eqref{asympt:level:1}, \eqref{asympt:level:2}, recalling Lemma~\ref{LE:sup:intermediaire}, Remark~\ref{RK:intermediaire}, that $K\subset\{x_1\geq-R_J\}$ and that $-R_n+R_J\leq-R_J$ (because $R_n\geq2R_J$, by construction of $R_n$), we have
\begin{align*}
\left\{
\begin{array}{rll}
\partial_tu\!\!\!&\geq L_{\R^N}u+f(u)& \mbox{in }[t_n^*,\infty)\times\{x_1\leq-R_n\}, \vspace{3pt}\\
\partial_tw_{n,\rho_n}^-\!\!\!&\leq L_{\R^N}w_{n,\rho_n}^-\!+f(w_{n,\rho_n}^-) & \mbox{in }[t_n^*,\infty)\times\{x_1\leq-R_n\}, \vspace{3pt}\\
u\!\!\!&\geq w_{n,\rho_n}^- & \mbox{in }[t_n^*,\infty)\times\{x_1\geq-R_n\}, \vspace{3pt}\\
u(t_n^*,\cdot)\!\!\!&\geq w_{n,\rho_n}^-(t_n^*,\cdot) & \mbox{in }\overline{\Omega}.
\end{array}
\right.
\end{align*}
By the comparison principle Lemma~\ref{LE:COMPARISON} (whose application is licit as explained in Remark~\ref{RE:COMPARISON}), we deduce that $u(t,x)\geq w_{n,\rho_n}(t,x)$
for all $(t,x)\in[t_n^*,\infty)\times\overline{\Omega}$.

Now, since $n\geq n_\lambda$, we may find some $\tau_n\geq t_n^*$ large enough so that $\lambda+\beta_0\hspace{0.1em}e^{-\alpha_0 \tau_n}< 1-\eps_n$ (by construction of $n_\lambda$). Since we also have $\varkappa_n(\lambda):=(\lambda+\beta_0\hspace{0.1em}e^{-\alpha_0 \tau_n})/(1-\eps_n)>0$, the number $\Theta_n(\lambda):=\phi^{-1}(\varkappa_n(\lambda))$ is well-defined.
Now, let $\Gamma_{n}(\lambda)$ and $T_n$ be given by
$$ \Gamma_{n}(\lambda):=\rho_n+\beta_0\kappa(\alpha_0)+\Theta_n(\lambda)\ \text{ and }\ T_n:=\max\left\{\tau_n,\frac{\Gamma_n(\lambda)+R_n}{c(1-1/n)}\right\}. $$
Suppose that $t\in[T_n,\infty)$ and that
$$ x\in\left\{x\in\overline{\Omega}\hspace{0.1em};\hspace{0.1em}-R_n\geq x_1\geq \Gamma_{n}(\lambda)-c\left(1-\frac{1}{n}\right)t\right\}. $$
Then, by construction, we have that
\begin{align*}
u(t,x)\geq -\beta_0\hspace{0.1em}e^{-\alpha_0 t}+(1-\eps_n)\hspace{0.1em}\phi\left(\Theta_n(\lambda)\right)\geq \lambda+\beta_0\hspace{0.1em}e^{-\alpha_0 \tau_n}-\beta_0\hspace{0.1em}e^{-\alpha_0 t}\geq \lambda.
\end{align*}
Therefore, $x\in E_\lambda(t)$ for all $t\geq T_n$, which thereby proves Lemma~\ref{LE:intermediaire}.
\end{proof}

\subsection{Conclusion: the super-level sets move exactly at speed $c$}
This subsection is devoted to the proof of Proposition~\ref{PROP:SUPERLEVEL}(ii). We will rely on some estimates obtained in the previous section together with a contradiction argument. So let us proceed.

\begin{proof}[Proof of Proposition~\ref{PROP:SUPERLEVEL}(ii)]
Let us extend $f$ linearly outside $[0,1]$ by the function $\widetilde{f}$ given by \eqref{extension:f}. For the sake of simplicity, let us still denote by $f$ this extension. Now, let $0<\beta_0<\min\{1-\theta,\theta\}/4$ and $A>0$ be as in the previous section, namely as in \eqref{bc-eq-f}-\eqref{bc-eq-phi:A}.
Let us first remark that the arguments in the proof of Lemma~\ref{LE:intermediaire} also yield the existence of some $\eps_*, t_\eps, \rho_\eps, R_\eps>0$ such that, for all $0<\eps\leq\eps_*$, all $t\geq t_\eps$ and all $x\in\overline{\Omega}$, there holds
$$ u(t,x)\geq w_{\eps}^-(t,x):=\left(1-\frac{\eps}{4}\right)\phi\left(x_1+\frac{ct}{2}-\rho_\eps-\kappa(\alpha_0)\hspace{0.1em}(1-e^{-\alpha_0 t})\right)\chi_{\text{-}\!\text{-}R_\eps}(x_1)-\beta_0\hspace{0.1em}e^{-\alpha_0 t}, $$
where $\chi_{\text{-}\!\text{-}R_\eps}$ has the same meaning as in the previous section.

Now, since $\lim_{z\to+\infty}\phi(z)=1$ and $\phi'>0$, there exists $\widetilde{R}_\eps>0$ such that $\phi(z)\geq1-\eps/4$ for all $z\geq\widetilde{R}_\eps$.
Moreover, if we let $t_*:=\max\{t_\eps,t_\eps'\}$, where $t_\eps'\geq0$ is such that $\beta_0\hspace{0.1em}e^{-\alpha_0t_\eps'}\leq\eps^2/16$ and $\widetilde{R}_\eps-ct_\eps'/4+\rho_\eps+\kappa(\alpha_0)<-R_\eps$, we have
\begin{align*}
w_\eps^-(t,x)> \left(1-\frac{\eps}{4}\right)\phi\left(\widetilde{R}_\eps+\frac{c}{4}\hspace{0.1em}(t-t_*)\right)-\beta_0\hspace{0.1em}e^{-\alpha_0 t}\geq \left(1-\frac{\eps}{4}\right)^2-\frac{\eps^2}{16}=1-\frac{\eps}{2},
\end{align*}
for all $t\geq t_*$ and all $x\in\{-R_\eps-ct/4<x_1<-R_\eps+R_J\}$. Consequently, we have
\begin{align}
u(t,x)>1-\frac{\eps}{2} \text{ for all }(t,x)\in[t_*,\infty)\times\left\{-R_\eps-\frac{ct}{4}<x_1<-R_\eps+R_J\right\}. \label{bc-eq-fe2}
\end{align}
Now, let $\eps_0:=\min\{\eps_*,\beta_0/4\}$ and, for all $\sigma>0$, all $\rho>0$ and all $\kappa>0$, let
$$ \widetilde{w}_{\sigma,\rho}(t,x):=\left(1-\eps_0\hspace{0.1em}e^{\sigma(x_1+R_{\eps_0})}\right)\phi\left(x_1+ct-\rho-2\hspace{0.05em}\eps_0\hspace{0.05em}\kappa\left(1-e^{-\frac{\sigma ct}{4}}\right)\right)-2\hspace{0.05em}\eps_0\hspace{0.1em}e^{-\frac{\sigma ct}{4}}, $$
where $R_{\eps_0}$ has the same meaning as $R_\eps$ with $\eps_0$ instead of $\eps$. We claim that
\begin{claim}\label{Claim:super}
There exists $\kappa_*>0$, $\sigma_*>0$ and $\rho_{\sigma_*}>0$ such that
$$ u(t,x)\geq \widetilde{w}_{\sigma_*,\rho_{\sigma_*}}(t,x) \text{ for all }(t,x)\in[t_*,\infty)\times\overline{\Omega}. $$
\end{claim}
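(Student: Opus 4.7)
The plan is to prove that $\widetilde{w}_{\sigma_*,\rho_{\sigma_*}}$ is a subsolution of \eqref{EQ:CONVOLUTION} on a suitable half-space, to verify that $u$ dominates $\widetilde{w}_{\sigma_*,\rho_{\sigma_*}}$ on the complementary strip of width $R_J$ (by means of \eqref{bc-eq-fe2}) and at the initial time $t=t_*$, and finally to invoke the half-space parabolic comparison principle (Lemma~\ref{LE:COMPARISON} together with Remark~\ref{RE:COMPARISON}). The normalisation $K\subset\{x_1\ge-R_J\}$ imposed at the start of the proof of Proposition~\ref{PROP:SUPERLEVEL}(i) ensures that $u$ satisfies the purely convolutional equation \eqref{EQ:CONVOLUTION} on $\{x_1\le-R_J\}$. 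The half-space on which I perform the comparison will be $H:=\{x_1\le-R_{\eps_0}-R_J\}$, with complementary strip $S:=\{-R_{\eps_0}-R_J<x_1\le-R_{\eps_0}\}$.

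I decompose $\widetilde{w}_{\sigma,\rho}(t,x)=(1-\eta(x_1))\phi(z)-\beta(t)$ with $\eta(x_1):=\eps_0 e^{\sigma(x_1+R_{\eps_0})}$, $z:=x_1+ct-\xi(t)$, $\xi(t):=\rho+2\eps_0\kappa(1-e^{-\sigma ct/4})$, $\beta(t):=2\eps_0 e^{-\sigma ct/4}$. Using the identity $L_{\R^N}\phi(z)=c\phi'(z)-f(\phi(z))$ (which follows from \eqref{couplephic} when $\phi$ is seen as a function on $\R^N$ depending only on $x_1$), a direct computation yields
\begin{align*}
\partial_t\widetilde{w}-L_{\R^N}\widetilde{w}-f(\widetilde{w}) &= -(1-\eta)\phi'(z)\tfrac{\eps_0\kappa\sigma c}{2}e^{-\sigma ct/4}+\tfrac{\sigma c\eps_0}{2}e^{-\sigma ct/4}\\
&\quad +(1-\eta)f(\phi(z))-f(\widetilde{w})-\eta(x_1)\mathcal{Q}(t,x),
\end{align*}
with $\mathcal{Q}(t,x):=\int_{\R^N}J_{\text{rad}}(|h|)(1-e^{\sigma h_1})\phi(x_1+h_1+ct-\xi(t))\,\mathrm{d}h$ gathering the correction coming from the non-constant factor $(1-\eta)$. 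Evenness of $J_{\text{rad}}$ in $h_1$ and the compact support of $J$ yield the uniform estimate $|\mathcal{Q}(t,x)|\le C\sigma\,\phi(z+R_J)$, with a sharper $O(\sigma^2)$ contribution once the constant part $\phi(z)$ is factored out.

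The sign analysis on $H$ mimics the three-regime partition $z<-A$, $z>A$, $|z|\le A$ of Lemma~\ref{LE:sup:intermediaire}. In the extreme regimes, the Taylor expansions \eqref{bc-eq-f}--\eqref{bc-eq-phi:A} of $f$ near $0$ and $1$ produce a negative contribution of order $\beta(t)$ from $(1-\eta)f(\phi(z))-f(\widetilde{w})$, which dominates the term $\tfrac{\sigma c\eps_0}{2}e^{-\sigma ct/4}$ provided $\sigma_*$ is small; the cross term $\eta\mathcal{Q}$ is controlled by the observation that, in these regimes, the constraint on $z$ forces $x_1\sim-ct$ for large $t$, so $\eta(x_1)$ decays like $e^{-\sigma ct}$, much faster than $\beta(t)$. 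In the middle regime $|z|\le A$, the dissipative term $-(1-\eta)\phi'(z)\tfrac{\eps_0\kappa\sigma c}{2}e^{-\sigma ct/4}$ has size at least $\tfrac{\zeta\eps_0\kappa\sigma c}{4}e^{-\sigma ct/4}$ (with $\zeta:=\inf_{[-A,A]}\phi'>0$ and $1-\eta\ge 1/2$ on $H$), and absorbs everything else once $\kappa=\kappa_*(\sigma_*)$ is large enough. For the lateral condition on $S$, under the further smallness $\sigma_*R_J\le\log 2$ one has $\eta\ge\eps_0/2$ on $S$, so $\widetilde{w}\le 1-\eps_0/2$; enlarging $t_*$ so that $ct_*/4\ge R_J$, the strip $S$ lies inside the strip of \eqref{bc-eq-fe2} for every $t\ge t_*$, giving $u>1-\eps_0/2\ge\widetilde{w}$ on $S$. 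The initial condition on $H$ is secured by choosing $\rho_{\sigma_*}$ large enough to make $\widetilde{w}_{\sigma_*,\rho_{\sigma_*}}(t_*,\cdot)\le 0$ throughout $H$ (possible because $H$ is bounded above and $\phi(-\infty)=0$). Outside $\{x_1\le-R_{\eps_0}\}$, the Claim follows either from $\widetilde{w}\le 0$ (as soon as $\eta\ge 1$) or from a further application of \eqref{bc-eq-fe2} on the strip $\{-R_{\eps_0}<x_1\le-R_{\eps_0}+R_J\}$, treating any remaining intermediate zone by the same arguments used to construct \eqref{bc-eq-fe2} itself.

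The main obstacle is the delicate interdependent calibration of $\sigma_*$, $\kappa_*$ and $\rho_{\sigma_*}$ (and the possible enlargement of $t_*$): $\sigma_*$ must simultaneously be small enough to make $\mathcal{Q}$ negligible in the sign analysis and to secure both $\sigma_*R_J\le\log 2$ and the inclusion of $S$ inside the moving strip of \eqref{bc-eq-fe2}; $\kappa_*$ depends on $\sigma_*$ through the middle-regime dissipative margin; and $\rho_{\sigma_*}$ is chosen last for the initial data. The regime-by-regime tracking of the interplay between $\eta(x_1)$, $\phi(z+R_J)$ and $\beta(t)$ needed to dominate $\eta\mathcal{Q}$ in each of the three $z$-regimes constitutes the technical heart of the argument.
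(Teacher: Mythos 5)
The paper does not prove Claim~\ref{Claim:super} by exhibiting $\widetilde{w}_{\sigma,\rho_\sigma}$ as a subsolution on a fixed half-space and then invoking the comparison principle, as you propose. It runs a touching-point (``sliding'') argument instead: assuming a first time $\overline{t}$ at which $u\ge\widetilde{w}_{\sigma,\rho_\sigma}$ fails, it produces a contact point $\overline{x}$ with $u(\overline{t},\overline{x})=\widetilde{w}_{\sigma,\rho_\sigma}(\overline{t},\overline{x})$ and, using \eqref{bc-eq-fe2}, \eqref{eq-ref-1} and \eqref{HYP:sur:Rj}, deduces the crucial pointwise constraint $\overline{x}_1\le-R_{\eps_0}-c\overline{t}/4$, whence $e^{\sigma(\overline{x}_1+R_{\eps_0})}\le e^{-\alpha\overline{t}}$ with constant~$1$, \emph{independently of} $\rho$ and $\kappa$. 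The sign of $\mathcal{I}(\overline{t},\overline{x})$ then needs to be controlled \emph{only at that point}, where this time-moving constraint is freely available; it is what closes Cases 1 and 2 of the paper's computation.

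Your direct approach discards that structural information and this creates a genuine gap. On the static half-space $H=\{x_1\le-R_{\eps_0}-R_J\}$ the only uniform bound is $e^{\sigma(x_1+R_{\eps_0})}\le e^{-\sigma R_J}$, which is of order~$1$ once you impose $\sigma_*R_J\le\log 2$ — nowhere near the $e^{-\alpha t}$ decay that Cases 1 and 2 require. Your substitute, ``the constraint on $z$ forces $x_1\sim-ct$ for large $t$, so $\eta(x_1)$ decays like $e^{-\sigma ct}$,'' is in fact wrong in the regime $z>A$: there the constraint $z>A$ gives only the \emph{lower} bound $x_1>A-ct+\xi(t)$, so $x_1$ may be arbitrarily close to $-R_{\eps_0}-R_J$ at any time and $\eta(x_1)$ need not decay at all; that regime happens to close without any such decay, by the smallness of $\sigma$ alone, as in the paper's Case~3. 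In the regimes $z<-A$ and $|z|\le A$ the $z$-constraint does give $x_1\le\pm A-ct+\xi(t)$ and thus $\eta(x_1)\lesssim e^{\sigma(\rho+2\eps_0\kappa)}e^{-\sigma ct}$, the right \emph{rate} but with a prefactor $e^{\sigma(\rho+2\eps_0\kappa)}$. This prefactor creates a circularity you acknowledge but do not resolve: $\rho$ is forced to be large by the initial condition on $H$ at time $t_*$, $t_*$ is enlarged to $\ge 4R_J/c$ for the lateral condition on $S$, and then the middle-regime dissipative estimate needs $\kappa_*$ large enough to absorb a quantity scaling like $e^{\sigma\rho}e^{-3\alpha t_*}$, with $\kappa$ feeding back into $\xi$ and hence into $\rho$. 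That calibration is precisely the missing step; it is not clear it can be closed, whereas the paper's touching-point scheme avoids it entirely because the key estimate on $e^{\sigma(\overline{x}_1+R_{\eps_0})}$ is $\rho$-free.
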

Note that by proving Claim~\ref{Claim:super}, we end the proof of Proposition~\ref{PROP:SUPERLEVEL}(ii). To see this, fix some $\lambda\in(0,1)$. Set $\varrho:=(1-\lambda)/2$ and $R_\lambda:=\phi^{-1}(\lambda+\varrho)$. Also, let us set
$$
\Gamma_1:=-R_{\eps_0}-\displaystyle\frac{1}{\sigma_*}\log\left(\frac{\varrho}{2\eps_0(\lambda+\varrho)}\right)\ \text{ and }\ \Gamma_2:=R_\lambda+\rho_{\sigma_*}+2\hspace{0.05em}\eps_0\hspace{0.05em}\kappa_*.
$$
Let $t\geq \max\{t_*,(\Gamma_2-\Gamma_1)/c\}$ and let $x\in\left\{x\in\overline{\Omega}\hspace{0.1em};\ \Gamma_1\geq x_1\geq \Gamma_2-ct\right\}$. Then, we have
\begin{align*} \widetilde{w}_{\sigma_*,\rho_{\sigma_*}}(t,x)&\geq \frac{2\lambda+\varrho}{2(\lambda+\varrho)}\,\phi(R_\lambda)-2\hspace{0.05em}\eps_0\hspace{0.1em}e^{-\frac{\sigma_* ct}{4}}=\lambda+\frac{\varrho}{2}-2\hspace{0.05em}\eps_0\hspace{0.1em}e^{-\frac{\sigma_* ct}{4}}.
\end{align*}
If also $t\geq t_\lambda:=\max\{t_*,(\Gamma_2-\Gamma_1)/c,4(\sigma_* c)^{-1}\log(4\eps_0/\varrho)\}$, then, by Claim~\ref{Claim:super}, we have
$$ u(t,x)\geq \widetilde{w}_{\sigma_*,\rho_{\sigma_*}}(t,x)\geq \lambda+\frac{\varrho}{2}-2\hspace{0.05em}\eps_0\hspace{0.1em}e^{-\frac{\sigma_* ct}{4}}\geq\lambda. $$
Therefore, we obtain that $\left\{x\in\overline{\Omega}\hspace{0.1em};\ \Gamma_1\geq x_1\geq \Gamma_2-ct\right\}\subset E_\lambda(t)$ for all $t\geq t_\lambda$.
\end{proof}
To complete the proof of Proposition~\ref{PROP:SUPERLEVEL}(ii), it remains to establish Claim~\ref{Claim:super}.
\begin{proof}[Proof of Claim~\ref{Claim:super}]
Define
\begin{align}
M(J):=\int_{\R}J_1(z_1)\hspace{0.05em}|z_1|\hspace{0.1em}e^{|z_1|}\hspace{0.1em}\mathrm{d}z_1. \label{DE:MJ}
\end{align}
Moreover, set
\begin{align*}
\alpha:=\frac{\sigma c}{4} \text{ and } \kappa_*:=\frac{4\hspace{0.1em}\zeta^{-1}}{c(1-\eps_0)}\hspace{0.1em}\left(M(J)+\frac{c}{2}+\frac{3\hspace{0.1em}[f]_{C^{0,1}(\R)}}{\sigma}+\frac{\|f\|_{L^\infty([0,1])}}{\sigma}\right),
\end{align*}
where $\zeta:=\inf_{z\in[-A,A]}\phi'(z)$ (remember that $A$ is given by \eqref{bc-eq-phi:A}) and $\sigma$ is given by
$$ \sigma:= \min\left\{1,\frac{|f'(0)|}{2\hspace{0.1em}M(J)+c},\frac{|f'(1)|}{c},\frac{|f'(1)|(1-\frac{\beta_0}{2})}{4\hspace{0.1em}M(J)}\right\}, $$
Recall that $R_J$ is an arbitrary constant such that $\mathrm{supp}(J)\subset[0,R_J]$, hence we have the freedom to chose $R_J$ arbitrarily large. So even if it means increasing $R_J$ (which is always possible and which does not impact the value of the constants involved in the definition of $\sigma$ and $\eps_0$), we may always assume that
\begin{align}
R_J\geq\frac{1}{\sigma}\hspace{0.1em}\log\left(\frac{1}{\eps_0}\right). \label{HYP:sur:Rj}
\end{align}
Furthermore, since $K$ is compact, up to immaterial translations, we may assume, without loss of generality, that $K\subset\{x_1\geq-R_J\}$. Even if it means increasing $R_{\eps_0}$ (and, therefore, increasing $t_*$), we may further assume that $K\subset\{x_1\geq -R_{\eps_0}+2R_J\}$ (notice that this does not affect the validity of any of the previous estimates).

Now,  since $u(t_*,x)>0$ and since $\lim_{x_1\to\infty}u(t_*,x)= 1$ (by Proposition~\ref{PROP:LIMITE:X1}), thanks to following asymptotic behaviour of $\phi$, namely
\begin{align*}
&\lim_{x_1\to-\infty}\phi(x_1+ct_*-\rho)-2\hspace{0.05em}\eps_0\hspace{0.1em}e^{-\alpha t_*}=-2\hspace{0.05em}\eps_0\hspace{0.1em}e^{-\alpha t_*}<0,\\
&\lim_{x_1\to+\infty}\phi(x_1+ct_*-\rho)-2\hspace{0.05em}\eps_0\hspace{0.1em}e^{-\alpha t_*}=1-2\hspace{0.05em}\eps_0\hspace{0.1em}e^{-\alpha t_*}<1,
\end{align*}
we may find some $\rho_{\sigma}>0$ such that
$$ u(t_*,x)\geq \phi(x_1+ct_*-\rho_{\sigma})-2\hspace{0.05em}\eps_0\hspace{0.1em}e^{-\alpha t_*}>\widetilde{w}_{\sigma,\rho_{\sigma}}(t_*,x) \text{ for all }x\in\overline{\Omega}. $$
The goal is to show that this inequality remains true for all $t\geq t_*$. Thanks to the uniform continuity of $u$ and $\phi$, there exists $t_0>t_*$ such that
$$ u(t,x)>\widetilde{w}_{\sigma,\rho_{\sigma}}(t,x) \text{ for all } (t,x)\in[t_*,t_0]\times\overline{\Omega}. $$
Observe also that
\begin{align}
u(t,x)>0\geq\widetilde{w}_{\sigma,\rho_{\sigma}}(t,x) \text{ for all }(t,x)\in[t_*,\infty)\times\left\{x_1\geq-R_{\eps_0}+\frac{1}{\sigma}\hspace{0.1em}\log\left(\frac{1}{\eps_0}\right)\right\}. \label{eq-ref-1}
\end{align}
Now, let us define the following set
$$ E:=\Big\{t\geq t_*\text{ such that }u(t,x)< \widetilde{w}_{\sigma,\rho_\sigma}(t,x)\text{ for some }x\in\overline{\Omega}\Big\}. $$
If we can prove that $E=\emptyset$, then Claim~\ref{Claim:super} will automatically follow. So let us assume, by contradiction, that
$E\neq\emptyset$ and  set $\overline{t}:=\inf E<+\infty$.
Readily, we observe that $\overline{t}\geq t_0>t_*$.

Next, we claim that
\begin{claim}
There exists a point $\overline{x}\in\overline{\Omega}$ such that $u(\overline{t},\overline{x})=\widetilde{w}_{\sigma,\rho_\sigma}(\overline{t},\overline{x})$.
\end{claim}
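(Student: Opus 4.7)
The plan is to argue by contradiction, combining the continuity in $(t,x)$ of both $u$ and $\widetilde{w}_{\sigma,\rho_\sigma}$ with the definition $\overline{t}=\inf E$. Since $u(t,\cdot)\geq\widetilde{w}_{\sigma,\rho_\sigma}(t,\cdot)$ on $\overline{\Omega}$ for every $t\in[t_*,\overline{t})$ (as these $t$ lie outside $E$), letting $t\nearrow\overline{t}$ and invoking that $u\in C^2(\R,C^{0,\alpha}(\overline{\Omega}))$ (by Theorem~\ref{TH:EXIST:ENTIRE}) together with the continuity of $\widetilde{w}_{\sigma,\rho_\sigma}$ will give $u(\overline{t},\cdot)\geq\widetilde{w}_{\sigma,\rho_\sigma}(\overline{t},\cdot)$ on $\overline{\Omega}$. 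On the other hand, since $\overline{t}\in\overline{E}$, there must exist sequences $(t_n)_{n\geq 1}\subset E$ with $t_n\to\overline{t}$ and corresponding points $(x_n)_{n\geq 1}\subset\overline{\Omega}$ satisfying $u(t_n,x_n)<\widetilde{w}_{\sigma,\rho_\sigma}(t_n,x_n)$. The whole argument will then reduce to showing that $(x_n)$ admits a bounded, hence convergent, subsequence: if $x_n\to\overline{x}\in\overline{\Omega}$, joint continuity of $u$ and $\widetilde{w}_{\sigma,\rho_\sigma}$ will give $u(\overline{t},\overline{x})\leq\widetilde{w}_{\sigma,\rho_\sigma}(\overline{t},\overline{x})$, which combined with the reverse inequality yields equality at $\overline{x}$.

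The boundedness of $(x_n)$ will be established coordinate by coordinate. For the first coordinate, \eqref{eq-ref-1} immediately forces $x_{n,1}<-R_{\eps_0}+\sigma^{-1}\log(1/\eps_0)$; and the asymptotic behaviour $\widetilde{w}_{\sigma,\rho_\sigma}(t,x)\to-2\eps_0\hspace{0.05em}e^{-\alpha t}<0$ as $x_1\to-\infty$ (uniform for $t$ near $\overline{t}$, by the explicit formula), together with $u\geq 0$, will provide a lower bound on $x_{n,1}$.

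The main obstacle will be to bound the transversal component $x_n':=(x_{n,2},\dots,x_{n,N})$, which requires a quantitative use of Proposition~\ref{PROP:HORIZON}. The plan is to first derive, in the already-established bounded range of $x_{n,1}$, the crude pointwise upper bound $\widetilde{w}_{\sigma,\rho_\sigma}(t,x)\leq\phi(x_1+ct)-2\eps_0\hspace{0.05em}e^{-\alpha t}$, which relies on $\phi<1$, the monotonicity of $\phi$, and the non-negativity of $\rho_\sigma+2\eps_0\hspace{0.05em}\kappa_*(1-e^{-\alpha t})$. Supposing, for contradiction, that $|x_n'|\to\infty$ along a subsequence, Proposition~\ref{PROP:HORIZON} will give $u(t_n,x_n)-\phi(x_{n,1}+ct_n)\to 0$, the convergence being locally uniform in $(t,x_1)$ hence valid on the compact range determined in the previous step. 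Combined with the above estimate, this should yield $u(t_n,x_n)-\widetilde{w}_{\sigma,\rho_\sigma}(t_n,x_n)\geq 2\eps_0\hspace{0.05em}e^{-\alpha t_n}-o(1)>0$ for $n$ large, contradicting the choice of $x_n$. The claim will then follow.
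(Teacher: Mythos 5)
Your proposal is correct and follows essentially the same strategy as the paper: take a minimising sequence $(t_n,x_n)$ with $t_n\in E$, $t_n\to\overline t$, $u(t_n,x_n)<\widetilde w_{\sigma,\rho_\sigma}(t_n,x_n)$; bound $x_{1,n}$ from above using \eqref{eq-ref-1} and from below using $\widetilde w_{\sigma,\rho_\sigma}\to -2\eps_0 e^{-\alpha t}<0$ as $x_1\to-\infty$ against $u>0$; rule out $|x_n'|\to\infty$ via Proposition~\ref{PROP:HORIZON} combined with the pointwise bound $\widetilde w_{\sigma,\rho_\sigma}(t,x)\le\phi(x_1+ct)-2\eps_0 e^{-\alpha t}$; and finally pass to the limit along a convergent subsequence using continuity. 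The only cosmetic difference is that you explicitly record the inequality $u(\overline t,\cdot)\ge\widetilde w_{\sigma,\rho_\sigma}(\overline t,\cdot)$ before extracting the touching point, while the paper invokes it implicitly from the definition of $\overline t$ at the end; this is a presentational choice and does not change the argument.
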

\begin{proof}  Let $(t_n)_{n\in\N}\subset E$ be such that $t_n\to\overline{t}$ as $n\to\infty$. Then, for all $n\in\N$, there is some point $x_n\in\overline{\Omega}$ such that $u(t_n,x_n)\leq \widetilde{w}_{\sigma,\rho_n}(t_n,x_n)$. We claim that the sequence $(x_n)_{n\in\N}$ stay in a compact set.
If not, then $|x_n|\to\infty$ as $n\to\infty$.
Since
$$\lim_{x_1\to-\infty}\widetilde{w}_{\sigma,\rho_\sigma}(t,x)=-2\hspace{0.05em}\eps_0\hspace{0.1em}e^{-\alpha t}<0,$$
and since (by \eqref{eq-ref-1}) we have 
$$u(t,x)>\widetilde{w}_{\sigma,\rho_\sigma}(t,x) \text{ for all } (t,x)\in [t_*,\infty)\times\left\{x_1\geq-R_{\eps_0}+\frac{1}{\sigma}\hspace{0.1em}\log\left(\frac{1}{\eps_0}\right)\right\},$$
we must have that $-R'\leq x_{1,n}< -R_{\eps_0}+\sigma^{-1}\log(1/\eps_0)$ for some $R'>0$. Hence, we must have $|x_n'|\to\infty$. However, by Proposition~\ref{PROP:HORIZON}, we have $|u(t_n,x_{1,n},x_n')-\phi(x_{1,n}+ct_n)|\to0$ as $n\to\infty$, but
$$\phi(x_{1,n}+ct_n) >2\hspace{0.05em}\eps_0\hspace{0.1em}e^{-\alpha t_n}+\widetilde{w}_{\sigma,\rho_\sigma}(t_n,x_n),$$
and so we have
$$ \limsup_{n\to\infty}\hspace{0.1em}u(t_n,x_n)\geq 2\hspace{0.05em}\eps_0\hspace{0.1em}e^{-\alpha\overline{t}}+\limsup_{n\to\infty}\hspace{0.1em}\widetilde{w}_{\sigma,\rho_\sigma}(t_n,x_n)>\limsup_{n\to\infty}\hspace{0.1em}\widetilde{w}_{\sigma,\rho_\sigma}(t_n,x_n), $$
a contradiction. Therefore, $(x_n)_{n\in\N}$ cannot be unbounded and stays in a compact set.

As a consequence, we may find a subsequence, still denoted by $x_n$, which converges towards some $\overline{x}\in\overline{\Omega}$. By \eqref{eq-ref-1}, we deduce that $\overline{x}_{1}<-R_{\eps_0}+\sigma^{-1}\log(1/\eps_0)\leq-R_{\eps_0}+R_J$.

In addition, we have $u(\overline{t},\overline{x})\leq\widetilde{w}_{\sigma,\rho_\sigma}(\overline{t},\overline{x})$ and $u(t,x)>\widetilde{w}_{\sigma,\rho_\sigma}(t,x)$ for all $t<\overline{t}$ and all $x\in\overline{\Omega}$ (since otherwise this would contradict the definition of $\overline{t}$). In fact, the latter implies that $u(\overline{t},\overline{x})=\widetilde{w}_{\sigma,\rho_\sigma}(\overline{t},\overline{x})$ (because $u$ and $\widetilde{w}_{\sigma,\rho_\sigma}$ are both continuous).
\end{proof}

By definition of $\overline{t}$ and $\overline{x}$, the function $z(\overline{t},x):= u(\overline{t}, x)-w_{\sigma,\rho_\sigma}(\overline{t},x)\ge 0$ achieves its global minimum at $\overline{x}$.
Hence,
$$ L_{\R^N}z(\overline{t},\overline{x})>0,\ z(\overline{t},\overline{x})=0 \text{ and } \partial_t z(\overline{t},\overline{x})\le 0. $$
(Remember that $K\subset\{x_1\geq-R_{\eps_0}+2R_J\}$ and that $\overline{x}_1<-R_{\eps_0}+R_J$.) Thus, we deduce that
\begin{align*}
\partial_t\widetilde{w}_{\sigma,\rho_\sigma}(\overline{t},\overline{x})-L_{\R^N}\widetilde{w}_{\sigma,\rho_\sigma}(\overline{t},\overline{x})-f(\widetilde{w}_{\sigma,\rho_\sigma}(\overline{t},\overline{x}))&>\partial_tu(\overline{t},\overline{x})-L_{\R^N}u(\overline{t},\overline{x})-f(u(\overline{t},\overline{x}))=0.  \end{align*}
The whole game is now to obtain a contradiction with this by showing that
\begin{align} \mathcal{I}(\overline{t},\overline{x}):=\partial_t\widetilde{w}_{\sigma,\rho_\sigma}(\overline{t},\overline{x})-L_{\R^N}\widetilde{w}_{\sigma,\rho_\sigma}(\overline{t},\overline{x})-f(\widetilde{w}_{\sigma,\rho_\sigma}(\overline{t},\overline{x}))<0. \label{super:contradiction} \end{align}
Set $\eta(x_1):=1-\eps_0\hspace{0.1em}e^{\sigma(x_1+R_{\eps_0})}$ and $\xi(\overline{t}):=ct-\rho-2\hspace{0.05em}\eps_0\hspace{0.05em}\kappa_*(1-e^{-\frac{\sigma ct}{4}})$. A short computation (using the equation satisfied by $\phi$) shows that
\begin{align*}
\mathcal{I}(\overline{t},\overline{x})&=\int_{\R}J_1(\overline{x}_1-y_1)\hspace{0.05em}\phi(y_1+\xi(\overline{t}))\hspace{0.05em}(\eta(\overline{x}_1)-\eta(y_1))\hspace{0.1em}\mathrm{d}y_1+2\hspace{0.05em}\eps_0\hspace{0.05em}\alpha\hspace{0.1em}e^{-\alpha \overline{t}} \\
&\qquad +\eta(\overline{x}_1)f(\phi(\overline{x}_1+\xi(\overline{t})))-f\big(\eta(\overline{x}_1)\phi(\overline{x}_1+\xi(\overline{t}))-2\hspace{0.05em}\eps_0\hspace{0.1em}e^{-\alpha\overline{t}}\big) \\
&\qquad -2\hspace{0.05em}\eps_0\hspace{0.05em}\alpha\hspace{0.05em}\kappa_*\hspace{0.1em}e^{-\alpha \overline{t}}\phi'(\overline{x}_1+\xi(\overline{t}))\hspace{0.1em}\eta(\overline{x}_1).
\end{align*}
Using the definition of $\eta$, we have
\begin{align*}
\int_{\R}J_1(\overline{x}_1-y_1)\hspace{0.05em}\phi(y_1+\xi(\overline{t}))\hspace{0.05em}&(\eta(\overline{x}_1)-\eta(y_1))\hspace{0.1em}\mathrm{d}y_1 \\
&\leq\eps_0\hspace{0.1em}e^{\sigma(\overline{x}_1+R_{\eps_0})}\int_{\R}J_1(z_1)\hspace{0.05em}\phi(\overline{x}_1+z_1+\xi(\overline{t}))\hspace{0.05em}(e^{\sigma |z_1|}-1)\hspace{0.1em}\mathrm{d}z_1. 
\end{align*}
Using that $e^X-1\leq Xe^X$ for all $X\geq0$, together with $\sigma\leq1$ and $0<\phi<1$ 
we obtain
\begin{align*}
\int_{\R}J_1(\overline{x}_1-y_1)\hspace{0.05em}\phi(y_1+\xi(\overline{t}))\hspace{0.05em}&(\eta(\overline{x}_1)-\eta(y_1))\hspace{0.1em}\mathrm{d}y_1\leq \sigma\hspace{0.1em}\eps_0\hspace{0.1em}M(J)\hspace{0.1em}e^{\sigma(\overline{x}_1+R_{\eps_0})},
\end{align*}
where $M_J>0$ is as in \eqref{DE:MJ}.
Thus, we have that
\begin{align}
\mathcal{I}(\overline{t},\overline{x})&\leq \sigma\hspace{0.1em}\eps_0\hspace{0.1em}M(J)\hspace{0.1em}e^{\sigma(\overline{x}_1+R_{\eps_0})}+2\hspace{0.05em}\eps_0\hspace{0.05em}\alpha\hspace{0.1em}e^{-\alpha \overline{t}}-2\hspace{0.05em}\eps_0\hspace{0.05em}\alpha\hspace{0.05em}\kappa_*\hspace{0.1em}e^{-\alpha \overline{t}}\phi'(\overline{x}_1+\xi(\overline{t}))\hspace{0.1em}\eta(\overline{x}_1) \nonumber \\
&\qquad +\eta(\overline{x}_1)f(\phi(\overline{x}_1+\xi(\overline{t})))-f\big(\eta(\overline{x}_1)\phi(\overline{x}_1+\xi(\overline{t}))-2\hspace{0.05em}\eps_0\hspace{0.1em}e^{-\alpha\overline{t}}\big). \label{eq-ref-4}
\end{align}
We now distinguish between three different cases.
\vskip 0.3cm

\noindent\emph{Case 1: $\overline{x}_1+\xi(\overline{t})<-A$.}
\smallskip

By definition of $A$, $\beta_0$ and $\eps_0$, we have
$$ \phi(\overline{x}_1+\xi(\overline{t}))<\phi(-A)\leq\frac{\beta_0}{2}<1-\frac{\beta_0}{2}<1-\eps_0. $$
Therefore, we have that $\widetilde{w}_{\sigma,\rho_\sigma}(\overline{t},\overline{x})<1-\eps_0$. Thanks to \eqref{bc-eq-fe2},  \eqref{HYP:sur:Rj}, \eqref{eq-ref-1} and the definition of $\sigma$, we have $\overline{x}_1\leq -R_{\eps_0}-c\overline{t}/4$. This, in turn, implies that
\begin{align}
e^{\sigma(\overline{x}_1+R_{\eps_0})}\leq e^{-\sigma c\overline{t}/4}=e^{-\alpha\overline{t}}. \label{eq-ref-5}
\end{align}
Combining \eqref{eq-ref-5} with \eqref{eq-ref-4}, using that $\eta(\overline{x}_1)>0$, that $\alpha=\sigma c/4$ and that $\phi'>0$, we get
\begin{align}
\mathcal{I}(\overline{t},\overline{x})&\leq \sigma\hspace{0.1em}\eps_0\left(M(J)+\frac{c}{2}\right)e^{-\alpha\overline{t}}+\eta(\overline{x}_1)f(\phi(\overline{x}_1+\xi(\overline{t}))) \nonumber\\
&\qquad-f\big(\eta(\overline{x}_1)\phi(\overline{x}_1+\xi(\overline{t}))-2\hspace{0.05em}\eps_0\hspace{0.1em}e^{-\alpha\overline{t}}\big). \label{eq:ref-CAS1}
\end{align}
Let us rewrite the last two terms in the right-hand side as follows
\begin{align*}
\eta(\overline{x}_1)f(\phi(\overline{x}_1+\xi(\overline{t})))-&f\big(\eta(\overline{x}_1)\phi(\overline{x}_1+\xi(\overline{t}))-2\hspace{0.05em}\eps_0\hspace{0.1em}e^{-\alpha\overline{t}}\big) \\
&\qquad=f(\phi(\overline{x}_1+\xi(\overline{t})))-f\big(\eta(\overline{x}_1)\phi(\overline{x}_1+\xi(\overline{t}))-2\hspace{0.05em}\eps_0\hspace{0.1em}e^{-\alpha\overline{t}}\big) \\
&\qquad\qquad+(\eta(x_1)-1)f(\phi(\overline{x}_1+\xi(\overline{t}))).
\end{align*}
Since $\phi(\overline{x}_1+\xi(\overline{t}))\leq\beta_0/2$ and since $\overline{x}_1\leq -R_{\eps_0}-c\overline{t}/4$, we have
\begin{align*}
f(\phi(\overline{x}_1+\xi(\overline{t})))-&f\big(\eta(\overline{x}_1)\phi(\overline{x}_1+\xi(\overline{t}))-2\hspace{0.05em}\eps_0\hspace{0.1em}e^{-\alpha\overline{t}}\big) \\
&\leq \eps_0\hspace{0.1em}\frac{3f'(0)}{4}\left(e^{\sigma(\overline{x}_1+R_{\eps_0})}\phi(\overline{x}_1+\xi(\overline{t}))+2\hspace{0.1em}e^{-\alpha\overline{t}}\right),
\end{align*}
In addition, by \eqref{bc-eq-f}, we have
\begin{align*}
(\eta(x_1)\!-\!1)f(\phi(\overline{x}_1\!+\!\xi(\overline{t})))&=-\eps_0\hspace{0.1em}e^{\sigma(x_1+R_{\eps_0})}f(\phi(\overline{x}_1\!+\!\xi(\overline{t})))\leq -\eps_0\hspace{0.1em}\frac{5f'(0)}{4}\hspace{0.1em}e^{\sigma(x_1+R_{\eps_0})}\phi(\overline{x}_1\!+\!\xi(\overline{t})).
\end{align*}
Hence, recalling \eqref{eq-ref-5} and using that $\phi\leq1$, we find that
\begin{align*}
\eta(\overline{x}_1)f(\phi(\overline{x}_1+\xi(\overline{t})))-&f\big(\eta(\overline{x}_1)\phi(\overline{x}_1+\xi(\overline{t}))-2\hspace{0.05em}\eps_0\hspace{0.1em}e^{-\alpha\overline{t}}\big)\leq\eps_0\hspace{0.1em}f'(0)\hspace{0.1em}e^{-\alpha\overline{t}}.
\end{align*}
Therefore, recalling \eqref{eq:ref-CAS1} and the definition of $\sigma$, we have
\begin{align}
\mathcal{I}(\overline{t},\overline{x})\leq \eps_0\left[\sigma\hspace{0.1em}\left(M(J)+\frac{c}{2}\right)+f'(0)\right]e^{-\alpha\overline{t}}< 0. \label{super:case1:contr}
\end{align}

\noindent\emph{Case 2: $-A\leq\overline{x}_1+\xi(\overline{t})\leq A$.}
\smallskip

In this case, we have $\phi(\overline{x}_1+\xi(\overline{t}))<1-\beta_0/2<1-\eps_0$. Hence, the estimate \eqref{eq-ref-5} remains true. Moreover, $\phi'(\overline{x}_1+\xi(\overline{t}))\geq\inf_{z\in[-A,A]}\phi'(z)=:\zeta>0$ and, since $\overline{x}_1<-R_{\eps_0}-c\overline{t}/4<-R_{\eps_0}$, we also have $\eta(\overline{x}_1)\geq 1-\eps_0$. Hence, using \eqref{eq-ref-4}, we obtain that
\begin{align*}
\mathcal{I}(\overline{t},\overline{x})&\leq \sigma\hspace{0.1em}\eps_0\left(M(J)+\frac{c}{2}-\frac{c}{2}(1-\eps_0)\hspace{0.1em}\zeta\hspace{0.1em}\kappa_*\right)e^{-\alpha\overline{t}} \\
&\qquad+\eta(\overline{x}_1)f(\phi(\overline{x}_1+\xi(\overline{t})))-f\big(\eta(\overline{x}_1)\phi(\overline{x}_1+\xi(\overline{t}))-2\hspace{0.05em}\eps_0\hspace{0.1em}e^{-\alpha\overline{t}}\big).
\end{align*}
Using that $f\in C^{0,1}(\R)$, we further have
\begin{align*}
\eta(\overline{x}_1)f(\phi(\overline{x}_1+\xi(\overline{t})))-&f\big(\eta(\overline{x}_1)\phi(\overline{x}_1+\xi(\overline{t}))-2\hspace{0.05em}\eps_0\hspace{0.1em}e^{-\alpha\overline{t}}\big) \\
&\qquad\leq \eps_0\hspace{0.1em}[f]_{C^{0,1}(\R)}\left(2\hspace{0.1em}e^{-\alpha\overline{t}}+e^{\sigma(x_1+R_{\eps_0})}\phi(\overline{x}_1+\xi(\overline{t}))\right) \\
&\qquad\qquad-\eps_0\hspace{0.1em}e^{-\sigma(x_1+R_{\eps_0})}f(\phi(\overline{x}_1+\xi(\overline{t}))) \\
&\qquad\leq \eps_0\hspace{0.1em}e^{-\alpha\overline{t}}\left(3\hspace{0.1em}[f]_{C^{0,1}(\R)}+\|f\|_{L^\infty([0,1])}\right).
\end{align*}
Therefore, recalling the definition of $\kappa_*$, we have
\begin{align}
\mathcal{I}(\overline{t},\overline{x})&\leq \sigma\hspace{0.1em}\eps_0\left(M(J)+\frac{c}{2}-\frac{c}{2}(1-\eps_0)\hspace{0.1em}\zeta\hspace{0.1em}\kappa_*+\frac{3\hspace{0.1em}[f]_{C^{0,1}(\R)}}{\sigma}+\frac{\|f\|_{L^\infty([0,1])}}{\sigma}\right)e^{-\alpha\overline{t}}<0. \label{super:case2:contr}
\end{align}

\noindent\emph{Case 3: $\overline{x}_1+\xi(\overline{t})>A$.}
\smallskip

In this case, we have $\phi(\overline{x}_1+\xi(\overline{t}))\geq1-\beta_0/2>\theta$, so that $f(\phi(\overline{x}_1+\xi(\overline{t})))\geq0$. Thus, using the fact that $\eta(\overline{x}_1)\leq1$ and the definition of $\eps_0$, we find that
\begin{align*}
\eta(\overline{x}_1)f(\phi(\overline{x}_1+\xi(\overline{t})))-&f\big(\eta(\overline{x}_1)\phi(\overline{x}_1+\xi(\overline{t}))-2\hspace{0.05em}\eps_0\hspace{0.1em}e^{-\alpha\overline{t}}\big) \\
&\qquad\leq f(\phi(\overline{x}_1+\xi(\overline{t})))-f\big(\eta(\overline{x}_1)\phi(\overline{x}_1+\xi(\overline{t}))-2\hspace{0.05em}\eps_0\hspace{0.1em}e^{-\alpha\overline{t}}\big) \\
&\qquad\leq \eps_0\hspace{0.1em}\frac{f'(1)}{2}\left(e^{-\sigma(x_1+R_{\eps_0})}\left(1-\frac{\beta_0}{2}\right)+2\hspace{0.1em}e^{-\alpha\overline{t}}\right).
\end{align*}
Plugging this in \eqref{eq-ref-4}, using that $\phi'>0$ and recalling the definition of $\sigma$, we obtain that
\begin{align}
\mathcal{I}(\overline{t},\overline{x})&\leq \eps_0\hspace{0.1em}e^{-\sigma(x_1+R_{\eps_0})}\left(\sigma\hspace{0.1em}M(J)+\frac{f'(1)}{2}\left(1-\frac{\beta_0}{2}\right)\right)+2\hspace{0.1em}\eps_0\hspace{0.1em}e^{-\alpha\overline{t}}\left(\frac{\sigma c}{4}+\frac{f'(1)}{2}\right)<0. \label{super:case3:contr}
\end{align}

Collecting \eqref{super:case1:contr}, \eqref{super:case2:contr} and \eqref{super:case3:contr} we obtain that \eqref{super:contradiction} holds, which is the desired contradiction. Therefore, $E=\emptyset$ and so 
$\widetilde{w}_{\sigma,\rho_\sigma}(t,x)\leq u(t,x)$ for all $(t,x)\in[t_*,\infty)\times\overline{\Omega}$, as desired. The proof is thereby complete.
\end{proof}


\section{The entire solution $u(t,x)$ is a generalised transition wave}\label{SE:EntireSol}

This last section is devoted to the proof of Theorem~\ref{TH:FRONT}. Namely, we will prove that $u(t,x)$ is a generalised transition almost-planar invasion front between $0$ and $u_\infty$ with global mean speed $c$. That is, we will prove that
\begin{align}
\sup_{(t,x)\in\R\times\overline{\Omega},\,x_1+ct\geq A}|u(t,x)-u_\infty(x)|\underset{A\to\infty}{\longrightarrow} 0,
\label{front:uinfini}
\end{align}
and that
\begin{align}
\sup_{(t,x)\in\R\times\overline{\Omega},\,x_1+ct\leq -A} u(t,x)\underset{A\to\infty}{\longrightarrow} 0.
\label{front:0}
\end{align}
We will adapt to our situation the arguments developed in \cite{Berestycki2009d} using the characterisation of the large time behaviour of the super-level sets of $u(t,x)$ derived at the previous section. For the convenience of the reader, we prove separately \eqref{front:uinfini} and \eqref{front:0}. 
\begin{proof}[Proof of \eqref{front:uinfini}]
Assume, by contradiction, that \eqref{front:uinfini} does not hold. Then, there exists $\eps>0$ and a sequence $(t_n,x_n)_{n\in\N}$ such that $x_{1,n}+ct_n\to\infty$ as $n\to\infty$ and
$$ u_\infty(x_n)-u(t_n,x_n)>\eps \text{ for all }n\in\N. $$
Up to extraction of a subsequence, two cases may occur: either $|x_n|\to\infty$ or $x_n\to\overline{x}$. In the latter case, we must have that $t_n\to\infty$ and, since $u(t,x)\to u_\infty(x)$ locally uniformly in $\overline{\Omega}$ as $t\to\infty$, we deduce that
$$ 0<\eps\leq \lim_{n\to\infty}\left(u_\infty(x_n)-u(t_n,x_n)\right)=0, $$
a contradiction. In the former case, we have $|x_n|\to\infty$ and so $u_\infty(x_n)\to1$ as $n\to\infty$.
Therefore, up to extraction of a subsequence, we have
\begin{align}
1-u(t_n,x_n)>\eps \text{ for all }n\in\N. \label{contra:front:1}
\end{align}
At this stage, up to extraction of a subsequence, three different subcases may occur.
\vskip 0.3cm

\noindent\emph{Subcase 1: $t_n\to-\infty$.}
\smallskip

Since $|u(t,x)-\phi(x_1+ct)|\to 0$ uniformly in $x\in\overline{\Omega}$ as $t\to-\infty$, we have $u(t_n,x_n)-\phi(x_{1,n}+ct_n)\to 0$ as $n\to\infty$. But since $x_{1,n}+ct_n\to\infty$, we must have that $\phi(x_{1,n}+ct_n)\to1$ which, in turn, implies that $u(t_n,x_n)\to1$ as $n\to\infty$, contradicting \eqref{contra:front:1}.
\vskip 0.3cm

\noindent\emph{Subcase 2: $t_n\to\overline{t}$.}
\smallskip

Since $x_{1,n}+ct_n\to\infty$, we must have $x_{1,n}\to\infty$. Since $|u(t,x)-\phi(x_1+ct)|\to 0$ uniformly in $x\in\overline{\Omega}$ as $t\to-\infty$, we may then find some $T\ll-1$ such that
$$ \sup_{x\in\overline{\Omega}}\hspace{0.1em}|u(T,x)-\phi(x_{1}+cT)|\leq\frac{\eps}{2}. $$
Otherwise said, we have that $u(T,x)\geq \phi(x_{1}+cT)-\eps/2$ for all $x\in\overline{\Omega}$. Up to decrease further $T$, we may assume that $T<\overline{t}$. Now, using that $\partial_t u>0$, we have $u(t,x)\geq u(T,x)\geq \phi(x_1+cT)-\eps/2$ for all $t\geq T$ and all $x\in\overline{\Omega}$. Thus, recalling \eqref{contra:front:1}, we obtain that
$$ 1-\eps>\limsup_{n\to\infty}\hspace{0.1em}u(t_n,x_n)\geq \limsup_{n\to\infty}\hspace{0.1em}\phi(x_{1,n}+cT)-\frac{\eps}{2}=1-\frac{\eps}{2}, $$
a contradiction.
\vskip 0.3cm

\noindent\emph{Subcase 3: $t_n\to\infty$.}
\smallskip

Up to extraction of a subsequence, three situations may occur: either $x_{1,n}\to-\infty$, or $x_{1,n}\to\infty$ or $x_{1,n}\to\overline{x}$. If $x_{1,n}\to\infty$, we readily get a contradiction (by repeating the arguments of Subcase~2), so this situation is ruled out. Next, if $x_{1,n}\to\overline{x}$, then, since $|x_n|\to\infty$, we must have that $|x_n'|\to\infty$. Hence, by Proposition~\ref{PROP:HORIZON}, we have
$$ \liminf_{n\to\infty}\hspace{0.1em}u(T,x_n)=\phi(\overline{x}_1+cT) \text{ for all }T\in\R. $$
Choose $T>0$ large enough so that $\phi(\overline{x}_1+cT)\geq1-\eps/2$. Since $t_n\geq T$ for $n$ large enough and since $\partial_tu>0$, we then have
$$ \eps\leq \limsup_{n\to\infty}\hspace{0.1em}\left(1-u(t_n,x_n)\right)\leq \limsup_{n\to\infty}\hspace{0.1em}\left(1-u(T,x_n)\right)\leq 1-\phi(\overline{x}_1+cT)\leq \frac{\eps}{2}, $$
which, again, yields a contradiction. Lastly, let us consider the case when $x_{1,n}\to-\infty$.
Since $t_n\to\infty$, $x_{1,n}+ct_n\to\infty$ and $x_{1,n}\to-\infty$ as $n\to\infty$, it follows immediately from Claim~\ref{Claim:super} and \eqref{contra:front:1} that, for $n$ large enough, there holds
$$ 1-\eps>u(t_n,x_n)\geq 1-\frac{\eps}{2}, $$
a contradiction. Hence, this case is ruled out too. The proof of \eqref{front:uinfini} is thereby complete.
\end{proof}
Finally, let us now prove \eqref{front:0}.
\begin{proof}[Proof of \eqref{front:0}]
As above, we argue by contradiction and we assume that there exist some $\eps>0$ and a sequence $(t_n,x_n)_{n\in\N}$ such that $x_{1,n}+ct_n\to-\infty$ as $n\to\infty$ and
\begin{align}
u(t_n,x_n)>\eps \text{ for all }n\in\N. \label{contra:front:uinfini}
\end{align}
Up to extraction, three situations may occur.
\vskip 0.3cm

\noindent\emph{Subcase 1: $t_n\to-\infty$.}
\smallskip

Since $|u(t,x)-\phi(x_1+ct)|\to0$ uniformly in $x\in\overline{\Omega}$ as $t\to-\infty$, we have $|u(t_n,x_n)-\phi(x_{1,n}+ct_n)|\to0$ as $n\to\infty$. But since $x_{1,n}+ct_n\to-\infty$ as $n\to\infty$, we have $\phi(x_{1,n}+ct_n)\to0$ as $n\to\infty$ which, in turn, implies that $u(t_n,x_n)\to0$ as $n\to\infty$, contradicting \eqref{contra:front:uinfini}.
\vskip 0.3cm

\noindent\emph{Subcase 2: $t_n\to\overline{t}$.}
\smallskip

Since $x_{1,n}+ct_n\to-\infty$ as $n\to\infty$, we must have $x_{1,n}\to-\infty$. By Proposition~\ref{PROP:LIMITE:X1}, we further have that $\lim_{x_1\to-\infty}u(\overline{t}+1,x)=0$. Hence, there exists $R_0>0$ such that
$$ u(\overline{t}+1,x)\leq\frac{\eps}{2} \text{ for all }x\in\{x_1\leq R_0\}. $$
But since $\partial_tu>0$ and since $x_{1,n}\to-\infty$, we infer that
$$ \eps\leq\limsup_{n\to\infty}\hspace{0.1em}u(t_n,x_n)\leq\lim_{n\to\infty}u(\overline{t}+1,x_n)\leq\frac{\eps}{2}, $$
a contradiction.
\vskip 0.3cm

\noindent\emph{Subcase 3: $t_n\to\infty$.}
\smallskip

By \eqref{superlevel:upper:est}, there exists $\alpha,\beta,\rho>0$ such that
$$ u(t,x)\leq \phi(x_1+ct+\rho)+\beta\hspace{0.1em}e^{-\alpha t} \text{ for all }(t,x)\in[0,\infty)\times\overline{\Omega}. $$
Since $x_{1,n}+ct_n\to-\infty$ and since $t_n\to\infty$, we obtain that
$$ \eps\leq\limsup_{n\to\infty}\hspace{0.1em}u(t_n,x_n)\leq\lim_{n\to\infty}\left(\phi(x_{1,n}+ct_n+\rho)+\beta\hspace{0.1em}e^{-\alpha t_n}\right)\leq\frac{\eps}{2}, $$
a contradiction. Hence, this case is ruled out too. The proof of \eqref{front:0} is thereby complete.
\end{proof}


\section*{Acknowledgments}
The authors warmly thank \emph{Guillaume Legendre} (CEREMADE, Universit\'e Paris-Dauphine) for its great help in the implementation of the numerics involving the geodesic distance. This work has been carried out in the framework of the Archim\`ede Labex (ANR-11-LABX-0033) and of the A*MIDEX project (ANR-11-IDEX-0001-02), funded by the ``Investissements d’Avenir" French Government program managed by the French National Research Agency (ANR). The research leading to these results has also received funding from the ANR DEFI project NONLOCAL (ANR-14-CE25-0013) and the ANR JCJC project MODEVOL (ANR-13-JS01-0009).


\appendix

\section{The $J$-covering property}

In this Appendix, we list some additional results regarding the properties of quasi-Euclidean distances. Precisely, we prove the assertions made in Remark~\ref{RE:COV:PROP}. Incidentally, this will justify that the first assumption in \eqref{C3} is satisfied in a wide range of situations (and is, therefore, not an empty assumption). Firstly, we show that if $\delta$ is the Euclidean distance, then the $J$-covering property always holds.
\begin{prop}\label{CovEucl}
Let $E\subset\R^N$ be a connected set and let $\delta\in\mathcal{Q}(\overline{E})$ be the Euclidean distance. Let $J:[0,\infty)\to[0,\infty)$ be a measurable function with $|\mathrm{supp}(J)|>0$. Then, $(E,\delta)$ has the $J$-covering property.
\end{prop}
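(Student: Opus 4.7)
My plan is to endow $\overline{E}$ with the reachability relation induced by iterating supports, show that the equivalence class of $x$ is both open and closed in $\overline{E}$, and conclude by connectedness. First I would fix a ``hop radius'' $r^{*}>0$ that is a Lebesgue density point of $\mathrm{supp}(J)$; this exists since $|\mathrm{supp}(J)|>0$. A standard polar-coordinate computation then shows that when $\delta$ is the Euclidean distance, the one-step set $V(z):=\mathrm{supp}(J(\delta(\cdot,z)))\subset\overline{E}$ admits the description $V(z)=\{w\in\overline{E}\text{ with positive-measure }\overline{E}\text{-neighborhoods}:|w-z|\in\mathrm{supp}(J)\}$; in particular, the one-step relation $w\in V(z)$ is symmetric.

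The crux is a \emph{local chaining lemma}: there exists $\rho_{0}\in(0,r^{*})$ such that for every $y_{0}\in\overline{E}$ and every $y\in\overline{E}$ with $0<|y-y_{0}|<\rho_{0}$, one has $y\in\Pi_{2}(J,y_{0})$. To prove it I would choose $\eta>0$ small compared with $r^{*}$ and $\rho_{0}$, and exhibit a pivot $w\in\overline{E}$ with both $|w-y_{0}|$ and $|w-y|$ lying in $\mathrm{supp}(J)\cap(r^{*}-\eta,r^{*}+\eta)$. Slicing the sphere $\{|w-y_{0}|=\rho\}$ by the value of $|w-y|$, and applying the change-of-variables on spheres, the angular set on which $|w-y|\in\mathrm{supp}(J)\cap(r^{*}-\eta,r^{*}+\eta)$ has positive $(N-1)$-dimensional measure whenever $|y-y_{0}|<\rho_{0}$ and $\rho$ is close to $r^{*}$; integrating $\rho$ against $\mathrm{supp}(J)\cap(r^{*}-\eta,r^{*}+\eta)$ and invoking Fubini together with the density property at $r^{*}$ then yield a positive-Lebesgue-measure set of admissible pivots $w$ in $\R^{N}$. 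Because $\overline{E}$ has positive-measure $\overline{E}$-bulk at each of its points in the situations of interest (e.g. $E$ open, as in the paper's application with $E=\Omega$), one may select such $w$ inside $\overline{E}$; the characterization of $V$ then gives $w\in V(y_{0})$ and $y\in V(w)$, so $y\in\Pi_{2}(J,y_{0})$.

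Granting the chaining lemma, I would conclude as follows. If $y_{0}\in\Pi_{j}(J,x)\subset U:=\bigcup_{k\geq 0}\Pi_{k}(J,x)$, then $\Pi_{2}(J,y_{0})\subset\Pi_{j+2}(J,x)\subset U$, so by the chaining lemma every $y\in\overline{E}$ with $|y-y_{0}|<\rho_{0}$ lies in $U$; hence $U$ is open in $\overline{E}$. Similarly, if $y\in\overline{E}\setminus U$ admitted some $y'\in U$ with $|y-y'|<\rho_{0}$, the chaining lemma applied at $y_{0}=y'$ would place $y\in\Pi_{2}(J,y')\subset U$, a contradiction; hence $\overline{E}\setminus U$ is also open. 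Since $\overline{E}$ is connected (being the closure of the connected set $E$) and $U\ni x$ is a nonempty clopen subset, one concludes $U=\overline{E}$, proving the $J$-covering property. The main obstacle is the chaining lemma, specifically the Fubini/density step and verifying that the pivot $w$ can be taken inside $\overline{E}$; this requires a mild regularity of $E$, trivially satisfied when $E$ is open.
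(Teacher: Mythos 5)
Your proof is correct and takes a genuinely different route from the paper's. Both arguments rest on the same key lemma — that $\Pi_{2}(J,x_{0})$ contains $\overline{B_{\rho}(x_{0})}\cap\overline{E}$ for some uniform radius $\rho>0$ — but you establish it and exploit it differently. For the small-ball inclusion, the paper is slicker: it truncates $J_{\mathrm{rad}}$ to a bounded piece $\Lambda$ of positive measure and observes that $G=\mathds{1}_{\Lambda}\ast\mathds{1}_{\Lambda}$ is continuous with $G(0)=|\Lambda|>0$, so $\mathrm{supp}(J_{\mathrm{rad}})+\mathrm{supp}(J_{\mathrm{rad}})$ contains a full ball $\overline{B_{\tau}}$ around the origin; this gives the inclusion immediately with no density points or coarea/Fubini slicing. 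Your Lebesgue-density-point and spherical-slicing argument also works but is more laborious to make airtight, and it is where the regularity of $E$ (needed to place the pivot $w$ inside $\overline{E}$) genuinely enters — though you are honest about that, and the paper has the same implicit requirement (its step $\overline{B_{\tau}(x_{0})\cap E}\subset\Pi_{2}(J,x_{0})$ likewise presupposes that $\overline{E}$ has positive measure in every ambient neighborhood, which holds for $E=\Omega$ open). For the passage from local to global, the roles reverse: the paper iterates the ball inclusion by hand, claiming $\overline{B_{\tau k}(x_{0})\cap E}\subset\bigcup_{j\leq k}\Pi_{2j}(J,x_{0})$ by induction on boundary points, an argument that is somewhat informal (the annulus $\mathcal{A}(x_{0},\tau k,\tau(k+1))\cap E$ need not be covered by $\tau$-balls around $\partial B_{\tau k}(x_{0})\cap\overline{E}$ if $E$ is contorted inside $B_{\tau(k+1)}(x_{0})$); your clopen/connectedness argument on $U:=\bigcup_{k\geq0}\Pi_{k}(J,x)$ sidesteps this entirely and is the cleaner way to conclude. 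A natural hybrid would be to take the paper's convolution lemma and finish with your clopen argument.
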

\begin{proof}
Let $x_0\in\overline{E}$. By definition of $\Pi_2(J,x_0)$, we have
$$ \Pi_2(J,x_0)=\left(x_0+\mathrm{supp}(J_{\mathrm{rad}})+\mathrm{supp}(J_{\mathrm{rad}})\right)\cap \overline{E}. $$
Let $R>0$ be such that $\Lambda:=\mathrm{supp}(J_{\mathrm{rad}})\cap \overline{B_R}$ has positive Lebesgue measure.
Since the function $G:\R^N\to[0,\infty)$ given by $G(x):=\mathds{1}_{\Lambda}\ast\mathds{1}_{\Lambda}(x)$ is continuous and since, on the other hand, $G(0)=|\Lambda|>0$, we deduce that there is some $\tau>0$ such that
$$ \overline{B_\tau}\subset\mathrm{supp}(G)\subset \Lambda+\Lambda\subset \mathrm{supp}(J_{\mathrm{rad}})+\mathrm{supp}(J_{\mathrm{rad}}). $$
Hence, $\overline{B_\tau(x_0)\cap E}\subset \Pi_2(J,x_0)$. Since $x_0\in\overline{E}$ was chosen arbitrarily, we may apply the same reasoning to any boundary point $z_0$ of $\overline{B_\tau(x_0)\cap E}$ and we have $\overline{B_\tau(z_0)\cap E}\subset \Pi_2(J,z_0)$. But since $z_0\in\Pi_2(J,x_0)$, we have $\Pi_2(J,z_0)\subset\Pi_4(J,z_0)$ and so $\overline{B_\delta(z_0)\cap E}\subset \Pi_4(J,x_0)$. This being true for any boundary point of $\overline{B_\tau(x_0)\cap E}$, we then obtain that $\overline{B_{2\tau}(x_0)\cap E}\subset\Pi_2(J,x_0)\cup\Pi_4(J,x_0)$. By induction, we find that
$$ \overline{B_{\tau k}(x_0)\cap E}\subset\bigcup_{j=1}^k\,\Pi_{2j}(J,x_0) \text{ for all }k\in\N^\ast. $$
In turn, this implies that the following chain of inclusions hold:
$$ \overline{E}=\bigcup_{k\geq0}\overline{B_{\tau k}(x_0)\cap E}\subset \bigcup_{k\geq0} \,\Pi_{2k}(J,x_0)\subset\bigcup_{j\geq0}\,\Pi_j(J,x_0)\subset\overline{E}. $$
Therefore, $(\Omega,\delta)$ has the $J$-covering property.
\end{proof}
Lastly, we prove that $(\Omega,\delta)$ has the $J$-covering property for \emph{all} $\delta\in\mathcal{Q}(\overline{\Omega})$, whenever $\Omega$ is the complement of a compact convex set with $C^{2}$ boundary and $J$ satisfies some mild additional assumptions.
\begin{prop}\label{COV:CVX}
Let $K\subset\R^N$ be a compact convex set with nonempty interior and $C^2$ boundary, let $\Omega:=\R^N\setminus K$ and let $\delta\in\mathcal{Q}(\overline{\Omega})$. Suppose that $J:[0,\infty)\to[0,\infty)$ is such that $[r_1,r_2]\subset\mathrm{supp}(J)$ for some $0\leq r_1<r_2$. Then, $(\Omega,\delta)$ has the $J$-covering property.
\end{prop}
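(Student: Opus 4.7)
The plan is to reduce the $J$-covering property for $(\Omega, \delta)$ to a polygonal-connectedness statement for $\overline{\Omega}$, and then to exploit the convexity of $K$ together with the $C^2$-regularity of $\partial K$ in order to construct such paths.

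\textbf{Reduction to polygonal paths.} Fix $x_0\in\overline{\Omega}$. I first claim that any $y\in\overline{\Omega}$ admitting a finite polygonal path $x_0 = z_0, z_1, \ldots, z_n = y$ in $\overline{\Omega}$ with $[z_i, z_{i+1}]\subset\overline{\Omega}$ and $|z_{i+1} - z_i|\in(r_1, r_2)$ for every $i$ must belong to $\Pi_n(J, x_0)$. Indeed, by the quasi-Euclidean property $\delta(z_i, z_{i+1}) = |z_{i+1} - z_i|\in(r_1, r_2)\subset\mathrm{supp}(J)$. Since $\mathrm{supp}(J) = \overline{\{t\geq 0 : J(t) > 0\}}$ and the whole open interval $(r_1, r_2)$ lies in this support, the set $\{t : J(t) > 0\}$ is dense in $(r_1, r_2)$; hence there is a sequence $t_k \nearrow |z_{i+1} - z_i|$ with $J(t_k) > 0$. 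The points $y_k := z_i + t_k(z_{i+1} - z_i)/|z_{i+1} - z_i|$ then lie on the segment $[z_i, z_{i+1}]\subset\overline{\Omega}$, satisfy $\delta(y_k, z_i) = |y_k - z_i| = t_k$ and $J(\delta(y_k, z_i)) > 0$, and converge to $z_{i+1}$. Consequently $z_{i+1}\in\mathrm{supp}(J(\delta(\cdot, z_i)))$, and induction on $i$ yields $y\in\Pi_n(J, x_0)$.

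\textbf{Offsetting the path into the interior.} To construct such a polygonal path, I use the geometry of $K$ to build a continuous outward offset of $\overline{\Omega}$ into $\Omega$. By compactness and convexity of $K$, the nearest-point projection $p:\overline{\Omega}\to\partial K$ is well-defined and continuous (it is even 1-Lipschitz on $\R^N$); by the $C^2$-regularity of $\partial K$, the outer unit normal $\nu:\partial K\to\mathbb{S}^{N-1}$ is continuous. Define $\Phi_\epsilon(z) := z + \epsilon\hspace{0.05em}\nu(p(z))$; this is a continuous map on $\overline{\Omega}$. Convexity of $K$ gives $z - p(z) = |z - p(z)|\hspace{0.05em}\nu(p(z))$ for $z\notin K$, so $\Phi_\epsilon(z) = p(z) + (|z - p(z)| + \epsilon)\hspace{0.05em}\nu(p(z))$ and $\mathrm{dist}(\Phi_\epsilon(z), K) = |z - p(z)| + \epsilon \geq \epsilon$; in particular $\Phi_\epsilon(\overline{\Omega})\subset\overline{\Omega}$. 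I choose $\epsilon > r_2$. Since $\overline{\Omega}$ is path-connected, any $y\in\overline{\Omega}$ may be joined to $x_0$ by a continuous curve $\gamma:[0,1]\to\overline{\Omega}$, and the composed path $\gamma_\epsilon := \Phi_\epsilon\circ\gamma$ is then a continuous path in $\{\mathrm{dist}(\cdot, K)\geq\epsilon\}\subset\Omega$. The two short end segments $[x_0, \Phi_\epsilon(x_0)]$ and $[\Phi_\epsilon(y), y]$ follow outward-normal directions and stay in $\overline{\Omega}$ by convexity.

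\textbf{Polygonal subdivision and the main obstacle.} The final step is to turn $\gamma_\epsilon$, together with the outward-normal end segments, into a polygonal path with all segment lengths in $(r_1, r_2)$. Since every point of $\gamma_\epsilon$ sits at distance $\geq\epsilon > r_2$ from $K$, every Euclidean step of length $\leq r_2$ from such a point stays in $\overline{\Omega}$ (by the triangle inequality). By uniform continuity of $\gamma_\epsilon$, I pick vertices $\gamma_\epsilon(t_0), \ldots, \gamma_\epsilon(t_m)$ with consecutive vertices at Euclidean distance strictly less than $r_2$, so that every connecting straight segment stays in $\overline{\Omega}$. Whenever two consecutive vertices $w, w'$ are at distance $< r_1$, I insert an auxiliary vertex $w''$ at the apex of an isoceles triangle with base $[w, w']$ and equal sides of some length $\ell\in(r_1, r_2)$, choosing the transverse direction arbitrarily within the $(N{-}2)$-sphere perpendicular to $w' - w$ (which is non-empty since $N\geq 2$ in the setting of \eqref{C1}); such $w''$ exists because $|w - w'| < r_1 < 2\ell$, lies within distance $r_2$ of $w$, and hence at distance $\geq \epsilon - r_2 > 0$ from $K$, so both segments $[w, w'']$ and $[w'', w']$ remain in $\overline{\Omega}$. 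The same zigzag construction handles the subdivision of the outward-normal end segments. The hard part of the argument is precisely this subdivision under the potentially narrow constraint $(r_1, r_2)$: the essential point is that once the path has been pushed away from $K$ by more than $r_2$, one has enough room to maneuver freely with arbitrary step lengths in $(r_1, r_2)$; this uses crucially the convexity of $K$ (for the outward offset to remain in $\overline{\Omega}$) and the $C^2$-regularity of $\partial K$ (for continuity of $\nu\circ p$). Combining with the reduction step gives $y\in\Pi_n(J, x_0)$ for some finite $n$, and hence the $J$-covering property.
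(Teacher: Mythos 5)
Your strategy — push any continuous path in $\overline{\Omega}$ outward to distance $>r_2$ from $K$ via $\Phi_\epsilon(z)=z+\epsilon\,\nu(p(z))$, then polygonally subdivide with step lengths in $(r_1,r_2)$ and feed each step through the density of $\{J>0\}$ in $(r_1,r_2)$ — is genuinely different from the paper's proof, which builds a two-step local reachability statement ($\overline{B_\varkappa(x_0)\cap\Omega}\subset\widetilde{\Pi}_2(x_0,r_1,r_2)$) using osculating balls, tangent cones and the sets $\mathrm{star}(x)$, and then iterates exactly as in the Euclidean case. Your route is more global and direct, and in fact it only uses $C^1$ regularity of $\partial K$ (continuity of $\nu\circ p$), whereas the paper's proof invokes $C^2$ through the maximum principal curvature and the osculating ball of radius $R_{\min}$.

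The one place that is not justified as written is the subdivision of the two outward-normal end segments $[x_0,\Phi_\epsilon(x_0)]$ and $[\Phi_\epsilon(y),y]$. For vertices on $\gamma_\epsilon$ you show that the zigzag apex $w''$ stays in $\overline{\Omega}$ because $\mathrm{dist}(w'',K)\geq \mathrm{dist}(w,K)-|w-w''|\geq\epsilon-r_2>0$; you then assert that \emph{the same zigzag construction handles the outward-normal end segments}. But on those segments the vertices sit at distance as small as $0$ from $K$ (for instance $x_0$ itself if $x_0\in\partial K$), so the estimate $\mathrm{dist}(w'',K)\geq\epsilon-r_2$ is simply unavailable, and a priori the apex could land in $\mathrm{int}(K)$. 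The fix uses convexity more pointedly than you indicate: on such a segment, $w$ and $w'$ are of the form $p(y)+a\,\nu(p(y))$ with $a\geq0$, and any apex taken perpendicular to $w'-w$, hence perpendicular to $\nu(p(y))$, has $\langle w''-p(y),\nu(p(y))\rangle$ equal to the average of the corresponding components of $w$ and $w'$, hence $\geq0$; therefore $w''$ and both legs of the zigzag lie in the supporting half-space $\{z:\langle z-p(y),\nu(p(y))\rangle\geq0\}\subset\overline{\Omega}$. (Alternatively, choose $\epsilon$ so that $nr_1<\epsilon<nr_2$ for some integer $n$ — possible once $\epsilon$ is large — and subdivide the normal segment directly without any zigzag.) Either repair is short, but without one the argument fails to close at precisely the spot where the geometry of $\partial K$ matters; the rest of your proof is sound.
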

\begin{proof}
The proof follows roughly the same structure as the one of Proposition~\ref{CovEucl}. However, it is slightly more involved due to the presence of an arbitrary quasi-Euclidean distance, which forces us to ``secure" starshaped regions in which it behaves like the Euclidean distance. To keep the proof as clear as possible, we split it into three main steps. First, we introduce some useful notations and terminology. Then, we make some preliminary geometric observations and, finally, we complete the proof by estimating the sets $\Pi_j(J,\cdot)$.
\vskip 0.3cm

\noindent\emph{Step 1. Some preparatory definitions}
\smallskip

Prior to proving Proposition~\ref{COV:CVX}, we will need to introduce a few definitions and notations.
For any $x\in\overline{\Omega}$, we define $\widetilde{\Pi}_1(x,r_1,r_2):=\{x\}$ and, for $j\geq0$, we set
$$ \widetilde{\Pi}_{j+1}(x,r_1,r_2):=\bigcup_{z\in\widetilde{\Pi}_j(x,r_1,r_2)}\!\!\mathrm{supp}\left(\mathds{1}_{[r_1,r_2]}(\delta(\cdot,z))\right). $$ 
Clearly, $\widetilde{\Pi}_j(x,r_1,r_2)\subset\Pi_j(J,x)$ for all $j\geq1$.  Also, for all $x\in\overline{\Omega}$, we set
$$ \mathrm{star}(x):=\big\{y\in\overline{\Omega} \text{ s.t. } [x,y]\subset\overline{\Omega}\big\}. $$
Roughly speaking, $\mathrm{star}(x)$ is the set of all points which are reachable from $x$ without ``jumping" through $K$. By definition, it is the largest subset of $\overline{\Omega}$ which is starshaped with respect to $x$. In addition, for any $x\in\overline{\Omega}$, we let $\mathscr{C}(x)$ be the closed cone with vertex $x$ whose boundary $\partial(\mathscr{C}(x))$ is tangent to $\partial K$. Notice that, since $K$ is a compact convex set, $\mathscr{C}(x)$ is always well-defined and we have $K\subset\mathscr{C}(x)$ for any $x\in\overline{\Omega}$.
For later purposes, it will be useful to denote by $\mathscr{C}^+(x):=\mathscr{C}(x)\cap\mathrm{star}(x)$ the upper part of the cone $\mathscr{C}(x)$.
\vskip 0.3cm

\noindent\emph{Step 2. Preliminary geometric observations}
\smallskip

First of all, we notice that, since $[r_1,r_2]\subset\mathrm{supp}(J)$, we also have $[r_1,\widetilde{r}_2]\subset\mathrm{supp}(J)$ for any $\widetilde{r}_2\in(r_1,r_2)$. Hence, up to replace $r_2$ by some $\widetilde{r}_2\in(r_1,r_2)$ arbitrarily close to $r_1$,
\begin{align}
\text{we have the freedom to choose $\varkappa:=r_1-r_2$ arbitrarily small.} \label{J:freedom}
\end{align}

Let $m\in\partial K$ be arbitrary and let $R_{\min}:=(\max_{\partial K}\gamma)^{-1}$ where $\gamma$ is the maximum principal curvature of $\partial K$. Since, by definition, $R_{\min}$ is the minimum of the radii of curvature of $\partial K$, there is then an osculating open ball $B$ with radius $R_{\min}$ such that $\partial B\cap\partial K=\{m\}$ and that $B\subset \mathrm{int}(K)$.
Although this is classical, we recall that $\max_{\partial K}\gamma>0$ (since $K$ is a compact convex set) and that $\max_{\partial K}\gamma<\infty$ (since the Weingarten map is bounded, as follows from the fact that $K$ has $C^{2}$ boundary), so that $R_{\min}$ and $B$ are well-defined.

Now, let $p:=m+\varkappa\hspace{0.1em}\nu(m)$, where $\nu(m)$ is the outward unit normal to $\partial K$ at $m$. Then, the ball $B_\varkappa(p)$ is tangent to $\partial K$ at $p$, satisfies $\overline{B_\varkappa(p)}\cap K=\{m\}$ and $B_\varkappa(p)\subset\Omega$ (remember that $K$ is convex). Let $q:=m+r_1\hspace{0.1em}\nu(m)$ and let $\mathscr{C}^+(q)$ be the upper part of $\mathscr{C}(q)$.
Also, let $\mathscr{C}_B(q)$ be the closed cone with vertex $q$ and tangent to $B$ and let $\mathscr{C}_B^+(q):=\mathscr{C}_B(q)\cap\mathrm{star}(q)$ be its upper part. Clearly, $\mathscr{C}_B(q)\subset\mathscr{C}(q)$ and $\mathscr{C}_B^+(q)\subset\mathscr{C}^+(q)$.

Now, by Thales' theorem, up to choose $\varkappa$ small (remember \eqref{J:freedom}), say if
$$ 0<\varkappa<\min\left\{\frac{r_1}{3},\frac{r_1R_{\min}}{2R_{\min}+r_1}\right\}, $$
we may assume that $\overline{B_\varkappa(p)}\subset\overline{\mathscr{C}_B^+(q)}$ (regardless of the choice of $m$).
Since $p=m+\varkappa\hspace{0.1em}\nu(m)$ and since the orthogonal cross section of the cone $\mathscr{C}_B(q)$ is increasing in the direction $-\nu(m)$ (in the sense of the inclusion), we also have
\begin{align}
\overline{B_\varkappa(m+\ell\hspace{0.1em}\nu(m))\cap\Omega}\subset\overline{\mathscr{C}_B^+(q)}\subset\overline{\mathscr{C}^+(q)} \text{ for all }\ell\in[0,\varkappa], \label{boule:step2:geod}
\end{align}
see Figure~\ref{Jcovering} for a visual evidence.
\begin{figure}
\centering
\includegraphics[scale=0.4]{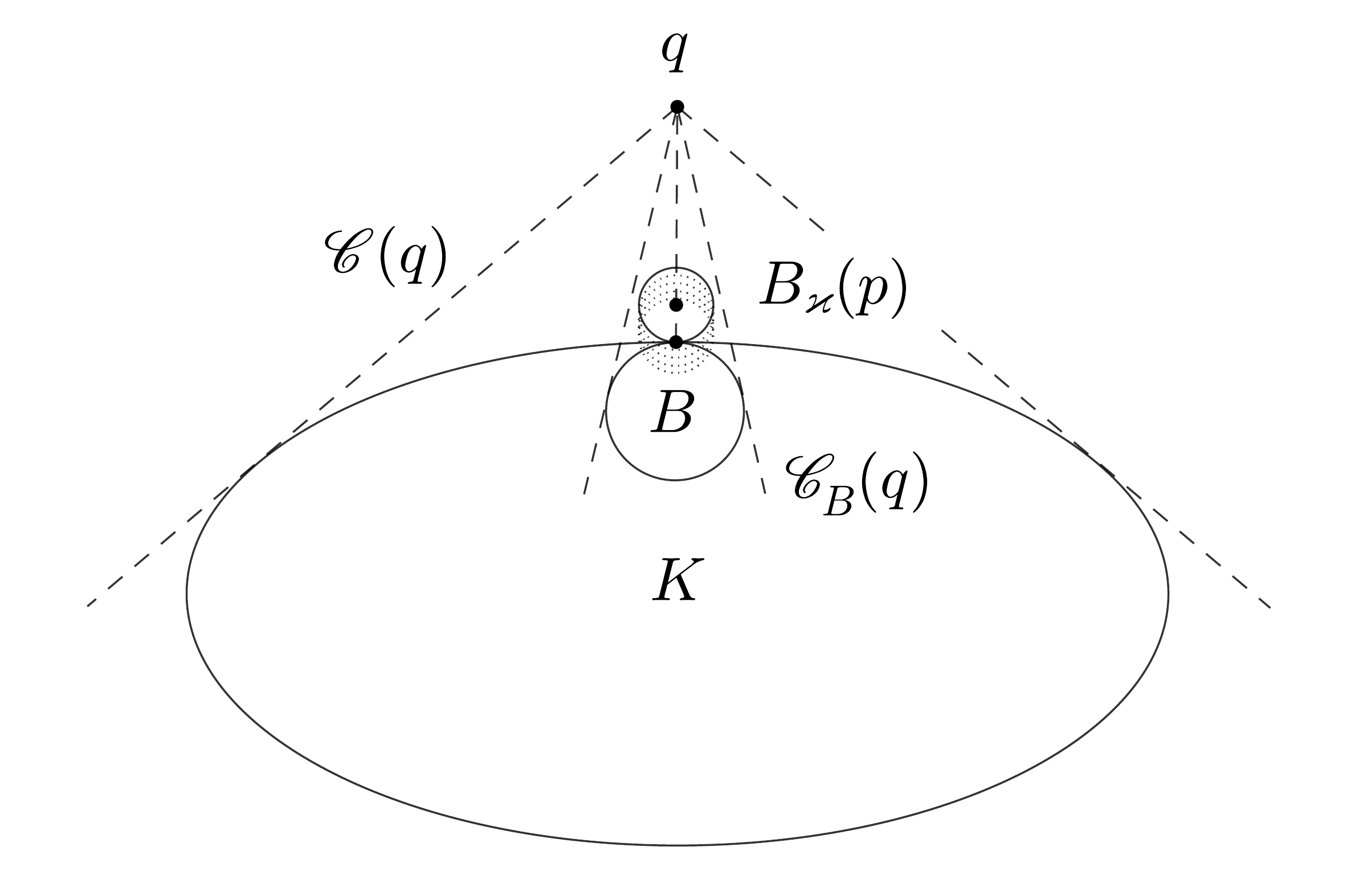} \\
\caption{Illustration of the balls $B$ and $B_{\varkappa}(p)$ and the cones $\mathscr{C}(q)$ and $\mathscr{C}_B(q)$, when $K$ is an ellipse. The upper cone $\mathscr{C}^+(q)$ (resp. $\mathscr{C}_B^+(q)$) correspond to the region of the cone $\mathscr{C}(q)$ (resp. $\mathscr{C}_B(q)$) which lie above $K$. The translates of the ball $B_{\varkappa}(p)$ appearing in \eqref{boule:step2:geod} are represented in thin dashed lines.} \label{Jcovering}
\end{figure}
Moreover, since $\ell+r_1\geq r_1$ for all $\ell\in[0,\varkappa]$, we have the straightforward inclusion
$$ \overline{\mathscr{C}^+(q)}=\overline{\mathscr{C}^+(m+r_1\hspace{0.1em}\nu(m))}\subset \overline{\mathscr{C}^+(m+(\ell+r_1)\hspace{0.1em}\nu(m))}. $$
Recalling \eqref{boule:step2:geod}, we obtain that
$$ \overline{B_{\varkappa}(m+\ell\hspace{0.1em}\nu(m))\cap\Omega}\subset\overline{\mathscr{C}^+(m+(\ell+r_1)\hspace{0.1em}\nu(m))} \text{ for all }\ell\in[0,\varkappa]. $$
Since $\overline{\mathscr{C}^+(m+(\ell+r_1)\hspace{0.1em}\nu(m))}\subset\mathrm{star}(m+(\ell+r_1)\hspace{0.1em}\nu(m))$ (by definition), it follows that
\begin{align}
\overline{B_{\varkappa}(m+\ell\hspace{0.1em}\nu(m))\cap\Omega}\subset \mathrm{star}(m+(\ell+r_1)\hspace{0.1em}\nu(m)) \text{ for all }\ell\in[0,\varkappa], \label{key:inclusion}
\end{align}
and all $m\in\partial K$.
Now that we have \eqref{key:inclusion}, we are in position to complete the proof.
\vskip 0.3cm

\noindent\emph{Step 3. Estimates for $\widetilde{\Pi}_j(\cdot,r_1,r_2)$ and conclusion}
\smallskip

Now, let us fix an arbitrary point $x_0\in\overline{\Omega}$. Since $\delta\in\mathcal{Q}(\overline{\Omega})$, we have that $\delta(x_0,y)=|x_0-y|$ for every $y\in\mathrm{star}(x_0)$. In particular,
\begin{align}
\mathrm{star}(x_0)\cap\overline{\mathcal{A}(x_0,r_1,r_2)}\subset\widetilde{\Pi}_1(x_0,r_1,r_2). \label{hyp:star2}
\end{align}
Since $\overline{\R^N\setminus\mathscr{C}(x_0)}$ is starshaped with respect to $x_0$ and since $(\R^N\setminus\mathscr{C}(x_0))\cap K=\emptyset$, we have
\begin{align}
\R^N\setminus\mathscr{C}(x_0)\subset\mathrm{star}(x_0). \label{hyp:star3}
\end{align}
Now, let $S(x_0)$ be the set of all $e\in\S^{N-1}$ such that $x_0+e\hspace{0.1em}t\in\R^N\setminus\mathscr{C}(x_0)$ for all $t\geq0$ (note that $S(x_0)$ is well-defined because $\R^N\setminus\mathscr{C}(x_0)$ is also a cone).
Since $K$ is convex, it follows that $\mathscr{C}(x_0)$ has a maximum opening angle less than $\pi$. In particular, the cone $\R^N\setminus\mathscr{C}(x_0)$ has a minimum opening angle greater than $\pi$. Hence, $S(x_0)$ contains a half-sphere.

Let $e\in S(x_0)$ and let $q\in[x_0-\varkappa\hspace{0.1em}e,x_0+\varkappa\hspace{0.1em}e]\cap\mathrm{star}(x_0)$ be arbitrary.
Then, there exist $t,\tau\in[r_1,r_2]$ such that $q=x_0+(t-\tau)\hspace{0.1em}e$. Hence, letting $p:=x_0+e\hspace{0.1em}t$, we have
$$ p\in\overline{\mathcal{A}(x_0,r_1,r_2)}\setminus\mathscr{C}(x_0),\quad p-\tau\hspace{0.1em}e=x_0+(t-\tau)\hspace{0.1em}e=q\quad \text{and}\quad |p-q|\in[r_1,r_2]. $$
Recalling \eqref{hyp:star2} and \eqref{hyp:star3}, we have that $p\in\widetilde{\Pi}_1(x_0,r_1,r_2)$.
Moreover, by construction, we further have $\delta(p,q)=|p-q|\in[r_1,r_2]$. Therefore, for all $e\in S(x_0)$ and all $q\in [x_0-\varkappa\hspace{0.1em}e,x_0+\varkappa\hspace{0.1em}e]\cap\mathrm{star}(x_0)$, there exists $p\in\widetilde{\Pi}_1(x_0,r_1,r_2)$ such that $r_1\leq\delta(p,q)\leq r_2$. Consequently,
$$ \bigcup_{e\in S(x_0)} [x_0-\varkappa\hspace{0.1em}e, x_0+\varkappa\hspace{0.1em}e]\cap\mathrm{star}(x_0)\subset\widetilde{\Pi}_2(x_0,r_1,r_2). $$
But since $S(x_0)$ contains a half-sphere, the left-hand side in the above equation is nothing but $\overline{B_{\varkappa}(x_0)}\cap\mathrm{star}(x_0)$. Hence, we have that
\begin{align}
\overline{B_{\varkappa}(x_0)}\cap\mathrm{star}(x_0)\subset \widetilde{\Pi}_2(x_0,r_1,r_2). \label{inter:star(x)}
\end{align}
Let us now prove that $\overline{B_{\varkappa}(x_0)\cap\Omega}\setminus\mathrm{star}(x_0)\subset \widetilde{\Pi}_2(x_0,r_1,r_2)$. We may suppose, without loss of generality, that $\overline{B_{\varkappa}(x_0)\cap\Omega}\setminus\mathrm{star}(x_0)\neq\emptyset$, since otherwise there is nothing to prove. So, we have, in particular, that $\overline{B_{\varkappa}(x_0)}\cap K\neq\emptyset$. Let $m\in\partial K$ be the orthogonal projection of $x_0$ to $\partial K$. Then, by construction, we have $x_0=m+|x_0-m|\hspace{0.1em}\nu(m)$, where $\nu(m)$ denotes the outward unit normal to $\partial K$ at $m$. Set $x_0^\perp:=x_0+r_1\hspace{0.1em}\nu(m)$. Notice that $x_0^\perp\in \overline{\mathcal{A}(x_0,r_1,r_2)}\setminus\mathscr{C}(x_0)$ (by construction of $x_0^\perp$), so that $x_0^\perp\in\widetilde{\Pi}_1(x_0,r_1,r_2)$ (remember \eqref{hyp:star2} and \eqref{hyp:star3}). Moreover, we have $\overline{B_\varkappa(x_0)\cap\Omega}\setminus\mathrm{star}(x_0)\subset\R^N\setminus B_{r_1}(x_0^\perp) \text{ and } \overline{B_\varkappa(x_0)}\subset\overline{B_{r_2}(x_0^\perp)}$. Therefore, we have
\begin{align}
\overline{B_\varkappa(x_0)\cap\Omega}\setminus\mathrm{star}(x_0)\subset \overline{\mathcal{A}(x_0^\perp,r_1,r_2)}. \label{hyp:star4}
\end{align}
Since $x_0=p+\ell\hspace{0.1em}\nu(p)$ and $x_0^\perp=p+(\ell+r_1)\hspace{0.1em}\nu(p)$ for some $\ell\in[0,\varkappa]$ and some $p\in\partial K$, we may apply \eqref{key:inclusion}, which then yields $\overline{B_\varkappa(x_0)\cap\Omega}\subset \mathrm{star}(x_0^\perp)$. Hence, using \eqref{hyp:star4}, it follows that
$$ \overline{B_\varkappa(x_0)\cap\Omega}\setminus\mathrm{star}(x_0)\subset \mathrm{star}(x_0^\perp)\cap \overline{\mathcal{A}(x_0^\perp,r_1,r_2)}. $$
Since $\delta(x_0^\perp,y)=|x_0^\perp-y|$ for all $y\in\mathrm{star}(x_0^\perp)$, this then implies that
$$ \overline{B_\varkappa(x_0)\cap\Omega}\setminus\mathrm{star}(x_0)\subset \widetilde{\Pi}_1(x_0^\perp,r_1,r_2)\subset \widetilde{\Pi}_2(x_0,r_1,r_2), $$
where, in the last inclusion, we have used that $x_0^\perp\in\widetilde{\Pi}_1(x_0,r_1,r_2)$. Together with \eqref{inter:star(x)}, this yields that $\overline{B_\varkappa(x_0)\setminus\Omega}\subset\widetilde{\Pi}_2(x_0,r_1,r_2)$. At this stage, we may conclude exactly as in the proof of Proposition~\ref{CovEucl} (remember that $\widetilde{\Pi}_j(x,r_1,r_2)\subset\Pi_j(J,x)$ for all $x\in\overline{\Omega}$) and we therefore obtain that $(\Omega,\delta)$ has the $J$-covering property, as desired.
\end{proof}

\bibliographystyle{plain}
\bibliography{BC}

\begin{thebibliography}{10}

\bibitem{Adriaensen2003}
F.~Adriaensen, J.~P. Chardon, G.~De Blust, E.~Swinnen, S.~Villalba, H.~Gulinck,
  and E.~Matthysen.
\newblock The application of ‘least-cost’ modelling as a functional
  landscape model.
\newblock {\em Landscape Urban Plan.}, 64(4):233 -- 247, 2003.

\bibitem{Alfaro2017}
M.~Alfaro and J.~Coville.
\newblock Propagation phenomena in monostable integro-differential equations:
  Acceleration or not?
\newblock {\em J. Diff. Eq.}, 263(9):5727 -- 5758, 2017.

\bibitem{Bartumeus2007}
F.~Bartumeus.
\newblock L\'evy processes in animal movement: an evolutionary hypothesis.
\newblock {\em Fractals}, 15(2):151--162, 2007.

\bibitem{Bartumeus2009}
F.~Bartumeus.
\newblock Behavioral intermittence, {L}\'evy patterns, and randomness in animal
  movement.
\newblock {\em Oikos}, 118(4):488--494, 2009.

\bibitem{Bates1997}
P.~W. Bates, P.~C. Fife, X.~Ren, and X.~Wang.
\newblock Traveling waves in a convolution model for phase transitions.
\newblock {\em Arch. Rational Mech. Anal.}, 138(2):105--136, 1997.

\bibitem{Berestycki2016b}
H.~Berestycki, J.~Bouhours, and G.~Chapuisat.
\newblock Front blocking and propagation in cylinders with varying cross
  section.
\newblock {\em Calc. Var. Partial Dif.}, 55(3):44, 2016.

\bibitem{Berestycki2016a}
H.~Berestycki, J.~Coville, and H.-H. Vo.
\newblock Persistence criteria for populations with nonlocal dispersion.
\newblock {\em J. Math. Biol.}, 72(7):1693--1745, 2016.

\bibitem{Berestycki2002}
H.~Berestycki and F.~Hamel.
\newblock Front propagation in periodic excitable media.
\newblock {\em Comm. Pure Appl. Math.}, 55(8):949--1032, 2002.

\bibitem{Berestycki2007b}
H.~Berestycki and F.~Hamel.
\newblock Generalized travelling waves for reaction-diffusion equations.
\newblock In {\em Perspectives in Nonlinear Partial Differential Equations: in
  Honor of Haim Brezis}, 2007.

\bibitem{Berestycki2012a}
H.~Berestycki and F.~Hamel.
\newblock Generalized transition waves and their properties.
\newblock {\em Comm. Pure Appl. Math.}, 65:592--648, 2012.

\bibitem{Berestycki2017}
H.~Berestycki and F.~Hamel.
\newblock {\em Reaction-Diffusion Equations and Propagation Phenomena}.
\newblock Applied Mathematical Sciences. Springer New York, 2017.

\bibitem{Berestycki2009d}
H.~Berestycki, F.~Hamel, and H.~Matano.
\newblock Bistable traveling waves around an obstacle.
\newblock {\em Comm. Pure Appl. Math.}, 62(6):729--788, 2009.

\bibitem{Berestycki2005b}
H.~Berestycki, F.~Hamel, and L.~Roques.
\newblock Analysis of the periodically fragmented environment model: {II} -
  biological invasions and pulsating travelling fronts.
\newblock {\em J. Math. Pures Appl.}, 84(8):1101--1146, 2005.

\bibitem{Berestycki1992}
H.~Berestycki and L.~Nirenberg.
\newblock Travelling fronts in cylinders.
\newblock {\em Ann. I. H. Poincare -- AN}, 9(5):497--572, 1992.

\bibitem{Berestycki2017a}
H.~Berestycki and N.~Rodr{\'\i}guez.
\newblock A non-local bistable reaction-diffusion equation with a gap.
\newblock {\em Discrete \& Continuous Dynamical Systems-A}, 37(2):685--723,
  2017.

\bibitem{Bonnet1999}
A.~Bonnet and F.~Hamel.
\newblock Existence of nonplanar solutions of a simple model of premixed bunsen
  flames.
\newblock {\em SIAM J. Math. Anal.}, 31(1):80--118, 1999.

\bibitem{Bouhours2015}
J.~Bouhours.
\newblock Robustness for a {L}iouville type theorem in exterior domains.
\newblock {\em J. Dyn. Diff. Equat.}, 27(2):297--306, 2015.

\bibitem{Brasseur2018}
J.~{Brasseur} and J.~{Coville}.
\newblock {A counterexample to the Liouville property of some nonlocal
  problems}.
\newblock {\em arXiv:1804.07485}, Apr 2018.

\bibitem{Brasseur2019}
J.~Brasseur, J.~Coville, F.~Hamel, and E.~Valdinoci.
\newblock Liouville type results for a nonlocal obstacle problem.
\newblock {\em Proc. London Math. Soc.}, 119(2):291--328, 2019.

\bibitem{Cain}
M.~L. Cain, B.~G. Milligan, and A.~E. Strand.
\newblock Long-distance seed dispersal in plant populations.
\newblock {\em Am. J. Bot.}, 87(9):1217--1227, 2000.

\bibitem{Cantrell2004}
R.~S. Cantrell and C.~Cosner.
\newblock {\em Spatial ecology via reaction-diffusion equations}.
\newblock John Wiley \& Sons, 2004.

\bibitem{Carr2004}
J.~Carr and A.~Chmaj.
\newblock Uniqueness of travelling waves for nonlocal monostable equations.
\newblock {\em Proc. Amer. Math. Soc.}, 132(8):2433--2439 (electronic), 2004.

\bibitem{Chapman}
D.~S. Chapman, C.~Dytham, and G.~S. Oxford.
\newblock Modelling population redistribution in a leaf beetle: an evaluation
  of alternative dispersal functions.
\newblock {\em J. Anim. Ecol.}, 76(1):36--44, 2007.

\bibitem{Chen2002}
F.~Chen.
\newblock Almost periodic traveling waves of nonlocal evolution equations.
\newblock {\em Nonlinear Anal.}, 50(6):807 -- 838, 2002.

\bibitem{Chen1997}
X.~Chen.
\newblock Existence, uniqueness, and asymptotic stability of traveling waves in
  nonlocal evolution equations.
\newblock {\em Adv. Differential Equations}, 2(1):125--160, 1997.

\bibitem{Clobert2012}
J.~Clobert, M.~Baguette, T.~G. Benton, and J.~M. Bullock.
\newblock {\em Dispersal ecology and evolution}.
\newblock Oxford University Press, 2012.

\bibitem{Cortazar2007}
C.~Cortazar, J.~Coville, M.~Elgueta, and S.~Martinez.
\newblock A nonlocal inhomogeneous dispersal process.
\newblock {\em J. Diff. Eq.}, 241(2):332--358, 2007.

\bibitem{cortazar2012}
C.~Cort{\'a}zar, M.~Elgueta, F.~Quir{\'o}s, and N.~Wolanski.
\newblock Asymptotic behavior for a nonlocal diffusion equation in domains with
  holes.
\newblock {\em Arch. Ration. Mech. Anal.}, 205(2):673--697, Aug 2012.

\bibitem{cortazar2016}
C.~Cortázar, M.~Elgueta, F.~Quirós, and N.~Wolanski.
\newblock Asymptotic behavior for a nonlocal diffusion equation in exterior
  domains: The critical two-dimensional case.
\newblock {\em J. Math. Anal. Appl.}, 436(1):586 -- 610, 2016.

\bibitem{Coville2007d}
J.~Coville.
\newblock Travelling fronts in asymmetric nonlocal reaction diffusion
  equations: The bistable and ignition cases.
\newblock {\em CCSD-Hal e-print}, pages~--, May 2007.

\bibitem{Coville2008a}
J.~Coville, J.~Davila, and S.~Martinez.
\newblock Nonlocal anisotropic dispersal with monostable nonlinearity.
\newblock {\em J. Diff. Eq.}, 244(12):3080--3118, 2008.

\bibitem{Coville2013}
J.~Coville, J.~Davila, and S.~Martinez.
\newblock Pulsating fronts for nonlocal dispersion and {KPP} nonlinearity.
\newblock {\em Ann. I. H. Poincare -- AN}, (30):179--223, 2013.

\bibitem{Coville2010a}
J.~Coville, N.~Dirr, and S.~Luckhaus.
\newblock Non-existence of positive stationary solutions for a class of
  semi-linear {PDE}s with random coefficients.
\newblock {\em NHM}, 5(4):745--763, 2010.

\bibitem{Coville2007a}
J.~Coville and L.~Dupaigne.
\newblock On a non-local equation arising in population dynamics.
\newblock {\em Proc. Roy. Soc. Edinburgh Sect. A}, 137(4):727--755, 2007.

\bibitem{Dirr2006}
N.~Dirr and N.~K. Yip.
\newblock Pinning and de-pinning phenomena in front propagation in
  heterogeneous media.
\newblock {\em Interfaces Free Bound.}, 8(1):79--109, 2006.

\bibitem{Etherington2016}
T.~R. Etherington.
\newblock Least-cost modelling and landscape ecology: Concepts, applications,
  and opportunities.
\newblock {\em Current Landscape Ecology Reports}, 1(1):40--53, Mar 2016.

\bibitem{Fagan2002}
W.~F. Fagan.
\newblock Connectivity, fragmentation, and extinction risk in dendritic
  metapopulations.
\newblock {\em Ecology}, 83(12):3243--3249, 2002.

\bibitem{Fang2015}
J.~Fang and X.-Q. Zhao.
\newblock Bistable traveling waves for monotone semiflows with applications.
\newblock {\em J. Eur. Math. Soc.}, 17(9):2243--2288, 2015.

\bibitem{Fife1977}
P.~C. Fife and J.~B. McLeod.
\newblock The approach of solutions of nonlinear diffusion equations to
  travelling front solutions.
\newblock {\em Arch. Ration. Mech. Anal.}, 65(4):335--361, 1977.

\bibitem{Fisher1937}
R.~A. Fisher.
\newblock The wave of advance of advantageous genes.
\newblock {\em Ann. Eugenics}, 7:335--369, 1937.

\bibitem{Frantz2012}
A.~C. Frantz, S.~Bertouille, M.-C. Eloy, A.~Licoppe, F.~Chaumont, and M.-C.
  Flamand.
\newblock Comparative landscape genetic analyses show a belgian motorway to be
  a gene flow barrier for red deer ({C}ervus elaphus), but not wild boars
  ({S}us scrofa).
\newblock {\em Mol. Ecol.}, 21(14):3445--3457, 2012.

\bibitem{Graves2014}
T.~Graves, R.~Chandler, J.~A. Royle, P.~Beier, and K.~C. Kendall.
\newblock Estimating landscape resistance to dispersal.
\newblock {\em Landscape Ecol.}, 29(7):1201--1211, 2014.

\bibitem{Hamel2016}
F.~Hamel.
\newblock Bistable transition fronts in $\mathbb{R}^{N}$.
\newblock {\em Adv. Math.}, 289:279--344, 2016.

\bibitem{Hutson2003}
V.~Hutson, S.~Martinez, K.~Mischaikow, and G.~T. Vickers.
\newblock The evolution of dispersal.
\newblock {\em J. Math. Biol.}, 47(6):483--517, 2003.

\bibitem{Kagan2000}
L.~Kagan and G.~Sivashinsky.
\newblock Flame propagation and extinction in large-scale vortical flows.
\newblock {\em Combustion and Flame}, 120(1):222 -- 232, 2000.

\bibitem{Kagan1998}
L.~Kagan, G.~Sivashinsky, and G.~Makhviladze.
\newblock On flame extinction by a spatially periodic shear flow.
\newblock {\em Combust. Theor. Model.}, 2(4):399--404, 1998.

\bibitem{Kanel1961}
J.~I. Kanel.
\newblock Certain problem of burning-theory equations.
\newblock {\em Dokl. Akad. Nauk SSSR}, 136:277--280, 1961.

\bibitem{Kawasaki1997}
K.~Kawasaki and N.~Shigesada.
\newblock {\em Biological Invasions: Theory and Practice}.
\newblock Oxford University Press, 1997.

\bibitem{Kinezaki2003}
N.~Kinezaki, K.~Kawasaki, F.~Takasu, and N.~Shigesada.
\newblock Modeling biological invasions into periodically fragmented
  environments.
\newblock {\em Theor. Popul. Biol.}, 64(3):291--302, 2003.

\bibitem{Kolmogorov1937}
A.~N. Kolmogorov, I.~G. Petrovsky, and N.~S. Piskunov.
\newblock {\'E}tude de l'\'equation de la diffusion avec croissance de la
  quantit\'e de mati\`ere et son application \`a un probl\`eme biologique.
\newblock {\em Bulletin Universit\'e d'\'Etat \`a Moscow (Bjul. Moskowskogo
  Gos. Univ)}, S\'erie Internationale(Section A):1--26, 1937.

\bibitem{Langlois2001}
J.~P. Langlois, L.~Fahrig, G.~Merriam, and H.~Artsob.
\newblock Landscape structure influences continental distribution of hantavirus
  in deer mice.
\newblock {\em Landscape Ecol.}, 16(3):255--266, Apr 2001.

\bibitem{deOliveira2014}
F.~Lemes~de Oliveira.
\newblock {\em Eco-cities: The Role of Networks of Green and Blue Spaces},
  pages 165--178.
\newblock Springer Berlin Heidelberg, Berlin, Heidelberg, 2014.

\bibitem{Sun2011}
W.-T. Li, Y.-J. Sun, and Z.-C. Wang.
\newblock Entire solutions in nonlocal dispersal equations with bistable
  nonlinearity.
\newblock {\em J. Diff. Eq.}, 251(3):551 -- 581, 2011.

\bibitem{Lim2016}
T.~S. Lim and A.~Zlato{\v{s}}.
\newblock Transition fronts for inhomogeneous {F}isher-{KPP} reactions and
  non-local diffusion.
\newblock {\em Transactions of the American Mathematical Society},
  368(12):8615--8631, 2016.

\bibitem{Logg2012}
A.~Logg, G.~N. Wells, and J.~Hake.
\newblock {\em DOLFIN: A C++/Python Finite Element Library}, chapter~10.
\newblock Springer, 2012.

\bibitem{Matano2003}
H.~Matano.
\newblock Traveling waves in spatially random media (mathematical economics).
\newblock {\em PRIMS, Kyoto University}, 1337:1--9, 2003.

\bibitem{Matano2006}
H.~Matano, K.~I. Nakamura, and B.~Lou.
\newblock Periodic traveling waves in a two-dimensional cylinder with
  saw-toothed boundary and their homogenization limit.
\newblock {\em Netw. Heterog. Media}, 1(4):537--568 (electronic), 2006.

\bibitem{Murray1993}
J.~D. Murray.
\newblock {\em Mathematical biology}, volume~19 of {\em Biomathematics}.
\newblock Springer-Verlag, Berlin, second edition, 1993.

\bibitem{Nadin2009}
G.~Nadin.
\newblock Travelling fronts in space-time periodic media.
\newblock {\em J. Math. Pures Appl.}, 92:232--262, 2009.

\bibitem{Nadin2018}
G.~Nadin.
\newblock {\em Propagation phenomena in various reaction-diffusion models}.
\newblock Habilitation \`a diriger les recherches, Sorbonne University,
  Doctoral School of Mathematical Science of Paris Centre, 2018.

\bibitem{Nathan2012}
R.~Nathan, E.~K. Klein, J.~J. Robledo-Arnuncio, and E.~Revilla.
\newblock Dispersal kernels.
\newblock In {\em Dispersal ecology and evolution}, pages 187--210. Oxford
  University Press Oxford, 2012.

\bibitem{Ninomiya2005}
H.~Ninomiya and M.~Taniguchi.
\newblock Existence and global stability of traveling curved fronts in the
  {A}llen–{C}ahn equations.
\newblock {\em J. Diff. Eq.}, 213(1):204 -- 233, 2005.

\bibitem{Nolen2009}
J.~Nolen and L.~Ryzhik.
\newblock Traveling waves in a one-dimensional heterogeneous medium.
\newblock In {\em Ann. I. H. Poincare -- AN}, volume~26, pages 1021--1047,
  2009.

\bibitem{Obermeyer2008}
K.~J. Obermeyer and Contributors.
\newblock {VisiLibity}: A {C}++ library for visibility computations in planar
  polygonal environments.
\newblock \texttt{http://www.VisiLibity.org}, 2008.
\newblock R-1.

\bibitem{Okubo2002}
A.~Okubo and S.~A. Levin.
\newblock {\em Diffusion and Ecological Problems -- Modern Perspectives}.
\newblock Second edition, Springer-Verlag, New York, 2002.

\bibitem{Pokorny2015}
T.~Pokorny, D.~Loose, G.~Dyker, J.~J.~G. Quezada-Eu{\'a}n, and T.~Eltz.
\newblock Dispersal ability of male orchid bees and direct evidence for
  long-range flights.
\newblock {\em Apidologie}, 46(2):224--237, Mar 2015.

\bibitem{Qiao2020}
S-X. Qiao, W-T. Li, and J-W. Sun.
\newblock Propagation phenomena for nonlocal dispersal equations in exterior
  domains.
\newblock {\em arXiv preprint arXiv:2005.01307}, 2020.

\bibitem{Rawal2015}
N.~Rawal, W.~Shen, and A.~Zhang.
\newblock Spreading speeds and traveling waves of nonlocal monostable equations
  in time and space periodic habitats.
\newblock {\em Discrete Cont. Dyn-A}, 35(4):1609--1640, 2015.

\bibitem{Ricketts2001}
T.~H. Ricketts.
\newblock The matrix matters: effective isolation in fragmented landscapes.
\newblock {\em The American Naturalist}, 158(1):87--99, 2001.

\bibitem{Robledo-Arnuncio2014}
J.~J. Robledo-Arnuncio, E.~K. Klein, H.~C. Muller-Landau, and L.~Santamaría.
\newblock Space, time and complexity in plant dispersal ecology.
\newblock {\em Movement Ecology}, 2(1):16, August 2014.

\bibitem{Schurr2005}
F.~M. Schurr, W.~J. Bond, G.~F. Midgley, and S.~I. Higgins.
\newblock A mechanistic model for secondary seed dispersal by wind and its
  experimental validation.
\newblock {\em J. Ecol.}, 93(5):1017--1028, 2005.

\bibitem{Shen2004}
W.~Shen.
\newblock Traveling waves in diffusive random media.
\newblock {\em J. Dyn. Diff. Equat.}, 16:1011--1060, 2004.

\bibitem{Shen2017}
W.~Shen and Z.~Shen.
\newblock Regularity and stability of transition fronts in nonlocal equations
  with time heterogeneous ignition nonlinearity.
\newblock {\em J. Diff. Eq.}, 262(5):3390--3430, 2017.

\bibitem{Shen2017a}
W.~Shen and Z.~Shen.
\newblock Transition fronts in nonlocal equations with time heterogeneous
  ignition nonlinearity.
\newblock {\em Discrete Contin. Dyn. Syst. - A}, 37(2):1013--1037, 2017.

\bibitem{Shen2019}
W.~Shen and Z.~Shen.
\newblock Existence, uniqueness and stability of transition fronts of non-local
  equations in time heterogeneous bistable media.
\newblock {\em Eur. J. Appl. Math.}, page 1–45, 2019.

\bibitem{Shen2012a}
W.~Shen and A.~Zhang.
\newblock Traveling wave solutions of monostable equations with nonlocal
  dispersal in space periodic habitats.
\newblock {\em Communications on Applied Nonlinear Analysis}, 19:73--101, 2012.

\bibitem{Shigesada1986}
N.~Shigesada, K.~Kawasaki, and E.~Teramoto.
\newblock Traveling periodic waves in heterogeneous environments.
\newblock {\em Theor. Popul. Biol.}, 30(1):143--160, 1986.

\bibitem{Sivashinsky2002}
G.~I. Sivashinsky.
\newblock Some developments in premixed combustion modeling.
\newblock {\em Proceedings of the Combustion Institute}, 29(2):1737 -- 1761,
  2002.

\bibitem{Sutherland2015}
C.~Sutherland, A.~K. Fuller, and J.~A. Royle.
\newblock Modelling non-{E}uclidean movement and landscape connectivity in
  highly structured ecological networks.
\newblock {\em Methods Ecol. Evol.}, 6(2):169--177, 2015.

\bibitem{Taylor1993}
P.~D. Taylor, L.~Fahrig, K.~Henein, and G.~Merriam.
\newblock Connectivity is a vital element of landscape structure.
\newblock {\em Oikos}, pages 571--573, 1993.

\bibitem{Tischendorf2000}
L.~Tischendorf and L.~Fahrig.
\newblock On the usage and measurement of landscape connectivity.
\newblock {\em Oikos}, 90(1):7--19, 2000.

\bibitem{Wang2002a}
X.~Wang.
\newblock Metastability and stability of patterns in a convolution model for
  phase transitions.
\newblock {\em J. Diff. Eq.}, 183(2):434--461, 2002.

\bibitem{Xin2000}
J.~Xin.
\newblock Front propagation in heterogeneous media.
\newblock {\em SIAM Rev.}, 42(2):161--230 (electronic), 2000.

\bibitem{Xiu2016}
N.~Xiu, M.~Ignatieva, and C.~Konijnendijk van~den Bosch.
\newblock The challenges of planning and designing urban green networks in
  {S}candinavian and {C}hinese cities.
\newblock {\em Journal of Architecture and Urbanism}, 40(3):163--176, 2016.

\bibitem{Yagisita2009}
H.~Yagisita.
\newblock Existence and nonexistence of travelling waves for a nonlocal
  monostable equation.
\newblock {\em Publ. RIMS, Kyoto Univ.}, 45:925--953, 2009.

\end{thebibliography}

\vspace{2mm}

\end{document}